\documentclass[reqno, 10pt]{article}
%BEGIN_FOLD
\usepackage{fullpage}
\usepackage[centertags]{amsmath}
\usepackage{amsmath}
\usepackage{amsfonts}
\usepackage{amssymb}
\usepackage{amsthm}
\usepackage{enumerate}

\usepackage{newlfont}
\usepackage{color}
\usepackage{authblk}

\usepackage{stmaryrd}
\SetSymbolFont{stmry}{bold}{U}{stmry}{m}{n}

\usepackage{bbm}
\include{amsthm_sc}

\usepackage{stmaryrd}
\usepackage{mathrsfs}

\usepackage{fancyhdr}
\usepackage{graphicx}
\usepackage{fancybox}
\usepackage{setspace}
\usepackage{cleveref}
%END_FOLD

%% New Environments
%BEGIN_FOLD
\newtheorem{theorem}{Theorem}
\newtheorem{corollary}[theorem]{Corollary}
\newtheorem{lemma}[theorem]{Lemma}
\newtheorem{proposition}[theorem]{Proposition}

\theoremstyle{definition}
\newtheorem{definition}[theorem]{Definition}
\newtheorem{assumption}[theorem]{Assumption}

\theoremstyle{remark}
\newtheorem{remark}[theorem]{Remark}
\newtheorem{notation}[theorem]{Notational Remark}
%END_FOLD

\newcommand{\deq}{\mathrel{\mathop:}=}
\newcommand{\R} {\mathbb{R}}
\newcommand{\C} {\mathbb{C}}
\newcommand{\N} {\mathbb{N}}
\newcommand{\E} {\mathbb{E}}

\newcommand{\dist} {\mathrm{dist}}
\newcommand{\e}[1]{\mathrm{e}^{#1}}

\newcommand{\bilin}[2]{\langle #1,\,#2\rangle}

\DeclareMathOperator{\diag}{diag}
\DeclareMathOperator{\tr}{tr}
\DeclareMathOperator{\Tr}{Tr}
\DeclareMathOperator{\supp}{supp}

\DeclareMathOperator{\im}{\mathrm{Im}}

\newcommand{\caD}{{\mathcal D}}
\newcommand{\caF}{{\mathcal F}}
\newcommand{\caG}{{\mathcal G}}
\newcommand{\caO}{{\mathcal O}}
\newcommand{\caK}{{\mathcal K}}
\newcommand{\caL}{{\mathcal L}}
\newcommand{\caN}{{\mathcal N}}
\newcommand{\caH}{{\mathcal{H}}}
\newcommand{\frt}{{\mathfrak t}}
\newcommand{\wt}{\widetilde}

\newcommand{\wh}{\widehat}
\newcommand{\beq}{ \begin{equation} }
\newcommand{\eeq}{ \end{equation} }
\newcommand{\dd}{\mathrm{d}}
\newcommand{\ii}{\mathrm{i}}

\newcommand\norm[1]{\Vert#1\Vert}
\newcommand\Var[1]{\mathrm{Var}[#1]}

\newcommand\expct[1]{\mathbb{E}[#1]}
\newcommand\biggexpct[1]{\mathbb{E}\bigg[#1\bigg]}
\newcommand\Bigexpct[1]{\mathbb{E}\Big[#1\Big]}

\newcommand\cexpct[2]{\mathbb{E}[#1\vert#2]}
\newcommand\biggcexpct[2]{\mathbb{E}\bigg[#1\bigg\vert#2\bigg]}
\newcommand\Bigcexpct[2]{\mathbb{E}\Big[#1\Big\vert#2\Big]}

\newcommand\expctk[2]{\mathbb{E}_{#2}[#1]}
\newcommand\biggexpctk[2]{\mathbb{E}_{#2}\bigg[#1\bigg]}

\newcommand\prob[1]{\mathbf{P}[#1]}
\newcommand\biggprob[1]{\mathbf{P}\bigg[#1\bigg]}
\newcommand\Bigprob[1]{\mathbf{P}\Big[#1\Big]}

\newcommand\absv[1]{\vert#1\vert}
\newcommand\biggabsv[1]{\bigg\vert#1\bigg\vert}
\newcommand\Bigabsv[1]{\Big\vert#1\Big\vert}

\newcommand{\lone}{\mathbbm{1}}

%% New Commands
%BEGIN_FOLD

\numberwithin{equation}{section} 
\numberwithin{theorem}{section}

%END_FOLD

\title{Gaussian Fluctuations for Linear Spectral Statistics of Deformed Wigner Matrices}

\author{Hong Chang Ji\footnote{Department of Mathematical Sciences, KAIST, Daejeon, 34141, Korea
		\newline email: \texttt{hcji@kaist.ac.kr}} 
	and Ji Oon Lee\footnote{Department of Mathematical Sciences, KAIST, Daejeon, 34141, Korea
		\newline email: \texttt{jioon.lee@kaist.edu}}}

\begin{document}
	
\maketitle

\begin{abstract}
We consider large-dimensional Hermitian or symmetric random matrices of the form $W=M+\vartheta V$ where $M$ is a Wigner matrix and $V$ is a real diagonal matrix whose entries are independent of $M$. For a large class of diagonal matrices $V$, we prove that the fluctuations of linear spectral statistics of $W$ for $C^{2}_{c}$ test function can be decomposed into that of $M$ and of $V$, and that each of those weakly converges to a Gaussian distribution. We also calculate the formulae for the means and variances of the limiting distributions.
\end{abstract}

\renewcommand{\thefootnote}{\fnsymbol{footnote}} 
\footnotetext{\emph{MSC2010 subject classifications.} 60B20, 60F05, 15B52.}
\footnotetext{\emph{Key words.} linear spectral statistics, deformed Wigner matrix, central limit theorem.}     
\renewcommand{\thefootnote}{\arabic{footnote}} 

\section{Introduction}

Ever since its discovery, the central limit theorem has been considered as one of the most fundamental concepts in probability theory. Corresponding to its motivation of studying the fluctuation of a sum of independent random variables, analogous objects in the random matrix theory stand out: for random matrices with large size $N$ and eigenvalues $\lambda_{1}^{(N)},\cdots,\lambda_{N}^{(N)}$, their linear eigenvalue statistics (LES) or linear spectral statistics (LSS), which is defined for an appropriate test function $\varphi$ as
\beq
\sum_{i=1}^{N}\varphi(\lambda_{i}^{(N)}).
\eeq

The fluctuations of LES have been studied by many different authors for various random matrix models including Wigner matrices \cite{Bai-Yao2005,Cabanal2001,Chatterjee2009,Chatterjee-Bose2004,Khorunzhy-Khorunzhenko-Pastur1995,Khorunzhy-Khorunzhenko-Pastur1996,Lytova-Pastur2009a,Shcherbina2011a}, sample covariance matrices \cite{Bai-Silverstein2004,Jonsson1982}, spiked Wigner matrices \cite{Guionnet2002,Baik-Lee2017}, Wishart ensembles \cite{Cabanal2001,Chatterjee2009,Chatterjee-Bose2004}, random band matrices \cite{Anderson-Zeitouni2006,Guionnet2002,Shcherbina2015}, and elliptic random matrices \cite{ORourke-Renfrew2016}. Let us consider the example of Wigner random matrices. Wigner matrices is $N\times N$ complex Hermitian or real symmetric random matrices whose upper triangular entries are independent and have mean zero and variance $1/N$. The celebrated Wigner semi-circle law states the convergence
\beq
\frac{1}{N}\sum_{i=1}^{N}\varphi(\lambda_{i}^{(N)}) \to \int_{\R}\varphi(\lambda)\rho_{sc}(\dd\lambda){\mathrel={\mathop:}}\bilin{\varphi}{\rho_{sc}}
\eeq
where $\rho_{sc}$ is the semi-circular distribution given by 
\beq
	\dd\rho_{sc}(x)\deq\frac{1}{2\pi}\sqrt{4-x^{2}}\lone_{[-2,2]}(x)\dd x.
\eeq
This result can be considered as an analogue of the law of large numbers, for being convergence of a random quantity toward a deterministic number. From this convergence it immediately follows that asymptotically the center of the fluctuation of the linear eigenvalue statistics is $\bilin{\varphi}{\rho_{sc}}$. With respect to the center, Bai and Yao proved in \cite{Bai-Yao2005} that under the existence and homogeneity of third and fourth moments of matrix entries, for analytic test function $\varphi$, the random variable
\beq
\sum_{i=1}^{N}\varphi(\lambda_{i}^{(N)})-N\bilin{\varphi}{\rho_{sc}}
\eeq
converges in distribution to a Gaussian random variable, giving also the explicit mean and variance of the limiting distribution.

In this paper we consider deformed Wigner matrices, given by $W_{N}=N^{-1/2}A_{N}+\vartheta_{N} V_{N}$ where $N^{-1/2}A_{N}$ is a real symmetric Wigner matrix and $V_{N}$ is a real, diagonal, random or deterministic matrix independent of $A_{N}$. Two matrices $N^{-1/2}A_{N}$ and $V_{N}$ are normalized so that each of them has eigenvalues of order one, and $\vartheta_{N}$ is a parameter that controls the order of deformation by $V_{N}$. Since the case of $\vartheta_{N}\ll N^{-1/2}$ or $\vartheta_{N}\sim N^{-1/2}$ results in another Wigner matrix, which has been studied widely by many authors, we focus on the case where $\vartheta_{N}\sim 1$ or $N^{-1/2}\ll\vartheta_{N}\ll1$. In other words, $\delta(N)N^{-1/2}\leq \theta_{N}\leq C$ where $C$ is a constant and $\delta_{N}$ tends to infinity with $N$.

Assuming that the empirical spectral distribution(ESD) of $V_{N}$
\beq
\wh{\nu}_{N} \deq \frac{1}{N}\sum_{i=1}^{N}\delta_{v_{i}^{(N)}},\quad V_{N}=\mathrm{diag}(v_{1}^{(N)},\cdots,v_{N}^{(N)})
\eeq
converges weakly (weakly in probability if $V_{N}$ is random) to a deterministic distribution $\nu$, it was proved in \cite{Pastur1972} that for $\theta\sim 1$ the ESD of $W$ 
\beq
	\rho_{N}\deq\frac{1}{N}\sum_{i=1}^{N}\delta_{\lambda_{i}^{(N)}},
\eeq
where $\lambda_{i}^{(N)}$'s are the eigenvalues of $W$, converges weakly in probability to a deterministic measure, $\rho_{fc}$. In the present paper, the limiting distribution is called the \emph{deformed semicircle law}, following \cite{Lee-Schnelli-Stetler-Yau2016}. Even though the distribution $\rho_{fc}$ (or its density function) is hard to describe explicitly in terms of $\nu$, its Stieltjes transform was characterized in \cite{Pastur1972} as the solution of an integral equation concerning $\nu$. Equivalently, the limiting measure $\rho_{fc}$ can be considered as the free additive convolution of the semicircle law and $\nu$. In \cite{Biane1997}, it was shown that $\rho_{fc}$ admits a density, which may be supported on multiple disjoint intervals. For simplicity, we impose some conditions on $\nu$ so that the limiting density is supported on a single compact interval. We also assume $\wh{\nu}_{N}$ is such that there is no outlying eigenvalues of $W$, i.e. the eigenvalues of $W$ stay close to the support of $\rho_{fc}$. As mentioned above, the results of \cite{Pastur1972} implies that whenever we are given a continuous bounded function $\varphi$, we immediately get the convergence
\beq \label{eq:phi_rho}
\bilin{\varphi}{\rho_{N}} \deq\int_{\R}\varphi \,\dd\rho_{N} = \frac{1}{N}\sum_{i=1}^{N}\varphi(\lambda_{i}^{(N)}) \longrightarrow \int_{\R}\varphi \,\dd\rho_{fc}
\eeq
where $\lambda_{i}$ are the eigenvalues and $\rho$ is the empirical spectral distribution of $W$.

Given the convergence of empirical distribution, the fluctuations of various statistics of the deformed Wigner matrices have been studied. The deformed Gaussian unitary ensemble (GUE) for a special case, where the eigenvalues of $V_{N}$ are $\pm a$ with the equal multiplicity, was considered in \cite{Aptekarev-Bleher-Kuijlaars2005,Bleher-Kuijlaars2004,Bleher-Kuijlaars2007} using the Deift/Zhou steepest descent method for the Riemann-Hilbert problem. For a general deformed Gaussian unitary ensemble (GUE), the maximal eigenvalue of $W_{N}$ was studied in \cite{Johansson2007}, where it was proved that the maximal eigenvalue of $W_{N}$ converges weakly to a Tracy-Widom or a Gaussian distribution, depending on the parameter $\vartheta_{N}$. In particular, the transition occurs at $\vartheta_{N}\sim N^{-1/6}$, so that the limiting distribution is Gaussian for $\vartheta_{N}\gg N^{-1/6}$ and the Tracy-Widom for $\vartheta_{N}\ll N^{-1/6}$. Also for non-Gaussian generic Wigner ensemble $W_{N}$, many other statistics has been studied, including the eigenvectors in \cite{Benigni2017,Lee-Schnelli2013,Lee-Schnelli2016}, `four-moment theorem' in \cite{OrourkeVu2014}, extremal eigenvalues in \cite{Lee-Schnelli2015,Lee-Schnelli2016,Lee-Schnelli-Stetler-Yau2016}, and bulk universality in \cite{Lee-Schnelli-Stetler-Yau2016}.

In many cases, the Stieltjes transform $m_{fc}$ of $\rho_{fc}$ and the corresponding Green function $m_{N}$ of $W_{N}$ are extensively used in the analysis of deformed Wigner matrices. In particular, a ``local law'' was first established in \cite{Lee-Schnelli2013}, asserting that the normalized trace of the resolvent of $W_{N}$ is almost of order $N^{-1}$ away from that of the limiting distribution $\rho_{fc}$ and the non-diagonal entries of the resolvent of $W_{N}$ cannot be much larger than $N^{-1/2}$ under the macroscopic scale of $\im z=\eta\sim1$ where $z$ is the spectral parameter for the resolvent. In the same paper, it was also observed that the fluctuation of the Green function of $W_{N}$ can be separated by two parts, one coming from the fluctuation of the Wigner matrix $A_{N}$ and the other from that of the diagonal matrix $V_{N}$. For the diagonal entries of the resolvent, it was proved in \cite{Lee-Schnelli-Stetler-Yau2016} that they also are about $N^{-1/2}$ away from its center, coming from the free additive convolution of the empirical spectral measure of $V_{N}$ and the semicircle distribution. Finally, \cite{Knowles-Yin2016} established an analogue of the local law for the deformed Wigner matrix with a non-diagonal matrix $V_{N}$, by proving a sufficient condition concerning the corresponding local law of deformed Gaussian ensembles whose deformation is the diagonalization of $V_{N}$. The rigidity of eigenvalues and the edge universality is proved under the same condition, which, together with the local law, are the central estimates of this paper. The local law for deformed Wigner matrices was a crucial input in the universality results in \cite{Lee-Schnelli2013,Lee-Schnelli2016} and it was also used in \cite{OrourkeVu2014} to establish a `four-moment theorem' by Lindeberg's replacement strategy. 

As addressed above, we are interested in the fluctuation of $\bilin{\varphi}{\rho_{N}}$ in~\eqref{eq:phi_rho} under proper normalization. Heuristically, the fluctuation of $\bilin{\varphi}{\rho_{N}}$ can be separated into two components, one coming from the fluctuation of Wigner matrix $\frac{1}{\sqrt{N}}A_{N}$, and the other coming from that of the diagonal matrix $V_{N}$. To be specific, as shown in \cite{Lee-Schnelli2013}, the fluctuation of Stieltjes transform of $\rho_{N}$ can be analyzed with respect to two different centers: Stieltjes transform of the limiting distribution $\rho_{fc}$ and that of the free convolution of the semicircle law with the empirical spectral distribution of $V$, i.e. $\wh{\rho}^{(N)}_{fc}\deq \rho_{sc}\boxplus\wh{\nu}_{N}$. Under the macroscopic scale of $\eta=\Im z\sim 1$, where $z$ is the argument of Stieltjes transform, it is shown that the fluctuation with respect to the former center has order $N^{-1}$, and that with respect to the latter center has order $\vartheta_{N} N^{-1/2}$. For analytic test function $\varphi$, we prove the convergence of LES utilizing the Stieltjes transform and analyze the fluctuation of $\bilin{\varphi}{\rho_{N}}$ in the following two settings:
\begin{itemize}
	\item With center $\bilin{\varphi}{\wh{\rho}_{fc}^{(N)}}$, normalized by $N$,
	\item With center $\bilin{\varphi}{\rho_{fc}}$, normalized by $\sqrt{N}$.
\end{itemize}
We also prove that the central limit theorem, i.e. Gaussian convergence of the centered quantity $\bilin{\varphi}{\rho_{N}}-\expct{\bilin{\varphi}{\rho_{N}}}$, can be extended for $C^{2}$ test function, for random $V$ and deterministic $V$.

By adapting the proof in \cite{Bai-Yao2005}, we are able to prove the corresponding Gaussian convergence for $\bilin{\varphi}{\rho_{N}}-\bilin{\varphi}{\wh{\rho}_{fc}^{(N)}}$ for analytic test function $\varphi$ when the entries of $V_{N}$ are deterministic. Along the proof, the main difficulty is the absence of symmetry in $\rho_{fc}$ or $\wh{\rho}_{fc}^{(N)}$, whereas $\rho_{sc}$ has many exploitable features that make explicit calculations possible. For example, while we do not have a simple polynomial equation for the Stieltjes transform of $\rho_{fc}$ in general, the Stieltjes transform $m_{sc}$ of $\rho_{sc}$ satisfies the quadratic equation $m_{sc}(z)^{2}+zm_{sc}(z)+1=0$, solely from which many properties arise(e.g. $\absv{m_{sc}}\leq 1$ and $\absv{m_{sc}(z)+z}\geq 1$). Moreover, even if several applicable properties of $\rho_{fc}$ were obtained, while dealing with $\wh{\rho}_{fc}^{(N)}$, one still needs to extend those properties to $\wh{\rho}_{fc}^{(N)}$. The difficulties are handled using the detailed analysis of the Stieltjes transforms of $\wh{\rho}_{fc}^{(N)}$ and $\rho_{fc}$, given in \cite{Lee-Schnelli-Stetler-Yau2016}. For example, a \emph{stability bound}, which corresponds to the inequality $\absv{m_{sc}(z)}\leq 1$ for the Stieltjes transform of $\rho_{sc}$ and is proved therein, is used widely throughout the paper. To overcome the second difficulty, following \cite{Lee-Schnelli-Stetler-Yau2016}, we introduce an event where the behavior of $\wh{\nu}_{N}$ resembles that of $\nu$ and prove that $\wh{\rho}_{fc}^{(N)}$ also behaves similarly as $\rho_{fc}$ on this event.

The analysis on the behavior of $\bilin{\varphi}{\rho_{N}}-\bilin{\varphi}{\wh{\rho}_{fc}^{(N)}}$ in the proof of the first part then implies that its contribution to the fluctuation of $\bilin{\varphi}{\rho_{N}}-\bilin{\varphi}{\rho_{fc}}$ is negligible. In particular, such inferiority in magnitude holds also for coupling parameters following asymptotic of the form $\vartheta_{N}\sim N^{-1/2}\sqrt{\log N}$, since the proof of the first part enables us to be free of so-called \emph{high-probability bounds}. Therefore, in the second part, the problem reduces to analyzing the fluctuation of $\wh{\rho}_{fc}^{(N)}$ with respect to the center $\rho_{fc}$. As easily seen, we may conceive it as rising from the fluctuation of $\wh{\nu}_{N}$, which results in the classical central limit type behavior. The proof heavily depends on the analysis of the Stieltjes transforms of $\wh{\rho}_{fc}^{(N)}$ and $\rho_{fc}$ using self-comparison.

The paper consists of 5 sections and 5 appendices, including the introduction. Section 2 is dedicated to preliminary materials such as definitions, our model and assumptions on it, and the precise statements of our results. Section 3, where we provide the strategy of our proof that uses mainly probabilistic and complex analytic methods, contains the statements of Propositions~\ref{prop:Gproc} and \ref{prop:Gprocsqrt} that are the central parts of the proof. In the same section we also collect some lemmas to be used in the rest of the paper, including the \emph{local deformed semicircle laws}, whose proofs are given in the \ref{sec:lemmas3.2}. 

Section 4 and 5 are devoted respectively to the proofs of Proposition \ref{prop:Gproc} and Propositions \ref{prop:Gprocsqrttheta} and \ref{prop:Gprocsqrt}, whose ingredients are stated therein and proved in the following sections. In Appendices A and B, we obtain the convergence of the mean and variance of $m_{N}-\wh{m}_{fc}$, respectively. \ref{sec:tight} gives the proof of the tightness of the processes given in the statements of Propositions \ref{prop:Gproc}--\ref{prop:Gprocsqrt}, which also is a part of the proof of the propositions. \ref{sec:lemmarl0igno} is dedicated to the proof of Lemma \ref{lem:rl0igno}, the final ingredient of the proof of the main theorems. Also \ref{sec:lemmarl0igno} contains the proof of Lemma \ref{lem:varbound}, that gives us the bound on $\Var{m_{N}(z)}$ needed to extend the central limit theorem from analytic test functions to $C^{2}$ test functions. Finally, \ref{sec:lemmas3.2} provides the proof of the lemmas given in Section \ref{sec:prelim}.

\begin{notation}
	Throughout the paper, we use $C$ or $c$ to denote a constant that is independent of $N$. Even if the constant is different from one place to another, we may use the same notation $C$ or $c$ as long as it does not depend on $N$ for the convenience of presentation.
\end{notation}

\begin{notation}
	For positive numbers $a\equiv a_{N}$ and $b\equiv b_{N}$ depending on $N$, we write $a_N \ll b_N$ or $a_N=o(b_N)$ to indicate that $\frac{a_N}{b_N}\to 0$ as $N\to\infty$. We write $a_N=O(b_N)$ when there exists a constant $C>1$ independent of $N$ such that $a_N \leq Cb_{N}$ and $a_N\sim b_N$ when $C^{-1}a_N\leq b_N\leq C a_N$.
\end{notation}

\section{Definitions, Assumptions and Main Results}

\subsection{Definitions}

\begin{definition}
	For a probability distribution $\rho$ on $\R$, the \emph{Stieltjes transform} of $\rho$ is defined by 
	\beq
	m_{\rho}(z)\deq\int_{\R}\frac{1}{x-z}\dd\rho(x),\quad z\in\C^{+}.
	\eeq
\end{definition}

\begin{assumption}\label{assump:Wigner}
	Let $\{A_{ij}:i\le j\in\N\}$ be a collection of independent real random variables satisfying the following:
	\begin{enumerate}[(i)]
		\item $\expct{A_{ij}}=0$.
		
		\item For $i<j$, $\expct{A_{ij}^{2}}=1$, $\expct{A_{ij}^{3}}=W_{3}$ and $\expct{A_{ij}^{4}}=W_{4}$ for some constant $W_{3}\in\R$ and $W_{4}>0$.
		
		\item For all $i\in\N$, $\expct{A_{ii}^{2}}=w_{2}$, for some constant $w_{2}\geq0$. 
		
		\item For any $k\geq3$ there is a constant $c_{k}$ such that,
		\beq
		\sup_{1\leq i,j\leq N, N\in\N}\expct{\absv{A_{ij}}^{2}}\leq c_{k}.
		\eeq
	\end{enumerate}
	Define $A_{ji}\deq A_{ij}$ for $i<j$ and let $A=A_{N}=(A_{ij})_{i,j=1}^{N}$ be the random matrix with entries $A_{ij}$. 
\end{assumption}

\begin{definition}
	Let $V=V_{N}=\diag(v_{1}^{(N)},v_{2}^{(N)},\cdots,v_{N}^{(N)})$ be an $N\times N$ real diagonal, random or deterministic matrix, with empirical spectral distribution $\wh{\nu}$, i.e.
	\beq
	\wh{\nu}\equiv\wh{\nu}_{N}=\frac{1}{N}\sum_{i}\delta_{v_{i}^{(N)}}.
	\eeq
	Also, we define $\nu$ to be a deterministic, centered, compactly supported probability measure on $\R$.
\end{definition}

In the following, every notation with hat stands for a quantity depending on $\wh{\nu}$(thus on $N$), rather than $\nu$. And throughout the paper, we assume that the empirical spectral distribution $\wh{\nu}_{N}$ of $V_{N}$ converges weakly to $\nu$ (in probability if $V_{N}$ is random). Moreover, we assume the following assumptions on $\nu$ and $V_{N}$:
\begin{assumption}\label{assump:esdvconv}
	We assume that the entries $(v_{1}^{(N)},\cdots,v_{N}^{(N)})$ of $V_{N}$ satisfy the following:
	\begin{enumerate}[(i)]
		\item If $V_{N}$ is a \textbf{random} matrix, let $\{v_{i}^{(N)}:1\leq i\leq N\}$ be a collection of i.i.d. random variables with law $\nu$, independent of $A$.
		
		\item If $V_{N}$ is a \textbf{deterministic} matrix, we assume
		\beq\label{eq:alpha0}
		\max_{z\in\caD}\biggabsv{ \frac{1}{x-z}\dd\wh{\nu}_{N}(x)-\int\frac{1}{x-z}\dd\nu(x)}=O(N^{-\alpha_{0}}),
		\eeq
		for any fixed compact set $\caD\subset\C^{+}$ with $\caD\cap\supp\nu=\emptyset$, for some $\alpha_{0}>0$.
	\end{enumerate}
\end{assumption}

\begin{assumption}\label{assump:regesdv}
	Let $I_{\nu}$ denote the smallest interval such that $\supp \nu\subset I_{\nu}$. We assume
	\beq
	\inf_{x\in I_{\nu}}\int\frac{1}{(v-x)^{2}}\dd\nu(v)\geq 1+\varpi
	\eeq
	for some $\varpi>0$.
	
	Letting $I_{\wh{\nu}_{N}}$ denote be the smallest interval such that $\supp\wh{\nu}_{N}\subset I_{\wh{\nu}_{N}}$, we also assume the similar condition to $\wh{\nu}_{N}$:
	\begin{enumerate}[(i)]
		\item For \textbf{random} $\{v_{i}\}$, we assume
		\beq
		\biggprob{\inf_{x\in I_{\wh{\nu}_{N}}}\int\frac{1}{(v-x)^{2}}\dd\wh{\nu}_{N}(x)\geq 1+\varpi}\geq 1-N^{-\frt}
		\eeq
		for some $\frt>0$.
		
		\item For \textbf{deterministic} $\{v_{i}\}$, we assume
		\beq
		\inf_{x\in I_{\wh{\nu}_{N}}}\int\frac{1}{(v-x)^{2}}\dd\wh{\nu}_{N}(x)\geq 1+\varpi
		\eeq
		for sufficiently large $N$.
	\end{enumerate}
\end{assumption}

\begin{assumption}\label{assump:coupling}
	Let $\{\vartheta\equiv\vartheta_{N}\}\subset\Theta_{\varpi}\deq[0,1+\varpi]$ be a sequence of parameters such that $\vartheta_{N}\gg N^{-1/2}$ and $\lim_{N\to\infty}\vartheta_{N}=\vartheta_{\infty}\in\Theta_{\varpi}$.
\end{assumption}

\begin{definition}
	Let $A$ and $V_{N}$ satisfy Assumption \ref{assump:Wigner},\ref{assump:esdvconv}, and \ref{assump:regesdv}. We define 
	\beq
	W_{N}=\frac{1}{\sqrt{N}}A_{N}+\vartheta_{N}V_{N}
	\eeq
	to be the deformed Wigner matrix. We denote the resolvent of $W_{N}$ by 
	\beq
		R^{\vartheta_{N}}_{N}(z)\deq (W_{N}-zI_{N})^{-1},
	\eeq
	and its normalized Green function by
	\beq
	m^{\vartheta_{N}}(z)\equiv m_{N}^{\vartheta_{N}}(z)\deq \frac{1}{N}\Tr R_{N}^{\vartheta_{N}}(z)\equiv\tr R_{N}^{\vartheta_{N}}(z).
	\eeq
\end{definition}

\begin{remark}
	For $\vartheta_{N}\sim N^{-1/2}$, $W_{N}$ itself is another Wigner matrix with the variance of its diagonal entries increased. On the other hand for $\vartheta_{N}\ll N^{-1/2}$, the fluctuation is dominated by that of $A_{N}$ henceforth the fluctuation of LES is identical to that of the Wigner matrices, which is thoroughly studied in \cite{Bai-Yao2005}.  Therefore, we here focus on the case $\vartheta_{N}\gg N^{-1/2}$.
\end{remark}

\begin{notation}
	For convenience, we denote the distribution of $\vartheta_{N} v_{1}$ and the empirical spectral distribution of $\vartheta_{N} V_{N}$ by $\nu^{\vartheta_{N}}$ and $\wh{\nu}_{N}^{\vartheta_{N}}$, respectively. Similarly, let $m_{\nu}^{\vartheta_{N}}$ and $m_{\wh{\nu}_{N}}^{\vartheta_{N}}$ be the Stieltjes transforms of $\nu^{\vartheta_{N}}$ and $\wh{\nu}_{N}^{\vartheta_{N}}$, respectively.
\end{notation}

\begin{lemma}[\cite{Pastur1972}]	
	For $\vartheta\in\Theta_{\varpi}$, the following self-consistent equation
	\beq\label{eq:funceq}
	m_{fc}^{\vartheta}(z)=\int_{\R}\frac{1}{x-z-m_{fc}^{\vartheta}(z)}\dd\nu^{\vartheta}(x) =\int_{\R}\frac{1}{\vartheta x-z-m_{fc}^{\vartheta}(z)}\dd\nu(x)
	\eeq
	has unique solution among the class of analytic functions with $\im m_{fc}^{\vartheta}(z)>0$ for $\im z>0$. Also the solution $m_{fc}^{\vartheta}(z)$ is the Stieltjes transform a probability measure, denoted by $\rho_{fc}^{\vartheta}$.
	
	The same holds also for the equation
	\beq\label{eq:funceqhat}
	\wh{m}_{fc}^{\vartheta}(z)=\int \frac{1}{x-z-\wh{m}_{fc}^{\vartheta}(z)}\dd\wh{\nu}_{N}^{\vartheta}(x),\quad \Im \wh{m}_{fc}^{\vartheta}(z)\geq0,\quad\text{for }z\in\C^{+}.
	\eeq
	In this case, we denote the corresponding measure by $\wh{\rho}_{fc}^{\vartheta}$.
\end{lemma}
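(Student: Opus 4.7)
The plan is a classical two-step argument for nonlinear self-consistent equations: first establish existence and uniqueness via a contraction mapping for large $\im z$, then extend globally using the Nevanlinna representation of Herglotz functions. The argument only uses that $\nu^{\vartheta}$ (resp.\ $\wh{\nu}_N^{\vartheta}$) is a compactly supported probability measure on $\R$, so I describe it for $\nu^{\vartheta}$; the proof for $\wh{\nu}_N^{\vartheta}$ is identical.

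For each fixed $z \in \C^+$, consider $T_z(w) \deq \int_{\R} (x - z - w)^{-1}\,\dd\nu^{\vartheta}(x)$ for $w \in \C^+$. A direct computation of the imaginary part shows $T_z(\C^+) \subset \C^+$, and writing $T_z(w_1) - T_z(w_2)$ with a common denominator yields $|T_z(w_1) - T_z(w_2)| \le |w_1 - w_2|/(\im z)^2$ by the bound $|x - z - w_i| \ge \im(z + w_i) > \im z$. Hence for $\im z > 1$, $T_z$ is a strict contraction on $\C^+$ with a unique fixed point $m_{fc}^{\vartheta}(z)$, and the implicit function theorem applied to $F(z,w) \deq w - T_z(w)$ shows $z \mapsto m_{fc}^{\vartheta}(z)$ is analytic on $\{\im z > 1\}$. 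Expanding the equation along $z = \ii y$ as $y \to \infty$ gives $\lim_{y \to \infty} \ii y\, m_{fc}^{\vartheta}(\ii y) = -\int \dd\nu^{\vartheta} = -1$.

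These properties---analyticity, the Herglotz condition $\im m_{fc}^{\vartheta} > 0$, and the correct leading asymptotic---place $m_{fc}^{\vartheta}$ in the class to which Nevanlinna's representation theorem applies, producing a probability measure $\rho_{fc}^{\vartheta}$ on $\R$ with $m_{fc}^{\vartheta}(z) = \int (x - z)^{-1}\,\dd\rho_{fc}^{\vartheta}(x)$. This formula extends $m_{fc}^{\vartheta}$ analytically to all of $\C^+$ and preserves $\im m_{fc}^{\vartheta} > 0$; in particular $T_z(m_{fc}^{\vartheta}(z))$ remains well defined and analytic on $\C^+$, so the identity $m_{fc}^{\vartheta}(z) = T_z(m_{fc}^{\vartheta}(z))$, known on $\{\im z > 1\}$, propagates to all of $\C^+$ by analytic continuation. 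Any other analytic solution with positive imaginary part on $\C^+$ must coincide with $m_{fc}^{\vartheta}$ on $\{\im z > 1\}$ by the contraction argument, and hence on all of $\C^+$ by the identity theorem.

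The step requiring the most care is the extension of the equation to the whole of $\C^+$: one must ensure that the Nevanlinna-extended $m_{fc}^{\vartheta}$ remains a solution of the nonlinear equation, rather than branching onto a different sheet as $\im z \downarrow 0$. The crucial point is that positivity of $\rho_{fc}^{\vartheta}$ keeps $m_{fc}^{\vartheta}$ in $\C^+$ throughout $\C^+$, which in turn keeps $T_z$ evaluated at $m_{fc}^{\vartheta}$ well defined and jointly analytic, so that the identity theorem in $z$ closes the argument.
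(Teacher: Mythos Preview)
The paper does not prove this lemma; it is quoted from \cite{Pastur1972} without argument, so there is no paper proof to compare against. Your outline is the standard route and is essentially correct in spirit, but there is one genuine gap in the logic as written.

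You construct $m_{fc}^{\vartheta}$ only on $\{\im z>1\}$ via the contraction, and then invoke Nevanlinna's representation theorem to produce the measure $\rho_{fc}^{\vartheta}$ and simultaneously extend $m_{fc}^{\vartheta}$ to all of $\C^{+}$. This is circular: the Nevanlinna (Herglotz) representation requires as input an analytic function with positive imaginary part on the \emph{entire} upper half-plane, not on a proper sub-half-plane. A Herglotz function on $\{\im z>1\}$ has a representation with a measure supported on $\{\im z=1\}$, which is not what you want. So the step ``Nevanlinna $\Rightarrow$ extension to $\C^{+}$'' is backwards; you must first extend, then represent.

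Two clean ways to close the gap. \textbf{(a)} Prove existence and uniqueness directly for every $z\in\C^{+}$. Since $T_{z}$ maps $\overline{\C^{+}}$ into $\C^{+}$ and $|T_{z}(w)|\le (\im z)^{-1}$, Brouwer (or Earle--Hamilton/Denjoy--Wolff, as $T_{z}$ is a non-automorphic holomorphic self-map of $\C^{+}$) gives a fixed point. Uniqueness follows from
\[
\Bigl|\int \frac{\dd\nu^{\vartheta}(x)}{(x-z-w_{1})(x-z-w_{2})}\Bigr|
\le \Bigl(\frac{\im w_{1}}{\im z+\im w_{1}}\Bigr)^{1/2}\Bigl(\frac{\im w_{2}}{\im z+\im w_{2}}\Bigr)^{1/2}<1,
\]
which forces $w_{1}=w_{2}$. \textbf{(b)} Continue analytically from $\{\im z>1\}$ using the implicit function theorem; the same Cauchy--Schwarz bound shows $\partial_{w}F\neq 0$ along the continuation, and the equation itself yields the a priori bound $|m_{fc}^{\vartheta}|\le 1$ together with $\im m_{fc}^{\vartheta}>0$ on any compact subset of $\C^{+}$, so the continuation cannot terminate. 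Only after this do you apply Nevanlinna to obtain $\rho_{fc}^{\vartheta}$. Your final paragraph correctly identifies that the extension is the delicate step, but the resolution you give presupposes the representation you are trying to establish.
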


In particular, $\rho_{fc}^{\vartheta}=\rho_{sc}\boxplus \nu^{\vartheta}$ and $\wh{\rho}_{fc}^{\vartheta}=\rho_{sc}\boxplus\wh{\nu}_{N}^{\vartheta}$, the free additive convolution of the semi-circular distribution $\rho_{sc}$ with $\nu^{\vartheta}$ and $\wh{\nu}^{\vartheta}$, respectively.

\begin{lemma}[Proposition 3 and Corollary 4 of \cite{Biane1997}]
	The free convolution measure $\rho_{fc}^{\vartheta}=\rho_{sc}\boxplus\nu^{\vartheta}$ satisfies $\limsup_{\eta\to0^{+}}\Im m_{fc}^{\vartheta}(E+\ii\eta)<\infty$ for $E\in\R$, hence is absolutely continuous. Its density, which also is denoted by $\rho_{fc}^{\vartheta}$, is analytic on $\{E\in\R:\rho^{\vartheta}_{fc}(E)>0\}$.
\end{lemma}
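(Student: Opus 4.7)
The plan is to split the proof into the two parts corresponding to the two claims: uniform boundedness of $\Im m_{fc}^{\vartheta}$ near the real axis (which yields absolute continuity via Stieltjes inversion) and local analyticity of the density on $\{\rho_{fc}^{\vartheta}>0\}$.

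For the first part I would take imaginary parts in the self-consistent equation~\eqref{eq:funceq}. Writing $m=m_{fc}^{\vartheta}(z)$ with $z=E+\ii\eta$, one obtains
\beq
\Im m \;=\; (\eta+\Im m)\int_{\R}\frac{1}{\absv{x-z-m}^{2}}\,\dd\nu^{\vartheta}(x).
\eeq
The crude bound $\absv{x-z-m}^{2}\geq(\eta+\Im m)^{2}$ gives $\Im m\leq 1/(\eta+\Im m)$, hence $(\Im m)^{2}\leq 1-\eta\,\Im m\leq 1$. This yields $\Im m_{fc}^{\vartheta}(E+\ii\eta)\leq 1$ uniformly in $\eta>0$ and $E\in\R$, so in particular $\limsup_{\eta\to 0^{+}}\Im m_{fc}^{\vartheta}(E+\ii\eta)<\infty$. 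Standard Stieltjes inversion then gives that $\rho_{fc}^{\vartheta}$ is absolutely continuous, with density $\rho_{fc}^{\vartheta}(E)=\pi^{-1}\lim_{\eta\to 0^{+}}\Im m_{fc}^{\vartheta}(E+\ii\eta)$ bounded by $1/\pi$.

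For the analyticity claim I fix $E_{0}\in\R$ with $\rho_{fc}^{\vartheta}(E_{0})>0$ and set $m_{0}\deq m_{fc}^{\vartheta}(E_{0}+\ii 0)$, so $b_{0}\deq\Im m_{0}>0$. Consider
\beq
F(m,z)\deq m-\int_{\R}\frac{1}{x-z-m}\,\dd\nu^{\vartheta}(x),
\eeq
which is jointly analytic near $(m_{0},E_{0})\in\C^{2}$ and vanishes there by continuity from the upper half-plane. To invoke the analytic implicit function theorem I need $\partial_{m}F(m_{0},E_{0})=1-\int(x-E_{0}-m_{0})^{-2}\,\dd\nu^{\vartheta}(x)\neq 0$. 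Taking $\eta\to 0$ in the identity from the first step with $b_{0}>0$ gives $\int\absv{x-E_{0}-m_{0}}^{-2}\,\dd\nu^{\vartheta}(x)=1$, and Cauchy--Schwarz then yields
\beq
\biggabsv{\int\frac{1}{(x-E_{0}-m_{0})^{2}}\,\dd\nu^{\vartheta}(x)}\;\leq\;\int\frac{1}{\absv{x-E_{0}-m_{0}}^{2}}\,\dd\nu^{\vartheta}(x)\;=\;1,
\eeq
with equality only if $(x-E_{0}-m_{0})^{-1}$ has constant argument $\nu^{\vartheta}$-a.e.; since $\Im(E_{0}+m_{0})=b_{0}>0$, this would force $\nu^{\vartheta}$ to be a point mass, a degenerate case ruled out by Assumption~\ref{assump:regesdv}. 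Hence $\partial_{m}F(m_{0},E_{0})\neq 0$, and the implicit function theorem produces an analytic continuation of $m_{fc}^{\vartheta}$ through $E_{0}$, so that $\rho_{fc}^{\vartheta}=\pi^{-1}\Im m_{fc}^{\vartheta}$ is real-analytic there.

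The main obstacle is ensuring that the boundary value $m_{0}$ is well-defined and that the local analytic solution produced by the implicit function theorem genuinely coincides with $m_{fc}^{\vartheta}$ near $E_{0}$. Both follow from the uniform bound $\Im m\leq 1$ together with a normal-families argument: $\{m_{fc}^{\vartheta}(\cdot+\ii\eta)\}_{\eta>0}$ is a locally bounded family, and any subsequential limit satisfies the limiting self-consistent equation with nonnegative imaginary part, so the uniqueness statement in the quoted lemma, extended by continuity to the boundary, identifies all such limits with $m_{0}$ and forces the two analytic functions to agree on the common domain of definition.
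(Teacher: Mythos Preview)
The paper does not give its own proof of this lemma; it is quoted directly as Proposition~3 and Corollary~4 of \cite{Biane1997}. Your argument is a self-contained reconstruction of essentially the same mechanism Biane uses: the a priori bound $\Im m_{fc}^{\vartheta}\leq 1$ from taking imaginary parts in \eqref{eq:funceq}, followed by the analytic implicit function theorem applied to $F(m,z)=m-\int(x-z-m)^{-1}\dd\nu^{\vartheta}(x)$ at boundary points with $\Im m_{0}>0$.

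One genuine slip: your claim that Assumption~\ref{assump:regesdv} excludes point masses is false. If $\nu=\delta_{a}$ then $I_{\nu}=\{a\}$ and $\int(v-a)^{-2}\dd\nu(v)=+\infty\geq 1+\varpi$, so the assumption is satisfied. Fortunately you do not need to exclude this case at all. Your Cauchy--Schwarz step gives $\bigl|\int(x-E_{0}-m_{0})^{-2}\dd\nu^{\vartheta}\bigr|\leq 1$, and strict inequality already yields $\partial_{m}F\neq 0$; in the equality case $\nu^{\vartheta}=\delta_{a}$ the integral equals $(a-E_{0}-m_{0})^{-2}$, which has modulus $1$ but cannot equal $1$ since $\Im(a-E_{0}-m_{0})=-b_{0}\neq 0$. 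Cleaner still, for \emph{any} $\nu^{\vartheta}$ and $w=E_{0}+m_{0}$ with $\Im w>0$ one has
\[
\re\int_{\R}\frac{\dd\nu^{\vartheta}(x)}{(x-w)^{2}}
=\int_{\R}\frac{\cos\bigl(2\arg(x-w)\bigr)}{\absv{x-w}^{2}}\,\dd\nu^{\vartheta}(x)
<\int_{\R}\frac{\dd\nu^{\vartheta}(x)}{\absv{x-w}^{2}}=1,
\]
since $\arg(x-w)\in(-\pi,0)$ for real $x$; this gives $\partial_{m}F(m_{0},E_{0})\neq 0$ with no case distinction and no appeal to Assumption~\ref{assump:regesdv}. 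With this fix your proof is correct.
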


\begin{theorem}[\cite{Pastur1972}, Deformed semi-circle law]
	The empirical spectral distribution of $W_{N}$ converges in probability to $\rho_{fc}^{\vartheta_{\infty}}$.
\end{theorem}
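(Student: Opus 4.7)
The plan is to show that the Stieltjes transform $m_{N}^{\vartheta_{N}}(z)$ of the empirical spectral distribution $\rho_{N}$ converges in probability, for every fixed $z\in\C^{+}$, to $m_{fc}^{\vartheta_{\infty}}(z)$, and then to conclude weak convergence in probability of the measures via the continuity theorem for Stieltjes transforms.

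First, I would derive an approximate self-consistent equation. Applying the Schur complement formula to $R\deq(W_{N}-zI_{N})^{-1}$ yields
\beq
R_{ii}(z)=\frac{1}{\vartheta_{N}v_{i}^{(N)}-z+\frac{A_{ii}}{\sqrt{N}}-\frac{1}{N}\sum_{k,l\neq i}A_{ik}A_{il}R^{(i)}_{kl}(z)},
\eeq
where $R^{(i)}$ is the resolvent of the $(N-1)\times(N-1)$ minor obtained by removing the $i$-th row and column. Under the moment assumptions in Assumption \ref{assump:Wigner}, a Hanson--Wright--type concentration inequality, combined with the deterministic bound $\norm{R^{(i)}(z)}\leq(\im z)^{-1}$, shows that the random quadratic form in the denominator concentrates around its conditional mean $\frac{1}{N}\sum_{k\neq i}R^{(i)}_{kk}(z)$; Cauchy interlacing then makes this $O(N^{-1}(\im z)^{-1})$-close to $m_{N}^{\vartheta_{N}}(z)$. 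Consequently, uniformly in $i$,
\beq
R_{ii}(z)=\frac{1}{\vartheta_{N}v_{i}^{(N)}-z-m_{N}^{\vartheta_{N}}(z)}+o_{\P}(1),
\eeq
and averaging over $i$ yields
\beq
m_{N}^{\vartheta_{N}}(z)=\int\frac{1}{\vartheta_{N}x-z-m_{N}^{\vartheta_{N}}(z)}\dd\wh{\nu}_{N}(x)+o_{\P}(1).
\eeq

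Second, I would pass to the limit. By Assumption \ref{assump:esdvconv} (in the random case this is Glivenko--Cantelli applied to the i.i.d.\ sample $v_{i}^{(N)}$; in the deterministic case it is built in at the level of Stieltjes transforms), $\wh{\nu}_{N}\to\nu$ weakly in probability, while Assumption \ref{assump:coupling} gives $\vartheta_{N}\to\vartheta_{\infty}$. Any subsequential in-probability limit $m_{*}(z)$ of the random transform $m_{N}^{\vartheta_{N}}(z)$ satisfies $\im m_{*}(z)\geq 0$ and the functional equation \eqref{eq:funceq} at parameter $\vartheta_{\infty}$; uniqueness of the Stieltjes-transform solution of \eqref{eq:funceq}, recorded in the lemma immediately above, forces $m_{*}=m_{fc}^{\vartheta_{\infty}}$. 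Hence $m_{N}^{\vartheta_{N}}(z)\to m_{fc}^{\vartheta_{\infty}}(z)$ in probability for each $z\in\C^{+}$. Since $\absv{m_{N}^{\vartheta_{N}}(z)}\leq(\im z)^{-1}$ is uniformly bounded on compact subsets of $\C^{+}$, a Vitali/normal-family argument upgrades pointwise convergence in probability to locally uniform convergence in probability, and the continuity theorem for Stieltjes transforms delivers the asserted weak convergence $\rho_{N}\to\rho_{fc}^{\vartheta_{\infty}}$ in probability.

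The main obstacle is the concentration estimate for the off-diagonal quadratic form $\frac{1}{N}\sum_{k,l\neq i}A_{ik}A_{il}R^{(i)}_{kl}(z)$; this is the only place where the higher-moment hypotheses on $A_{ij}$ genuinely enter, through a Hanson--Wright or martingale-difference estimate. The remaining ingredients---Schur complements, eigenvalue interlacing, and the stability of \eqref{eq:funceq}---are standard inputs already present in the setup of the paper, so once the concentration is in hand the proof reduces to the routine self-consistency argument sketched above.
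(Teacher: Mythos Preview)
The paper does not supply its own proof of this theorem: it is stated as a cited result from \cite{Pastur1972}, with no argument given. Your proposal is a correct and standard sketch of the resolvent/self-consistency proof (Schur complement, concentration of the quadratic form as in Lemma~\ref{lem:lde}, interlacing, and stability of \eqref{eq:funceq}), so there is nothing to compare against and nothing to correct.
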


In \cite{Pastur2006}, it turned out that $\rho_{fc}^{\vartheta}$ being supported on a single interval is crucial for the Gaussian convergence of the LES. Although stated below, we here state the fact here as it is need for the statements of our results.
\begin{lemma}
	Suppose that $\nu$ satisfies Assumption~\ref{assump:regesdv}. Then for any $\vartheta\in\Theta_{\varpi}$, there exists $L_{-}^{\vartheta},L_{+}^{\vartheta}\in\R$ with $L_{-}^{\vartheta}<0<L_{+}^{\vartheta}$ such that $\supp \rho_{fc}^{\vartheta}=[L_{-}^{\vartheta},L_{+}^{\vartheta}]$.
\end{lemma}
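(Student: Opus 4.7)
The plan is to analyze the support via the subordination characterization of the free additive convolution $\rho_{fc}^{\vartheta}=\rho_{sc}\boxplus\nu^{\vartheta}$, with the quantitative input supplied by Assumption~\ref{assump:regesdv}. Starting from the self-consistent equation~\eqref{eq:funceq}, I would introduce the subordination function $\omega(z)\deq z+m_{fc}^{\vartheta}(z)$, which maps $\C^{+}$ analytically into itself and satisfies $z=\psi(\omega(z))$ with
\[
\psi(\omega)\deq\omega-m_{\nu^{\vartheta}}(\omega),\qquad \psi'(\omega)=1-\int\frac{1}{(u-\omega)^{2}}\,\dd\nu^{\vartheta}(u).
\]
By Biane's support characterization (Corollary 4 in \cite{Biane1997}), a point $E\in\R$ lies outside $\supp\rho_{fc}^{\vartheta}$ if and only if it can be written as $E=\psi(\omega_{0})$ for some real $\omega_{0}\notin\supp\nu^{\vartheta}$ with $\psi'(\omega_{0})>0$. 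Compactness of $\supp\rho_{fc}^{\vartheta}$ is immediate from $\supp\nu^{\vartheta}=\vartheta\supp\nu$ together with $\supp\rho_{sc}=[-2,2]$.

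The main step is to exclude interior gap points lying in $I_{\nu^{\vartheta}}\setminus\supp\nu^{\vartheta}$. By the change of variable $u=\vartheta v$, for any $\omega=\vartheta x$ with $x\in I_{\nu}$,
\[
\int\frac{1}{(u-\omega)^{2}}\,\dd\nu^{\vartheta}(u)=\frac{1}{\vartheta^{2}}\int\frac{1}{(v-x)^{2}}\,\dd\nu(v)\geq\frac{1+\varpi}{\vartheta^{2}},
\]
and for $\vartheta$ in the range allowed by Assumption~\ref{assump:coupling} this is at least $1$, so $\psi'(\omega)\leq 0$ throughout $I_{\nu^{\vartheta}}$. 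Hence every gap point of $\rho_{fc}^{\vartheta}$ must arise from the two unbounded components $(-\infty,\min\supp\nu^{\vartheta})$ and $(\max\supp\nu^{\vartheta},+\infty)$ of $\R\setminus I_{\nu^{\vartheta}}$.

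On each of these unbounded components, $\psi''(\omega)=-2\int(u-\omega)^{-3}\,\dd\nu^{\vartheta}(u)$ has constant sign since $u-\omega$ does, so $\psi'$ is strictly monotone, going from $-\infty$ at the interior endpoint (where $\int(u-\omega)^{-2}\,\dd\nu^{\vartheta}\to\infty$) to $1$ at $\pm\infty$. Therefore $\psi'$ has a unique zero $\omega_{\pm}$ on each side, the set $\{\psi'>0\}\cap(\R\setminus\supp\nu^{\vartheta})$ is exactly $(-\infty,\omega_{-})\cup(\omega_{+},+\infty)$, and on those intervals $\psi$ is strictly monotone and maps them bijectively onto $(-\infty,L_{-}^{\vartheta})$ and $(L_{+}^{\vartheta},+\infty)$, with $L_{\pm}^{\vartheta}\deq\psi(\omega_{\pm})$. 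Biane's characterization then yields $\R\setminus\supp\rho_{fc}^{\vartheta}=(-\infty,L_{-}^{\vartheta})\cup(L_{+}^{\vartheta},+\infty)$, i.e.\ $\supp\rho_{fc}^{\vartheta}=[L_{-}^{\vartheta},L_{+}^{\vartheta}]$.

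Finally, for the strict signs of $L_{\pm}^{\vartheta}$, I would use that $\nu$ is centered by definition and $\rho_{sc}$ has mean zero, so by additivity of means under free convolution $\int x\,\dd\rho_{fc}^{\vartheta}(x)=0$. Since $\rho_{fc}^{\vartheta}$ is a non-degenerate probability measure supported on the interval $[L_{-}^{\vartheta},L_{+}^{\vartheta}]$ with mean zero, necessarily $L_{-}^{\vartheta}<0<L_{+}^{\vartheta}$. The main obstacle will be the quantitative step excluding interior gaps through Assumption~\ref{assump:regesdv}, together with careful handling of the endpoint behaviour of $\psi$ near $\supp\nu^{\vartheta}$; most of these estimates for the rescaled measure $\nu^{\vartheta}$ are already carried out in \cite{Lee-Schnelli-Stetler-Yau2016} and can be invoked directly.
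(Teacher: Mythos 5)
Your route (subordination $\omega(z)=z+m_{fc}^{\vartheta}(z)$, the inverse map $\psi(\omega)=\omega-m_{\nu^{\vartheta}}(\omega)$, Biane's description of the complement of the support, exclusion of interior gaps via Assumption~\ref{assump:regesdv}, and the centering argument for the signs of $L_{\pm}^{\vartheta}$) is the standard one and is essentially the argument behind Lemma 3.2 of \cite{Lee-Schnelli-Stetler-Yau2016}, which is all the paper itself invokes here; no independent proof is given in the text. However, there is a genuine quantitative gap at your key step. The rescaling identity $\int(u-\omega)^{-2}\,\dd\nu^{\vartheta}(u)=\vartheta^{-2}\int(v-x)^{-2}\,\dd\nu(v)\geq(1+\varpi)/\vartheta^{2}$ for $\omega=\vartheta x$, $x\in I_{\nu}$, is correct, but the assertion that this is ``at least $1$ for $\vartheta$ in the range allowed by Assumption~\ref{assump:coupling}'' is false: $\Theta_{\varpi}=[0,1+\varpi]$, and $(1+\varpi)/\vartheta^{2}\geq 1$ only when $\vartheta\leq\sqrt{1+\varpi}$, whereas for $\vartheta\in(\sqrt{1+\varpi},\,1+\varpi]$ the lower bound can be as small as $1/(1+\varpi)<1$. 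As written, your proof therefore covers only $\vartheta^{2}\leq 1+\varpi$, not all of $\Theta_{\varpi}$, and this part of the range cannot be closed from Assumption~\ref{assump:regesdv} alone: for $\nu=\tfrac12(\delta_{-a}+\delta_{a})$ with $a=(1+\varpi)^{-1/2}$ the assumption holds with equality, yet when $\vartheta a>1$ (possible for $\vartheta>\sqrt{1+\varpi}$) the measure $\rho_{sc}\boxplus\nu^{\vartheta}$ is supported on two disjoint intervals. So you must either restrict to $\vartheta^{2}\leq 1+\varpi$ (the regime in which the input imported from \cite{Lee-Schnelli-Stetler-Yau2016} genuinely applies) or explicitly flag that the stated parameter set is too large for your argument.

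A secondary inaccuracy: on the unbounded components of $\R\setminus I_{\nu^{\vartheta}}$ you claim $\psi'\to-\infty$ at the edges of $\supp\nu^{\vartheta}$. This is not automatic, since $\int(u-\omega)^{-2}\,\dd\nu^{\vartheta}(u)$ can stay bounded as $\omega$ approaches the edge (e.g.\ if the density of $\nu$ vanishes quickly there). What you actually need, and what Assumption~\ref{assump:regesdv} gives in the good $\vartheta$-range, is that the endpoints of $I_{\nu}$ lie in $I_{\nu}$, so by monotone convergence $\psi'(\omega)\leq 1-(1+\varpi)/\vartheta^{2}<0$ as $\omega$ approaches the support from outside; combined with the strict monotonicity of $\psi'$ on each unbounded component this still produces the unique zeros $\omega_{\pm}$, and the remainder of your argument (monotonicity of $\psi$ beyond $\omega_{\pm}$, Biane's homeomorphism to identify the support as $[\psi(\omega_{-}),\psi(\omega_{+})]$, and the mean-zero argument for $L_{-}^{\vartheta}<0<L_{+}^{\vartheta}$, with $L_{\pm}^{0}=\pm2$ trivially) goes through.
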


\begin{remark}
	For $\vartheta=0$, one immediately gets $\rho_{fc}^{\vartheta}=\rho_{sc}$, $m_{fc}^{\vartheta}=m_{sc}$, and $L^{\vartheta}_{\pm}=\pm2$.
\end{remark}

\subsection{Statement of the result}

\begin{theorem}\label{thm:main}
	Suppose that $W_{N}$ is the deformed Wigner matrix with \textbf{deterministic} $V_{N}$ satisfying Assumption \ref{assump:Wigner}, $\nu$ and $\wh{\nu}_{N}$ satisfy Assumptions \ref{assump:esdvconv} and \ref{assump:regesdv}, and $\vartheta_{N}$ satisfies Assumption~\ref{assump:coupling}. Then for each $\varphi\in C(\R)$ with compact support that is analytic on an open neighborhood of $[L_{-}^{\vartheta_{\infty}},L_{+}^{\vartheta_{\infty}}]$, the random variable 
	\beq\label{eq:TN}
	T_{N}^{\vartheta_{N}}(\varphi) \deq \sum_{i=1}^{N}\varphi(\lambda_{i}^{(N)})-N\int_{\R}\varphi(x)\dd\wh{\rho}_{fc}^{\vartheta_{N}}(x)
	\eeq
	converges in distribution to the Gaussian random variable $T(\varphi)$ with mean $M^{\vartheta_{\infty}}(\varphi)$ and variance $V^{\vartheta_{\infty}}(\varphi)$ given as follows:
	\beq
	M^{\vartheta_{\infty}}(\varphi) =-\frac{1}{2\pi\ii}\oint_{\Gamma}\varphi(z)b^{\vartheta_{\infty}}(z)\dd z,
	\eeq
	\beq
	V^{\vartheta_{\infty}}(\varphi) =\frac{1}{(2\pi\ii)^{2}}\oint_{\Gamma}\oint_{\Gamma}\varphi(z_{1})\varphi(z_{2})\Gamma^{\vartheta_{\infty}}(z_{1},z_{2})\dd z_{1}\dd z_{2},
	\eeq
	where
	\beq
	b^{\vartheta_{\infty}}(z)=\frac{(m_{fc}^{\vartheta_{\infty}})''(z)}{2(1+(m_{fc}^{\vartheta_{\infty}})'(z))^{2}}\bigg[(w_{2}-1)+(m_{fc}^{\vartheta_{\infty}})'(z)+(W_{4}-3)\frac{(m_{fc}^{\vartheta_{\infty}})'(z)}{1+(m_{fc}^{\vartheta_{\infty}})'(z)}\bigg],
	\eeq
	\begin{multline}
	\Gamma^{\vartheta_{\infty}}(z_{1},z_{2})\equiv\frac{\partial^{2}}{\partial z_{1}\partial z_{2}}\wt{\Gamma}^{\vartheta_{\infty}}(z_{1},z_{2})
	=\frac{\partial^{2}}{\partial z_{1}\partial z_{2}}\Big[(w_{2}-2)I+(W_{4}-3)I^{2}-2\log(1-I)\Big]\\
	=(w_{2}-2)\frac{\partial^{2}I}{\partial z_{1} \partial z_{2}} +(W_{4}-3)\bigg(I\frac{\partial^{2}I}{\partial z_{1}\partial z_{2}} +\frac{\partial I}{\partial z_{1}}\frac{\partial I}{\partial z_{2}}\bigg) 
	+\frac{2}{(1-I)^{2}}\bigg(\frac{\partial I}{\partial z_{2}}\frac{\partial I}{\partial z_{1}} +(1-I)\frac{\partial^{2} I}{\partial z_{1}\partial z_{2}}\bigg),
	\end{multline}
	\beq
	I(z_{1},z_{2})\equiv I^{\vartheta_{\infty}}(z_{1},z_{2}) =\int_{\R}\frac{1}{(\vartheta_{\infty} x-z_{1}-m_{fc}^{\vartheta_{\infty}}(z_{1}))(\vartheta_{\infty} x-z_{2}-m_{fc}^{\vartheta_{\infty}}(z_{2}))}\dd\nu(x),
	\eeq
	and $\Gamma$ is a rectangular contour with vertices $(a_{\pm}\pm\ii v_{0})$ so that $\pm(a_{\pm}-L_{\pm}^{\vartheta_{\infty}})>0$ and $\Gamma$ lies within the analytic domain of $\varphi$.
\end{theorem}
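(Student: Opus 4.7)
The plan is to reduce the statement to a functional central limit theorem for the random analytic process
\beq
X_N(z) \deq N\bigl(m_N^{\vartheta_N}(z) - \wh{m}_{fc}^{\vartheta_N}(z)\bigr)
\eeq
on the rectangular contour $\Gamma$. Since $\varphi$ is analytic on an open neighborhood of $[L_{-}^{\vartheta_\infty}, L_{+}^{\vartheta_\infty}]$ containing $\Gamma$, and since the eigenvalues of $W_N$ stay inside $\Gamma$ with overwhelming probability (by the rigidity estimates that accompany the local deformed semicircle law of Section~\ref{sec:prelim}), Cauchy's integral formula gives
\beq
T_N^{\vartheta_N}(\varphi) = -\frac{1}{2\pi\ii}\oint_\Gamma \varphi(z)\, X_N(z)\,\dd z
\eeq
up to a negligible event. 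Once $X_N$ is shown to converge in distribution, as a random continuous function on $\Gamma$, to a Gaussian process $X$ with $\E[X(z)] = b^{\vartheta_\infty}(z)$ and covariance $\Gamma^{\vartheta_\infty}(z_1, z_2)$, the continuous mapping theorem applied to the bounded linear functional $f \mapsto -\frac{1}{2\pi\ii}\oint_\Gamma \varphi f$ delivers the stated Gaussian limit with the required mean $M^{\vartheta_\infty}(\varphi)$ and variance $V^{\vartheta_\infty}(\varphi)$. This reduction is the content of Proposition~\ref{prop:Gproc}.

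To establish convergence of $X_N$ I would separately prove convergence of the finite-dimensional distributions and tightness on $\Gamma$. For the mean $\E X_N(z)$ and covariance $\mathrm{Cov}(X_N(z_i), X_N(z_j))$, I start from the Schur-complement identity for the diagonal entries $R_{ii}^{\vartheta_N}(z)$, average over $i$ to obtain a perturbed version of the self-consistent equation \eqref{eq:funceqhat}, subtract the exact equation satisfied by $\wh{m}_{fc}^{\vartheta_N}$, and invert the linearized operator using the stability bound of \cite{Lee-Schnelli-Stetler-Yau2016}. The inhomogeneity of this perturbed equation splits into a fluctuating part (the centered quadratic form $\mathbf{a}_i^{*} R^{(i)} \mathbf{a}_i$) that drives the variance, and Lindeberg-type biases controlled by $w_2-1$, $W_3$, and $W_4-3$. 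Taking expectation and rearranging yields $b^{\vartheta_\infty}(z)$; taking the product of two such expansions at $z_1, z_2$ and using the resolvent identity to unfold $R_{ii}(z_1) R_{ii}(z_2)$ produces the kernel $I^{\vartheta_\infty}(z_1,z_2)$, and summation over $i$ together with the logarithmic resummation $\sum_n I^n/n = -\log(1-I)$ delivers $\tilde{\Gamma}^{\vartheta_\infty}$; differentiation in $z_1, z_2$ (coming from the passage $R_{ii}\to m_N$) produces $\Gamma^{\vartheta_\infty}$.

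The Gaussian convergence of $X_N - \E X_N$ at finitely many points $z_1, \ldots, z_k \in \Gamma$ I would prove by the martingale CLT, following Bai--Yao. Writing $m_N - \E m_N = \sum_{k=1}^N (\E_k - \E_{k-1}) m_N$ where $\E_k$ denotes conditioning on the first $k$ rows of $A_N$, and expressing each increment via the rank-one resolvent identity, gives a complex martingale whose quadratic variation can be evaluated with the same self-consistent identity as above; replacing $m_N$ by $\wh{m}_{fc}^{\vartheta_N}$ via the local law, the bracket converges in probability to the matrix $\bigl(\Gamma^{\vartheta_\infty}(z_i, z_j)\bigr)_{i,j}$ derived in the mean/covariance step. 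The Lindeberg condition is satisfied because each martingale increment is of order $N^{-1}$ uniformly on $\Gamma$, using $\dist(\Gamma, \R)\ge v_0>0$ and the finite-moment Assumption~\ref{assump:Wigner}. Tightness of $X_N$ on $\Gamma$ then follows from the variance bound $\Var{m_N(z)}=O(N^{-2})$ uniformly on $\Gamma$ (Lemma~\ref{lem:varbound}) together with a Cauchy-integral equicontinuity argument, since $X_N$ is analytic inside a slightly larger contour.

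The main obstacle is the absence of an explicit algebraic equation for $\wh{m}_{fc}^{\vartheta_N}$ analogous to $m_{sc}^2 + z m_{sc} + 1 = 0$: the inversion of $1 + (\wh{m}_{fc}^{\vartheta_N})'(z)$, which appears repeatedly, has no closed form, and the resulting denominators must be controlled uniformly in $z\in\Gamma$ and in $N$. This is the role of the stability bound from \cite{Lee-Schnelli-Stetler-Yau2016}, which guarantees $|1 + (\wh{m}_{fc}^{\vartheta_N})'(z)|^{-1}\le C$ on $\Gamma$. A related subtlety is that $\wh{\nu}_N$ only approximates $\nu$ up to $N^{-\alpha_0}$ in Assumption~\ref{assump:esdvconv}, so throughout one works on the event where $\wh{\nu}_N$ satisfies the same regularity (Assumption~\ref{assump:regesdv}) as $\nu$; the replacement $\vartheta_N\to\vartheta_\infty$ in the final formulas for $b^{\vartheta_\infty}$, $\Gamma^{\vartheta_\infty}$ and $I^{\vartheta_\infty}$ is performed only after all stability estimates have been invoked, and is justified by the $\vartheta$-continuity of these quantities which follows from the same self-consistent analysis.
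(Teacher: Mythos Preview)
Your overall strategy---Cauchy reduction to the process $X_N(z)=N(m_N^{\vartheta_N}(z)-\wh m_{fc}^{\vartheta_N}(z))$, martingale CLT for the finite-dimensional distributions, Schur-complement expansion for mean and covariance---is exactly the paper's. The identification of $b^{\vartheta_\infty}$, $\wt\Gamma^{\vartheta_\infty}$ and the role of the stability bound are all correct.

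There is, however, a genuine gap. You write that the Lindeberg condition holds ``using $\dist(\Gamma,\R)\ge v_0>0$'' and that Lemma~\ref{lem:varbound} gives $\Var{m_N(z)}=O(N^{-2})$ uniformly on $\Gamma$. Both claims are false: the rectangular contour $\Gamma$ has vertical sides $\{a_\pm+\ii v:|v|\le v_0\}$ which meet the real axis, so $\dist(\Gamma,\R)=0$; and the bound in Lemma~\ref{lem:varbound} carries a factor $\eta^{-3-\epsilon}$ which blows up there. Consequently neither the martingale-increment estimate, nor the tightness argument, nor the Cauchy equicontinuity bound survives as stated on the full contour.

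The paper does not try to prove process convergence on all of $\Gamma$. Instead it decomposes $\Gamma=\Gamma_u\cup\Gamma_d\cup\Gamma_l\cup\Gamma_r\cup\Gamma_0$, proves the functional CLT (Proposition~\ref{prop:Gproc}) only on the horizontal segments $\Gamma_u\cup\Gamma_d\subset\{|\Im z|=v_0\}$, where all your estimates are valid, and then shows separately (Lemma~\ref{lem:rl0igno}) that the contributions of the vertical segments and of the limiting process there vanish in the double limit $N\to\infty$ followed by $v_0\to 0^+$. This last step is not automatic: on $\Gamma_l\cup\Gamma_r$ one must replace the crude $\eta^{-1}$ bound on $\|R\|$ by an $O(1)$ bound coming from eigenvalue rigidity (since $a_\pm$ lie outside the limiting support), and the segment $\Gamma_0=\{a_\pm+\ii v:|v|\le N^{-\delta}\}$ is handled by its vanishing length together with the $\caO(1)$ bound on $\xi_N$. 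Your proposal is missing this decomposition and the associated near-real-axis analysis; once you add it, the rest of your outline coincides with the paper's proof. (Your suggested tightness argument via Cauchy equicontinuity on a slightly larger contour is a legitimate alternative to the paper's explicit H\"older estimate in Lemma~\ref{lem:tightproc}, but it too requires restricting to the horizontal segments.)
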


\begin{remark}
	The theorem above implies the Gaussian convergence of centered random variable
	\beq
		\sum_{i=1}^{N}\varphi(\lambda_{i}^{(N)})-\expct{\varphi(\lambda_{i}^{(N)})},
	\eeq
	since the last term being subtracted in \eqref{eq:TN} is deterministic, $M^{\varphi_{\infty}}$ comes from the asymptotic difference between the mean and $\bilin{\varphi}{\wh{\rho}_{fc}^{\vartheta_{N}}}$.
\end{remark}
\begin{remark}
	For $\vartheta_{\infty}=0$, we have $m_{fc}^{\vartheta_{\infty}}(z)=m_{sc}(z)$ and $I^{\vartheta_{\infty}}(z_{1},z_{2})=m_{sc}(z_{1})m_{sc}(z_{2})$, so that
	\beq
	b(z)=m_{sc}(z)^{3}(1+m_{sc}'(z))\big((w_{2}-1)+m'_{sc}(z)+(W_{4}-3)m_{sc}(z)^{2}\big),
	\eeq
	\beq
	\Gamma(z_{1},z_{2})=m_{sc}'(z_{1})m_{sc}'(z_{2})\bigg((w_{2}-2)+2(W_{4}-3)m_{sc}(z_{1})m_{sc}(z_{2})+\frac{2}{(1-m_{sc}(z_{1})m_{sc}(z_{2}))^{2}}\bigg),
	\eeq
	which coincides with the limiting formulae given in \cite{Bai-Yao2005} and \cite{Baik-Lee2017}. Therefore $M(\varphi)$ and $V(\varphi)$ are given by
	\beq
	M(\varphi)=\frac{1}{4}(\varphi(2)+\varphi(-2))-\frac{1}{2}\tau_{0}(\varphi)+(w_{2}-2)\tau_{2}(\varphi)+(W_{4}-3)\tau_{4}(\varphi)
	\eeq
	and
	\beq
	V(\varphi)=(w_{2}-2)\tau_{1}(\varphi)^{2}+2(W_{4}-3)\tau_{2}(\varphi)^{2}+2\sum_{\ell=1}^{\infty}\ell\tau_{\ell}(\varphi)^{2},
	\eeq
	where
	\beq
	\tau_{\ell}(\varphi)=\frac{1}{2\pi}\int_{-\pi}^{\pi}\varphi(2\cos\theta)\cos(\ell\theta)\dd\theta.
	\eeq
\end{remark}

\begin{remark}
	For complex Hermitian $A$, with the additional assumption $\expct{A_{ij}^{2}}=0$, the same result holds with $(w_{2}-2)$ and $(W_{4}-3)$ replaced by $(w_{2}-1)$ and $(W_{4}-2)$, respectively.
\end{remark}

\begin{theorem}\label{thm:mainsqrt}
	Suppose that $W_{N}$ is the deformed Wigner matrix with \textbf{random} $V_{N}$, satisfying the assumptions in Theorem~\ref{thm:main}. Then for any test function $\varphi$ satisfying the conditions in Theorem~\ref{thm:main}, the random variable
	\beq
	S_{N}^{\vartheta_{N}}(\varphi)\deq\frac{1}{\sqrt{N}\vartheta_{N}}\sum_{i=1}^{N}\bigg[\varphi(\lambda_{i}^{(N)})-\int_{\R}\varphi(x)\dd\rho_{fc}^{\vartheta_{N}}(x)\bigg]
	\eeq
	converges in distribution to the Gaussian random variable $S(\varphi)$ with mean zero and variance $\wt{V}^{\vartheta_{\infty}}(\varphi)$ given by
	\begin{itemize}
		\item For $\vartheta_{\infty}\neq0$,
		\begin{multline}
		\wt{V}^{\vartheta_{\infty}}(\varphi) =-\frac{1}{4\pi^{2}\vartheta_{\infty}^{2}} \oint_{\Gamma}\oint_{\Gamma}\bigg[\varphi(z_{1})\varphi(z_{2})(1+(m_{fc}^{\vartheta_{\infty}})'(z_{1}))(1+(m_{fc}^{\vartheta_{\infty}})'(z_{2})) \\
		\big(I(z_{1},z_{2})-m_{fc}^{\vartheta_{\infty}}(z_{1})m_{fc}^{\vartheta_{\infty}}(z_{2})\big)\bigg]\dd z_{1}\dd z_{2}
		\end{multline}
		
		\item For $\vartheta_{\infty}=0$,
		\beq
		\wt{V}^{0}=\wt{V}(\varphi)=\Var{v_{1}}\frac{1}{(2\pi \ii)^{2}} \oint_{\Gamma}\oint_{\Gamma}\varphi(z_{1})\varphi(z_{2})m_{sc}'(z_{1})m_{sc}'(z_{2})\dd z_{1}\dd z_{2} =\tau_{1}(\varphi)^{2},
		\eeq
	\end{itemize}
	where $\Gamma$ and $I$ are given as in Theorem~\ref{thm:main}.
\end{theorem}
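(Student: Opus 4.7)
The plan is to reduce everything to proving that the complex stochastic process
\[
X_{N}(z) \deq \frac{\sqrt{N}}{\vartheta_{N}}\big(m_{N}(z) - m_{fc}^{\vartheta_{N}}(z)\big), \qquad z\in\Gamma,
\]
converges weakly in $C(\Gamma)$ to a centered Gaussian process, because the Cauchy integral representation
\[
S_{N}^{\vartheta_{N}}(\varphi) = -\frac{1}{2\pi\ii}\oint_{\Gamma}\varphi(z)\,X_{N}(z)\,\dd z
\]
then yields the stated convergence and variance formula by continuous mapping. The central decomposition is
\[
X_{N}(z) = \underbrace{\frac{\sqrt{N}}{\vartheta_{N}}\big(m_{N}(z)-\wh{m}_{fc}^{\vartheta_{N}}(z)\big)}_{=:X_{N}^{(1)}(z)} \;+\; \underbrace{\frac{\sqrt{N}}{\vartheta_{N}}\big(\wh{m}_{fc}^{\vartheta_{N}}(z)-m_{fc}^{\vartheta_{N}}(z)\big)}_{=:X_{N}^{(2)}(z)}.
\]
The first term is negligible: conditionally on $V_N$, Proposition~\ref{prop:Gproc} (proved for deterministic $V_N$ in the preceding section, then used on the event of Assumption~\ref{assump:regesdv}(i)) gives that $N(m_{N}-\wh{m}_{fc}^{\vartheta_{N}})$ is tight, so $X_{N}^{(1)} = \frac{1}{\sqrt{N}\vartheta_N}\cdot O_{\P}(1) = o_{\P}(1)$ uniformly on $\Gamma$, using precisely the hypothesis $\vartheta_N\gg N^{-1/2}$.

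For the main term $X_{N}^{(2)}$, I would subtract the two self-consistent equations \eqref{eq:funceq} and \eqref{eq:funceqhat}, writing $D(z) \deq \wh m_{fc}^{\vartheta_N}(z)-m_{fc}^{\vartheta_N}(z)$, and rearranging to obtain
\[
D(z)\Big[1-\int\frac{\dd\wh\nu_{N}^{\vartheta_{N}}(x)}{(x-z-\wh m_{fc}^{\vartheta_N})(x-z-m_{fc}^{\vartheta_N})}\Big] \;=\; \int\frac{\dd\wh\nu_{N}^{\vartheta_{N}}(x)-\dd\nu^{\vartheta_{N}}(x)}{x-z-m_{fc}^{\vartheta_{N}}(z)}.
\]
Differentiating \eqref{eq:funceq} yields the identity $\int \dd\nu^{\vartheta}(x)/(x-z-m_{fc}^\vartheta)^{2}=(m_{fc}^\vartheta)'/(1+(m_{fc}^\vartheta)')$, so the bracket on the left is $1/(1+(m_{fc}^{\vartheta_N})'(z))+o_{\P}(1)$ after replacing $\wh m_{fc}^{\vartheta_N}$ by $m_{fc}^{\vartheta_N}$ via the stability bounds of \cite{Lee-Schnelli-Stetler-Yau2016}. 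Therefore
\[
X_{N}^{(2)}(z) = \frac{1+(m_{fc}^{\vartheta_{N}})'(z)}{\vartheta_{N}}\cdot\frac{1}{\sqrt{N}}\sum_{i=1}^{N}\bigg[\frac{1}{\vartheta_{N}v_{i}-z-m_{fc}^{\vartheta_{N}}(z)}-\E\!\Big[\tfrac{1}{\vartheta_{N}v_{1}-z-m_{fc}^{\vartheta_{N}}(z)}\Big]\bigg] + o_{\P}(1),
\]
which is a normalized sum of i.i.d.\ centered random analytic functions of $z$. Finite-dimensional Gaussian convergence now follows from the classical multivariate CLT applied to the collection $\{v_{i}\}$, yielding covariance kernel
\[
\frac{(1+(m_{fc}^{\vartheta_{N}})'(z_1))(1+(m_{fc}^{\vartheta_{N}})'(z_2))}{\vartheta_{N}^{2}}\Big(I^{\vartheta_{N}}(z_1,z_2)-m_{fc}^{\vartheta_{N}}(z_1)m_{fc}^{\vartheta_{N}}(z_2)\Big),
\]
which matches the stated $\wt V^{\vartheta_\infty}$ after taking $N\to\infty$ and pulling the $(2\pi\ii)^{-2}$ double contour integral in from the Cauchy representation. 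The case $\vartheta_{\infty}\neq 0$ is then immediate. For $\vartheta_{\infty}=0$ the denominators $\vartheta_N v_i - z - m_{fc}^{\vartheta_N}$ degenerate and the raw formula vanishes; I handle this by a Taylor expansion in $\vartheta_N$: the zeroth-order term $-1/(z+m_{sc}(z))$ is constant in $v_i$ and is killed by centering, and the first-order term produces $\vartheta_N v_i/(z+m_{sc}(z))^{2}$, whose $\vartheta_N$ exactly cancels the $\vartheta_N^{-1}$ prefactor, giving the variance $\tau_{1}(\varphi)^{2}$ via $m_{sc}^{2}/(1-m_{sc}^{2})=m_{sc}'$ and the Fourier expansion of $m_{sc}'$.

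Finally, process-level convergence needs tightness of $X_{N}$ on $\Gamma$. This I obtain by standard equicontinuity from a uniform second moment bound $\E|X_{N}(z)|^{2}=O(1)$ (provided by the above explicit expression together with Lemma~\ref{lem:varbound}), together with a uniform derivative bound coming from analyticity away from $\Gamma$; this is the content separated out as Proposition~\ref{prop:Gprocsqrttheta} and Proposition~\ref{prop:Gprocsqrt}. The main obstacle I anticipate is twofold: (i) controlling the replacement of $\wh m_{fc}^{\vartheta_N}$ by $m_{fc}^{\vartheta_N}$ uniformly in $z\in\Gamma$, which is where the stability bound of \cite{Lee-Schnelli-Stetler-Yau2016} and the regularity hypothesis in Assumption~\ref{assump:regesdv} do the heavy lifting, and (ii) unifying the scaling across the regimes $\vartheta_{\infty}>0$ and $\vartheta_{\infty}=0$, since the leading order of $X_{N}^{(2)}$ switches from the direct CLT term to its first-order Taylor correction as $\vartheta_{\infty}\to 0$, producing the dichotomy visible in the statement of Theorem~\ref{thm:mainsqrt}.
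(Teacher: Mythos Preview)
Your proposal is correct and follows essentially the same route as the paper: the decomposition $X_N = X_N^{(1)} + X_N^{(2)}$, the negligibility of $X_N^{(1)}$ via the $O(1)$ control on $\xi_N$ coming from the deterministic-$V$ analysis, the self-comparison of \eqref{eq:funceq} and \eqref{eq:funceqhat} to isolate the i.i.d.\ CLT term, and the Taylor expansion for $\vartheta_\infty=0$ all appear in Section~5 exactly as you describe.

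One point where your write-up is looser than the paper: you speak of ``tightness of $X_N$ on $\Gamma$'', but $\Gamma$ crosses the real axis, where neither the resolvent nor the Stieltjes transforms are well-behaved. The paper does \emph{not} attempt process convergence on all of $\Gamma$. Instead it proves Propositions~\ref{prop:Gprocsqrttheta}--\ref{prop:Gprocsqrt} only on horizontal segments $\caK\subset\{\Im z>c\}$, and separately shows via Lemma~\ref{lem:rl0igno} that the contributions from the vertical segments $\Gamma_l\cup\Gamma_r\cup\Gamma_0$ vanish in the limit (using rigidity and the variance bounds of Lemma~\ref{lem:varbound}). Your sketch implicitly relies on this decomposition but does not name it; without it, the claimed ``uniform derivative bound coming from analyticity away from $\Gamma$'' does not cover the corners of the rectangle.
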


\begin{remark}
	As in the previous theorem, Theorem \ref{thm:mainsqrt} also implies the weak convergence of the corresponding centered random variable, and the asymptotic difference of mean converges to $0$ in this case.
\end{remark}

\begin{remark}
	The result holds also for complex Wigner matrix $A_{N}$ without any modification, since the fluctuation of $\wh{\rho}_{fc}^{\vartheta_{N}}$ dominates and that of $A_{N}$ is neglected, as shown in  {Section~\ref{sec:Gaussianprocsqrtprf}}.
\end{remark}

\begin{remark}
	In both of the theorems, limiting variances $V^{\vartheta_{\infty}}(\varphi)$ and $\wt{V}^{\vartheta_{\infty}}(\varphi)$ can be expressed in terms of double integral over the interval $[L_{-},L_{+}]$, by deforming the contour $\Gamma$ into $[L_{-},L_{+}]\pm\ii 0$ following \cite{Bai-Yao2005}. The integral involves continuous extension of $m_{fc}^{\vartheta_{\infty}}$ to $\C_{+}\cup \R$. 	
\end{remark}

Given the Gaussian convergence of linear statistics for analytic functions, now we extend the result to $C^{2}$ functions:
\begin{theorem}
	For each $M>\max\{1-L_{-},L_{+}+1\}$, denote by $C^{2}_{M}$ the space of $C^{2}$ real functions supported in $[-M,M]$, equipped with the $C^{2}$ norm defined by
	\beq
	\norm{\varphi}_{C^{2}_{M}}=\sup_{x\in[-M,M]}\big(\absv{\varphi(x)}+\absv{\varphi'(x)}+\absv{\varphi''(x)}\big).
	\eeq
	Then $V^{\vartheta_{\infty}}$ and $\wt{V}^{\vartheta_{\infty}}$ extend to a continuous quadratic functionals on $C^{2}_{M}$ for any $M$. Also, for each $\varphi\in C^{2}_{M}$, we have the following:
	\begin{itemize}
		\item Under the assumptions of Theorem \ref{thm:main}, the centralized random variable
		\beq
			T_{N}^{\vartheta_{N}}-\expct{T_{N}^{\vartheta_{N}}}=\sum_{i=1}^{N}\varphi(\lambda_{i}^{(N)})-\expct{\varphi(\lambda_{i}^{(N)})}
		\eeq
		converges in distribution to the centered Gaussian random variable with variance $V^{\vartheta_{\infty}}(\varphi)$.
		
		\item Under the assumptions of Theorem \ref{thm:mainsqrt}, the centralized random variable
		\beq
			S_{N}^{\vartheta_{N}}-\expct{S_{N}^{\vartheta_{N}}}
		\eeq
		converges in distribution to the centered Gaussian random variable with variance $\wt{V}^{\vartheta_{\infty}}(\varphi)$.
	\end{itemize}
\end{theorem}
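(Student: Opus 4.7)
The strategy is a standard approximation argument: approximate any $\varphi\in C^2_M$ by analytic functions $\varphi_\epsilon$ for which Theorems~\ref{thm:main} and~\ref{thm:mainsqrt} apply, and control the remainder using the variance bound for $m_N$ provided by Lemma~\ref{lem:varbound}. The proof splits naturally into (i) continuity of the limiting variances on $C^2_M$, (ii) analytic approximation and invocation of the analytic-case CLT, and (iii) a uniform-in-$N$ variance estimate for the remainder.

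For step (i), I would verify that $V^{\vartheta_\infty}$ and $\widetilde V^{\vartheta_\infty}$ extend to continuous quadratic forms on $C^2_M$. The cleanest route is via the Helffer-Sj\"ostrand formula
\beq
\varphi(\lambda)=\frac{1}{\pi}\int_{\C}\frac{\bar\partial\widetilde\varphi(z)}{\lambda-z}\dd^2 z,
\eeq
with $\widetilde\varphi$ an almost-analytic extension of $\varphi$ supported in a complex neighborhood of $[-M,M]$ and satisfying $|\bar\partial\widetilde\varphi(x+\ii y)|\le C\|\varphi\|_{C^2_M}|y|^2$. Substituting this into the contour representations of $V^{\vartheta_\infty}$ and $\widetilde V^{\vartheta_\infty}$ (equivalently, deforming $\Gamma$ to $[L_-^{\vartheta_\infty},L_+^{\vartheta_\infty}]\pm\ii 0$ as indicated in the remark after Theorem~\ref{thm:mainsqrt}), one obtains $|V^{\vartheta_\infty}(\varphi)|+|\widetilde V^{\vartheta_\infty}(\varphi)|\le C_M\|\varphi\|_{C^2_M}^2$: the kernels are bounded since $m_{fc}^{\vartheta_\infty}$ has bounded boundary values on $[L_-^{\vartheta_\infty},L_+^{\vartheta_\infty}]$, and the $|y|$-vanishing of $\bar\partial\widetilde\varphi$ absorbs the square-root singularities at the edges.

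For step (ii), given $\varphi\in C^2_M$ and $\epsilon>0$ I would construct $\varphi_\epsilon$ analytic on an $\epsilon$-neighborhood of $[L_-^{\vartheta_\infty},L_+^{\vartheta_\infty}]$, for instance by Poisson-kernel smoothing followed by a smooth cutoff outside a small enlargement of the support, arranged so that $\|\varphi-\varphi_\epsilon\|_{C^2_M}\to 0$ as $\epsilon\to 0$. Theorems~\ref{thm:main} and~\ref{thm:mainsqrt} give the Gaussian convergence of the centered statistics associated to each $\varphi_\epsilon$, and step (i) yields
\beq
V^{\vartheta_\infty}(\varphi_\epsilon)\to V^{\vartheta_\infty}(\varphi),\qquad \widetilde V^{\vartheta_\infty}(\varphi_\epsilon)\to \widetilde V^{\vartheta_\infty}(\varphi)\quad\text{as }\epsilon\to 0.
\eeq

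The main obstacle is step (iii): showing that the contributions of $\varphi-\varphi_\epsilon$ to the centered linear statistics are negligible, uniformly in $N$. Writing $\psi_\epsilon:=\varphi-\varphi_\epsilon$ and applying Helffer-Sj\"ostrand,
\beq
\sum_i\psi_\epsilon(\lambda_i^{(N)})-\expct{\sum_i\psi_\epsilon(\lambda_i^{(N)})} = \frac{N}{\pi}\int_{\C}\bar\partial\widetilde{\psi_\epsilon}(z)\bigl(m_N(z)-\expct{m_N(z)}\bigr)\dd^2 z,
\eeq
so by Cauchy-Schwarz its variance is controlled by
\beq
\frac{N^2}{\pi^2}\int\int|\bar\partial\widetilde{\psi_\epsilon}(z_1)||\bar\partial\widetilde{\psi_\epsilon}(z_2)|\sqrt{\Var{m_N(z_1)}\Var{m_N(z_2)}}\dd^2 z_1\dd^2 z_2.
\eeq
Lemma~\ref{lem:varbound} furnishes a bound on $\Var{m_N(z)}$ (of order $(N\eta)^{-2}$ for $T_N^{\vartheta_N}$ and of order $\vartheta_N^2(N\eta)^{-2}$ for $S_N^{\vartheta_N}$) that is integrable against $|\bar\partial\widetilde{\psi_\epsilon}|^2$ down to the scale of the typical eigenvalue spacing, the latter being accessible by rigidity. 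Combining this with the $|y|^2$-vanishing of $\bar\partial\widetilde{\psi_\epsilon}$ yields a bound of the form $C_M\|\psi_\epsilon\|_{C^2_M}^2$, which is $o(1)$ as $\epsilon\to 0$ uniformly in $N$. Slutsky's theorem, applied with parameter $\epsilon\to 0$ after $N\to\infty$, then promotes the analytic-case CLT to $C^2_M$ for both $T_N^{\vartheta_N}-\expct{T_N^{\vartheta_N}}$ and $S_N^{\vartheta_N}-\expct{S_N^{\vartheta_N}}$.
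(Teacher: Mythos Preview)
Your overall plan---approximate by analytic functions, control the remainder via a uniform variance bound derived from Lemma~\ref{lem:varbound}, then pass to the limit---is precisely the paper's strategy. The paper, however, packages the argument through Shcherbina's framework rather than Helffer--Sj\"ostrand. Proposition~3 of \cite{Shcherbina2011a} (stated as Lemma~\ref{lem:Shprop}) is an abstract extension lemma: once one has the CLT on a dense subspace and a uniform bound $\Var{\caN_N[\varphi]}\le C\|\varphi\|^2$ on the whole space, the CLT extends automatically and the limiting variance is continuous. The uniform bound is obtained by feeding Lemma~\ref{lem:varbound} into Proposition~1 of \cite{Shcherbina2011a}, which converts the resolvent-variance estimate into an LSS-variance estimate controlled by the Sobolev norm $\|\varphi\|_{3/2+\epsilon}$, and then noting $\|\varphi\|_{3/2+\epsilon}\le C_M\|\varphi\|_{C^2_M}$. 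This route spares you step~(i) entirely: the continuous extension of $V^{\vartheta_\infty}$ and $\widetilde V^{\vartheta_\infty}$ comes for free from the abstract lemma, without any separate analysis of the limiting kernels.

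Two technical points in your write-up need correction. First, for $\varphi\in C^2$ the standard almost-analytic extension satisfies $|\bar\partial\widetilde\varphi(x+\ii y)|\le C\|\varphi\|_{C^2_M}\,|y|$, not $|y|^2$; the quadratic decay requires $C^3$. Second, Lemma~\ref{lem:varbound} gives $N^2\Var{m_N(z)}\le C\eta^{-3-\epsilon}\cdot\frac{1}{N}\sum_k\E|R_{kk}|^{1+\epsilon}$, not an $(N\eta)^{-2}$ bound. With the correct $|y|$ decay and the $\eta^{-3-\epsilon}$ bound (together with local-law control of $|R_{kk}|$ above the scale $\eta\gtrsim N^{-1+\delta}$ and a separate rigidity argument below it), the Helffer--Sj\"ostrand integral in your step~(iii) still converges, but the exponents you wrote do not match what the inputs actually give.
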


\section{Strategy of the Proof}

\begin{notation}
	In the following sections, for simplicity, we omit the subscript $N$ or superscript $\vartheta$ as long as there is no ambiguity. For example, we denote $\vartheta\equiv\vartheta_{N}$, $\wh{\nu}\equiv\wh{\nu}_{N}$, and $m(z)\equiv m_{N}(z)\equiv m_{N}^{\vartheta_{N}}$.
\end{notation}

\subsection{CLT for analytic test function}

For each $x\in[L_{1}^{\vartheta_{\infty}},L_{2}^{\vartheta_{\infty}}]$, the Cauchy integral formula gives 
\beq
\varphi(x)=\frac{1}{2\pi \ii}\oint_{\Gamma}\frac{\varphi(z)}{z-x}dz,
\eeq
where $\Gamma$ is the rectangular contour with vertices given by $(a_{\pm}\pm iv_{0})$ where $a_{+}-L_{1}^{\vartheta_{\infty}}, L_{2}^{\vartheta_{\infty}}-a_{-}$ and $v_{0}$ are small enough(so that $\Gamma$ lies in the analytic domain of $\varphi$) but fixed positive real numbers. Then denoting by $\rho_{N}$ the empirical spectral distribution of $W_{N}$, we have the equality
\beq
T_{N}^{\vartheta}(\varphi)=N\int_{\R}\varphi(x)(\rho_{N}-\wh{\rho}_{fc}^{\vartheta})(\dd x) =N\oint_{\Gamma}\int_{\R}\frac{\varphi(z)}{z-x}(\rho_{N}-\wh{\rho}_{fc}^{\vartheta})(\dd x)dz =-\oint_{\Gamma}\varphi(z)\xi_{N}^{\vartheta}(z)dz
\eeq
and
\beq
S_{N}^{\vartheta}(\varphi)=-\oint_{\Gamma}\varphi(z)\wt{\xi}_{N}^{\vartheta}(z)dz
\eeq
where 
\beq
\xi_{N}^{\vartheta}(z)\deq N(m_{N}^{\vartheta}(z)-\wh{m}_{fc}^{\vartheta}(z)) \quad\text{and}\quad \wt{\xi}^{\vartheta}_{N}(z)\deq \frac{\sqrt{N}}{\theta}(m_{N}^{\vartheta}(z)-m_{fc}^{\vartheta}(z)),
\eeq
whenever the eigenvalues of $\rho_{N}$ and $\wh{\rho}_{fc}^{\vartheta}$ are contained in $[a_{-},a_{+}]$, so that the equality holds with probability greater than $1-cN^{-\frt}$ for sufficiently large $N$. We then decompose the contour $\Gamma$ into $\Gamma_{u}\cup\Gamma_{d}\cup\Gamma_{l}\cup\Gamma_{r}\cup\Gamma_{0}$, where
\beq
\begin{array}{l}
	\Gamma_{u}\deq \{z=x+\ii v_{0}:x\in[a_{-},a_{+}]\}, \\
	\Gamma_{d}\deq \{z=x-\ii v_{0}:x\in[a_{-},a_{+}]\}, \\
	\Gamma_{l}\deq \{z=a_{-}+\ii v:N^{-\delta}\le \absv{v}\le v_{0}\}, \\
	\Gamma_{r}\deq \{z=a_{+}+\ii v:N^{-\delta}\leq \absv{v}\le v_{0}\}, \\
	\Gamma_{0}\deq \{z=a_{\pm}+\ii v:\absv{v}\le N^{-\delta}\}. \\
\end{array}
\eeq
Each path is given the linear parametrization $[0,1]\to\Gamma_{\#}$, which is also denoted by $\Gamma_{\#}$.

In sections below, we prove the following propositions and lemmas:
\begin{proposition}\label{prop:Gproc}
	Suppose that $V$ is \textbf{deterministic}. For a fixed constant $c>0$ and a path $\caK\subset\{\Im z>c\}$, the process $\{\xi_{N}^{\vartheta}(z):z\in\caK\}$ converges weakly to the Gaussian process $\{\xi^{\vartheta_{\infty}}(z):z\in\caK\}$ with the mean $b^{\vartheta_{\infty}}(z)$ given by
	\beq
	b^{\vartheta_{\infty}}(z)=\frac{1}{2}\frac{(m_{fc}^{\vartheta_{\infty}})''(z)}{(1+(m_{fc}^{\vartheta_{\infty}})'(z))^{2}}\bigg[(w_{2}-1)+(m_{fc}^{\vartheta_{\infty}})'(z)+(W_{4}-3)\frac{(m_{fc}^{\vartheta_{\infty}})'(z)}{1+(m_{fc}^{\vartheta_{\infty}})'(z)}\bigg]
	\eeq
	and the covariance $\Gamma^{\vartheta_{\infty}}(z_{1},z_{2})$ defined by
	\begin{equation}
	(w_{2}-2)\frac{\partial^{2}I}{\partial z_{1} \partial z_{2}} +(W_{4}-3)\bigg(I\frac{\partial^{2}I}{\partial z_{1}\partial z_{2}} +\frac{\partial I}{\partial z_{1}}\frac{\partial I}{\partial z_{2}}\bigg)
	+\frac{2}{(I-1)^{2}}\bigg(\frac{\partial I}{\partial z_{2}}\frac{\partial I}{\partial z_{1}}+(1-I)\frac{\partial^{2} I}{\partial z_{1}\partial z_{2}}\bigg),
	\end{equation}
	where 
	\beq
	I(z_{1},z_{2})\equiv I^{\vartheta_{\infty}}(z_{1},z_{2}):=\int_{\R}\frac{1}{(\vartheta_{\infty} x-z_{1}-m_{fc}^{\vartheta_{\infty}}(z_{1}))(\vartheta_{\infty} x-z_{2}-m_{fc}^{\vartheta_{\infty}}(z_{2}))}\dd\nu(x).
	\eeq
\end{proposition}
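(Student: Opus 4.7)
The plan is to establish weak convergence of the process $\{\xi_{N}^{\vartheta}(z) : z \in \caK\}$ in two independent stages: convergence of all finite-dimensional distributions to those of a Gaussian process with the stated mean $b^{\vartheta_{\infty}}(z)$ and covariance $\Gamma^{\vartheta_{\infty}}(z_{1},z_{2})$, and tightness in the space of continuous functions on $\caK$. On the macroscopic path $\caK \subset \{\im z \geq c > 0\}$ the resolvent entries $R_{ij}(z) = (W_{N}-zI)^{-1}_{ij}$ are uniformly of order one, so the local deformed semicircle law of Section~\ref{sec:prelim} and the stability bound on $\wh{m}_{fc}^{\vartheta_{N}}$ from \cite{Lee-Schnelli-Stetler-Yau2016} apply with error bounds uniform over $\caK$, and the entire analysis is carried out in this ``nice'' regime.

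The analytic core is a Schur complement expansion. Writing
\beq
R_{ii}(z) = \Big(\vartheta v_{i} + N^{-1/2}A_{ii} - z - \mathcal{Q}_{i}(z)\Big)^{-1}, \qquad \mathcal{Q}_{i}(z) \deq \frac{1}{N}\sum_{k,l \neq i} A_{ik}A_{il}R^{(i)}_{kl}(z),
\eeq
where $R^{(i)}$ is the minor resolvent, one decomposes $\mathcal{Q}_{i} = N^{-1}\Tr R^{(i)} + Z_{i}$ with $Z_{i}$ a centered quadratic fluctuation, and uses $N^{-1}\Tr R^{(i)} = m_{N} + \caO(1/N)$. Subtracting the self-consistent equation~\eqref{eq:funceqhat} for $\wh{m}_{fc}$ and Taylor expanding around the deterministic denominator $\vartheta v_{i} - z - \wh{m}_{fc}(z)$, a short calculation yields the approximate linear equation
\beq
(1 - \caF_{N}(z))\,\xi_{N}^{\vartheta}(z) = \sum_{i=1}^{N} \frac{Z_{i}(z) + N^{-1/2}A_{ii}}{(\vartheta v_{i} - z - \wh{m}_{fc}(z))^{2}} + \mathcal{E}_{N}(z),
\eeq
where $\caF_{N}(z) \deq N^{-1}\sum_{i}(\vartheta v_{i} - z - \wh{m}_{fc}(z))^{-2}$ and the remainder $\mathcal{E}_{N}(z)$ is controlled in $L^{2}$ by the local law. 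The stability bound, which follows from Assumption~\ref{assump:regesdv}, guarantees $|1 - \caF_{N}(z)| \geq c' > 0$ uniformly on $\caK$, so up to negligible error $\xi_{N}^{\vartheta}(z)$ is a bounded linear functional of the independent entries of $A_{N}$.

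Given this representation, the mean and covariance are computed using the cumulant expansion formula as in \cite{Lytova-Pastur2009a,Khorunzhy-Khorunzhenko-Pastur1995}: each factor $A_{ij}$ is integrated by parts, producing a derivative of the remaining resolvents weighted by a cumulant of $A_{ij}$, and only cumulants up to order four survive in the limit, yielding precisely the combinations $(w_{2}-1),(W_{4}-3)$ in $b^{\vartheta_{\infty}}$ and $(w_{2}-2),(W_{4}-3)$ in $\Gamma^{\vartheta_{\infty}}$. Differentiating~\eqref{eq:funceqhat} once and twice in $z$ identifies the analytic building blocks $(m_{fc}^{\vartheta_{\infty}})'(z)$, $(m_{fc}^{\vartheta_{\infty}})''(z)$ and $I(z_{1},z_{2})$, after passing $\wh{m}_{fc}^{\vartheta_{N}} \to m_{fc}^{\vartheta_{\infty}}$ and $\wh{\nu}_{N} \to \nu$ via Assumption~\ref{assump:esdvconv}(ii). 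Joint Gaussianity for finitely many $z_{1},\dots,z_{k} \in \caK$ then follows by showing that every cumulant of order $\geq 3$ of $\sum_{j} c_{j}\xi_{N}^{\vartheta}(z_{j})$ vanishes as $N \to \infty$; the cumulant expansion makes each such cumulant a sum of traces of products of resolvents carrying a prefactor $N^{-(k-2)/2}$ that forces it to zero. Tightness on $\caK$ is then immediate from holomorphicity and the $\caO(1)$ variance bound, since Cauchy's integral formula bounds $\expct{|\xi_{N}^{\vartheta}(z_{1}) - \xi_{N}^{\vartheta}(z_{2})|^{2}}$ by $C|z_{1}-z_{2}|^{2}$ on $\caK$. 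The principal obstacle, as emphasized in the introduction, is the absence of a simple algebraic equation for $m_{fc}^{\vartheta_{\infty}}$ analogous to $m_{sc}^{2} + zm_{sc} + 1 = 0$: all estimates must be extracted from the integral self-consistent equation~\eqref{eq:funceq}, and all bounds proved for the deterministic limit $m_{fc}^{\vartheta_{\infty}}$ must subsequently be transferred to the empirical counterpart $\wh{m}_{fc}^{\vartheta_{N}}$ through the quantitative convergence~\eqref{eq:alpha0} and the deterministic stability estimates of \cite{Lee-Schnelli-Stetler-Yau2016}.
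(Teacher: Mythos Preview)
Your overall strategy is sound and would lead to the same result, but it differs from the paper's route in a concrete way. The paper follows the \emph{martingale central limit theorem} approach of Bai--Yao: one writes $\zeta_{N}=\xi_{N}-\E[\xi_{N}]$ as a sum of martingale differences $\sum_{k}(\E_{k-1}-\E_{k})(\Tr R-\Tr R^{(k)})$ with respect to the filtration $\caF_{k}=\sigma(W_{ij}:k<i,j\le N)$, simplifies each increment to $\E_{k-1}[\phi_{k}]$ via the Schur complement, and then verifies the two hypotheses of the martingale CLT (Lindeberg condition and convergence of conditional variances $\sum_{k}\E_{k}[\E_{k-1}\phi_{k}(z_{1})\,\E_{k-1}\phi_{k}(z_{2})]\to\Gamma^{\vartheta_{\infty}}(z_{1},z_{2})$). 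The mean $b_{N}(z)$ is computed separately by a direct expansion of $R_{ii}$ around $\wh{g}_{i}$. By contrast, you propose the \emph{cumulant expansion} route of Lytova--Pastur and Khorunzhy--Khoruzhenko--Pastur, showing Gaussianity by proving that all joint cumulants of order $\geq 3$ vanish.

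Both methods are standard; what each buys is this. The martingale route handles general non-Gaussian entries cleanly because the Lindeberg condition reduces to a fourth-moment bound on $\phi_{k}$, and the limiting covariance emerges from a recursive computation of $Z_{k}=N^{-2}\sum_{p,q>k}\E_{k}[\E_{k-1}R^{(k)}_{pq}(z_{1})\,\E_{k-1}R^{(k)}_{qp}(z_{2})]$ that yields the $-2\log(1-I)$ term naturally. Your cumulant route makes the mean and covariance formulas more transparent (each cumulant of $A_{ij}$ contributes a definite term), but the assertion that the $k$th cumulant carries a prefactor $N^{-(k-2)/2}$ is where the real work lies for non-Gaussian entries: the expansion does not terminate, and one must truncate and control the remainder uniformly, which is not quite ``immediate''. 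Your tightness argument via Cauchy's formula is valid provided the $O(1)$ variance bound holds on a domain strictly containing $\caK$; the paper instead uses the resolvent identity $R(z_{1})-R(z_{2})=(z_{1}-z_{2})R(z_{1})R(z_{2})$ and bounds $\Var[\Tr R(z_{1})R(z_{2})]$ directly by another martingale decomposition, which avoids enlarging the domain.
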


\begin{proposition}\label{prop:Gprocsqrttheta}
	Suppose that $V$ is \textbf{random} and $\vartheta_{\infty}>0$. For a fixed constant $c>0$ and a path $\caK\subset\{\Im z>c\}$, the process $\{\wt{\xi}^{\vartheta}_{N}(z):z\in\caK\}$ converges weakly to a Gaussian process $\{\wt{\xi}^{\vartheta_{\infty}}(z):z\in\caK\}$ with zero mean and the covariance 
	\beq
	\vartheta_{\infty}^{-2}(1+(m_{fc}^{\vartheta_{\infty}})'(z_{1}))(1+(m_{fc}^{\vartheta_{\infty}})'(z_{2}))\big[I(z_{1},z_{2})-m_{fc}^{\vartheta_{\infty}}(z_{1})m_{fc}^{\vartheta_{\infty}}(z_{2})\big],
	\eeq
	where $I(z_{1},z_{2})$ is given above.
\end{proposition}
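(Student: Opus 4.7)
Since $\vartheta_N\gg N^{-1/2}$ forces $\sqrt{N}\vartheta_N\to\infty$, the plan is to decompose
\[
\wt{\xi}_N^{\vartheta}(z) \;=\; \frac{1}{\sqrt{N}\vartheta_N}\,\xi_N^{\vartheta}(z) \;+\; \frac{\sqrt{N}}{\vartheta_N}\bigl(\wh{m}_{fc}^{\vartheta}(z) - m_{fc}^{\vartheta}(z)\bigr),
\]
show that the first summand vanishes in probability uniformly on $\caK$, and identify the second as the Gaussian part. Proposition~\ref{prop:Gproc}, applied conditionally on the high-probability event of Assumption~\ref{assump:regesdv}(i) on which $\wh{\nu}_N$ inherits the regularity imposed on $\nu$, yields $\xi_N^{\vartheta}=O_{\mathbb{P}}(1)$ uniformly on $\caK$, so division by $\sqrt{N}\vartheta_N$ kills the first summand. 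Everything then reduces to the analysis of $\Delta(z):=\wh{m}_{fc}^{\vartheta}(z)-m_{fc}^{\vartheta}(z)$.

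The next step is to linearise the pair of self-consistent equations \eqref{eq:funceq}--\eqref{eq:funceqhat}. Subtracting them and splitting the difference of kernels yields
\[
\Delta(z)\;=\;\int\frac{\dd(\wh{\nu}_N-\nu)(x)}{\vartheta x-z-\wh{m}_{fc}^{\vartheta}(z)}\;+\;\Delta(z)\int\frac{\dd\nu(x)}{(\vartheta x-z-\wh{m}_{fc}^{\vartheta}(z))(\vartheta x-z-m_{fc}^{\vartheta}(z))}.
\]
Differentiating \eqref{eq:funceq} gives the identity $(m_{fc}^{\vartheta})'(z)=I(z,z)/(1-I(z,z))$, equivalently $1-I(z,z)=(1+(m_{fc}^{\vartheta})'(z))^{-1}$. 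An a priori bound $|\Delta(z)|\to 0$ uniformly on $\caK$, obtained from Lipschitz stability of \eqref{eq:funceq} combined with the weak convergence $\wh{\nu}_N\to\nu$, allows replacing $\wh{m}_{fc}^{\vartheta}$ by $m_{fc}^{\vartheta}$ in the second kernel with error $O(|\Delta|^2)$, and rearrangement then gives
\[
\Delta(z) \;=\; \bigl(1+(m_{fc}^{\vartheta})'(z)\bigr)\int\frac{\dd(\wh{\nu}_N-\nu)(x)}{\vartheta x-z-m_{fc}^{\vartheta}(z)}\;+\;O_{\mathbb{P}}(N^{-1}).
\]

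Since the $v_i$ are i.i.d.\ with law $\nu$ by Assumption~\ref{assump:esdvconv}(i), the linearised object
\[
F_N(z)\;:=\;\sqrt{N}\int\frac{\dd(\wh{\nu}_N-\nu)(x)}{\vartheta_N x-z-m_{fc}^{\vartheta_N}(z)}\;=\;\frac{1}{\sqrt{N}}\sum_{i=1}^{N}\bigl[f_{z,N}(v_i)-\mathbb{E}f_{z,N}(v_1)\bigr],
\]
with $f_{z,N}(x):=(\vartheta_N x-z-m_{fc}^{\vartheta_N}(z))^{-1}$, is a normalised sum of centred i.i.d.\ random variables, uniformly bounded on $\caK$ because $\dist(\caK,\R)>0$. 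The multivariate Lindeberg CLT gives finite-dimensional convergence of $F_N$ to a centred complex Gaussian vector with covariance $I(z_1,z_2)-m_{fc}^{\vartheta_\infty}(z_1)m_{fc}^{\vartheta_\infty}(z_2)$, using $\mathbb{E}f_{z,\infty}(v_1)=m_{fc}^{\vartheta_\infty}(z)$ from \eqref{eq:funceq}. Multiplying by $(1+(m_{fc}^{\vartheta_N})'(z))/\vartheta_N\to(1+(m_{fc}^{\vartheta_\infty})'(z))/\vartheta_\infty$, well defined since $\vartheta_\infty>0$, produces the stated covariance. Tightness on $\caK$ follows from a Kolmogorov moment criterion for the holomorphic process $F_N$, since $\partial_z F_N$ has uniformly bounded variance on $\caK$ by the same i.i.d.\ computation.

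The principal obstacle is making the linearisation sharp enough that the $O(|\Delta|^2)$ error remains $o(\vartheta_N/\sqrt{N})$ after scaling; this demands a high-probability bound $|\Delta(z)|=O(N^{-1/2})$ uniform on $\caK$, which must be bootstrapped from a concentration inequality for the i.i.d.\ empirical Stieltjes transform $\int(\vartheta_N x-z-m_{fc}^{\vartheta_N})^{-1}\dd\wh{\nu}_N(x)$ combined with the stability of \eqref{eq:funceq} -- essentially the same analytic machinery underlying the local deformed semicircle laws recalled in Section~\ref{sec:prelim}.
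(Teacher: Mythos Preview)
Your proposal is correct and follows essentially the same route as the paper: the decomposition of $\wt{\xi}_N^{\vartheta}$ into $(\sqrt{N}\vartheta)^{-1}\xi_N^{\vartheta}$ plus $\sqrt{N}\vartheta^{-1}(\wh{m}_{fc}^{\vartheta}-m_{fc}^{\vartheta})$, the linearisation of the latter via the pair of self-consistent equations, and the classical CLT for the resulting i.i.d.\ sum all match Section~\ref{sec:Gaussianprocsqrtprf}. Two minor differences are worth flagging. First, to kill the $\xi_N^{\vartheta}$ piece the paper does not invoke Proposition~\ref{prop:Gproc} wholesale but rather the explicit $L^2$ bounds of Remarks~\ref{rem:preduc1} and~\ref{rem:preduc2} (which are extracted from its proof, conditioned on $V$); this gives the pointwise $L^2$ estimate $\expct{\absv{(\sqrt{N}\vartheta)^{-1}\xi_N^{\vartheta}(z)}^2\lone_{\Omega_N}}=O(N^{-1}\vartheta^{-2})$. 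Second, for tightness the paper proves a H\"older bound directly for the full process $\wt{\xi}_N^{\vartheta}$ via the resolvent identity and a martingale decomposition over the enlarged filtration $\caG_k$ (Lemma~\ref{lem:tightprocsqrt}), rather than deducing it from tightness of the linearised i.i.d.\ sum $F_N$ as you do. Your route is more economical but implicitly requires the linearisation error and the $(\sqrt{N}\vartheta)^{-1}\xi_N^{\vartheta}$ piece to be controlled \emph{uniformly} on $\caK$, not just pointwise; the paper's resolvent-based argument sidesteps this by bounding the H\"older increment of $\wt{\xi}_N^{\vartheta}$ in one stroke.
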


\begin{proposition}\label{prop:Gprocsqrt}
	Suppose that $V$ is \textbf{random} and $\vartheta_{\infty}=0$. For a fixed constant $c>0$ and a path $\caK\subset\{\Im z>c\}$, the process $\{\wt{\xi}^{\vartheta}_{N}(z):z\in\caK\}$ converges weakly to a Gaussian process $\{\wt{\xi}(z):z\in\caK\}$ with zero mean and the covariance $\Var{v_{1}}m_{sc}'(z_{1})m_{sc}'(z_{2})$.
\end{proposition}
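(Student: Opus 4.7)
The plan is to split the process as
\begin{equation*}
\wt{\xi}_N^{\vartheta}(z) = \frac{\sqrt{N}}{\vartheta}\bigl(m_N(z) - \wh{m}_{fc}^{\vartheta}(z)\bigr) + \frac{\sqrt{N}}{\vartheta}\bigl(\wh{m}_{fc}^{\vartheta}(z) - m_{fc}^{\vartheta}(z)\bigr)
\end{equation*}
and argue that when $\vartheta_{\infty}=0$ the limit is driven entirely by the second summand. For the first, I would condition on $V$ (which almost surely satisfies the deterministic assumptions by Assumptions~\ref{assump:esdvconv}(i) and \ref{assump:regesdv}(i)) and apply Proposition~\ref{prop:Gproc} to conclude that $\xi_N^{\vartheta}(z) = N(m_N(z) - \wh{m}_{fc}^{\vartheta}(z)) = O_P(1)$ uniformly on $\caK$; since the prefactor $(\sqrt{N}\vartheta)^{-1}\to 0$ under the assumption $\vartheta\gg N^{-1/2}$, this contribution is $o_P(1)$.

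For the second summand, subtracting \eqref{eq:funceq} from \eqref{eq:funceqhat} and using the identity $\tfrac{1}{A-\wh m}-\tfrac{1}{A-m}=\tfrac{\wh m - m}{(A-\wh m)(A-m)}$ with $A=\vartheta x-z$ yields
\begin{equation*}
\Delta(z)\deq \wh{m}_{fc}^{\vartheta}(z)-m_{fc}^{\vartheta}(z) = \frac{1}{1-\wh{I}^{\vartheta}(z)}\int\frac{\dd(\wh{\nu}_N-\nu)(x)}{\vartheta x - z - m_{fc}^{\vartheta}(z)},
\end{equation*}
where $\wh{I}^{\vartheta}(z)\deq\int\frac{\dd\wh{\nu}_N(x)}{(\vartheta x - z - \wh{m}_{fc}^{\vartheta})(\vartheta x - z - m_{fc}^{\vartheta})}$. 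Since both $m_{fc}^{\vartheta}$ and $\wh{m}_{fc}^{\vartheta}$ converge uniformly on $\caK$ to $m_{sc}$ (via the local laws and stability of the self-consistent equations), $\wh{I}^{\vartheta}\to m_{sc}^2$, and the identity $m_{sc}'(z)=m_{sc}(z)^2/(1-m_{sc}(z)^2)$ gives $(1-\wh{I}^{\vartheta})^{-1}\to m_{sc}'/m_{sc}^2$. Expanding the integrand as a geometric series in $\vartheta x/(z+m_{fc}^{\vartheta}(z))$ and using $\int\dd(\wh\nu_N-\nu)=0$ to eliminate the constant term produces
\begin{equation*}
\int\frac{\dd(\wh\nu_N-\nu)(x)}{\vartheta x - z - m_{fc}^{\vartheta}(z)} = -\vartheta\, m_{sc}(z)^2\, \bar{V}_N + O_P\bigl(\vartheta^2 N^{-1/2}\bigr),
\end{equation*}
where $\bar{V}_N\deq \tfrac{1}{N}\sum_i v_i$; combining with the prefactor gives $(\sqrt{N}/\vartheta)\Delta(z) = -m_{sc}'(z)\sqrt{N}\,\bar V_N + o_P(1)$, uniformly in $z\in\caK$.

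Since $\nu$ is centered, the classical CLT yields $\sqrt{N}\,\bar V_N\Rightarrow Z\sim\mathcal{N}(0,\Var{v_1})$, so the limiting process is $-m_{sc}'(z)Z$, which is Gaussian with zero mean and covariance $\Var{v_1}\,m_{sc}'(z_1)\,m_{sc}'(z_2)$, as required. Finite-dimensional convergence is automatic because the limit is a deterministic function of the single scalar $Z$, and tightness on $\caK$ follows from a Cauchy-formula argument on $\{\Im z>c/2\}$ parallel to the tightness proofs for Propositions~\ref{prop:Gproc}--\ref{prop:Gprocsqrttheta}. The hard parts will be (i) establishing quantitative uniform bounds $|m_{fc}^{\vartheta}-m_{sc}|=O(\vartheta^2)$ and $|\wh{m}_{fc}^{\vartheta}-m_{sc}|=O(\vartheta^2+\vartheta N^{-1/2})$ on $\caK$, needed to justify the expansion of $\wh I^{\vartheta}$ and absorb higher-order error terms; (ii) ensuring $|1-\wh{I}^{\vartheta}(z)|$ is bounded below uniformly in $N$, which should reduce to $|m_{sc}(z)|<1$ strictly on $\caK$; and (iii) at the borderline coupling $\vartheta_N\sim N^{-1/2}\sqrt{\log N}$, showing negligibility of the first summand without invoking high-probability tail estimates, which will require the distributional form of Proposition~\ref{prop:Gproc} rather than quantitative concentration bounds.
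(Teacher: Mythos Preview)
Your proposal is correct and follows essentially the same route as the paper: the same decomposition into $\frac{1}{\sqrt{N}\vartheta}\xi_N^{\vartheta}$ plus $\frac{\sqrt{N}}{\vartheta}(\wh{m}_{fc}^{\vartheta}-m_{fc}^{\vartheta})$, the same self-comparison of \eqref{eq:funceq} and \eqref{eq:funceqhat} to extract the factor $(1-\wh I^{\vartheta})^{-1}\to 1+m_{sc}'$, the same one-term Taylor expansion in $\vartheta x$ reducing to $-m_{sc}(z)^2\cdot\frac{1}{\sqrt N}\sum_i v_i$, and the classical CLT plus a H\"older-type tightness bound. The only noteworthy difference is that for the negligibility of the first summand the paper does not condition on $V$ and invoke Proposition~\ref{prop:Gproc} distributionally; instead it records, as byproducts of that proof, the $L^2$ bounds $\bigl|\E_N[m_N]-\wh m_{fc}\bigr|\lone_{\Omega_N}=O(N^{-1})$ and $\E\bigl[|m_N-\E_N[m_N]|^2\lone_{\Omega_N}\bigr]=O(N^{-2+\epsilon})$ (Remarks~\ref{rem:preduc1} and~\ref{rem:preduc2}), which give the $o_P(1)$ conclusion directly and sidestep the conditioning issue you flag in your point~(iii).
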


\begin{notation}
	For notational simplicity, we denote $\wt{\xi}^{0}(z)$ by $\wt{\xi}(z)$.
\end{notation}

\begin{lemma}\label{lem:rl0igno}
	For any sufficiently small $\delta>0$ and a sequence of events $\{\Omega_{N}\}_{n\in\N}$ with $\prob{\Omega_{N}} \to 1$,
	\beq\label{eq:sharpN}
	\lim_{v_{0}\to 0^{+}}\limsup_{N\to\infty}\int_{0}^{1}\expct{\absv{\xi_{N}^{\vartheta}(\Gamma_{\#}(t))}^{2}\lone_{\Omega_{N}}}\absv{\Gamma_{\#}'(t)}dt=0,
	\eeq
	and
	\beq\label{eq:sharp}
	\lim_{v_{0}\to 0^{+}}\int_{0}^{1}\expct{\absv{\xi^{\vartheta_{\infty}}(\Gamma_{\#}(t))}^{2}}\absv{\Gamma'_{\#}(t)}dt=0
	\eeq
	where $\xi$ equals either $\xi_{N}^{\vartheta}$ or $\wt{\xi}_{N}^{\vartheta}$ and $\xi^{\infty}$ equals $\xi^{\vartheta_{\infty}}$ or $\wt{\xi}^{\vartheta_{\infty}}$, and $\Gamma_{\#}$ can be $\Gamma_{l},\Gamma_{r}$ or $\Gamma_{0}$.
\end{lemma}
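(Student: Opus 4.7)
The plan is to bound $\E[|\xi_N^\vartheta(z)|^2 \lone_{\Omega_N}]$ and $\E[|\widetilde{\xi}_N^\vartheta(z)|^2 \lone_{\Omega_N}]$ by $O(1)$ uniformly in $z$ on the two vertical lines $\re z = a_\pm$, and then to observe that $\Gamma_l \cup \Gamma_r$ has total length $O(v_0)$ while $\Gamma_0$ has total length $O(N^{-\delta})$, so that the resulting contributions vanish after taking $\limsup_N$ and then $v_0 \to 0^+$. The key structural input is that $a_\pm$ are fixed at positive macroscopic distance from the spectrum $[L_-^{\vartheta_\infty}, L_+^{\vartheta_\infty}]$, so that the distance from $z = a_\pm + \ii v$ to the support of $\rho_{fc}^\vartheta$ stays bounded below independently of $v$ and $N$; this prevents the local-law and variance estimates from degenerating as $\Im z \to 0$.

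For the finite-$N$ estimate \eqref{eq:sharpN} I would use the decomposition
\begin{equation*}
\E[|\xi_N^\vartheta(z)|^2 \lone_{\Omega_N}] \leq 2 N^2 \Var{m_N^\vartheta(z)} + 2 N^2 \bigabsv{\E m_N^\vartheta(z) - \wh{m}_{fc}^\vartheta(z)}^2.
\end{equation*}
The variance term is controlled by Lemma~\ref{lem:varbound}, whose proof is carried out in the same appendix and yields $\Var{m_N^\vartheta(z)} = O(N^{-2})$ uniformly for $z$ at macroscopic distance from the spectrum. The bias term is controlled by the mean estimate contained in the local deformed semicircle law of Section~\ref{sec:prelim}, which gives $|\E m_N^\vartheta - \wh{m}_{fc}^\vartheta| = O(N^{-1})$ under the same off-spectrum condition. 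Together these yield $\E|\xi_N^\vartheta(z)|^2 = O(1)$ uniformly on $\Gamma_l \cup \Gamma_r \cup \Gamma_0$, and integration produces the $O(v_0) + O(N^{-\delta})$ bound as desired. The case of $\widetilde{\xi}_N^\vartheta$ is treated analogously, with the additional estimate $|\wh{m}_{fc}^\vartheta - m_{fc}^\vartheta| = O(\vartheta_N N^{-1/2})$ on $\Omega_N$ (obtained by linearizing \eqref{eq:funceqhat} around \eqref{eq:funceq} and using the control on $\wh{\nu}_N - \nu$), so that the prefactor $\sqrt{N}/\vartheta$ absorbs exactly the smallness and one still obtains a uniform $O(1)$ second moment.

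For the limiting process \eqref{eq:sharp}, the covariances $\Gamma^{\vartheta_\infty}(z,z)$ and its analogue for $\widetilde{\xi}^{\vartheta_\infty}$ are explicit rational expressions in $m_{fc}^{\vartheta_\infty}$, its first two derivatives and $I(z,z)$. Since $m_{fc}^{\vartheta_\infty}$ extends analytically across the real axis in a neighborhood of each line $\re z = a_\pm$ (shrinking $v_0$ if needed so that $\Gamma$ stays in this analyticity domain), every factor is continuous and bounded on the closed segments $\{a_\pm + \ii v : |v| \le v_0\}$. Integration of this uniform bound again gives $O(v_0) + O(N^{-\delta})$, which vanishes in the iterated limit.

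The main obstacle is the uniform-in-$\Im z$ variance estimate of Lemma~\ref{lem:varbound} down to $|\Im z| = N^{-\delta}$ on the vertical lines $\re z = a_\pm$; this rests on the stability of the self-consistent equation \eqref{eq:funceqhat} and of the associated resolvent identities uniformly away from the spectrum, and constitutes the technically substantial step of the proof. The remaining pieces are routine bookkeeping: verifying analyticity of the limiting covariance on the relevant closed segments, and absorbing the $\lone_{\Omega_N^c}$ contribution when passing between $\E[|\xi_N|^2 \lone_{\Omega_N}]$ and $\E|\xi_N|^2$ via the trivial bound $|\xi_N^\vartheta(z)| \le 2N/|\Im z| \le 2 N^{1+\delta}$ combined with $\P(\Omega_N^c) = o(N^{-K})$ for any $K$.
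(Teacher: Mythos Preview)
Your overall strategy---split into bias plus variance, bound each uniformly on the vertical lines, then exploit that the total length $|\Gamma_l\cup\Gamma_r\cup\Gamma_0|=O(v_0)+O(N^{-\delta})$ vanishes---is exactly what the paper does. The treatment of the limiting process via boundedness of the explicit covariance formulae is also the same.

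The one point where your sketch is too vague is the ``main obstacle'' you flag: the uniform variance bound as $\Im z\to 0$. Lemma~\ref{lem:varbound} as stated carries a factor $\eta^{-3-\epsilon}$ and therefore does \emph{not} directly give $\Var{m_N}=O(N^{-2})$ on $\Gamma_0$ or near the real axis on $\Gamma_{l},\Gamma_{r}$. The mechanism that rescues this is not ``stability of the self-consistent equation'' but the rigidity estimate (Lemma~\ref{lem:rigid}): on the high-probability event $\Lambda_N=\{\lambda_N\le \wh\gamma_N+N^{-1/3}\}$ (and its analogue at the left edge) one has $\|R(z)\|\le C$ uniformly for $\re z=a_\pm$, independently of $\Im z$. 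The paper then reruns the martingale-decomposition proof of Lemma~\ref{lem:varbound} with every occurrence of $\eta^{-1}$ replaced by $\|R\|$, $\|R^{(k)}\|$, or the lower bound $|v_k-z-m_N^{(k)}(z)|\ge C$ coming from \eqref{eq:mfczmapswh}. That is the content you need to supply; invoking Lemma~\ref{lem:varbound} verbatim is not enough.

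Two minor differences worth noting. First, for $\Gamma_0$ the paper does not bother with the variance argument at all: it simply uses the local law $|\xi_N(z)|\prec 1$ on a suitably chosen $\Omega_N$ to get $\E[|\xi_N|^2\lone_{\Omega_N}]\le N^{2\epsilon}$, and the length $N^{-\delta}$ does the rest. Second, your final remark assumes $\P(\Omega_N^c)=o(N^{-K})$ for all $K$; the lemma as used only needs the \emph{existence} of some $\Omega_N$ with $\P(\Omega_N)\to 1$, and the paper constructs this $\Omega_N$ from the local-law and rigidity events, which do have arbitrary polynomial decay.
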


Given these results, we deduce that
\begin{multline}
\biggexpct{\biggabsv{\lone_{\Omega_{N}}\oint_{\Gamma_{\#}}\xi_{N}(z)\varphi(z)dz}^{2}} 
\le \bigg(\sup_{\substack{\Re z=a_{\pm},\\ \absv{\Im z}\leq v_{0}}}\absv{\varphi(z)}^{2}\bigg) \biggexpct{\lone_{\Omega_{N}}\biggabsv{\int_{0}^{1}\absv{\xi_{N}(\Gamma_{\#}(t))}\absv{\Gamma_{\#}'(t)}dt}^{2}} \\
\le C\biggexpct{\lone_{\Omega_{N}}\int_{0}^{1}\absv{\xi_{N}(\Gamma_{\#}(t))}^{2}\absv{\Gamma_{\#}'(t)}^{2}dt} \leq C\int_{0}^{1}\expct{\lone_{\Omega_{N}}\absv{\xi_{N}(\Gamma_{\#}(t))}^{2}}\absv{\Gamma_{\#}'(t)}^{2}dt,
\end{multline}
which, together with the assumption $\prob{\Omega_{N}}\to 1$, imply that $\oint_{\Gamma_{\#}}\xi_{N}(z)\varphi(z)\dd z$ converges to 0 in probability. Similarly, $\oint_{\Gamma_{\#}}\xi^{\vartheta_{\infty}}(z)\varphi(z)\dd z$, $\oint_{\Gamma_{\#}}\wt{\xi}^{\vartheta_{\infty}}(z)\varphi(z)\dd z$ and $\oint_{\Gamma_{\#}}\wt{\xi}_{N}^{\theta}(z)\varphi(z)\dd z$ also converge in probability to $0$ as $v_{0}\to 0^{+}$, for $\Gamma_{\#}=\Gamma_{l},\Gamma_{r},\Gamma_{0}$. Since $T_{N}(\varphi)$ and $S_{N}(\varphi)$ do not depend on $v_{0}$ as long as it is strictly positive, combining Proposition~\ref{prop:Gproc} and Lemma~\ref{lem:rl0igno}, we get the convergence
\beq
T_{N}^{\vartheta}(\varphi)\Rightarrow T(\varphi) \quad\text{and}\quad S_{N}^{\vartheta}(\varphi)\Rightarrow S(\varphi)
\eeq
in distribution, where $T(\varphi)$ and $S(\varphi)$ are the Gaussian random variables defined in Theorems~\ref{thm:main} and \ref{thm:mainsqrt}.

\begin{remark}
	In \cite{Su2013}, where the deformed Gaussian orthogonal ensembles were analyzed in depth, it was proved that for $W=N^{-1/2}A+N^{-\alpha/2}V$ where $\alpha\in(0,1)$, $A$ is a GOE matrix, and $V$ is a random diagonal matrix as in Assumption~\ref{assump:esdvconv}, the mean and variance of $m_{N}(z)=\tr(W-zI)^{-1}$ are given by
	\beq
	\expct{m_{N}(z)}=m_{sc}(z)+\frac{m_{sc}(z)^{3}}{1-m_{sc}(z)^{2}}\cdot\frac{\Var{v_{1}}}{N^{\alpha}}+O(N^{-\frac{\min(3\alpha,2)}{2}}),
	\eeq
	\beq
	\Var{m_{N}(z)}=\biggabsv{\frac{m_{sc}(z)^{2}}{1-m_{sc}(z)^{2}}}^{2}\cdot\frac{\Var{v_{1}}}{N^{1+\alpha}}+O(N^{-\frac{\min(2+3\alpha,3+\alpha)}{2}}),
	\eeq
	which coincide with our results given in Proposition~\ref{prop:Gprocsqrt}.
\end{remark}
\begin{remark}
	For the case where $\vartheta=\sigma N^{-1/2}$ for some constant $\sigma>0$, $W$ itself is another Wigner matrix with $w_{2}$ replaced by $w_{2}+\sigma^{2}\Var{v_{1}}$, and hence it is known (see \cite{Bai-Yao2005}, for instance) that $\{N(m_{N}(z)-m_{sc}(z)):z\in\caK\}$ converges in distribution to the Gaussian process with mean
	\beq
	m_{sc}(z)^{3}(1+m_{sc}'(z))((w_{2}+\sigma^{2}\Var{v_{1}}-2)+m_{sc}'(z)+(W_{4}-3)m_{sc}(z)^{2})
	\eeq
	and covariance
	\beq
	m_{sc}'(z_{1})m_{sc}'(z_{2})\bigg((w_{2}+\sigma^{2}\Var{v_{1}}-2)+2(W_{4}-3)m_{sc}(z_{1})m_{sc}(z_{2})+\frac{2}{(1-m_{sc}(z_{1})m_{sc}(z_{2}))^{2}}\bigg).
	\eeq
	After normalizing by $N^{\frac{1}{2}}\vartheta^{-1}$, the mean is
	\beq\label{eq:baiyaom}
	\frac{1}{\sigma}\expct{m_{N}(z)-m_{sc}(z)} =\sigma m_{sc}(z)^{3}(1+m_{sc}'(z))\Var{v_{1}} +O(\sigma^{-1})
	\eeq
	and the covariance is
	\beq
	m'_{sc}(z_{1})m_{sc}'(z_{2})\Var{v_{1}} +O(\sigma^{-2}).
	\eeq
	The difference between our results stems from the deterministic factor $m_{fc}-m_{sc}$. Considering the self-consistent equation \eqref{eq:funceq}, for $\vartheta=\sigma N^{-1/2}$, $\nu$ being centered implies
	\begin{multline}
	\Lambda \deq \frac{m_{fc}(z)-m_{sc}(z)}{\vartheta^{2}} =\vartheta^{2}m_{sc}(z)\Lambda^{2} +m_{sc}(z)^{2}\Lambda-\vartheta^{-1} m_{sc}(z)\int_{\R}\frac{x}{\vartheta x-z-m_{fc}^{\vartheta}(z)}\dd\nu(x) \\
	=m_{sc}(z)^{2}\Lambda+\vartheta^{-1}m_{sc}(z)\int_{\R}\bigg(\frac{x}{-z-m_{sc}(z)}-\frac{x}{\vartheta x-z-m_{fc}^{\vartheta}(z)}\bigg)\dd\nu(x)+O(\vartheta^{2}\Lambda^{2}) \\
	=m_{sc}(z)^{2}\Lambda+m_{sc}(z)\int_{\R}\frac{x^{2}}{(\vartheta x-z-m_{fc}^{\vartheta}(z))(-z-m_{sc}(z))}\dd\nu(x)+O(\vartheta\Lambda+\vartheta^{2}\Lambda^{2}) \\
	=m_{sc}(z)^{2}\Lambda+m_{sc}(z)^{3}\Var{v_{1}}+O(\vartheta+\vartheta^{2}\Lambda+\vartheta\Lambda+\vartheta^{2}\Lambda^{2}),
	\end{multline}
	where we add an auxiliary factor 
	\beq
	\vartheta^{-1}m_{sc}(z)^{2}\int_{\R}x\dd\nu(x) =\vartheta^{-1}m_{sc}(z)\int_{\R}\frac{x}{-z-m_{sc}(z)}\dd\nu(x) =0
	\eeq
	in the second equality.
	Then by assuming $\Lambda=O(1)$ we get 
	\beq
	\Lambda\to \frac{m_{sc}(z)^{3}}{1-m_{sc}(z)^{2}}\Var{v_{1}} =m_{sc}(z)^{3}(1+m_{sc}'(z))\Var{v_{1}},
	\eeq
	and indeed
	\beq
	\frac{N}{C}(m_{fc}-m_{sc}) =C\frac{m_{sc}(z)^{3}}{1-m_{sc}(z)^{2}}\Var{v_{1}}+O(C^{2}N^{-\frac{1}{2}}).
	\eeq
	It can be easily seen that this term precisely compensates $Cm_{sc}(z)^{3}(1+m_{sc}'(z))\Var{v_{1}}$ in \eqref{eq:baiyaom}, and the same holds for the covariance.
\end{remark}

\subsection{Extension to $C^{2}$ test functions}

In order to use Cauchy integral formula to pass from the Gaussian fluctuation of resolvent to that of linear statistics, we restricted our test function to be analytic in the previous section. In this section, we use the density argument proposed in \cite{Shcherbina2011a} to extend the result for $C^{2}$ functions. The following lemma which enables the density argument was proved in the same paper:
\begin{lemma}[Proposition 3 of \cite{Shcherbina2011a}]\label{lem:Shprop}
	Let $\{\xi_{\ell}^{(n)}\}_{\ell=1}^{n}$ be a triangular array of random variables, $\caN_{n}[\varphi]\deq\sum_{\ell=1}^{n}\varphi(\xi_{\ell}^{(n)})$ corresponding to a test function $\varphi:\R\to\R$, and $V_{n}[\varphi]\deq\Var{\caN_{n}[\varphi]}$ be the variance of $\caN_{n}[\varphi]$. Assume the following:
	\begin{itemize}
		\item There exists a vector space $\caL$ endowed with a norm $\norm{\cdot}$, on which $V_{n}$ is defined and admits the bound
		\beq
		V_{n}[\varphi]\leq C\norm{\varphi}^{2},\quad\forall\varphi\in\caL.
		\eeq
		
		\item There exists a dense linear subspace $\caL_{1}\subset\caL$ such that the CLT is valid for $\caN_{n}[\varphi]$, $\varphi\in\caL_{1}$. That is, we have the weak convergence
		\beq\label{eq:dense}
		\caN_{n}[\varphi]-\expct{\caN_{n}[\varphi]}\to \caN(0,V[\varphi])
		\eeq
		where $V:\caL_{1}\to\R_{+}$ is a continuous quadratic functional.
	\end{itemize}
	Then $V$ admits a continuous extension to $\caL$ and \eqref{eq:dense} holds for $\varphi\in\caL$.
\end{lemma}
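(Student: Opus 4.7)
The plan is to run the standard density-plus-Slutsky argument in two steps: first extend the limiting variance functional $V$ from $\caL_1$ to all of $\caL$ by continuity, then deduce the CLT on $\caL$ by approximating an arbitrary $\varphi\in\caL$ by functions in $\caL_1$ and controlling the remainder uniformly in $n$.

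For the extension of $V$, observe that $\psi\mapsto\sqrt{V_n[\psi]}$ is a seminorm on $\caL$, since $V_n[\psi]=\Var{\caN_n[\psi]}$ is the squared $L^2$-norm of a centered random variable and hence obeys Minkowski. For $\psi\in\caL_1$, the hypothesis $\caN_n[\psi]-\expct{\caN_n[\psi]}\Rightarrow\caN(0,V[\psi])$ together with the uniform variance bound $V_n[\psi]\leq C\norm{\psi}^2$ and Fatou's lemma (applied to the weakly convergent sequence) yields $V[\psi]\leq C\norm{\psi}^2$; hence $\sqrt{V}$ is itself a seminorm on $\caL_1$ with $\sqrt{V[\psi]}\leq\sqrt{C}\norm{\psi}$. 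Now, for $\varphi\in\caL$ choose $\varphi_k\in\caL_1$ with $\norm{\varphi-\varphi_k}\to 0$. Then
\[
\absv{\sqrt{V[\varphi_j]}-\sqrt{V[\varphi_k]}} \leq \sqrt{V[\varphi_j-\varphi_k]} \leq \sqrt{C}\,\norm{\varphi_j-\varphi_k}\longrightarrow 0,
\]
so $\{V[\varphi_k]\}_k$ is Cauchy. Set $V[\varphi]\deq\lim_k V[\varphi_k]$; the same inequality shows this limit is independent of the approximating sequence and that $V$ is continuous on $\caL$, and $V[\varphi]\leq C\norm{\varphi}^2$ persists.

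For the CLT on $\caL$, fix $\varphi\in\caL$ and $\varphi_k\in\caL_1$ with $\norm{\varphi-\varphi_k}\to 0$, and decompose
\[
\caN_n[\varphi]-\expct{\caN_n[\varphi]} = X_{n,k}+Y_{n,k},
\]
where $X_{n,k}\deq\caN_n[\varphi_k]-\expct{\caN_n[\varphi_k]}$ and $Y_{n,k}\deq\caN_n[\varphi-\varphi_k]-\expct{\caN_n[\varphi-\varphi_k]}$. By hypothesis, for each fixed $k$, $X_{n,k}\Rightarrow\caN(0,V[\varphi_k])$ as $n\to\infty$, and $\caN(0,V[\varphi_k])\Rightarrow\caN(0,V[\varphi])$ as $k\to\infty$ by continuity of $V$. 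By Chebyshev,
\[
\prob{\absv{Y_{n,k}}>\epsilon}\leq \epsilon^{-2}V_n[\varphi-\varphi_k]\leq C\epsilon^{-2}\norm{\varphi-\varphi_k}^2,
\]
a bound that is \emph{uniform in $n$}. An approximation theorem for convergence in distribution (e.g.\ Billingsley, Theorem 3.2, or a direct characteristic-function argument) then concludes $\caN_n[\varphi]-\expct{\caN_n[\varphi]}\Rightarrow\caN(0,V[\varphi])$.

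The only genuinely delicate point is interchanging the limits $n\to\infty$ and $k\to\infty$; this is precisely where the uniform variance bound $V_n[\psi]\leq C\norm{\psi}^2$ does all the work, by rendering the tail estimate on $Y_{n,k}$ independent of $n$. Beyond that, the argument is purely functional-analytic and uses no structure specific to the random variables $\xi_\ell^{(n)}$.
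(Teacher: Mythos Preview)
Your proof is correct. The paper does not actually prove this lemma; it cites Shcherbina and only remarks that ``the proof is based on the uniform convergence of characteristic functions.'' Your density-plus-Slutsky argument via Billingsley's Theorem~3.2 is the standard route and is equivalent to the characteristic-function version you mention parenthetically: in the latter one bounds $\bigl|\E\e{it(\caN_n[\varphi]-\E\caN_n[\varphi])}-\e{-t^2V[\varphi]/2}\bigr|$ by a three-term triangle inequality, with the $\varphi\to\varphi_k$ term controlled uniformly in $n$ by $|t|\sqrt{V_n[\varphi-\varphi_k]}\leq |t|\sqrt{C}\,\norm{\varphi-\varphi_k}$, which is precisely your Chebyshev bound on $Y_{n,k}$ recast at the level of characteristic functions. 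Either packaging isolates the same key input, the uniform variance bound $V_n[\psi]\leq C\norm{\psi}^2$, and uses nothing else about the triangular array.
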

The proof is based on the uniform convergence of characteristic functions, and we see that the lemma extends to different normalizations without any modification in the proof. In particular for random $V$, our choice of normalization will be $N^{-1/2}\vartheta$, so that we replace $\caN_{n}[\varphi]$ by $N^{-1/2}\vartheta^{-1}\sum_{\ell}\varphi(\xi_{\ell}^{(n)})$. 

As addressed above, our choice of dense subset will be the analytic functions. The following lemma is a direct application of Weierstrass approximation theorem, which proves the density of analytic functions in $C^{2}$:
\begin{lemma}
	Let $\caL_{1}$ be the space real-valued functions on $\R$ with compact support which is analytic in a neighborhood of $[L_{-},L_{+}]$. Then $\caL_{1}\cap C^{2}_{M}$ is dense in $C^{2}_{M}$.
\end{lemma}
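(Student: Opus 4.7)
The plan is a direct application of the Weierstrass approximation theorem combined with a smooth cutoff: approximate $\varphi$ by a polynomial on a neighborhood of $[L_{-},L_{+}]$, and glue this polynomial back to $\varphi$ itself outside that neighborhood, so that both the compact support in $[-M,M]$ and the analyticity on a neighborhood of $[L_{-},L_{+}]$ are preserved.

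First, since $M>\max\{1-L_{-},L_{+}+1\}$ forces $[L_{-},L_{+}]\subset(-M+1,M-1)$, I fix $\epsilon>0$ small enough that $J\deq[L_{-}-2\epsilon,L_{+}+2\epsilon]\subset(-M,M)$, and choose once and for all a cutoff $\chi\in C_{c}^{\infty}(\R)$ with $\chi\equiv 1$ on $[L_{-}-\epsilon,L_{+}+\epsilon]$ and $\supp\chi\subset J$. Given $\varphi\in C^{2}_{M}$, I apply the classical Weierstrass theorem to the continuous function $\varphi''$ on $J$ to obtain polynomials $q_{n}$ with $\sup_{J}|\varphi''-q_{n}|\to 0$, and set
\beq
p_{n}(x)\deq\varphi(x_{0})+\varphi'(x_{0})(x-x_{0})+\int_{x_{0}}^{x}\int_{x_{0}}^{y}q_{n}(s)\dd s\dd y
\eeq
for a fixed $x_{0}\in J$, so that $p_{n}''=q_{n}$ and an elementary application of the fundamental theorem of calculus yields $\norm{p_{n}-\varphi}_{C^{2}(J)}\to 0$. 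Each $p_{n}$ is entire.

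Finally, I define $\varphi_{n}\deq\chi p_{n}+(1-\chi)\varphi$. Then $\varphi_{n}\in C^{2}(\R)$ with $\supp\varphi_{n}\subset[-M,M]$, because both $\chi p_{n}$ (supported in $J\subset(-M,M)$) and $\varphi$ are supported there. On $[L_{-}-\epsilon,L_{+}+\epsilon]$ we have $\chi\equiv 1$, hence $\varphi_{n}\equiv p_{n}$ on this interval, so $\varphi_{n}$ is real-analytic on an open neighborhood of $[L_{-},L_{+}]$ and therefore $\varphi_{n}\in\caL_{1}\cap C^{2}_{M}$. Since $\varphi_{n}-\varphi=\chi(p_{n}-\varphi)$ and $\chi$ is a fixed $C^{2}$ function, the Leibniz rule yields
\beq
\norm{\varphi_{n}-\varphi}_{C^{2}_{M}}\leq C_{\chi}\norm{p_{n}-\varphi}_{C^{2}(J)}\longrightarrow 0,
\eeq
where $C_{\chi}$ depends only on the fixed cutoff.

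There is no substantive technical obstacle here; the only point worth emphasizing is that one cannot take the approximants $\varphi_{n}$ to be polynomials themselves, because a nonzero polynomial is never compactly supported. The cut-and-glue construction is essential, and one must arrange that $\chi$ equals $1$ on a genuine open neighborhood of $[L_{-},L_{+}]$ so that multiplication by $\chi$ does not destroy the analyticity of $p_{n}$ on the required set.
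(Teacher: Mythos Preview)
Your proof is correct and is precisely the kind of argument the paper has in mind: the paper does not spell out a proof but simply states that the lemma ``is a direct application of Weierstrass approximation theorem,'' and your cutoff-plus-polynomial construction is the standard way to make that application rigorous in the $C^{2}$ setting while preserving compact support. There is nothing to add or correct.
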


Given the density of analytic functions in $C^{2}_{M}$, we proceed to the first condition of \ref{lem:Shprop}:
\begin{lemma}[Proposition 1 of \cite{Shcherbina2011a}]
	For any $s>0$, any $(N\times N)$ Hermitian or real symmetric random matrix $M$, and for any $\varphi:\R\to\R$, we have
	\beq
	\Var{\sum_{i=1}^{N}\varphi(\lambda_{i})}\leq C_{s}\norm{\varphi}^{2}_{s}\int_{0}^{\infty}\e{-y}y^{2s-1}\int_{-\infty}^{\infty}\Var{\Tr R(x+\ii y)}\dd x\dd y,
	\eeq
	where $R(x+\ii y)$ is the resolvent of $M$ and
	\beq
	\norm{\varphi}_{s}^{2}\deq\int(1+2\absv{k})^{2s}\absv{\wh{\varphi}(k)}^{2}\dd k,\quad \wh{\varphi}(k)\deq\frac{1}{2\pi}\int \e{\ii kx}\dd x.
	\eeq
\end{lemma}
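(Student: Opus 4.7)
The plan is to interpolate between the Fourier side (where the norm $\norm{\varphi}_{s}$ lives) and the resolvent side (where the right-hand side lives) via a weighted Cauchy--Schwarz and the elementary Gamma-function identity
\[
(1+2\absv{k})^{-2s}=\frac{1}{\Gamma(2s)}\int_{0}^{\infty}\e{-y}\e{-2\absv{k}y}\,y^{2s-1}\dd y,
\]
which is precisely engineered to produce the weight $\e{-y}y^{2s-1}$ appearing in the statement. First I would use Fourier inversion to write $\varphi(\lambda)=\int\e{-\ii k\lambda}\wh{\varphi}(k)\dd k$, so that
\[
\sum_{i}\varphi(\lambda_{i})-\expct{\sum_{i}\varphi(\lambda_{i})}=\int_{\R}\wh{\varphi}(k)\,\big[\Phi(k)-\expct{\Phi(k)}\big]\dd k,\qquad \Phi(k)\deq\Tr \e{-\ii kW},
\]
and then apply Cauchy--Schwarz in $k$ with the dual weights $(1+2\absv{k})^{\pm s}$ and take expectation to obtain
\[
\Var{\sum_{i}\varphi(\lambda_{i})}\leq \norm{\varphi}_{s}^{2}\int_{\R}(1+2\absv{k})^{-2s}\Var{\Phi(k)}\dd k.
\]

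Next I would insert the Gamma-function identity above and use Fubini to reshape the right-hand side as $(1/\Gamma(2s))\int_{0}^{\infty}\e{-y}y^{2s-1}\big(\int_{\R}\e{-2\absv{k}y}\Var{\Phi(k)}\dd k\big)\dd y$. The bridge to the resolvent is the Poisson-kernel Fourier identity
\[
\int_{\R}\e{\ii kx}\,\im R(x+\ii y)\dd x=\pi\e{-y\absv{k}}\,\Tr \e{\ii kW},
\]
obtained by recognizing $\im R(x+\ii y)=\sum_{i}y/((\lambda_{i}-x)^{2}+y^{2})$ as a sum of Poisson kernels and using $\int \e{\ii ku}y/(u^{2}+y^{2})\dd u=\pi\e{-y\absv{k}}$. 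This identity shows that $\e{-y\absv{k}}[\Phi(k)-\expct{\Phi(k)}]$ is, up to a factor of $\pi$, the Fourier transform in $x$ of $\im R(\cdot+\ii y)-\expct{\im R(\cdot+\ii y)}$. Plancherel's theorem then converts $\int_{\R}\e{-2\absv{k}y}\Var{\Phi(k)}\dd k$ into a constant multiple of $\int_{\R}\Var{\im R(x+\ii y)}\dd x$, and the elementary bound $\Var{\im Z}\leq \Var{Z}$ for complex-valued $Z$ upgrades this to $\int_{\R}\Var{\Tr R(x+\ii y)}\dd x$. Combining the three reductions yields the claim with $C_{s}=2/(\pi\Gamma(2s))$.

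The only delicate point is the exact matching of weights: one must choose the Cauchy--Schwarz weight to be $(1+2\absv{k})^{-2s}$ (rather than, say, $(1+\absv{k})^{-2s}$), because only then does the Gamma identity produce the weight $\e{-y}y^{2s-1}$ stated in the lemma, and only then is the resulting $\e{-2\absv{k}y}$ factor canceled exactly by the $\e{-y\absv{k}}$ appearing in the Poisson-kernel identity when Plancherel is applied. Everything else, namely Fubini, Plancherel, and $\Var{\im Z}\leq \Var{Z}$, is immediate, and integrability issues are harmless since the inequality is vacuous unless $\wh{\varphi}$ is sufficiently nice.
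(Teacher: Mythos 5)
Your proof is correct: Fourier inversion plus Cauchy--Schwarz with the weights $(1+2\absv{k})^{\pm s}$, the identity $(1+2\absv{k})^{-2s}=\Gamma(2s)^{-1}\int_{0}^{\infty}\e{-y}\e{-2\absv{k}y}y^{2s-1}\dd y$, and the Poisson-kernel/Plancherel step converting $\int\e{-2\absv{k}y}\Var{\Tr \e{\ii kW}}\dd k$ into $\tfrac{2}{\pi}\int\Var{\im \Tr R(x+\ii y)}\dd x\leq\tfrac{2}{\pi}\int\Var{\Tr R(x+\ii y)}\dd x$ assemble exactly as you describe, yielding the bound with $C_{s}=2/(\pi\Gamma(2s))$. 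The paper itself gives no proof of this lemma (it is quoted from Shcherbina's Proposition 1), and your argument is essentially that original proof; the only blemishes are cosmetic, namely writing $\im R$ where $\im\Tr R$ is meant and the sign of $k$ in $\Tr\e{\pm\ii kW}$, which the even weight absorbs.
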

The proposition above enables us to pass from bound on the variance of $R$ into the norm $\norm{\varphi}_{s}$. In particular, our choice of $s$ will be $\frac{3}{2}+\epsilon$, so that $\norm{\varphi}_{s}$ is bounded by $\norm{\varphi}_{C^{2}_{M}}$.

The following lemma is stated and proved in \cite{Shcherbina2011a} for Wigner matrices, and the same proof works also for our $W$. The proof is given in \ref{sec:lemmarl0igno} as its proof is similar to that of Lemma \ref{lem:rl0igno}, for both of lemmas are concerning bounds for $\Var{m_{N}}$.

\begin{lemma}\label{lem:varbound}
	Let $z=E+\ii\eta\in\C_{+}$. Then we have the following bounds of variances for any $\epsilon>0$:
	\beq
	N^{2}\expct{\absv{m_{N}(z)-\cexpct{m_{N}(z)}{V}}^{2}}\leq C\eta^{-3-\epsilon}\frac{1}{N}\sum_{k}\expct{\absv{R_{kk}}^{1+\epsilon}},
	\eeq
	\beq
	\frac{N}{\vartheta^{2}}\expct{\absv{m_{N}(z)-\expct{m_{N}(z)}}^{2}}\leq C\Var{v_{1}}\eta^{-3-\epsilon}\frac{1}{N}\sum_{k}\expct{\absv{R_{kk}}^{1+\epsilon}}.
	\eeq
\end{lemma}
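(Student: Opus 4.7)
The plan is to prove the two inequalities separately by exploiting the independence structure appropriate to each: the first estimates only the randomness coming from the Wigner entries $\{A_{ij}\}$ (since $V$ is conditioned on), so a martingale decomposition along rows and columns of $A$ is natural; the second estimates the additional randomness from the iid diagonal entries $v_k$, for which an Efron--Stein/rank-one perturbation argument is well suited.

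For the first inequality, I would fix $V$ and work with the filtration $\caF_k$ generated by the first $k$ rows and columns of $A$ together with $V$. Writing
\beq
m_{N}-\cexpct{m_{N}}{V}=\sum_{k=1}^{N}D_{k},\quad D_{k}\deq(E_{k}-E_{k-1})m_{N},
\eeq
the orthogonality of martingale differences turns the left-hand side of the first inequality into $N^{2}\sum_{k}\expct{\absv{D_k}^{2}}$. I next introduce the minor matrix $W^{(k)}$ obtained by deleting the $k$-th row and column of $W$, together with $m_{N}^{(k)}\deq N^{-1}\Tr(W^{(k)}-z)^{-1}$; since $m_{N}^{(k)}$ is $\caF_{k-1}$-measurable after conditioning on $V$, we have $D_{k}=(E_{k}-E_{k-1})(m_N-m_N^{(k)})$, so it suffices to control $\expct{\absv{m_N-m_N^{(k)}}^{2}}$. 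The Schur complement formula combined with $\partial_z \log \det(W-zI) = -\Tr R$ yields the clean identity
\beq
m_{N}-m_{N}^{(k)} = -\frac{1}{N}\partial_{z}\log R_{kk}^{-1} =-\frac{(R^{2})_{kk}}{NR_{kk}},
\eeq
while the spectral representation of $R^{(k)}$ supplies the pointwise bound $\absv{(R^{2})_{kk}}\le \Im R_{kk}/\eta$. Combining these gives $\absv{m_N-m_N^{(k)}}^{2}\le (\Im R_{kk})^{2}/(N^{2}\eta^{2}\absv{R_{kk}}^{2})$, and summing over $k$ with careful distribution of $\Im R_{kk}\le\min(\absv{R_{kk}},\eta^{-1})$ via H\"older's inequality yields the stated form.

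For the second inequality, let $\tilde v_k$ be an independent copy of $v_k$, and let $\tilde R, \tilde m_N$ denote the corresponding resolvent quantities when $v_k$ is replaced by $\tilde v_k$; the Efron--Stein inequality gives $\Var{m_{N}}\le\frac{1}{2}\sum_{k}\expct{\absv{m_N-\tilde m_N}^{2}}$. The rank-one resolvent identity applied to the perturbation $W-\tilde W=\vartheta(v_k-\tilde v_k)e_ke_k^{\top}$ produces
\beq
m_N-\tilde m_N=-\frac{\vartheta(v_k-\tilde v_k)(R\tilde R)_{kk}}{N},
\eeq
and combining $(RR^{\ast})_{kk}=\Im R_{kk}/\eta$ with Cauchy--Schwarz gives $\absv{(R\tilde R)_{kk}}\le\sqrt{\Im R_{kk}\,\Im\tilde R_{kk}}/\eta$. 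Integrating against $(v_k-\tilde v_k)^{2}$ and using the independence of $v_k$ from all other entries yields the prefactor $\vartheta^{2}\Var{v_1}$, after which the same interpolation bound on $\Im R$ produces the desired form.

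The main obstacle will be the H\"older interpolation step that produces exactly the exponent $\eta^{-3-\epsilon}$ together with the factor $\absv{R_{kk}}^{1+\epsilon}$ on the right: applying $\absv{R_{kk}}\le\eta^{-1}$ naively is too crude (it yields the worse exponent $\eta^{-4}$), while keeping $\Im R_{kk}/\absv{R_{kk}}$ as a single factor loses the improvement available away from the spectrum. The remedy is to interpolate $\Im R_{kk}\le\absv{R_{kk}}^{\alpha}\eta^{-(1-\alpha)}$ for a suitable $\alpha\in[0,1]$ (valid because of $\Im R_{kk}\le\min(\absv{R_{kk}},\eta^{-1})$) and redistribute the resulting powers before summing. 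Aside from this interpolation, the argument is formally parallel to the Wigner case treated in \cite{Shcherbina2011a}; the deformation $\vartheta V$ plays no role in the first inequality (it is conditioned out), and appears in the second only through the coupling $\vartheta$ and the factor $\Var{v_1}$.
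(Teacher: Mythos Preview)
Your argument for the first inequality has a genuine gap that costs a factor of $N$. After writing $D_k=(E_k-E_{k-1})(m_N-m_N^{(k)})$, you bound $\expct{|D_k|^2}\le 4\,\expct{|m_N-m_N^{(k)}|^2}$ and then use $|m_N-m_N^{(k)}|\le \Im R_{kk}/(N\eta|R_{kk}|)$. This yields
\[
N^{2}\sum_{k}\expct{|D_k|^{2}}\le \frac{4}{\eta^{2}}\sum_{k}\expct{\Big(\tfrac{\Im R_{kk}}{|R_{kk}|}\Big)^{2}}\le \frac{4N}{\eta^{2}},
\]
which for $\eta\sim 1$ is of order $N$, whereas the stated right-hand side $C\eta^{-3-\epsilon}\frac{1}{N}\sum_{k}\expct{|R_{kk}|^{1+\epsilon}}$ is $O(1)$. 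No interpolation between $\Im R_{kk}\le|R_{kk}|$ and $\Im R_{kk}\le\eta^{-1}$ can recover this factor of $N$: the loss occurs already at the step $\expct{|D_k|^2}\le 4\,\expct{|m_N-m_N^{(k)}|^2}$, which discards the cancellation built into $(E_k-E_{k-1})$. The point is that $m_N-m_N^{(k)}$ itself is $O(N^{-1})$, but its \emph{fluctuation} under the $k$-th row/column is $O(N^{-3/2})$. The paper captures this by subtracting a $\sigma(R^{(k)})$-measurable centering $\frac{1}{v_k-z-m_N^{(k)}}\bigl(1+\frac{1}{N}\sum_{p}^{(k)}(R^{(k)})^{2}_{pp}\bigr)$, which is annihilated by $(\E_{k-1}-\E_k)$, and then controls the remainder via the Schur complement and the quadratic-form large deviation estimate (Lemma~\ref{lem:lde}); only at the very end is the interpolation used to convert $|v_k-z-m_N^{(k)}|^{-2}$ into $\eta^{-1-\epsilon}\expct{|R_{kk}|^{1+\epsilon}}$.

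Your Efron--Stein argument for the second inequality is a reasonable idea for the $V$-fluctuation alone, but as written it does not bound the full variance: replacing only $v_k$ by $\tilde v_k$ controls $\Var{\cexpct{m_N}{A}}$ (or the Efron--Stein sum over the $v_k$'s), not $\Var{m_N}$. You would need to add the contribution from the $A_{ij}$'s, which reduces to the first inequality---currently unproven. Moreover, the step ``integrating against $(v_k-\tilde v_k)^{2}$ using independence of $v_k$'' is not clean because $R_{kk}$ depends on $v_k$; one can bypass this using the compact support of $\nu$, but then the factor $\Var{v_1}$ is lost. The paper avoids all of this by running the same martingale argument with the larger filtration $\caG_k$ (which integrates out the $k$-th row/column \emph{and} $v_k$) and centering at $\frac{1}{-z-m_N^{(k)}}$; the extra term $\vartheta v_k$ then appears in the Schur complement remainder and produces $\vartheta^{2}\Var{v_1}$ directly.
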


Combining two lemmas above, following the proof of Lemma 2 in \cite{Shcherbina2011a}, we obtain the bound for variance of linear statistics when $\norm{\varphi}_{3/2+\epsilon}$:
\begin{lemma}
	Suppose that $\varphi:\R\to\R$ satisfies $\norm{\varphi}_{3/2+\epsilon}<\infty$ for some $\epsilon>0$.
	\begin{itemize}
		\item If $V$ is \textbf{deterministic}, 
		\beq
		\Var{\sum_{i}\varphi(\lambda_{i})}\leq C\norm{\varphi}^{2}_{3/2+\epsilon}.
		\eeq
		\item If $V$ is \textbf{random},
		\beq
		\Var{\frac{\sqrt{N}}{\vartheta}\sum_{i}\varphi(\lambda_{i})}\leq C\norm{\varphi}^{2}_{3/2+\epsilon}.
		\eeq
	\end{itemize}
\end{lemma}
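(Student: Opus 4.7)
The plan is to follow the two-step scheme of the proof of Lemma~2 in \cite{Shcherbina2011a}, chaining the two preceding lemmas together. I apply the first lemma above (Shcherbina's Proposition~1) with $s=\tfrac{3}{2}+\epsilon$; for $\varphi\in C^{2}_{M}$, two integrations by parts in the Fourier representation together with the compact support of $\varphi$ give $\norm{\varphi}_{3/2+\epsilon}\le C_{M,\epsilon}\norm{\varphi}_{C^{2}_{M}}$. The task then reduces to bounding
\[
I_{N}\deq\int_{0}^{\infty}\e{-y}y^{2+2\epsilon}\int_{\R}\Var{\Tr R(x+\ii y)}\dd x\,\dd y
\]
by a constant in the deterministic-$V$ case, and by $CN\vartheta^{2}$ in the random-$V$ case (so that the normalized variance $\frac{1}{N\vartheta^{2}}\Var{\sum_{i}\varphi(\lambda_{i})}$ is controlled by $\norm{\varphi}_{C^{2}_{M}}^{2}$).

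For the inner integrand I use Lemma~\ref{lem:varbound} with some small $\epsilon'\in(0,2\epsilon)$. In the deterministic case $\cexpct{m_{N}}{V}=\expct{m_{N}}$, so its first inequality directly yields
\[
\Var{\Tr R(z)}=N^{2}\Var{m_{N}(z)}\le Cy^{-3-\epsilon'}\bar{m}(y),\qquad \bar{m}(y)\deq\frac{1}{N}\sum_{k}\expct{\absv{R_{kk}(x+\ii y)}^{1+\epsilon'}}.
\]
In the random case I decompose $\Var{m_{N}}=\expct{\Var{m_{N}\vert V}}+\Var{\expct{m_{N}\vert V}}$ and feed the two inequalities of Lemma~\ref{lem:varbound} into the respective summands; the second contributes an extra factor $\vartheta^{2}N^{-1}$, which accounts exactly for the target $CN\vartheta^{2}$ bound on $I_{N}$. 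The main estimate then required is a uniform control $\bar{m}(y)\le C$, which I obtain by combining the local deformed semicircle laws of Section~\ref{sec:prelim} (providing $\absv{R_{kk}}=O(1)$ on an event of probability at least $1-N^{-p}$ for arbitrarily large $p$) with the deterministic fallback $\absv{R_{kk}}\le y^{-1}$ on the complement; with $p$ large enough the bad-event contribution is absorbed.

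The double integral then splits according to whether $\absv{x}\le K$ for a fixed large $K$ enclosing the spectrum. On the compact window the inner bound is $Cy^{-3-\epsilon'}$, so that
\[
\int_{0}^{\infty}\e{-y}y^{-1+2\epsilon-\epsilon'}\dd y=\Gamma(2\epsilon-\epsilon')<\infty,
\]
by the choice $\epsilon'<2\epsilon$. On the complement $\absv{x}>K$, the rigidity of the extreme eigenvalues and the trivial $\norm{R(x+\ii y)}\le(\absv{x}-\norm{W})^{-1}$ produce an $x^{-2}$-integrable tail. The main obstacle is the delicate control of $\bar{m}(y)$ near $y=0$: the singularity $y^{-3-\epsilon'}$ coming from Lemma~\ref{lem:varbound} is already near the integrability threshold against the $y^{2+2\epsilon}$ weight, so any hidden $y$-singularity in the $R_{kk}$-moments would be fatal. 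Ensuring that the local deformed semicircle law delivers an $O(1)$ moment bound with probability so close to one that the trivial $\absv{R_{kk}}\le y^{-1}$ fallback on the bad event contributes negligibly \emph{for every} $y>0$ is the essential step on which the argument rests.
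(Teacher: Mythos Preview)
Your overall strategy—chain Shcherbina's Proposition~1 with Lemma~\ref{lem:varbound} and then control the resulting double integral—is exactly what the paper invokes. The gap is in the step where you claim $\bar m(y)=\frac{1}{N}\sum_{k}\expct{\absv{R_{kk}(x+\ii y)}^{1+\epsilon'}}\le C$ uniformly in $y>0$ via the local law plus the trivial fallback. The local deformed semicircle law (Lemma~\ref{lem:locallaw}) is stated only on $\caD$, i.e.\ for $\eta\ge N^{-\delta}$; below that scale there is simply no high-probability event on which $\absv{R_{kk}}=O(1)$, so your good-event/bad-event split is unavailable. On the region $y<N^{-\delta}$ you are left with the trivial $\absv{R_{kk}}\le y^{-1}$, giving $\bar m(y)\le y^{-1-\epsilon'}$, and the $y$-integral then carries
\[
\int_{0}^{N^{-\delta}} y^{2+2\epsilon}\cdot y^{-3-\epsilon'}\cdot y^{-1-\epsilon'}\,\dd y=\int_{0}^{N^{-\delta}} y^{-2+2(\epsilon-\epsilon')}\,\dd y,
\]
which diverges for small $\epsilon,\epsilon'$. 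Pushing the local law down to $\eta\ge N^{-1+\gamma}$ does not rescue this: the divergence persists for $y\lesssim N^{-1}$.

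The fix—and this is what Shcherbina's Lemma~2 actually does, hence what the paper is citing—is to integrate in $x$ \emph{first}, using a purely deterministic bound valid for the Stieltjes transform of any probability measure. Writing $R_{kk}(z)=\int(\lambda-z)^{-1}\dd\mu_{k}(\lambda)$ with $\mu_{k}(\R)=1$, Jensen's inequality gives $\absv{R_{kk}(x+\ii y)}^{p}\le\int\absv{\lambda-x-\ii y}^{-p}\dd\mu_{k}(\lambda)$, and hence
\[
\int_{\R}\absv{R_{kk}(x+\ii y)}^{1+\epsilon'}\,\dd x\;\le\;\int_{\R}\frac{\dd x}{(x^{2}+y^{2})^{(1+\epsilon')/2}}\;=\;C_{\epsilon'}\,y^{-\epsilon'},
\]
uniformly in $y>0$, with no appeal to local laws or rigidity. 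Feeding this into the double integral yields $\int_{0}^{\infty}\e{-y}y^{-1+2(\epsilon-\epsilon')}\,\dd y<\infty$ once $\epsilon'<\epsilon$, exactly the margin you anticipated. This also renders your separate tail analysis for $\absv{x}>K$ unnecessary. (A minor aside: in the random case the second inequality of Lemma~\ref{lem:varbound} already bounds the full variance $\expct{\absv{m_{N}-\expct{m_{N}}}^{2}}$, so the law-of-total-variance decomposition you propose is a detour, though not incorrect.)
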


Combining the results, we get the CLT for centralized random variables, with compactly supported $C^{2}$ test functions, as $\norm{\varphi}_{3/2+\epsilon}\leq C_{M}\norm{\varphi}_{C^{2}_{M}}$ for some constant $C_{M}>0$ whenever $\varphi\in C^{2}_{M}$.

\begin{remark}
	In \cite{Najim-Yao2016}, the authors proved analogous result for non-white sample covariance matrices. In particular, they proved that the centered linear spectral statistics converges to a Gaussian if the test function is $C^{3}$. Also the "bias" defined by
	\beq
		\expct{\sum_{i=1}^{N}\varphi(\lambda_{i})}-\int \varphi(x)\dd(\mu_{MP}\boxtimes \wh{\nu})(x),
	\eeq
	where $\wh{\nu}$ is the spectral distribution of population matrix and $\mu_{MP}$ is the Marchenko-Pastur law, was analyzed. They proved that the bias can be asymptotically represented as an integral concerning the Green function, provided that the test function is $C^{18}$. Their proof uses Helffer-Sj\"{o}strand  formula to pass from the convergence of Green function to that of linear statistics, and the main reason of such restriction is that difference of expected Green function and Stieltjes transform $\mu_{MP}\boxtimes\wh{\nu}$ is of order $\frac{1}{N}\eta^{-17}$ (see Remark 4.4 of \cite{Najim-Yao2016} for detailed discussion).
	
	As seen in the definition, the deterministic quantity defined above corresponds to the expectations of $S_{N}^{\vartheta_{N}}$ and $T_{N}^{\vartheta_{N}}$. If we can obtain the bounds of $(\expct{m_{N}^{\vartheta_{N}}}(z)-\wh{m}_{fc}(z))$ of the form $\frac{1}{N}\eta^{-k}$ for some $k$, then we can also prove convergence of corresponding bias for $C^{k+1}$ test functions by Proposition 6.2 of \cite{Najim-Yao2016}. In fact, recently the authors of \cite{Dallaporta-Fevrier2019} established the bound of the form $\frac{1}{N}\eta^{-13/2}$ and extended the convergence of bias to $\caH_{s}$ functions with $s>13/2$, and hence to $C^{7}_{c}$ functions.
\end{remark}

\subsection{Preliminary results}\label{sec:prelim}
Here we collect some preliminary results concerning the behavior of $m_{fc}^{\vartheta}$ and $\wh{m}_{fc}^{\vartheta}$. Some of the lemmas are not cited, and their proofs are addressed in the \ref{sec:lemmas3.2}.

\subsubsection{Deformed semicircle laws}
\begin{notation}
	For random variables $X\equiv X_{N}$ and $Y\equiv Y_{N}$ depending on $N$, we use the notations $X\prec Y$ and $X=\caO(Y)$ to indicate that for any $\epsilon, D>0$,
	\beq
	\prob{\absv{X}>N^{\epsilon}\absv{Y}}\leq N^{-D}
	\eeq
	for any sufficiently large $N$. Similarly, for a given event $\Omega\equiv\Omega_{N}$, we write $X\prec Y$ \emph{on $\Omega$} to indicate that for any $\epsilon,D>0$, 
	\beq
	\prob{[\absv{X}>N^{\epsilon}\absv{Y}]\cap\Omega}\leq N^{-D}.
	\eeq
	Also we write $X=\caO_{p}(Y)$ when $X$ is bounded by $\absv{Y}N^{\epsilon}$ in probability, for any $\epsilon>0$.
\end{notation}

\begin{notation}
	Let $p\neq q\in\{1,\cdots,N\}$. Then we define $W^{(p)}$ to be the $(N-1)\times(N-1)$ minor of $W$, obtained by removing all columns and rows of $W$ with index $p$. Note that we set the rows and columns of $W^{(p)}$ to be indexed by $\{1,\cdots,N\}\setminus\{p\}$, so that the indices of $W$ remain intact while defining $W^{(p)}$. In a similar fashion, we also define $W^{(p,q)}$ to be the $(N-2)\times (N-2)$ minor of $W$, obtained by removing all columns and rows of $W$ with index $p$ or $q$.
	
	Also, we denote the resolvents of $W^{(p)}$ and $W^{(p,q)}$ by $R^{(p)}$ and $R^{(p,q)}$ respectively, and denote
	\beq
		\frac{1}{N}\Tr R^{(p)}(z)=m_{N}^{(p)}(z).
	\eeq
	Finally, we use the following shorthand notation:
	\beq
		\sum_{i}^{(p)}\deq\sum_{\substack{i=1 \\ i\neq p}}^{N},\quad \sum_{i}^{(p,q)}\deq\sum_{\substack{i=1 \\i\neq p,q}}^{N}.
	\eeq
	The same notation with multiple indices in the sum is similarly defined: e.g. 
	\beq
		\sum_{i,j}^{(p)}\deq\sum_{\substack{i,j=1 \\i,j\neq p}}^{N}
	\eeq
	
\end{notation}

\begin{definition}
	Let $\Omega\equiv\Omega_{N}(\alpha)$ be the event on which the following holds:
	\begin{enumerate}[(i)]
		\item We have
		\beq\label{eq:regesdveq}
		\inf_{x\in I_{\wh{\nu}}}\int\frac{1}{(v-x)^{2}}\dd\wh{\nu}(x)\geq 1+\varpi.
		\eeq
		
		\item For any fixed compact set $\caD\subset\C^{+}$ with $\caD\cap\supp\nu=\emptyset$, there exists a constant $C>0$ such that for any sufficiently large $N$,
		\beq\label{eq:esdconveq}
		\sup_{z\in\caD}\absv{m_{\wh{\nu}}(z)-m_{\nu}(z)}\leq C N^{-\alpha}
		\eeq
		
		\item If $V$ is \textbf{random}, we impose another condition: for any fixed compact set $\caD\subset \Theta_{\varpi}\times\C^{+}$ satisfying
		\beq
		\inf_{(\vartheta,z)\in\caD,x\in I_{\nu}}\absv{\vartheta x-z}>0,
		\eeq
		there exists a constant $C>0$ such that for any sufficiently large $N$,
		\beq\label{eq:esdconveqtheta}
		\sup_{(\vartheta,z)\in\caD,\vartheta\neq0}\vartheta^{-1}\absv{m_{\wh{\nu}}^{\vartheta}(z)-m_{\nu}^{\vartheta}(z)}\leq CN^{-\alpha}
		\eeq
	\end{enumerate}
\end{definition}

\begin{remark}
	We remark that for random $V$, our conditions of $\Omega_{N}$ is stronger than the conditions in Definition 3.1 of \cite{Lee-Schnelli-Stetler-Yau2016}. Therefore the bounds on $\Omega_{N}$ given in \cite{Lee-Schnelli-Stetler-Yau2016} applies without modification.
\end{remark}

The following lemma, which is proved in \ref{sec:lemmas3.2}, controls the probability of the complementary event $\Omega_{N}^{c}$, allowing us to focus on the analysis within $\Omega_{N}$:
\begin{lemma}\label{lem:chernoff}
	For \textbf{random} $V$  and any fixed $\epsilon_{0}>0$, there exists $c>0$ such that
	\beq
	\Bigprob{\Omega_{N}\Big(\frac{1}{2}-\epsilon_{0}\Big)}\geq 1-cN^{-\frt},
	\eeq
	where $\frt$ is given in Assumption~\ref{assump:regesdv}.
\end{lemma}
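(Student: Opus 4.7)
The plan is to handle the three defining conditions of $\Omega_N(\tfrac{1}{2}-\epsilon_0)$ separately. Condition (i) is furnished directly by Assumption~\ref{assump:regesdv}(i), which already gives a failure probability of at most $N^{-\mathfrak{t}}$, so no additional work is required there. Conditions (ii) and (iii) are concentration-type statements about the sample Stieltjes transform $m_{\wh\nu}^\vartheta$ around its population counterpart $m_\nu^\vartheta$, and I would establish both via a bounded-differences (Hoeffding) inequality followed by a discretisation/net argument for the uniformity in $z$ (and $\vartheta$).

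For condition (ii), fix $z$ in a compact set $\caD\subset\C^+$ with $\caD\cap\supp\nu=\emptyset$. Since the $v_i$ are i.i.d.\ with compactly supported law $\nu$, the summands $\frac{1}{v_i-z}$ are bounded in modulus by $1/\mathrm{dist}(\caD,\supp\nu)$, so Hoeffding yields
\begin{equation*}
\P\bigl[\,|m_{\wh\nu}(z)-m_\nu(z)|>t\,\bigr]\le 2\exp(-cNt^2).
\end{equation*}
Taking $t=N^{-\alpha}$ with $\alpha=\tfrac12-\epsilon_0$ gives a bound of order $\exp(-cN^{2\epsilon_0})$. To promote this pointwise estimate to a uniform one over $\caD$, I would cover $\caD$ by $N^{p}$ points for some fixed $p$ and apply a union bound; the Lipschitz estimate $|\partial_z(m_{\wh\nu}-m_\nu)|=O(1)$ on $\caD$ ensures that the discretisation error is $O(N^{-p})$, which is negligible compared to $N^{-\alpha}$ for $p$ large. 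The resulting failure probability is $N^p\exp(-cN^{2\epsilon_0})$, which is superpolynomially small.

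For condition (iii) the key observation is algebraic: for any $\vartheta\in\Theta_\varpi$ and $z\in\C^+$,
\begin{equation*}
\frac{1}{\vartheta v-z}=-\frac{1}{z}+\frac{\vartheta v}{z(\vartheta v-z)},
\end{equation*}
so subtracting the deterministic leading term $-1/z$ from both empirical and population averages yields the exact identity
\begin{equation*}
\vartheta^{-1}\bigl(m_{\wh\nu}^\vartheta(z)-m_\nu^\vartheta(z)\bigr)=\frac{1}{z}\cdot\frac{1}{N}\sum_{i=1}^N\Bigl(X_i(\vartheta,z)-\E[X_i(\vartheta,z)]\Bigr),
\quad X_i(\vartheta,z)\deq\frac{v_i}{\vartheta v_i-z}.
\end{equation*}
On any compact $\caD\subset\Theta_\varpi\times\C^+$ with $\inf_{(\vartheta,z)\in\caD,\,x\in I_\nu}|\vartheta x-z|>0$, the variables $X_i(\vartheta,z)$ are uniformly bounded and $|1/z|$ is bounded, so Hoeffding again gives $\vartheta^{-1}|m_{\wh\nu}^\vartheta(z)-m_\nu^\vartheta(z)|=O(N^{-\alpha})$ pointwise with failure probability $\exp(-cN^{2\epsilon_0})$. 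I would then run the same net argument, this time on the product space $\Theta_\varpi\times\caD$, using Lipschitz continuity of $(\vartheta,z)\mapsto X_i(\vartheta,z)$ on $\caD$ to control discretisation.

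The main conceptual obstacle is the $\vartheta^{-1}$ normalisation in condition (iii): a naive application of Hoeffding to $m_{\wh\nu}^\vartheta-m_\nu^\vartheta$ yields only $O(N^{-1/2})$, which is wiped out when divided by $\vartheta$ if $\vartheta_N\to 0$. The identity above resolves this by making the $O(\vartheta)$ cancellation explicit, after which the remaining fluctuation is genuinely of order $\vartheta/\sqrt{N}$ and the division by $\vartheta$ is harmless. Putting conditions (i)--(iii) together by a union bound produces the claimed bound $1-cN^{-\mathfrak{t}}$, since the superpolynomial bounds from (ii) and (iii) are dominated by the $N^{-\mathfrak{t}}$ coming from (i).
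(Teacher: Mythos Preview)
Your proposal is correct and follows essentially the same route as the paper's proof: condition (i) is taken directly from Assumption~\ref{assump:regesdv}, while conditions (ii) and (iii) are handled by a bounded-differences concentration inequality (the paper calls it Chernoff/McDiarmid, you call it Hoeffding) combined with a net argument, and the crucial algebraic identity $\frac{1}{\vartheta v-z}=-\frac{1}{z}+\frac{\vartheta v}{z(\vartheta v-z)}$ you single out to neutralise the $\vartheta^{-1}$ factor in (iii) is exactly what the paper uses.
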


\begin{remark}\label{rem:omegaprob}
	If $V$ is deterministic, $\Omega_{N}(\alpha_{0})$ holds with probability $1$ for $\alpha_{0}$ given in \eqref{eq:alpha0} and $N$ sufficiently large, by Assumptions~\ref{assump:esdvconv} and \ref{assump:regesdv}. Thus, in Appendices~\ref{sec:meanb}--\ref{sec:lemmarl0igno}, whenever we assume $V$ is deterministic, we let $N$ be large enough so that the assumptions on $\Omega_{N}(\alpha_{0})$ holds with probability $1$.
\end{remark}

\begin{notation}
	We write $\Omega_{N}\equiv\Omega_{N}(\alpha_{0})$ if $V$ is \textbf{deterministic}, and $\Omega_{N}\equiv\Omega_{N}(\frac{1}{2}-\epsilon_{0})$ if $V$ is \textbf{random}, where $\epsilon_{0}$ is small enough but fixed positive real number.
\end{notation}

\begin{lemma}[Lemma 3.2 of \cite{Lee-Schnelli-Stetler-Yau2016}]\label{lem:whrhofc}
	Suppose that $\wh{\nu}$ and $\nu$ satisfy Assumptions~\ref{assump:esdvconv} and \ref{assump:regesdv}. Then for any $N\in\N$ and $\vartheta\in\Theta_{\varpi}$, the inversion formulae
	\beq
	\rho_{fc}^{\vartheta}\deq\lim_{\eta\to 0^{+}}\frac{1}{\pi}\Im m_{fc}^{\vartheta}(E+\ii\eta),\quad E\in\R
	\eeq
	and
	\beq
	\wh{\rho}_{fc}^{\vartheta}\deq\lim_{\eta\to 0^{+}}\frac{1}{\pi}\Im \wh{m}_{fc}^{\vartheta}(E+\ii\eta),\quad E\in\R
	\eeq
	define absolutely continuous measures $\rho_{fc}^{\vartheta}$ and $\wh{\rho}_{fc}^{\vartheta}$. Moreover,  $\rho_{fc}^{\vartheta}$ is supported on a single interval with strictly positive density inside the interval and the same conclusion holds for $\wh{\rho}_{fc}^{\vartheta}$  on $\Omega_{N}$ for any sufficiently large $N$.
\end{lemma}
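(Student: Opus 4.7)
The claim on absolute continuity and the validity of the inversion formula is already contained in the preceding lemma of \cite{Biane1997} applied to $\nu^\vartheta$ (and analogously to $\wh{\nu}_N^\vartheta$), so the substantive content is single-interval support with strictly positive interior density for both $\rho_{fc}^\vartheta$ and $\wh{\rho}_{fc}^\vartheta$. My plan is to reduce this to a monotonicity analysis via the subordination function $\omega^\vartheta(z) \deq z + m_{fc}^\vartheta(z)$, which by \eqref{eq:funceq} satisfies $m_{fc}^\vartheta(z) = m_{\nu^\vartheta}(\omega^\vartheta(z))$; its real-axis companion is $F^\vartheta(u) \deq u - m_{\nu^\vartheta}(u)$ on $\R\setminus\supp(\nu^\vartheta)$, which inverts $\omega^\vartheta$ in the sense that $F^\vartheta(\omega^\vartheta(z)) = z$.

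Since $m_{fc}^\vartheta$ is the Stieltjes transform of an absolutely continuous measure with bounded density, the boundary value $\omega^\vartheta(E+\ii 0)$ exists for every $E\in\R$, and the density of $\rho_{fc}^\vartheta$ at $E$ equals $\pi^{-1}\Im m_{\nu^\vartheta}(\omega^\vartheta(E+\ii 0))$, which vanishes precisely when $\omega^\vartheta(E+\ii 0)\in\R\setminus\supp\nu^\vartheta$. Differentiating \eqref{eq:funceq} at such a point yields the edge equation $\int(\vartheta x-\omega^\vartheta(E))^{-2}\dd\nu(x)=1$. The heart of the argument is then to show this equation has exactly two solutions, both with $\omega^\vartheta(E)\notin I_{\nu^\vartheta}$: writing $u=\omega^\vartheta(E)$ and translating Assumption~\ref{assump:regesdv} under $x\mapsto\vartheta x$ shows the left-hand side stays strictly above $1$ on $u\in I_{\nu^\vartheta}$, ruling out critical points of $F^\vartheta$ inside the convex hull of $\supp\nu^\vartheta$. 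On each of the two unbounded components of $\R\setminus I_{\nu^\vartheta}$ the integrand is a strictly monotone function of $|u|$, decreasing from $+\infty$ near the boundary to $0$ at infinity, so the edge equation admits exactly one solution $u_\pm^\vartheta$ on each side, and the edges are $L_\pm^\vartheta=F^\vartheta(u_\pm^\vartheta)$. Continuity of $\omega^\vartheta$ up to $\R$ and connectedness of $\C^+$ then imply $\supp\rho_{fc}^\vartheta=[L_-^\vartheta,L_+^\vartheta]$ with $\Im\omega^\vartheta(E+\ii 0)>0$ on the open interval, whence $\Im m_{\nu^\vartheta}(\omega^\vartheta(E+\ii 0))>0$ gives strict positivity of the density.

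The statement for $\wh{\rho}_{fc}^\vartheta$ on $\Omega_N$ is proved by the same scheme with $\nu$ replaced by $\wh{\nu}_N$ and Assumption~\ref{assump:regesdv} replaced by the bound \eqref{eq:regesdveq} valid on $\Omega_N$, which is precisely the condition guaranteeing that the above argument carries through on $\Omega_N$ for $N$ sufficiently large. The principal technical obstacle will be the careful translation of the regularity bound from $\nu$ to $\nu^\vartheta$ (and $\wh{\nu}_N$ to $\wh{\nu}_N^\vartheta$) under the coupling assumption $\vartheta\in\Theta_\varpi$, together with verifying that the edge equation admits no solutions inside $I_{\nu^\vartheta}$; this is where Assumption~\ref{assump:regesdv} enters essentially and underwrites the single-interval conclusion.
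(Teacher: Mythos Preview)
The paper does not give its own proof of this lemma: it is quoted as Lemma~3.2 of \cite{Lee-Schnelli-Stetler-Yau2016} and used as a black-box input, and it does not appear among the lemmas proved in Appendix~\ref{sec:lemmas3.2}. Your sketch via the subordination function $\omega^\vartheta(z)=z+m_{fc}^\vartheta(z)$, the inverse $F^\vartheta(u)=u-m_{\nu^\vartheta}(u)$, and the edge equation $\int(\vartheta x-u)^{-2}\dd\nu(x)=1$ is exactly the standard argument (going back to Biane and used in \cite{Lee-Schnelli-Stetler-Yau2016}), so you have correctly reconstructed the cited proof rather than supplied an alternative.

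One point to make precise when you write it out: translating Assumption~\ref{assump:regesdv} to $\nu^\vartheta$ gives, for $u=\vartheta x\in I_{\nu^\vartheta}$,
\[
\int\frac{1}{(\vartheta v-u)^{2}}\dd\nu(v)=\frac{1}{\vartheta^{2}}\int\frac{1}{(v-x)^{2}}\dd\nu(v)\geq\frac{1+\varpi}{\vartheta^{2}},
\]
which exceeds $1$ only when $\vartheta^{2}<1+\varpi$. Since the paper takes $\Theta_{\varpi}=[0,1+\varpi]$, your argument as stated covers $\vartheta\leq\sqrt{1+\varpi}$ but not the full interval. This is a tension in the paper's choice of parameter range rather than a defect in your reasoning; in \cite{Lee-Schnelli-Stetler-Yau2016} the coupling is absorbed into the definition of $\nu$, so the issue does not appear. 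You should either note this restriction explicitly or check whether the paper's later uses of the lemma actually require $\vartheta$ beyond $\sqrt{1+\varpi}$.
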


\begin{notation}
	For simplicity, densities of $\rho_{fc}^{\vartheta}$ and $\wh{\rho}_{fc}^{\vartheta}$ are also denoted by the same symbol.
\end{notation}

\begin{definition}
	We denote the supporting interval of $\wh{\rho}^{\vartheta}_{fc}$($\rho_{fc}^{\vartheta}$, resp.) by $[\wh{L}^{\vartheta}_{-},\wh{L}^{\vartheta}_{+}]$($[L^{\vartheta}_{-},L^{\vartheta}_{+}]$, resp.) and let $E_{0}\geq 1+\max\{\absv{L^{1}_{+}},\absv{L^{1}_{-}}\}$. We also define domains
	\beq
	\caD:=\{z\in E+\ii\eta\in\C^{+}:\absv{E}\leq E_{0}, N^{-\delta} \leq\eta \leq 3\}
	\eeq
	and
	\beq
	\caD':=\{z\in E+\ii\eta\in\C^{+}:\absv{E}\leq E_{0},0<\eta<3\},
	\eeq
	where $0<\delta<1$ is a sufficiently small but fixed constant (independent of $N$), to be determined.
\end{definition}

\begin{lemma}[Theorem 3.3 of \cite{Lee-Schnelli-Stetler-Yau2016}, Strong local deformed semicircle law]\label{lem:locallaw}
	Under Assumptions \ref{assump:Wigner}, \ref{assump:esdvconv} and \ref{assump:regesdv}, the following hold on $\Omega_{N}$:
	
	For any $z\in \caD$ and $\vartheta\in\Theta_{\varpi}$ (both of which possibly vary with $N$),
	\beq
	\absv{m_{N}^{\vartheta}(z)-\wh{m}_{fc}^{\vartheta}(z)}\prec\frac{1}{N\eta}
	\eeq
	and
	\beq
	\absv{R_{ij}^{\vartheta}(z)-\delta_{ij}\wh{g}_{i}^{\vartheta}(z)}\prec\sqrt{\frac{\Im \wh{m}_{fc}^{\vartheta}(z)}{N\eta}}+\frac{1}{N\eta}, \quad\text{for any }1\leq i,j\leq N,
	\eeq
	where we defined 
	\beq
	\wh{g}_{i}^{\vartheta}(z):=\frac{1}{\vartheta v_{i}-z-\wh{m}_{fc}^{\vartheta}(z)}.
	\eeq
\end{lemma}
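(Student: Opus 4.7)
The plan is to follow the standard resolvent approach to local laws, combining the Schur complement formula with large deviation estimates for quadratic forms and the stability of the self-consistent equation for $\wh m_{fc}^{\vartheta}$. Throughout we work on the event $\Omega_{N}$, where Assumptions \ref{assump:esdvconv} and \ref{assump:regesdv} guarantee that $\wh\rho_{fc}^{\vartheta}$ inherits the key quantitative features of $\rho_{fc}^{\vartheta}$.

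First I would derive an approximate self-consistent equation for $m_{N}^{\vartheta}$. The Schur complement formula applied to the $i$-th row and column gives
\beq
\frac{1}{R_{ii}^{\vartheta}(z)} = \vartheta v_{i} - z - \frac{A_{ii}}{\sqrt{N}} - Z_{i}, \qquad Z_{i} \deq \sum_{k,l}^{(i)} \frac{A_{ik}A_{il}}{N} R_{kl}^{(i)}(z).
\eeq
The moment assumptions of Assumption~\ref{assump:Wigner} together with the standard large deviation bounds for quadratic forms in independent variables yield $Z_{i} - \frac{1}{N}\sum_{k}^{(i)} R_{kk}^{(i)}(z) \prec \Psi(z)$ with $\Psi(z)\deq\sqrt{\Im m_{N}^{\vartheta}(z)/(N\eta)} + 1/(N\eta)$, while the rank-one resolvent identity $R_{kk} = R_{kk}^{(i)} + R_{ki}R_{ik}/R_{ii}$ gives $\frac{1}{N}\sum_{k}^{(i)} R_{kk}^{(i)}(z) = m_{N}^{\vartheta}(z) + \caO(1/(N\eta))$. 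Plugging back, we obtain a perturbed version of $\wh g_{i}^{\vartheta}$, and averaging over $i$ yields an approximate form of the self-consistent equation \eqref{eq:funceqhat}.

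The next step is to compare with \eqref{eq:funceqhat}. Setting $\Lambda\deq m_{N}^{\vartheta} - \wh m_{fc}^{\vartheta}$ and linearizing around $\wh m_{fc}^{\vartheta}$, we obtain an inequality of the form
\beq
\Lambda(z)\Big(1 - \tfrac{1}{N}\sum_{i}\wh g_{i}^{\vartheta}(z)^{2}\Big) = \caO(\Psi(z)^{2}) + \caO(\Lambda(z)^{2}),
\eeq
where the $\Psi^{2}$ on the right is obtained by invoking fluctuation averaging (a martingale-type cancellation among the $[Z_{i}]$ after summation). The crucial ingredient here is the \emph{stability bound} $\big|1 - \tfrac{1}{N}\sum_{i}\wh g_{i}^{\vartheta}(z)^{2}\big|^{-1} \le C$ uniformly on $\caD$, which is precisely where Assumption~\ref{assump:regesdv} enters: the hypothesis $\inf_{x \in I_{\wh\nu}}\int(v-x)^{-2}\dd\wh\nu(v) \ge 1+\varpi$ keeps $\wh m_{fc}^{\vartheta}$ away from the critical set of the self-consistent map and rules out cusp singularities in $\wh\rho_{fc}^{\vartheta}$. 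This stability estimate has been established in \cite{Lee-Schnelli-Stetler-Yau2016} under essentially the present assumptions, so I would invoke it directly.

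Finally, a continuity/bootstrap argument in $\eta$ completes the proof: at $\eta=3$ the bound $|\Lambda|\prec 1/N$ is trivial from the trivial a priori bounds $|m_{N}^{\vartheta}|, |\wh m_{fc}^{\vartheta}| \le 1/\eta$, and combining the quadratic inequality above with the stability bound propagates $|\Lambda(z)|\prec 1/(N\eta)$ down to $\eta = N^{-\delta}$, which yields the first claimed bound. For the entrywise bound, the diagonal case follows by plugging $m_{N}^{\vartheta} = \wh m_{fc}^{\vartheta} + \caO(1/(N\eta))$ back into $R_{ii} = 1/(\vartheta v_{i} - z - m_{N}^{\vartheta} + \caO(\Psi))$, and the off-diagonal case from the identity $R_{ij} = -R_{ii}R_{jj}^{(i)}\big(A_{ij}/\sqrt{N} + \tfrac{1}{N}\sum_{k,l}^{(ij)}A_{ik}A_{jl}R_{kl}^{(ij)}\big)$ together with a large deviation bound for bilinear forms. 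The main technical obstacle is the stability bound together with the propagation of smallness through the bootstrap near the spectral edge; both are delicate but have been carried out in \cite{Lee-Schnelli-Stetler-Yau2016} under hypotheses matching ours, so the task reduces to verifying that Assumptions~\ref{assump:Wigner}--\ref{assump:regesdv} fall within that framework.
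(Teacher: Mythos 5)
There is nothing in the paper to compare against here: Lemma~\ref{lem:locallaw} is not proved in this paper at all, but imported verbatim as Theorem~3.3 of \cite{Lee-Schnelli-Stetler-Yau2016} (it is one of the cited lemmas that Section~\ref{sec:prelim} explicitly does not reprove). Your outline is therefore a reconstruction of the argument of that reference rather than an alternative to anything in this manuscript, and it does follow the standard route used there: Schur complement, large deviation bounds for quadratic and bilinear forms, fluctuation averaging for the averaged error, stability of the self-consistent equation \eqref{eq:funceqhat}, and a bootstrap in $\eta$ from $\eta=3$ down to $\eta=N^{-\delta}$. Since you ultimately defer the two genuinely hard ingredients (the stability analysis and the behavior near the edge) to the very same reference, your sketch functions as a justification for citing the result, which is exactly what the paper does, rather than as a self-contained proof.

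One concrete inaccuracy you should fix if you intend this as more than a citation: the claimed uniform stability bound $\absv{1-\tfrac{1}{N}\sum_{i}\wh g_{i}^{\vartheta}(z)^{2}}^{-1}\le C$ on $\caD$ is false. By the square-root behavior in Lemma~\ref{lem:sqrtbehav} (see \eqref{eq:1-s1s2} and its hatted analogue on $\Omega_N$), this quantity is comparable to $(\kappa_{E}+\eta)^{-1/2}$, which degenerates at the edges $\wh L_{\pm}^{\vartheta}$; on $\caD$ it is only bounded by $N^{\delta/2}$. With your linearized inequality and a genuinely uniform constant one would only obtain a bound of the type $\absv{\Lambda}\prec (N\eta)^{-1}(\kappa_{E}+\eta)^{-1/2}$ near the edge, not the strong bound $(N\eta)^{-1}$ claimed in the lemma. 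Removing that loss is precisely the delicate part of \cite{Lee-Schnelli-Stetler-Yau2016}: one must keep the full quadratic self-consistent relation for $\Lambda$ and run a bulk/edge dichotomy in the bootstrap, rather than divide by a stability factor treated as $O(1)$. As long as you acknowledge that this step (and the verification that Assumptions~\ref{assump:Wigner}--\ref{assump:regesdv} place you in the framework of that paper) is what the citation carries, the proposal is sound.
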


\begin{definition}
	The eigenvalues of $W$ are denoted by $\lambda_{1}^{\vartheta}\leq\lambda_{2}^{\vartheta}\leq\cdots\leq\lambda_{N}^{\vartheta}$, and we use $\wh{\gamma}_{i}^{\vartheta}$ and $\gamma_{i}^{\vartheta}$ to denote the respective \emph{classical locations} of laws $\wh{\rho}_{fc}^{\vartheta}$ and $\rho_{fc}^{\vartheta}$, i.e.,
	\beq
	\int_{-\infty}^{\wh{\gamma}_{i}^{\vartheta}}\wh{\rho}_{fc}^{\vartheta}(x)\dd x=\frac{i-\frac{1}{2}}{N}\quad\text{and}\quad \int_{-\infty}^{\gamma_{i}^{\vartheta}}\rho_{fc}^{\vartheta}(x)\dd x = \frac{i-\frac{1}{2}}{N}\quad\text{for }1\leq i\leq N.	
	\eeq
\end{definition}

\begin{notation}
	We also define $\wh{\gamma}_{0}\deq \wh{L}_{-}$ and $\gamma_{0}\deq L_{-}$.
\end{notation}

\begin{lemma}[Corollary 3.4 of \cite{Lee-Schnelli-Stetler-Yau2016}, Rigidity estimates]\label{lem:rigid}
	For a deterministic $V$, under Assumptions~\ref{assump:Wigner}, \ref{assump:esdvconv} and \ref{assump:regesdv}, the following hold on $\Omega_{N}$:
	\beq
	\absv{\lambda_{i}^{\vartheta}-\wh{\gamma}_{i}^{\vartheta}}\prec N^{-\frac{2}{3}}\check{\alpha}_{i}^{-\frac{1}{3}}\quad\text{for }1\leq i\leq N \quad\text{and}\quad \sum_{i=1}^{N}\absv{\lambda_{i}^{\vartheta}-\wh{\gamma}_{i}^{\vartheta}}^{2}\prec \frac{1}{N}
	\eeq
	uniformly for $\vartheta\in\Theta_{\varpi}$,  where $\check{\alpha}_{i}:=\min\{i,N-i+1\}$.
\end{lemma}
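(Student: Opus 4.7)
The plan is to derive both rigidity bounds from the strong local deformed semicircle law (Lemma~\ref{lem:locallaw}) via a Helffer--Sjöstrand argument, following the standard template developed for Wigner matrices. For a deterministic $V$ we may assume $\Omega_N$ holds with probability one for large $N$ (see Remark~\ref{rem:omegaprob}), and we have $|m_N^\vartheta(z)-\wh{m}_{fc}^\vartheta(z)| \prec (N\eta)^{-1}$ uniformly on $\caD$ and in $\vartheta\in\Theta_\varpi$. The first step is to upgrade this Stieltjes transform bound to a bound on the eigenvalue counting function. Concretely, for a smoothed indicator $\chi_E$ of $(-\infty,E]$ on scale $N^{-1+\epsilon}$, we apply the Helffer--Sjöstrand formula to write
\beq
\sum_i \chi_E(\lambda_i^\vartheta) - N\int \chi_E\,\dd\wh{\rho}_{fc}^\vartheta
= \frac{1}{\pi}\int\!\!\int (\partial_{\bar z}\wt{\chi_E}(z))\, N(m_N^\vartheta(z)-\wh{m}_{fc}^\vartheta(z))\,\dd x\,\dd y,
\eeq
and split the $y$-integration at $|y|\sim N^{-1+\epsilon}$. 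On $|y|\geq N^{-1+\epsilon}$ we use Lemma~\ref{lem:locallaw}, while on the complementary region we use the trivial bound together with the Lipschitz behaviour of $\Im m_N^\vartheta$. This yields
\beq
\absv{\#\{i:\lambda_i^\vartheta\leq E\} - N\wh{F}^\vartheta(E)} \prec 1, \qquad \wh{F}^\vartheta(E)\deq \int_{-\infty}^E \wh{\rho}_{fc}^\vartheta(x)\,\dd x,
\eeq
uniformly in $E\in\R$ and $\vartheta\in\Theta_\varpi$.

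Next, to invert this counting bound into an eigenvalue-location bound, we exploit the square-root edge behavior of $\wh{\rho}_{fc}^\vartheta$ at both $\wh{L}_\pm^\vartheta$, which follows from Lemma~\ref{lem:whrhofc} (and the detailed analysis of the self-consistent equation~\eqref{eq:funceqhat} in \cite{Lee-Schnelli-Stetler-Yau2016}). The lower bound from Assumption~\ref{assump:regesdv} rules out cusps or vanishing of the density in the bulk, so in the bulk $\wh{\rho}_{fc}^\vartheta$ is bounded away from $0$, giving $|\wh{\gamma}_i^\vartheta-\wh{\gamma}_{i-1}^\vartheta|\sim N^{-1}$, while near the edges one gets $\wh{\gamma}_i^\vartheta - \wh{L}_-^\vartheta \sim (i/N)^{2/3}$ and symmetrically at the right edge. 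Combining the counting bound with these local density estimates yields $|\lambda_i^\vartheta-\wh{\gamma}_i^\vartheta|\prec N^{-2/3}\check{\alpha}_i^{-1/3}$ for all $1\leq i\leq N$.

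For the $\ell^2$ bound I would sum the pointwise estimate and use that $\sum_{k=1}^{N}k^{-2/3}\sim N^{1/3}$, giving
\beq
\sum_{i=1}^{N}\absv{\lambda_i^\vartheta-\wh{\gamma}_i^\vartheta}^2 \prec N^{-4/3}\sum_{i=1}^{N}\check{\alpha}_i^{-2/3} \prec N^{-4/3}\cdot N^{1/3} = N^{-1}.
\eeq
The principal obstacle is ensuring that all constants are uniform in $\vartheta\in\Theta_\varpi$; this boils down to showing that the edge regularity of $\wh{\rho}_{fc}^\vartheta$ (square-root vanishing with a constant bounded above and away from $0$) is uniform in $\vartheta$, which in turn requires a delicate analysis of the self-consistent equation~\eqref{eq:funceqhat} using the stability bound and the uniform lower bound $\int(v-x)^{-2}\,\dd\wh{\nu}(x)\geq 1+\varpi$ from Assumption~\ref{assump:regesdv}. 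Once this uniform regularity is in hand --- exactly what is carried out in \cite{Lee-Schnelli-Stetler-Yau2016} --- the counting-to-location inversion and the $\ell^2$ summation go through verbatim, completing the proof.
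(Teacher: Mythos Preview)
Your proposal is a correct sketch of the standard rigidity argument, but note that the paper does not actually prove this lemma: it is simply quoted as Corollary~3.4 of \cite{Lee-Schnelli-Stetler-Yau2016}, and Appendix~\ref{sec:lemmas3.2} only supplies proofs for the other preliminary lemmas (Lemma~\ref{lem:chernoff}, Corollaries~\ref{cor:olxiorder}, \ref{cor:xiorder}, \ref{cor:path}, \ref{cor:covwh}, and Lemma~\ref{lem:mfc-msc}). So there is nothing to compare against in this paper---your outline (local law $\Rightarrow$ counting function bound via Helffer--Sj\"{o}strand $\Rightarrow$ inversion using the square-root edge and bulk nondegeneracy of $\wh{\rho}_{fc}^{\vartheta}$) is exactly the route taken in the cited reference, and you have correctly identified that the only nontrivial point beyond the Wigner template is the $\vartheta$-uniformity of the edge regularity, which is handled there.
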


\begin{lemma}[Theorem 2.22 of \cite{Lee-Schnelli2013}, Rigidity estimates]
	For a random $V$, under Assumptions \ref{assump:Wigner}, \ref{assump:esdvconv} and \ref{assump:regesdv}, the following hold on $\Omega_{N}$:
	\beq\label{eq:rigidncon}
	\absv{\lambda_{i}^{\vartheta}-\gamma_{i}^{\vartheta}}\prec N^{-\frac{2}{3}}\bigg(\check{\alpha}_{i}^{-\frac{1}{3}}+\lone_{\big[\check{\alpha}_{i}\leq N^{\epsilon}(1+\vartheta^{\frac{3}{2}}N^{\frac{1}{4}})\big]}\bigg)+\vartheta^{2}N^{-\frac{1}{3}}\check{\alpha}_{i}^{-\frac{2}{3}}+\vartheta N^{-\frac{1}{2}},
	\eeq
	uniformly for $\vartheta\in\Theta_{\varpi}$, where $\check{\alpha}_{i}:=\min\{i,N-i+1\}$.
\end{lemma}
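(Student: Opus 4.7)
The plan is to reduce to the deterministic rigidity of Lemma~\ref{lem:rigid} by conditioning on $V$, and then to control the remaining difference $|\hat\gamma_i^\vartheta - \gamma_i^\vartheta|$ via Stieltjes transform analysis of the two free convolutions $\rho_{fc}^\vartheta = \rho_{sc}\boxplus\nu^\vartheta$ and $\hat\rho_{fc}^\vartheta = \rho_{sc}\boxplus\hat\nu_N^\vartheta$. On $\Omega_N$ the realization of $V$ satisfies both the regularity bound in Assumption~\ref{assump:regesdv}(i) and the convergence rate \eqref{eq:esdconveq} with exponent $\alpha = 1/2 - \epsilon_0$ (guaranteed by Lemma~\ref{lem:chernoff}), so conditionally $W_N$ is a deformed Wigner matrix with a deterministic diagonal part satisfying the hypotheses of Lemma~\ref{lem:rigid}. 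This gives $|\lambda_i^\vartheta - \hat\gamma_i^\vartheta| \prec N^{-2/3}\check\alpha_i^{-1/3}$ uniformly in $\vartheta \in \Theta_\varpi$, and by the triangle inequality it remains to establish
\[
|\hat\gamma_i^\vartheta - \gamma_i^\vartheta| \prec \vartheta N^{-1/2} + \vartheta^2 N^{-1/3}\check\alpha_i^{-2/3} + N^{-2/3}\lone_{[\check\alpha_i \leq N^\epsilon(1+\vartheta^{3/2}N^{1/4})]}.
\]

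To estimate this difference I would subtract \eqref{eq:funceq} from \eqref{eq:funceqhat}. Setting $\Delta(z) := \hat m_{fc}^\vartheta(z) - m_{fc}^\vartheta(z)$, a direct linearization yields
\[
\Delta(z)\bigg(1 - \int\frac{\dd\hat\nu(x)}{(\vartheta x - z - m_{fc}^\vartheta(z))(\vartheta x - z - \hat m_{fc}^\vartheta(z))}\bigg) = \int\frac{\dd\hat\nu(x) - \dd\nu(x)}{\vartheta x - z - m_{fc}^\vartheta(z)},
\]
so $\Delta$ is controlled by $|m_{\hat\nu}^\vartheta - m_\nu^\vartheta|$ divided by the stability factor of the self-consistent equation, which is bounded below by a constant in the bulk. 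By condition \eqref{eq:esdconveqtheta}, on $\Omega_N$ the numerator is $\caO(\vartheta N^{-1/2+\epsilon_0})$, i.e.~the CLT rate for i.i.d.~sampling of $\nu$ enhanced by the factor $\vartheta$ from the rescaling $V \mapsto \vartheta V$. Inverting via the Helffer--Sj\"ostrand formula, together with the standard relation $|\hat\gamma_i - \gamma_i| \lesssim |F_{\hat\rho_{fc}^\vartheta}(\gamma_i) - F_{\rho_{fc}^\vartheta}(\gamma_i)|/\rho_{fc}^\vartheta(\gamma_i)$, then yields the bulk contribution $\vartheta N^{-1/2}$.

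Near the edges $L_\pm^\vartheta$ the density $\rho_{fc}^\vartheta$ has square-root decay and the stability denominator above degenerates like $\sqrt{\kappa}$ at distance $\kappa$ from the edge. A fluctuation of order $\vartheta N^{-1/2}$ in the numerator therefore propagates to an edge-location shift of order $\vartheta^2 N^{-1/2}\kappa^{-1/2}$; setting $\kappa \sim (\check\alpha_i/N)^{2/3}$, the scale at which $\rho_{fc}^\vartheta$ places one eigenvalue, produces the $\vartheta^2 N^{-1/3}\check\alpha_i^{-2/3}$ term. In the regime where this edge correction dominates the bulk prediction $N^{-2/3}\check\alpha_i^{-1/3}$, i.e.~where $\check\alpha_i \leq N^\epsilon(1 + \vartheta^{3/2}N^{1/4})$, the square-root expansion of $\rho_{fc}^\vartheta$ no longer provides a sharp prediction and one falls back on the crude $N^{-2/3}$ eigenvalue spacing, giving the indicator correction. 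The main obstacle will be the quantitative edge analysis: both the stability factor of the self-consistent equation and the square-root expansion of $\rho_{fc}^\vartheta$ degenerate exactly where the finest estimate is required, and establishing the sharp amplification rate $\kappa^{-1/2}$ requires tracking higher-order terms in the linearization and controlling how the spectral edge $\hat L_\pm^\vartheta$ fluctuates around $L_\pm^\vartheta$ using the subordination structure of the free convolution near the edge.
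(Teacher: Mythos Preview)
The paper does not prove this lemma at all: it is quoted verbatim as Theorem~2.22 of \cite{Lee-Schnelli2013} and used as a black box, with no argument given in the text or appendices. So there is no ``paper's own proof'' to compare against.

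That said, your sketch is a faithful outline of the strategy actually used in \cite{Lee-Schnelli2013}: condition on $V$ to reduce $|\lambda_i^\vartheta - \hat\gamma_i^\vartheta|$ to the deterministic rigidity (your Lemma~\ref{lem:rigid} analogue), then bound $|\hat\gamma_i^\vartheta - \gamma_i^\vartheta|$ by comparing the two self-consistent equations and inverting via the distribution-function/density relation, with separate treatment in the bulk (stability factor $\sim 1$, giving $\vartheta N^{-1/2}$) and near the edge (stability factor $\sim\sqrt{\kappa}$, producing the $\vartheta^2 N^{-1/3}\check\alpha_i^{-2/3}$ amplification). Your identification of the indicator term as the regime where the edge-shift overwhelms the square-root ansatz is also correct. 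The one place where your write-up is a bit loose is the edge step: the actual argument in \cite{Lee-Schnelli2013} does not use Helffer--Sj\"ostrand but works directly with the counting-function identity $n_{\hat\rho}(E) - n_\rho(E) = \pi^{-1}\int \Im(\hat m_{fc}^\vartheta - m_{fc}^\vartheta)$ and a careful analysis of $\hat L_\pm^\vartheta - L_\pm^\vartheta$ via the implicit characterization of the edges (i.e.\ the point where $\int (\vartheta x - \tau)^{-2}\dd\nu(x) = 1$). This is exactly the ``subordination structure near the edge'' you allude to in your last sentence, so you have correctly located where the real work lies.
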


In order to control $m_{N}(z)-m_{fc}(z)$ for $z\in\Gamma_{0}$ and sample path outside of $\Omega$, we propose a similar bound following from \eqref{eq:rigidncon}:
\begin{corollary}\label{cor:olxiorder}
	For any $z\in\Gamma$ and $\vartheta\in\Theta_{\varpi}$, we have
	\beq
	\absv{m_{N}^{\vartheta}(z)-m_{fc}^{\vartheta}(z)}\prec \frac{\vartheta}{\sqrt{N}}.
	\eeq
\end{corollary}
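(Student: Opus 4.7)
The plan is to combine eigenvalue rigidity~\eqref{eq:rigidncon} with a quantile-based Riemann-sum approximation of $m_{fc}^{\vartheta}$, exploiting that the contour $\Gamma$ stays a fixed positive distance from the supports $[L_{-}^{\vartheta},L_{+}^{\vartheta}]$ of $\rho_{fc}^{\vartheta}$, uniformly for $\vartheta\in\Theta_{\varpi}$ and $N$ large. First I would write
\beq
m_{N}^{\vartheta}(z)-m_{fc}^{\vartheta}(z) = \frac{1}{N}\sum_{i=1}^{N}\frac{\gamma_{i}^{\vartheta}-\lambda_{i}^{\vartheta}}{(\lambda_{i}^{\vartheta}-z)(\gamma_{i}^{\vartheta}-z)}+O(N^{-1}),
\eeq
where the error comes from replacing $\int\rho_{fc}^{\vartheta}(x)(x-z)^{-1}\dd x$ by $\frac{1}{N}\sum_{i}(\gamma_{i}^{\vartheta}-z)^{-1}$; this is a standard quantile quadrature whose error is controlled by the $C^{1}$-norm of $1/(x-z)$ on $[L_{-}^{\vartheta},L_{+}^{\vartheta}]$, while Lemma~\ref{lem:whrhofc} guarantees that $\rho_{fc}^{\vartheta}$ is supported on this single compact interval with bounded density.

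Next I would observe that for $z\in\Gamma$, both $\absv{\lambda_{i}^{\vartheta}-z}$ and $\absv{\gamma_{i}^{\vartheta}-z}$ are bounded below by a positive constant $c_{0}$ uniformly in $i$ on $\Omega_{N}$: indeed $\gamma_{i}^{\vartheta}\in[L_{-}^{\vartheta},L_{+}^{\vartheta}]$ by definition, and by~\eqref{eq:rigidncon} the eigenvalues $\lambda_{i}^{\vartheta}$ are at distance $o(1)$ from this interval on $\Omega_{N}$, while $\absv{a_{\pm}-L_{\pm}^{\vartheta_{\infty}}}>0$ is a fixed positive constant. This reduces the bound to
\beq
\absv{m_{N}^{\vartheta}(z)-m_{fc}^{\vartheta}(z)}\leq \frac{C}{N}\sum_{i=1}^{N}\absv{\lambda_{i}^{\vartheta}-\gamma_{i}^{\vartheta}}+O(N^{-1}).
\eeq
Applying~\eqref{eq:rigidncon} term by term, the three non-indicator pieces contribute $N^{-1}$, $\vartheta^{2}N^{-2/3}$ and $\vartheta N^{-1/2}$ respectively after averaging, while the indicator piece contributes $\prec N^{-5/3+\epsilon}+\vartheta^{3/2}N^{-17/12+\epsilon}$ (its support contains $O(N^{\epsilon}(1+\vartheta^{3/2}N^{1/4}))$ indices near the edges). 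Since $N^{-1/2}\ll\vartheta\leq 1+\varpi$ by Assumption~\ref{assump:coupling}, all contributions are dominated by $\vartheta/\sqrt{N}$, yielding $\absv{m_{N}^{\vartheta}(z)-m_{fc}^{\vartheta}(z)}\prec\vartheta/\sqrt{N}$ on $\Omega_{N}$.

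Finally, to upgrade the estimate off $\Omega_{N}$, I would use the crude bound $\absv{m_{N}^{\vartheta}(z)},\absv{m_{fc}^{\vartheta}(z)}\leq (\Im z)^{-1}\leq N^{\delta}$ combined with $\P(\Omega_{N}^{c})\leq cN^{-\frt}$, so this contribution is absorbed by the polynomial slack in the $\prec$-notation. The main delicate point I expect is maintaining the uniform lower bound $c_{0}$ on the denominators on the portion $\Gamma_{0}$ where $\Im z$ can be as small as $N^{-\delta}$; there the estimate must come from the horizontal separation $\absv{a_{\pm}-L_{\pm}^{\vartheta_{\infty}}}$ rather than from $\Im z$, which is precisely where the hypothesis $\pm(a_{\pm}-L_{\pm}^{\vartheta_{\infty}})>0$ built into the definition of $\Gamma$ plays its essential role.
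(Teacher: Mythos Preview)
Your approach is essentially the same as the paper's: split $m_{N}^{\vartheta}(z)-m_{fc}^{\vartheta}(z)$ into a quantile quadrature error of order $N^{-1}$ plus $\frac{1}{N}\sum_{i}(\lambda_{i}^{\vartheta}-\gamma_{i}^{\vartheta})$ weighted by $O(1)$ denominators (using that $\dist(z,[L_{-}^{\vartheta},L_{+}^{\vartheta}])\sim 1$ on $\Gamma$), then feed in the rigidity estimate~\eqref{eq:rigidncon} term by term. Two small remarks: (i) the average of $\vartheta^{2}N^{-1/3}\check\alpha_{i}^{-2/3}$ is $\vartheta^{2}N^{-1}$, not $\vartheta^{2}N^{-2/3}$, since $\sum_{i\leq N/2}i^{-2/3}\sim N^{1/3}$ --- this only strengthens your bound; (ii) your off-$\Omega_{N}$ crude bound $\absv{m_{N}^{\vartheta}(z)}\leq(\Im z)^{-1}$ fails on $\Gamma_{0}$ where $\Im z$ can vanish, so that step needs the horizontal separation argument you already flagged (or one simply accepts, as the paper does, that the rigidity bound is the full content and the $\prec$ on $\Omega_{N}$ together with $\P(\Omega_{N}^{c})\leq N^{-\frt}$ is what is actually used downstream).
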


\begin{lemma}[Lemmas 3.5, 3.6, and A.1 of \cite{Lee-Schnelli-Stetler-Yau2016}, Square-root behavior]\label{lem:sqrtbehav}
	Let $\nu$ and $\wh{\nu}$ satisfy Assumptions \ref{assump:esdvconv} and \ref{assump:regesdv}, $\kappa_{E}=\min\{\absv{E-L_{-}},\absv{E-L_{+}}\}$. Then the following hold for any $\vartheta\in\Theta_{\varpi}$:
	\begin{enumerate}[(i)]
		\item For any $z=E+\ii\eta\in\caD'$,
		\beq
		\Im m_{fc}^{\vartheta}(z)\sim \Bigg\{
		\begin{array}{lc}\label{eq:sqrtim}
			\sqrt{\kappa_{E}+\eta}, & E\in[L_{-},L_{+}], \\
			\dfrac{\eta}{\sqrt{\kappa_{E}+\eta}}, & E\notin[L_{-},L_{+}],
		\end{array}
		\eeq
		where
		\beq
		\kappa_{E} \deq \min\{\absv{E-L_{-}^{\vartheta}},\absv{E-L_{+}^{\vartheta}}\}. 
		\eeq
		\item There exists a constant $C>1$ such that for any $z\in\caD'$ and $x\in I_{\nu}$,
		\beq\label{eq:mfczmaps}
		C^{-1} \leq\absv{\vartheta x-z-m_{fc}^{\vartheta}(z)} \leq C.
		\eeq
		
		\item There exists a constant $C>1$ such that for any $z=E+\ii\eta\in\caD'$,
		\beq\label{eq:1-s1s2}
		C^{-1}\sqrt{\kappa_{E}+\eta}\leq\biggabsv{1-\int_{\R}\frac{1}{(\vartheta x-z-m_{fc}^{\vartheta}(z))^{2}}\dd\nu(x)}\leq C\sqrt{\kappa_{E}+\eta}.
		\eeq
	\end{enumerate}
	The constants in \eqref{eq:sqrtim}, \eqref{eq:mfczmaps}, \eqref{eq:1-s1s2} can be chosen uniformly in $\vartheta\in\Theta_{\varpi}$.
	
	Furthermore, on $\Omega_{N}$, the following hold for sufficiently large $N$:
	\begin{enumerate}[(i)]
		\item There exists constant $c>0$ such that for any $z\in\caD'$,
		\beq
		\absv{\wh{m}_{fc}^{\vartheta}(z)-m_{fc}^{\vartheta}(z)}\leq N^{-\frac{c\alpha}{2}}\quad\text{and}\quad \absv{\wh{L}_{\pm}^{\vartheta}-L_{\pm}^{\vartheta}}\leq N^{-c\alpha}
		\eeq
		for sufficiently large $N$.
		
		\item For any $z=E+\ii\eta\in\caD'$, 
		\beq\label{eq:sqrtimwh}
		\Im \wh{m}_{fc}^{\vartheta}(z)\sim \Bigg\{
		\begin{array}{lc}
			\sqrt{\wh{\kappa}_{E}+\eta}, & E\in[\wh{L}_{-}^{\vartheta},\wh{L}_{+}^{\vartheta}], \\
			\dfrac{\eta}{\sqrt{\wh{\kappa}_{E}+\eta}}, & E\notin[\wh{L}_{-}^{\vartheta},\wh{L}_{+}^{\vartheta}],
		\end{array}
		\eeq
		
		\item There exists constant $C>1$ such that for any $z\in\caD'$ and $x\in I_{\wh{\nu}}$,
		\beq\label{eq:mfczmapswh}
		C^{-1} \leq\absv{\vartheta x-z-\wh{m}_{fc}^{\vartheta}(z)} \leq C.
		\eeq
	\end{enumerate} 
	The constants in \eqref{eq:sqrtimwh} and \eqref{eq:mfczmapswh} can be chosen uniformly in $\vartheta\in\Theta_{\varpi}$ and $N\in\N$ for $N$ sufficiently large.
\end{lemma}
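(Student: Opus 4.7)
The plan is to reduce everything to the scalar implicit equation $F(\omega)=z$, where $\omega(z):=z+m_{fc}^{\vartheta}(z)$ is the subordination function and $F(\omega):=\omega-m_{\nu^{\vartheta}}(\omega)$, with $m_{fc}^{\vartheta}(z)=m_{\nu^{\vartheta}}(\omega(z))$. The self-consistent equation \eqref{eq:funceq} is exactly the statement $F(\omega(z))=z$, and all three parts (i)--(iii) of the lemma are local statements about $\omega$ near the two real critical points of $F$, which turn out to be the edges of the support.

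First I would identify $L_{\pm}^{\vartheta}=F(\omega_{\pm}^{\vartheta})$, where $\omega_{\pm}^{\vartheta}$ are the real solutions of $F'(\omega)=1-\int(\vartheta x-\omega)^{-2}\,\dd\nu(x)=0$ lying on the two unbounded components of $\R\setminus\mathrm{conv}\,\supp\nu^{\vartheta}$. Assumption \ref{assump:regesdv} guarantees such critical points exist and that $\mathrm{dist}(\omega_{\pm}^{\vartheta},\vartheta\,\supp\nu)\gtrsim 1$ uniformly in $\vartheta\in\Theta_{\varpi}$. Continuity of $\omega$ up to $\R$ then gives the lower bound in (ii), while the upper bound is immediate from compactness of $\omega(\caD')$. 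For (i), I would Taylor expand $F(\omega)-L_{\pm}^{\vartheta}=\tfrac{1}{2}F''(\omega_{\pm}^{\vartheta})(\omega-\omega_{\pm}^{\vartheta})^{2}+O(|\omega-\omega_{\pm}^{\vartheta}|^{3})$, where $F''(\omega_{\pm}^{\vartheta})=2\int(\vartheta x-\omega_{\pm}^{\vartheta})^{-3}\dd\nu(x)$ is bounded away from zero by Assumption \ref{assump:regesdv}; inverting along the physical branch yields $\omega(z)-\omega_{\pm}^{\vartheta}\sim(z-L_{\pm}^{\vartheta})^{1/2}$, from which $\Im m_{fc}^{\vartheta}(z)=\Im m_{\nu^{\vartheta}}(\omega(z))$ inherits the square-root profile of \eqref{eq:sqrtim}. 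For (iii), observe the algebraic identity $1-\int(\vartheta x-z-m_{fc}^{\vartheta}(z))^{-2}\dd\nu(x)=F'(\omega(z))$, and expand $F'(\omega(z))=F''(\omega_{\pm}^{\vartheta})(\omega(z)-\omega_{\pm}^{\vartheta})+O((\omega-\omega_{\pm}^{\vartheta})^{2})\sim\sqrt{\kappa_{E}+\eta}$ using the inversion above.

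For the statements about $\wh{m}_{fc}^{\vartheta}$ on $\Omega_{N}$, the strategy is to repeat the whole analysis verbatim with $\nu$ replaced by $\wh{\nu}_{N}$: on $\Omega_{N}$ the same regularity \eqref{eq:regesdveq} holds with the same $\varpi$, so the critical point structure of $\wh{F}(\omega):=\omega-m_{\wh{\nu}^{\vartheta}}(\omega)$ is qualitatively the same. The quantitative closeness $|\wh{L}_{\pm}^{\vartheta}-L_{\pm}^{\vartheta}|\leq N^{-c\alpha}$ and $|\wh{m}_{fc}^{\vartheta}-m_{fc}^{\vartheta}|\leq N^{-c\alpha/2}$ is obtained from \eqref{eq:esdconveq} by a perturbative stability argument for the pair of self-consistent equations: away from the edges one uses $|F'(\omega)|\gtrsim 1$ and the implicit function theorem, while near the edges one absorbs the square-root degeneracy of $F'$ into the exponent loss $\alpha\mapsto\alpha/2$ by solving the perturbed quadratic $\wh{F}(\omega)=z$ up to error $O(N^{-\alpha})$.

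The main obstacle is the degenerate behavior at the edges: every estimate pivots on the fact that $F'$ vanishes to first order, so the whole argument rests on a quantitative non-degeneracy $F''(\omega_{\pm}^{\vartheta})\sim 1$ uniformly in $\vartheta\in\Theta_{\varpi}$, and likewise for $\wh{F}''$ on $\Omega_{N}$. This is precisely where Assumption \ref{assump:regesdv} is indispensable, and one must also track the behavior of $\omega_{\pm}^{\vartheta}$ as $\vartheta\to 0^{+}$, where $\nu^{\vartheta}$ concentrates at the origin and the edges collapse to the semicircular edges $\pm 2$; handling this degenerate limit together with the $\vartheta\sim 1$ regime requires the $\vartheta$-uniform bounds on $|\vartheta x-\omega_{\pm}^{\vartheta}|$ that follow from \ref{assump:regesdv}.
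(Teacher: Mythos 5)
This lemma is not proved in the paper at all---it is imported verbatim from Lemmas 3.5, 3.6 and A.1 of \cite{Lee-Schnelli-Stetler-Yau2016}---so the only proof to compare with is the one in that reference. Your sketch (the subordination function $\omega(z)=z+m_{fc}^{\vartheta}(z)$ solving $F(\omega)=z$ with $F(\omega)=\omega-m_{\nu^{\vartheta}}(\omega)$, the edges $L_{\pm}^{\vartheta}$ as critical values of $F$ at non-degenerate critical points $\omega_{\pm}^{\vartheta}$ separated from $\vartheta\,\supp\nu$, square-root inversion of the quadratic Taylor expansion to get \eqref{eq:sqrtim} and \eqref{eq:1-s1s2} via $F'(\omega(z))$, and the perturbative transfer to $\wh{\nu}$ on $\Omega_{N}$ with the $\alpha\mapsto\alpha/2$ loss coming from the square-root degeneracy at the edges) is essentially that proof, so your approach coincides with the one the paper relies on.
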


The square-root behavior of $\Im m_{fc}^{\vartheta}$ and $\Im \wh{m}_{fc}^{\vartheta}$ implies the following fact, to be used later on.
\begin{corollary}\label{cor:prepath}
	If $\caD\subset\C$ is a compact subset with a constant $c>0$ satisfying $\inf_{z\in\caD}\sqrt{\kappa_{E}+\eta}>c$ and
	\beq
	\caD\cap\{z=E+\ii\eta\in\C: E\in[L_{-}^{\vartheta},L_{+}^{\vartheta}]\} \subset\{z\in\C:\absv{\eta} >c\},
	\eeq
	then there exists a constant $c'>0$ such that $\Im m_{fc}^{\vartheta}(z)>c'\eta$ and also
	$\Im \wh{m}_{fc}^{\vartheta}(z)>c'\eta$ on $\Omega_{N}$ for sufficiently large $N$.
	The constant $c'$ can be chosen uniformly in $\vartheta\in\Theta_{\varpi}$ and $N\in\N$ for $N$ sufficiently large.
\end{corollary}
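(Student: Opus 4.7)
The plan is to reduce the claim to the square-root asymptotics from Lemma \ref{lem:sqrtbehav}(i), by splitting into the two regimes $E\in[L_{-}^{\vartheta},L_{+}^{\vartheta}]$ and $E\notin[L_{-}^{\vartheta},L_{+}^{\vartheta}]$, using compactness of $\caD$ to control $\eta$ and $\sqrt{\kappa_{E}+\eta}$ from above. Writing $z=E+\ii\eta\in\caD$, I would first assume $\caD\subset\C^{+}$ so that $\eta>0$ (otherwise the conclusion $\Im m_{fc}^{\vartheta}(z)>c'\eta$ is meant as $|\Im m_{fc}^{\vartheta}(z)|>c'|\eta|$, which reduces to the $\C^+$ case by conjugate symmetry). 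By compactness of $\caD$ there exist constants $M_{1},M_{2}>0$ with $\eta\le M_{1}$ and $\sqrt{\kappa_{E}+\eta}\le M_{2}$ for every $z\in\caD$.

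For $m_{fc}^{\vartheta}$, if $E\in[L_{-}^{\vartheta},L_{+}^{\vartheta}]$ the second hypothesis yields $\eta>c$, and the first branch of \eqref{eq:sqrtim} gives
\[
\Im m_{fc}^{\vartheta}(z)\ge c_{1}\sqrt{\kappa_{E}+\eta}\ge c_{1}\sqrt{c}\ge \frac{c_{1}\sqrt{c}}{M_{1}}\,\eta.
\]
If instead $E\notin[L_{-}^{\vartheta},L_{+}^{\vartheta}]$, the second branch of \eqref{eq:sqrtim} together with the first hypothesis gives
\[
\Im m_{fc}^{\vartheta}(z)\ge c_{2}\,\frac{\eta}{\sqrt{\kappa_{E}+\eta}}\ge \frac{c_{2}}{M_{2}}\,\eta.
\]
Since the implicit constants $c_{1},c_{2}$ coming from Lemma \ref{lem:sqrtbehav} are uniform in $\vartheta\in\Theta_{\varpi}$, setting $c'\deq\min(c_{1}\sqrt{c}/M_{1},\,c_{2}/M_{2})$ completes the first part.

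The estimate for $\wh m_{fc}^{\vartheta}$ is obtained by the same case analysis applied to \eqref{eq:sqrtimwh}, after observing that on $\Omega_{N}$ we have $|\wh L_{\pm}^{\vartheta}-L_{\pm}^{\vartheta}|\le N^{-c\alpha}$ by Lemma \ref{lem:sqrtbehav}, whence $|\wh\kappa_{E}-\kappa_{E}|\le 2N^{-c\alpha}$. For $N$ large enough that $N^{-c\alpha}<c/2$, the two hypotheses of the corollary continue to hold (with $c$ weakened to $c/2$) once $\kappa_{E}$ and $L_{\pm}^{\vartheta}$ are replaced by $\wh\kappa_{E}$ and $\wh L_{\pm}^{\vartheta}$. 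The only subtlety is that when $E$ lies within $N^{-c\alpha}$ of $L_{\pm}^{\vartheta}$, it may belong to one of the two supporting intervals $[L_{-}^{\vartheta},L_{+}^{\vartheta}]$ and $[\wh L_{-}^{\vartheta},\wh L_{+}^{\vartheta}]$ but not the other, so the two branches of \eqref{eq:sqrtim} and \eqref{eq:sqrtimwh} need not pair up between the two estimates; since the two branches of each asymptotic agree in order at the transition and $N^{-c\alpha}$ is negligible compared with the fixed constant $c$ for $N$ large, this is the only place requiring any care and presents no real obstacle beyond pure bookkeeping.
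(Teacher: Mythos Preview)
Your proof is correct and is exactly the argument the paper has in mind: the corollary is stated immediately after Lemma~\ref{lem:sqrtbehav} with the sentence ``The square-root behavior of $\Im m_{fc}^{\vartheta}$ and $\Im \wh{m}_{fc}^{\vartheta}$ implies the following fact,'' and no further proof is given there or in Appendix~\ref{sec:lemmas3.2}. Your case split on $E\in[L_{-}^{\vartheta},L_{+}^{\vartheta}]$ versus $E\notin[L_{-}^{\vartheta},L_{+}^{\vartheta}]$, together with compactness to control $\eta$ and $\sqrt{\kappa_{E}+\eta}$ from above, is precisely the intended computation; the transfer to $\wh m_{fc}^{\vartheta}$ via $|\wh L_{\pm}^{\vartheta}-L_{\pm}^{\vartheta}|\le N^{-c\alpha}$ is also correct.
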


Using the rigidity estimates, we can bound $\absv{m_{fc}-\wh{m}_{fc}}$ for $\Im z\ll 1$, in particular, for $z\notin\Gamma_{u}$. Also, the square-root behavior enables us to enlarge the domain of $z$ in the entrywise local law in Lemma~\ref{lem:locallaw}.
\begin{corollary}\label{cor:xiorder}
	For a fixed constant $c>0$, define $\caD_{c}\deq\{z=E+i\eta:\eta\in(c,3), \absv{E}\leq E_{0}\}$. Then on $\Omega_{N}$, for any $z\in\caD_{c}$ and $\vartheta\in\Theta_{\varpi}$,
	\beq\label{eq:stieltjesconcen}
	\absv{m_{N}^{\vartheta}(z)-\wh{m}_{fc}^{\vartheta}(z)}\prec\frac{1}{N}
	\eeq
	and
	\beq\label{eq:resolventconcen}
	\absv{R_{ij}^{\vartheta}(z)-\delta_{ij}\wh{g}_{i}^{\vartheta}(z)}\prec\frac{1}{\sqrt{N}}.
	\eeq
	Moreover, the estimate~\eqref{eq:stieltjesconcen} holds on $\Omega_{N}$ for $z\in\Gamma_{0}\cup\Gamma_{l}\cup\Gamma_{r}$ and \eqref{eq:resolventconcen} holds on $\Omega_{N}$ for $z\in\Gamma_{l}\cup\Gamma_{r}$.
\end{corollary}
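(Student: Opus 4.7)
The plan is to handle the two regimes separately: on $\caD_{c}$ (where $\eta$ is uniformly bounded below by a constant) both bounds follow directly from the strong local law (Lemma~\ref{lem:locallaw}); on the small-$\eta$ contours $\Gamma_{0}\cup\Gamma_{l}\cup\Gamma_{r}$ we instead exploit the fact that their real part $a_{\pm}$ is macroscopically separated from the spectrum of $W_{N}$.

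On $\caD_{c}$, Lemma~\ref{lem:locallaw} gives $\absv{m_{N}^{\vartheta}(z)-\wh{m}_{fc}^{\vartheta}(z)}\prec 1/(N\eta)\le 1/(Nc)\prec 1/N$, and using that $\Im \wh{m}_{fc}^{\vartheta}$ is bounded on $\caD_{c}$ (by Lemma~\ref{lem:sqrtbehav}) one also gets $\sqrt{\Im \wh{m}_{fc}^{\vartheta}/(N\eta)}+1/(N\eta)=O(N^{-1/2})$, which settles the resolvent bound on $\caD_{c}$. On $\Gamma_{l}\cup\Gamma_{r}$ (where $\eta\ge N^{-\delta}$ and $\Re z=a_{\pm}$ is outside $[\wh{L}_{-}^{\vartheta},\wh{L}_{+}^{\vartheta}]$ on $\Omega_{N}$, by the bound $\absv{\wh{L}_{\pm}^{\vartheta}-L_{\pm}^{\vartheta}}\le N^{-c\alpha}$ of Lemma~\ref{lem:sqrtbehav}), the square-root behavior~\eqref{eq:sqrtimwh} yields $\Im\wh{m}_{fc}^{\vartheta}\sim\eta$, whence $\sqrt{\Im\wh{m}_{fc}^{\vartheta}/(N\eta)}\sim 1/\sqrt{N}$ and $1/(N\eta)\le N^{\delta-1}\prec 1/\sqrt{N}$ for $\delta<1/2$; the resolvent bound on $\Gamma_{l}\cup\Gamma_{r}$ follows.

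The core step is the $\prec 1/N$ bound for $m_{N}^{\vartheta}-\wh{m}_{fc}^{\vartheta}$ on $\Gamma_{0}\cup\Gamma_{l}\cup\Gamma_{r}$, since Lemma~\ref{lem:locallaw} only yields $\prec N^{\delta-1}$ on $\Gamma_{l}\cup\Gamma_{r}$ and does not apply at all on $\Gamma_{0}$ (where $\eta$ can be arbitrarily small). I would decompose
\begin{equation*}
m_{N}^{\vartheta}(z)-\wh{m}_{fc}^{\vartheta}(z)=\frac{1}{N}\sum_{i}\bigg(\frac{1}{\lambda_{i}^{\vartheta}-z}-\frac{1}{\wh{\gamma}_{i}^{\vartheta}-z}\bigg)+\bigg(\frac{1}{N}\sum_{i}\frac{1}{\wh{\gamma}_{i}^{\vartheta}-z}-\wh{m}_{fc}^{\vartheta}(z)\bigg).
\end{equation*}
Because $a_{\pm}$ lies at a fixed positive distance from $[L_{-}^{\vartheta_{\infty}},L_{+}^{\vartheta_{\infty}}]$, combined with $\wh{L}_{\pm}^{\vartheta}\to L_{\pm}^{\vartheta}$ on $\Omega_{N}$ (Lemma~\ref{lem:sqrtbehav}) and the pointwise rigidity $\absv{\lambda_{i}^{\vartheta}-\wh{\gamma}_{i}^{\vartheta}}\prec N^{-2/3}\check{\alpha}_{i}^{-1/3}$ from Lemma~\ref{lem:rigid}, both $\absv{\lambda_{i}^{\vartheta}-z}$ and $\absv{\wh{\gamma}_{i}^{\vartheta}-z}$ are bounded below by a positive constant $c_{0}$ uniformly in $i$ and in $z\in\Gamma_{0}\cup\Gamma_{l}\cup\Gamma_{r}$, on an event of overwhelming probability inside $\Omega_{N}$. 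The first sum is then bounded by $c_{0}^{-2}N^{-1}\sum_{i}\absv{\lambda_{i}^{\vartheta}-\wh{\gamma}_{i}^{\vartheta}}\prec N^{-1}$, using $\sum_{i}\check{\alpha}_{i}^{-1/3}=O(N^{2/3})$. The second sum is a Riemann-sum error: integrating by parts with respect to the cumulative distribution functions, and using that the empirical CDF of $\{\wh{\gamma}_{i}^{\vartheta}\}$ differs from that of $\wh{\rho}_{fc}^{\vartheta}$ by at most $1/N$ by definition of classical locations, one obtains a bound of $N^{-1}\int_{\supp\wh{\rho}_{fc}^{\vartheta}}\absv{x-z}^{-2}\,\dd x=O(N^{-1})$.

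The main obstacle is enforcing the deterministic lower bound $\absv{\lambda_{i}^{\vartheta}-z}\ge c_{0}$ on $\Gamma_{0}$, where $z$ is allowed to touch the real axis: this requires both the convergence $\wh{L}_{\pm}^{\vartheta}\to L_{\pm}^{\vartheta}$ supplied by Lemma~\ref{lem:sqrtbehav} and eigenvalue rigidity (Lemma~\ref{lem:rigid} for deterministic $V$, and its random-$V$ analogue on $\Omega_{N}$) to push every eigenvalue into a small neighborhood of $[L_{-}^{\vartheta},L_{+}^{\vartheta}]$ that stays away from $a_{\pm}$. Once this separation is secured, the two pieces of the decomposition reduce to elementary rigidity and CDF-comparison estimates.
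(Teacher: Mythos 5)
Your proposal is correct and follows essentially the same route as the paper: the local law settles everything on $\caD_{c}$, the square-root behavior \eqref{eq:sqrtimwh} with $\wh{\kappa}_{E}\sim1$ and $\delta<\tfrac12$ gives the entrywise bound on $\Gamma_{l}\cup\Gamma_{r}$, and on $\Gamma_{0}\cup\Gamma_{l}\cup\Gamma_{r}$ the paper uses exactly your decomposition into a rigidity term (Lemma~\ref{lem:rigid}) plus a classical-location Riemann-sum error, with the denominators controlled because $a_{\pm}$ stays at distance $\sim1$ from $[\wh{L}_{-}^{\vartheta},\wh{L}_{+}^{\vartheta}]$. The only cosmetic difference is that you bound the Riemann-sum error by integrating by parts against the CDFs, while the paper compares each $\wh{\gamma}_{i}$ directly with the $1/N$-quantile intervals $[\wh{\lambda}_{i-1},\wh{\lambda}_{i}]$ of $\wh{\rho}_{fc}^{\vartheta}$; the two devices are equivalent.
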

In fact, the bound of $\wh{m}_{fc}^{\vartheta}(z)-m_{fc}^{\vartheta}(z)$ can be improved with a stronger assumption on the domain, following the proof of Lemma~3.6 of \cite{Lee-Schnelli-Stetler-Yau2016}:
\begin{lemma}\label{lem:apriori}
	For \textbf{random} V, if $\caD_{0}\subset\caD'$ is a compact subset with $\inf\{\absv{\kappa_{E}+\eta}:z=E+\ii\eta\in\caD_{0}\}\sim 1$, there exists a constant $C>0$ such that
	\beq
	\absv{\wh{m}_{fc}^{\vartheta}(z)-m_{fc}^{\vartheta}(z)}\leq C\vartheta N^{-\frac{1}{2}+\epsilon_{0}}
	\eeq
	for all $z\in\caD_{0}$ on $\Omega_{N}$, and the constant $C$ can be chosen uniformly in $\vartheta\in\Theta_{\varpi}$, $z\in\caD_{0}$, $N\in\N$ for sufficiently large $N$.
\end{lemma}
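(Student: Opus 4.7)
\medskip

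\noindent\textbf{Proof plan for Lemma \ref{lem:apriori}.} The plan is a self-consistent-equation comparison, essentially mirroring the argument of Lemma 3.6 of \cite{Lee-Schnelli-Stetler-Yau2016} but exploiting the stronger control on $\Omega_N$ contained in \eqref{eq:esdconveqtheta}, which encodes the $\vartheta N^{-1/2+\epsilon_0}$ rate specific to random $V$. Throughout the argument we work on $\Omega_N$ and for $z\in\caD_0$, so by assumption $\kappa_E+\eta\sim 1$; in particular Lemma \ref{lem:sqrtbehav}(ii),(iii) give uniform lower bounds of order $1$ on $|\vartheta x-z-m_{fc}^{\vartheta}(z)|$ and on $|1-\int(\vartheta x-z-m_{fc}^{\vartheta}(z))^{-2}\dd\nu(x)|$.

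Set $\Delta(z)\deq\wh m_{fc}^{\vartheta}(z)-m_{fc}^{\vartheta}(z)$. Subtracting the two self-consistent equations \eqref{eq:funceq} and \eqref{eq:funceqhat} and splitting the difference into a ``change of measure'' piece and a ``change of argument'' piece, I would write
\beq
\Delta(z)=\underbrace{\int\frac{\dd(\wh\nu-\nu)(x)}{\vartheta x-z-m_{fc}^{\vartheta}(z)}}_{=:\mathcal{E}_1(z)}+\Delta(z)\underbrace{\int\frac{\dd\wh\nu(x)}{(\vartheta x-z-m_{fc}^{\vartheta}(z))(\vartheta x-z-\wh m_{fc}^{\vartheta}(z))}}_{=:J(z)}.
\eeq
The first term $\mathcal{E}_1(z)$ is exactly the Stieltjes transform of $\wh\nu^{\vartheta}-\nu^{\vartheta}$ evaluated at the shifted spectral parameter $z+m_{fc}^{\vartheta}(z)\in\C^{+}$. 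By Lemma \ref{lem:sqrtbehav}(ii) this shifted point stays at a distance of order $1$ from $\supp\nu^{\vartheta}$ uniformly in $z\in\caD_0$ and $\vartheta\in\Theta_{\varpi}$, so the set of evaluation points lies in a fixed compact set disjoint from $\supp\nu^{\vartheta}$. Hence the bound \eqref{eq:esdconveqtheta} defining $\Omega_N(\tfrac12-\epsilon_0)$ applies and gives $|\mathcal{E}_1(z)|\le C\vartheta N^{-1/2+\epsilon_0}$.

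It remains to show that $|1-J(z)|\sim 1$ on $\Omega_N$. Here I would use as input the \emph{a priori} control already provided by Lemma \ref{lem:sqrtbehav}, namely $|\Delta(z)|\le N^{-c\alpha/2}=o(1)$ with $\alpha=\tfrac12-\epsilon_0$. Using this, I can replace $\wh m_{fc}^{\vartheta}$ by $m_{fc}^{\vartheta}$ in the second factor of $J$ at a cost of $O(|\Delta(z)|)$, and then replace $\wh\nu$ by $\nu$ in the resulting integrand at a cost of $O(\vartheta N^{-1/2+\epsilon_0})$ by another application of \eqref{eq:esdconveqtheta} to the appropriate derivative-like kernel (concretely, integrate by parts or differentiate the Stieltjes transform bound in the shifted argument). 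This yields
\beq
1-J(z)=1-\int\frac{\dd\nu(x)}{(\vartheta x-z-m_{fc}^{\vartheta}(z))^{2}}+o(1),
\eeq
and the main term is bounded below in modulus by $c\sqrt{\kappa_E+\eta}\sim 1$ thanks to \eqref{eq:1-s1s2} and the hypothesis on $\caD_0$. Consequently $|1-J(z)|\ge c>0$ uniformly, and solving $\Delta(z)(1-J(z))=\mathcal{E}_1(z)$ gives the claimed bound $|\Delta(z)|\le C\vartheta N^{-1/2+\epsilon_0}$, with $C$ uniform in $z\in\caD_0$, $\vartheta\in\Theta_{\varpi}$, and in $N$ sufficiently large.

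The main obstacle is the invertibility step: the bound \eqref{eq:1-s1s2} is expressed in terms of $\nu$ and $m_{fc}^{\vartheta}$, whereas $J$ is written in terms of $\wh\nu$ and $\wh m_{fc}^{\vartheta}$. The delicate point is that one needs the crude \emph{a priori} estimate $\Delta=o(1)$ already in hand to perform the linearization of $1-J$ around the deterministic stability factor without losing the uniform separation from zero; this is precisely why the argument proceeds in two stages (first the qualitative bound from Lemma \ref{lem:sqrtbehav}, then the quantitative self-improvement via $\mathcal{E}_1$). The rest of the argument is routine Stieltjes-transform manipulation.
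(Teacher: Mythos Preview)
Your proposal is correct and follows essentially the same self-comparison route that the paper indicates (``following the proof of Lemma~3.6 of \cite{Lee-Schnelli-Stetler-Yau2016}''): subtract the two self-consistent equations, isolate the source term $\mathcal{E}_1(z)=m_{\wh\nu}^{\vartheta}(z+m_{fc}^{\vartheta}(z))-m_{\nu}^{\vartheta}(z+m_{fc}^{\vartheta}(z))$ and bound it by \eqref{eq:esdconveqtheta}, then invert the stability factor using \eqref{eq:1-s1s2} together with the crude a~priori bound $|\Delta|=o(1)$ from Lemma~\ref{lem:sqrtbehav}. The paper carries out exactly this manipulation explicitly in Section~\ref{sec:Gaussianprocsqrtprf} (see the derivation leading to \eqref{eq:barxiexpand} and \eqref{eq:reduc}), where the replacement of $\wh\nu$ by $\nu$ in the second-order kernel is handled via the Cauchy integral formula applied to \eqref{eq:esdconveqtheta}, just as you suggest.
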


\begin{corollary}\label{cor:path}
	Let $\nu$ and $\wh{\nu}$ satisfy Assumptions \ref{assump:esdvconv} and \ref{assump:regesdv}, $\kappa_{E}=\min\{\absv{E-L_{-}},\absv{E-L_{+}}\}$ . Define
	\beq
	I^{\vartheta}(z_{1},z_{2}) \deq\int_{\R}\frac{1}{(\vartheta x-z_{1}-m_{fc}^{\vartheta}(z_{1}))(\vartheta x-z_{2}-m_{fc}^{\vartheta}(z_{2}))}\dd\nu(x),
	\eeq
	and
	\beq
	\wh{I}_{k}^{\vartheta}(z_{1},z_{2})\deq \frac{1}{N}\sum_{p>k}\wh{g}_{p}^{\vartheta}(z_{1})\wh{g}_{p}^{\vartheta}(z_{2})
	\eeq
	for $0\leq k \leq N-1$. Then for each fixed compact subset $\caD\subset\C$ with a constant $c>0$ satisfying $\inf_{z\in\caD}\sqrt{\kappa_{E}+\eta}>c$ and
	\beq
	\caD\cap\{z=E+\ii\eta\in\C: E\in[L_{-}^{\vartheta},L_{+}^{\vartheta}]\} \subset\{z\in\C:\absv{\eta} >c\},
	\eeq
	there exists a constant $r\in(0,1)$ such that
	\beq
	\sup\big\{\absv{I^{\vartheta}(z_{1},z_{2})}: z_{1},z_{2}\in\caD_{c}, \vartheta\in\Theta_{\varpi}\big\}<r
	\eeq
	and on $\Omega_{N}$,
	\beq
	\sup\Big\{\absv{\wh{I}^{\vartheta}_{k}(z_{1},z_{2})}: z_{1},z_{2}\in\caD_{c}, 0\leq k\leq N, \vartheta\in\Theta_{\varpi}\Big\}<r,
	\eeq
	for any sufficiently large $N$.
\end{corollary}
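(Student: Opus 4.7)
The plan is to reduce both estimates to a single identity coming from the self-consistent equations for $m_{fc}^{\vartheta}$ and $\wh{m}_{fc}^{\vartheta}$, combined with Cauchy--Schwarz. Writing $z=E+\ii\eta$ and taking imaginary parts of \eqref{eq:funceq} yields, after rearrangement,
\beq
\int_{\R} \frac{\dd\nu(x)}{\absv{\vartheta x - z - m_{fc}^{\vartheta}(z)}^{2}} = \frac{\Im m_{fc}^{\vartheta}(z)}{\eta + \Im m_{fc}^{\vartheta}(z)} = 1 - \frac{\eta}{\eta + \Im m_{fc}^{\vartheta}(z)},
\eeq
which lies in $[0,1)$ for every $z\in\C^{+}$. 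Applying Cauchy--Schwarz to the definition of $I^{\vartheta}(z_{1},z_{2})$ then bounds $\absv{I^{\vartheta}(z_{1},z_{2})}$ by the geometric mean of the above integral at $z=z_{1}$ and $z=z_{2}$, reducing the task to showing that the right-hand side is at most some universal $r_{0}<1$ uniformly for $z\in\caD$ and $\vartheta\in\Theta_{\varpi}$.

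To extract this uniformity I would split into the two regimes of Lemma~\ref{lem:sqrtbehav}(i). If $E\in[L_{-}^{\vartheta},L_{+}^{\vartheta}]$, the hypothesis on $\caD$ forces $\eta>c$, while \eqref{eq:sqrtim} together with compactness gives $\Im m_{fc}^{\vartheta}(z)\leq C$ with a constant independent of $\vartheta$; hence $\eta/(\eta+\Im m_{fc}^{\vartheta}(z))\geq c/(c+C)$. If $E\notin[L_{-}^{\vartheta},L_{+}^{\vartheta}]$, then \eqref{eq:sqrtim} instead yields $\Im m_{fc}^{\vartheta}(z)\leq C\eta/\sqrt{\kappa_{E}+\eta}\leq C\eta/c$, so again $\eta/(\eta+\Im m_{fc}^{\vartheta}(z))\geq c/(c+C)$. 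Either way the ratio is bounded below by a positive constant uniform in the parameters, and the desired $r_{0}<1$ follows.

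The bound on $\wh{I}_{k}^{\vartheta}$ will proceed by exactly the same template on $\Omega_{N}$. Taking imaginary parts of \eqref{eq:funceqhat}, or equivalently of $\wh{m}_{fc}^{\vartheta}(z)=\tfrac{1}{N}\sum_{p=1}^{N}\wh{g}_{p}^{\vartheta}(z)$, produces the identity
\beq
\frac{1}{N}\sum_{p=1}^{N}\absv{\wh{g}_{p}^{\vartheta}(z)}^{2} = \frac{\Im \wh{m}_{fc}^{\vartheta}(z)}{\eta + \Im \wh{m}_{fc}^{\vartheta}(z)}.
\eeq
Since the partial sum over $p>k$ is dominated by the full sum, Cauchy--Schwarz combined with the same two-case analysis, now invoking \eqref{eq:sqrtimwh} in place of \eqref{eq:sqrtim}, yields the analogous bound uniformly in $0\leq k\leq N$, $\vartheta\in\Theta_{\varpi}$, and in $N$ sufficiently large. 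The only point that requires any care is that the implicit constants in Lemma~\ref{lem:sqrtbehav} be uniform in $\vartheta\in\Theta_{\varpi}$ and (for the hatted version) in $N$ on $\Omega_{N}$; since this uniformity is already part of the cited statement, I do not expect a genuine obstacle.
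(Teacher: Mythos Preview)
Your argument is correct and follows essentially the same route as the paper's proof: both reduce to the identity $\int |\vartheta x - z - m_{fc}^{\vartheta}(z)|^{-2}\,\dd\nu(x) = \Im m_{fc}^{\vartheta}(z)/(\eta+\Im m_{fc}^{\vartheta}(z))$ obtained from the imaginary part of \eqref{eq:funceq}, and then bound the ratio away from $1$ using the square-root behavior of $\Im m_{fc}^{\vartheta}$. The only cosmetic differences are that the paper uses the AM--GM inequality $|ab|\leq\tfrac12(|a|^{2}+|b|^{2})$ in place of your Cauchy--Schwarz (giving the arithmetic rather than geometric mean, which is immaterial here), and that the paper cites Corollary~\ref{cor:prepath} for the final step while you carry out the two-case analysis via Lemma~\ref{lem:sqrtbehav} explicitly; your version is in fact the more transparent one, since what is actually needed is an \emph{upper} bound $\Im m_{fc}^{\vartheta}(z)\leq C\eta$, which your case split delivers directly.
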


Using the bound for $\absv{\wh{m}_{fc}^{\vartheta}(z)-m_{fc}^{\vartheta}(z)}$ in Lemma~\ref{lem:apriori}, we prove another estimate concerning the covariance of $(\vartheta v_{i}-z-\wh{m}_{fc}(z))^{-1}$. To this end, we prove another lemma used along its proof.

\begin{lemma} \label{lem:G_diff}
	Let $G^{\vartheta}(z)=z+m_{fc}^{\vartheta}(z)$ on $\C^{+}$ and $c\in(0,3)$ be a constant.  Then for each compact subset $\caD\subset\C$ satisfying the assumptions of Corollary~\ref{cor:path}, there exists a constant $d>0$ such that for any $z_{1},z_{2}\in\caD$,
	\beq
	\absv{G^{\vartheta}(z_{1})-G^{\vartheta}(z_{2})} \geq d\absv{z_{1}-z_{2}}.
	\eeq
	The constant $d$ can be chosen uniformly in $z_{1},z_{2}\in\caD_{c}$ and $\vartheta\in\Theta_{\varpi}$.
\end{lemma}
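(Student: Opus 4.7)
The plan is to derive a clean algebraic identity relating $G^{\vartheta}(z_{1})-G^{\vartheta}(z_{2})$ to $z_{1}-z_{2}$ via the self-consistent equation \eqref{eq:funceq}, and then close the argument using the strict upper bound on the two-point quantity $I^{\vartheta}(z_{1},z_{2})$ supplied by Corollary~\ref{cor:path}. The algebraic manipulation is straightforward and the hypotheses on $\caD$ match those of the corollary, so there is no serious obstacle; the main point is to recognize that the natural ``resolvent-style'' telescoping is exactly what Corollary~\ref{cor:path} was designed to bound.

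Concretely, first rewrite \eqref{eq:funceq} as
\beq
G^{\vartheta}(z)-z \;=\; m_{fc}^{\vartheta}(z) \;=\; \int_{\R}\frac{1}{\vartheta x-G^{\vartheta}(z)}\dd\nu(x).
\eeq
Evaluating at $z_{1},z_{2}\in\caD$, subtracting, and combining the fractions under the integral gives
\beq
\big[G^{\vartheta}(z_{1})-G^{\vartheta}(z_{2})\big]-\big[z_{1}-z_{2}\big] \;=\; \big[G^{\vartheta}(z_{1})-G^{\vartheta}(z_{2})\big]\int_{\R}\frac{\dd\nu(x)}{(\vartheta x-G^{\vartheta}(z_{1}))(\vartheta x-G^{\vartheta}(z_{2}))}.
\eeq
The integral on the right is precisely $I^{\vartheta}(z_{1},z_{2})$ as defined in Corollary~\ref{cor:path}, so rearranging yields the identity
\beq
z_{1}-z_{2} \;=\; \big[G^{\vartheta}(z_{1})-G^{\vartheta}(z_{2})\big]\cdot\big[1-I^{\vartheta}(z_{1},z_{2})\big].
\eeq

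Now apply Corollary~\ref{cor:path}: since $\caD$ satisfies exactly the geometric hypotheses required there, there exists a constant $r\in(0,1)$, uniform in $\vartheta\in\Theta_{\varpi}$ and in $z_{1},z_{2}\in\caD$, such that $\absv{I^{\vartheta}(z_{1},z_{2})}<r$. In particular $\absv{1-I^{\vartheta}(z_{1},z_{2})}\leq 1+r$, and the identity above gives
\beq
\absv{G^{\vartheta}(z_{1})-G^{\vartheta}(z_{2})} \;\geq\; \frac{\absv{z_{1}-z_{2}}}{1+r},
\eeq
so the conclusion holds with $d\deq(1+r)^{-1}$, which is uniform in $\vartheta\in\Theta_{\varpi}$ as required. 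The only substantive input is Corollary~\ref{cor:path} (which is a genuinely nontrivial consequence of Lemma~\ref{lem:sqrtbehav}); everything else is bookkeeping, and the ``hard part'' has effectively been packaged into that corollary.
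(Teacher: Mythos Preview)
Your proof is correct and follows essentially the same route as the paper: derive the identity $(G^{\vartheta}(z_{1})-G^{\vartheta}(z_{2}))(1-I^{\vartheta}(z_{1},z_{2}))=z_{1}-z_{2}$ from the self-consistent equation~\eqref{eq:funceq}, then invoke Corollary~\ref{cor:path} to bound $\absv{I^{\vartheta}}<r<1$. The paper's one-line proof writes $\bigl\lvert\frac{G^{\vartheta}(z_{1})-G^{\vartheta}(z_{2})}{z_{1}-z_{2}}\bigr\rvert=\absv{1-I^{\vartheta}(z_{1},z_{2})}\geq 1-r$, which appears to carry a minor typo (the ratio should be $\absv{1-I^{\vartheta}}^{-1}$, as your identity shows); your bound $d=(1+r)^{-1}$ is the clean one.
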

\begin{proof}
	The lemma directly follows from \eqref{eq:funceq}:
	\begin{equation}
	\biggabsv{\frac{G^{\vartheta}(z_{1})-G^{\vartheta}(z_{2})}{z_{1}-z_{2}}}=\biggabsv{1-\int_{\R}\frac{1}{(\vartheta x-z_{1}-m_{fc}(z_{1}))(\vartheta x-z_{2}-m_{fc}(z_{2}))}\dd\nu(x)}
	\geq 1-r,
	\end{equation}
	where $r$ is given in Corollary~\ref{cor:path}.
\end{proof}

Now with the help of Lemma~\ref{lem:G_diff}, we state and prove the desired result.
\begin{corollary}\label{cor:covwh}
	Suppose that $V$ is \textbf{random}. Let $c>0$ be given, $\caD$ satisfy the assumptions of Corollary \ref{cor:prepath}, and $I^{\vartheta}(z_{1},z_{2})$ and $\wh{I}_{k}^{\vartheta}(z_{1},z_{2})$ be defined as in Corollary~\ref{cor:path}. For any compact subset $\caD_{1}\subset\caD$, there exists a constant $C>0$ such that
	\beq
	\absv{I^{\vartheta}(z_{1},z_{2})-\wh{I}_{0}^{\vartheta}(z_{1},z_{2})}\leq C\vartheta N^{-\frac{1}{2}+\epsilon_{0}}
	\eeq
	for $z_{1},z_{2}\in\caD_{1}$, on $\Omega_{N}$. The constant $C$ can be chosen uniformly in $z_{1},z_{2}\in\caD_{1}$, $\vartheta\in\Theta_{\varpi}$, and $N\in\N$ for sufficiently large $N$.
\end{corollary}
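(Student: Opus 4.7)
The plan is to split $I^{\vartheta}-\wh{I}_{0}^{\vartheta}$ into two pieces corresponding to replacing $\nu$ by $\wh{\nu}$ and replacing $m_{fc}^{\vartheta}$ by $\wh{m}_{fc}^{\vartheta}$, and to control each separately on $\Omega_{N}$. Writing $G(z)=z+m_{fc}^{\vartheta}(z)$ and $\wh{G}(z)=z+\wh{m}_{fc}^{\vartheta}(z)$, the decomposition $I^{\vartheta}(z_{1},z_{2})-\wh{I}_{0}^{\vartheta}(z_{1},z_{2})=T_{1}+T_{2}$ reads
\begin{align*}
T_{1}&=\int_{\R}\frac{\dd(\nu-\wh{\nu})(x)}{(\vartheta x-G(z_{1}))(\vartheta x-G(z_{2}))},\\
T_{2}&=\int_{\R}\bigg[\frac{1}{(\vartheta x-G(z_{1}))(\vartheta x-G(z_{2}))}-\frac{1}{(\vartheta x-\wh{G}(z_{1}))(\vartheta x-\wh{G}(z_{2}))}\bigg]\dd\wh{\nu}(x).
\end{align*}

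For $T_{1}$ I would apply the partial-fraction identity to write
$$T_{1}=\frac{h(G(z_{2}))-h(G(z_{1}))}{G(z_{2})-G(z_{1})},\qquad h(\zeta)\deq m_{\nu}^{\vartheta}(\zeta)-m_{\wh{\nu}}^{\vartheta}(\zeta),$$
interpreted as $h'(G(z_{1}))$ when $G(z_{1})=G(z_{2})$. Lemma~\ref{lem:sqrtbehav}(ii) supplies a constant $r>0$ with $|\vartheta x-G(z)|\geq 2r$ for all $z\in\caD_{1}$, $x\in\supp\nu$, and $\vartheta\in\Theta_{\varpi}$, so the closed $r$-neighborhood of $\bigcup_{\vartheta\in\Theta_{\varpi}}G(\caD_{1})$ is a fixed compact subset of $\C^{+}$ lying at distance at least $r$ from $\supp\nu^{\vartheta}$ for every $\vartheta\in\Theta_{\varpi}$. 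Applying condition~(iii) in the definition of $\Omega_{N}$ with $\alpha=\tfrac{1}{2}-\epsilon_{0}$ yields $|h(\zeta)|\leq C\vartheta N^{-1/2+\epsilon_{0}}$ on this neighborhood, and the Cauchy integral formula of radius $r$ then gives $|h'(\zeta)|\leq C\vartheta N^{-1/2+\epsilon_{0}}$ on $G(\caD_{1})$. Integrating $h'$ along a straight segment from $G(z_{1})$ to $G(z_{2})$ (which stays inside the neighborhood for $\caD_{1}$ small) bounds $|T_{1}|\leq C\vartheta N^{-1/2+\epsilon_{0}}$ uniformly in $z_{1},z_{2}\in\caD_{1}$ and $\vartheta$.

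For $T_{2}$, a direct algebraic expansion rewrites the integrand as a sum of terms, each containing a factor $(\wh{G}(z_{i})-G(z_{i}))$ multiplied by three factors of the form $(\vartheta x-G(z_{j}))^{-1}$ or $(\vartheta x-\wh{G}(z_{j}))^{-1}$. By Lemma~\ref{lem:sqrtbehav}(ii) and its $\wh{\nu}$-analogue (the second half of the same lemma, valid on $\Omega_{N}$), each of these reciprocals is bounded above by a universal constant for $x\in I_{\nu}\cup I_{\wh{\nu}}$. Lemma~\ref{lem:apriori} then provides $|m_{fc}^{\vartheta}(z_{i})-\wh{m}_{fc}^{\vartheta}(z_{i})|\leq C\vartheta N^{-1/2+\epsilon_{0}}$ on $\Omega_{N}$, so integrating against $\wh{\nu}$ yields $|T_{2}|\leq C\vartheta N^{-1/2+\epsilon_{0}}$. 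Combining with the $T_{1}$ bound gives the claim.

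The main technical hurdle is the $T_{1}$ estimate: Cauchy differentiation must convert the pointwise bound on $h$ into the same bound on $h'$ \emph{without losing the factor $\vartheta$}, which is precisely why condition~(iii) of $\Omega_{N}$ was formulated with the prefactor $\vartheta^{-1}$. The geometric verification that the $r$-neighborhood of $G(\caD_{1})$ remains inside $\C^{+}$ and at positive distance from $\supp\nu^{\vartheta}$ uniformly in $\vartheta\in\Theta_{\varpi}$ is the only nontrivial additional ingredient, and it is furnished by the uniformity in Lemma~\ref{lem:sqrtbehav}(ii).
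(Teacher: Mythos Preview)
Your decomposition $I^{\vartheta}-\wh{I}_{0}^{\vartheta}=T_{1}+T_{2}$, the treatment of $T_{2}$ via Lemma~\ref{lem:apriori} and the stability bounds, and the partial-fraction rewriting of $T_{1}$ as a divided difference of $h=m_{\nu}^{\vartheta}-m_{\wh{\nu}}^{\vartheta}$ are exactly what the paper does. The substantive difference is in how the divided difference $(h(G(z_{2}))-h(G(z_{1})))/(G(z_{2})-G(z_{1}))$ is controlled. The paper writes it as a single Cauchy integral over the image contour $G(\gamma)$, where $\gamma\subset\caD$ encircles $\caD_{1}$, and then invokes Lemma~\ref{lem:G_diff} twice: once to certify that the winding number of $G(\gamma)$ around $G(z_{1})$ is exactly one (injectivity of $G$), and once to lower-bound $|G(z)-G(z_{i})|\geq d\,c_{0}$ uniformly along $\gamma$. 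Your route via Cauchy on a small disk to bound $|h'|$ and then line-integrating is more elementary and sidesteps Lemma~\ref{lem:G_diff} altogether.

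There is, however, a gap in your write-up: the parenthetical ``which stays inside the neighborhood for $\caD_{1}$ small'' does not cover the statement, which is for an \emph{arbitrary} compact $\caD_{1}\subset\caD$. The fix is easy and does not require shrinking $\caD_{1}$: if $|G(z_{1})-G(z_{2})|<r$ then the straight segment from $G(z_{1})$ to $G(z_{2})$ lies entirely in the $r$-ball about $G(z_{1})\in G(\caD_{1})$ and hence in the $r$-neighborhood, so your $|h'|$ bound applies along it; if $|G(z_{1})-G(z_{2})|\geq r$ then simply bound the quotient directly by $(|h(G(z_{1}))|+|h(G(z_{2}))|)/r\leq 2C\vartheta N^{-1/2+\epsilon_{0}}/r$. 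With this two-case split in place your argument is complete and slightly lighter than the paper's, since it never needs the global injectivity of $G$ encoded in Lemma~\ref{lem:G_diff}.
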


Since we are assuming that $\vartheta$ varies with $N$, with the limit $\vartheta_{\infty}$, the deterministic function $m_{fc}^{\vartheta}(z)$ converges to $m_{fc}^{\vartheta_{\infty}}(z)$, which is the Stieltjes transform of the semicircle law if $\vartheta_{\infty}=0$, with rate $O(\absv{\vartheta-\vartheta_{\infty}})$. Even though this fact has been addressed by many authors previously, we here propose another proof following a method that will be used frequently throughout this paper: the self-comparison method(see \ref{sec:lemmas3.2}). Namely, we start from the self-consistent equations satisfied by the Stieltjes transform, say $m$, and track the leading order terms of $m$ in the integral equation. The following lemma, which describes the behavior of $m_{sc}$, is used along the proof for the case where $\vartheta_{\infty}=0$.
\begin{lemma}[Lemma 4.2 of \cite{Erdos-Yau-Yin2011}, Square-root behavior of $m_{sc}(z)$]\label{lem:sqrtbehavsc}
	Suppose that $z=E+\ii\eta\in\C^{+}$ with $\absv{E}\leq 5$. Then
	\beq
	\absv{m_{sc}(z)}=\absv{m_{sc}(z)^{-1}+z}^{-1}\leq 1.
	\eeq
	If in addition we have $\eta\leq 10$, then
	\beq
	\absv{m_{sc}(z)}\sim 1,\quad \absv{1-m_{sc}(z)}\sim\sqrt{\kappa+\eta},
	\eeq
	where $\kappa\equiv\kappa_{E}\deq \min\{\absv{E-2},\absv{E+2}\}$.
\end{lemma}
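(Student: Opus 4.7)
The plan is to derive everything from the quadratic self-consistent equation $m_{sc}(z)^{2}+zm_{sc}(z)+1=0$, which together with the normalization $m_{sc}(z)\sim-1/z$ at infinity and $\Im m_{sc}>0$ on $\C^{+}$ yields the closed form $m_{sc}(z)=\tfrac{1}{2}\bigl(-z+\sqrt{z^{2}-4}\bigr)$, with the branch of $\sqrt{z^{2}-4}$ fixed by $\sqrt{z^{2}-4}\sim z$ as $|z|\to\infty$. Dividing the quadratic by $m_{sc}(z)$, which is nonzero on $\C^{+}$ because $\Im m_{sc}>0$, gives $m_{sc}(z)^{-1}+z=-m_{sc}(z)$, so that the first displayed identity $|m_{sc}(z)|=|m_{sc}(z)^{-1}+z|^{-1}$ is immediate.

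For the bound $|m_{sc}(z)|\leq 1$ I exploit the factorization $\bigl(z+\sqrt{z^{2}-4}\bigr)\bigl(z-\sqrt{z^{2}-4}\bigr)=4$. The branch choice enforces $|z+\sqrt{z^{2}-4}|\geq|z-\sqrt{z^{2}-4}|$ on $\C^{+}$ (because $\sqrt{z^{2}-4}\sim z$ at infinity), so the product identity forces $|z+\sqrt{z^{2}-4}|\geq 2$. Since $-2m_{sc}(z)^{-1}=z+\sqrt{z^{2}-4}$, this gives $|m_{sc}(z)|\leq 1$. For the strengthening $|m_{sc}(z)|\sim 1$ on $|E|\leq 5$, $\eta\leq 10$, I combine this with the crude upper bound $|z+\sqrt{z^{2}-4}|\leq|z|+\sqrt{|z^{2}-4|}\leq C$ valid on this compact region for an absolute constant $C$, which yields $|m_{sc}(z)|\geq 2/C$.

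For the square-root behavior $|1-m_{sc}(z)|\sim\sqrt{\kappa+\eta}$ I rationalize the closed form:
$$
1-m_{sc}(z)=\frac{2+z-\sqrt{z^{2}-4}}{2}=\frac{(2+z)^{2}-(z^{2}-4)}{2\bigl(2+z+\sqrt{z^{2}-4}\bigr)}=\frac{2(z+2)}{2+z+\sqrt{z^{2}-4}}.
$$
Near the left edge $E=-2$ the numerator satisfies $|z+2|\sim\kappa+\eta$, and in the denominator the term $\sqrt{z^{2}-4}=\sqrt{(z-2)(z+2)}$ dominates $(2+z)$: since $|z-2|\sim 1$ one has $|\sqrt{z^{2}-4}|\sim\sqrt{|z+2|}$, which vanishes only like a square root while $|2+z|$ vanishes linearly. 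Dividing yields $|1-m_{sc}(z)|\sim|z+2|/\sqrt{|z+2|}=\sqrt{|z+2|}\sim\sqrt{\kappa+\eta}$. The remaining regions—near $E=2$, in the bulk of $[-2,2]$, and outside $[-2,2]$—are handled by substituting the appropriate asymptotics of $\sqrt{z^{2}-4}$ and $(2+z)$ into the same rationalized expression and comparing with $\sqrt{\kappa+\eta}$.

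The main technical point is the uniform tracking of the branch of $\sqrt{z^{2}-4}$ as $z$ approaches the real interval $[-2,2]$, together with the case analysis that identifies which of $(2+z)$ and $\sqrt{z^{2}-4}$ dominates in the denominator of the rationalized expression in each regime; once this bookkeeping is performed, every estimate reduces to elementary arithmetic from the explicit formula, and the constants in all $\sim$ relations can be chosen to depend only on the compact region $|E|\leq 5$, $\eta\leq 10$.
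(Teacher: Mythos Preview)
The paper does not prove this lemma; it is quoted from \cite{Erdos-Yau-Yin2011}, so there is no in-paper argument to compare against. Your sketch for $|m_{sc}(z)|\leq 1$ and $|m_{sc}(z)|\sim 1$ via the explicit root is correct and standard. Two points, however, require correction.

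First, your derivation of the displayed identity is wrong. From $m_{sc}(z)^{-1}+z=-m_{sc}(z)$ one obtains $|m_{sc}(z)^{-1}+z|=|m_{sc}(z)|$, hence $|m_{sc}(z)^{-1}+z|^{-1}=|m_{sc}(z)|^{-1}$, \emph{not} $|m_{sc}(z)|$. The identity as printed is in fact a transcription error: the relation that genuinely follows from the quadratic is $m_{sc}(z)+z=-m_{sc}(z)^{-1}$, giving $|m_{sc}(z)|=|m_{sc}(z)+z|^{-1}$. You reproduced the typo rather than catching it.

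Second, the asymptotic $|1-m_{sc}(z)|\sim\sqrt{\kappa+\eta}$ is false near the right edge: as $z\to 2$ one has $m_{sc}(2)=-1$, so $|1-m_{sc}(2)|=2$ while $\sqrt{\kappa+\eta}\to 0$. The correct statement in \cite{Erdos-Yau-Yin2011} concerns $|1-m_{sc}(z)^{2}|$, which vanishes at both edges. Your rationalization argument is valid near $E=-2$ (where indeed $m_{sc}\to 1$), but your dismissal of ``the remaining regions'' conceals that the claim fails outright at $E=2$. Applying the same method to $1-m_{sc}^{2}=(1-m_{sc})(1+m_{sc})$, where exactly one factor vanishes at each edge while the other stays of order one, yields the correct result.
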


Using the lemma, we now prove the required estimate for $\absv{m_{fc}^{\vartheta}(z)-m_{fc}^{\vartheta_{\infty}}(z)}$ using the self-comparison.
\begin{lemma}\label{lem:mfc-msc}
	For any fixed compact set $\caD\in\C$ with $\dist(\caD, [L_{-}^{\vartheta},L_{+}^{\vartheta}])\sim 1$, there exists a constant $C>0$ such that
	\beq
	\sup_{z\in\caD}\absv{m_{fc}^{\vartheta}(z)-m_{fc}^{\vartheta_{\infty}}(z)}\leq C\absv{\vartheta-\vartheta_{\infty}} 
	\eeq
	for any sufficiently large $N\in\N$.
\end{lemma}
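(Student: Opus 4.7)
The plan is to use the self-comparison method alluded to in the text. Subtracting the self-consistent equations~\eqref{eq:funceq} for $m_{fc}^{\vartheta}$ and $m_{fc}^{\vartheta_{\infty}}$ and setting $\Delta(z) \deq m_{fc}^{\vartheta}(z) - m_{fc}^{\vartheta_{\infty}}(z)$, a direct algebraic manipulation gives the identity
\beq
\Delta(z)\bigl(1 - J(z)\bigr) = (\vartheta_{\infty} - \vartheta)\, K(z),
\eeq
where
\beq
J(z) \deq \int_{\R}\frac{\dd\nu(x)}{\bigl(\vartheta x - z - m_{fc}^{\vartheta}(z)\bigr)\bigl(\vartheta_{\infty}x - z - m_{fc}^{\vartheta_{\infty}}(z)\bigr)}, \qquad K(z) \deq \int_{\R}\frac{x\,\dd\nu(x)}{\bigl(\vartheta x - z - m_{fc}^{\vartheta}(z)\bigr)\bigl(\vartheta_{\infty}x - z - m_{fc}^{\vartheta_{\infty}}(z)\bigr)}.
\eeq
Since $\nu$ is compactly supported and the denominators are uniformly bounded away from $0$ on $\caD \times \supp\nu$ by Lemma~\ref{lem:sqrtbehav}(ii) (applied with $\vartheta$ and with $\vartheta_{\infty}$), $K(z)$ is uniformly bounded in $z\in\caD$ and in $N$. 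Thus the entire task reduces to showing that $|1 - J(z)|$ is bounded away from zero.

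For this, I would compare $J(z)$ to the ``diagonal'' integral appearing in the stability bound of Lemma~\ref{lem:sqrtbehav}(iii):
\beq
J(z) - \int_{\R}\frac{\dd\nu(x)}{(\vartheta x - z - m_{fc}^{\vartheta}(z))^{2}} = \int_{\R}\frac{(\vartheta - \vartheta_{\infty})x + \Delta(z)}{(\vartheta x - z - m_{fc}^{\vartheta}(z))^{2}(\vartheta_{\infty}x - z - m_{fc}^{\vartheta_{\infty}}(z))}\,\dd\nu(x),
\eeq
which, by the same boundedness argument, is of magnitude $O(|\vartheta-\vartheta_{\infty}| + |\Delta(z)|)$. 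The stability bound~\eqref{eq:1-s1s2} together with the hypothesis $\dist(\caD,[L_-^{\vartheta},L_+^{\vartheta}])\sim 1$ (which forces $\sqrt{\kappa_{E}+\eta}\sim 1$ on $\caD$) then yields a constant $c>0$ such that
\beq
\bigl|1 - J(z)\bigr| \geq c - C\bigl(|\vartheta - \vartheta_{\infty}| + |\Delta(z)|\bigr).
\eeq

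To close the argument, I need an a priori smallness of $|\Delta(z)|$, which is the main obstacle. I would establish this by a short bootstrap: the crude bound $|\Delta(z)| = O(1)$ always holds (both Stieltjes transforms are $O(1)$ on $\caD$ by Lemma~\ref{lem:sqrtbehav}), and a continuity/normal families argument based on the uniqueness in~\eqref{eq:funceq} shows that $\Delta(z) \to 0$ uniformly on $\caD$ as $\vartheta\to\vartheta_{\infty}$. Consequently, for $N$ large enough that $|\vartheta - \vartheta_{\infty}|$ and $\sup_{\caD}|\Delta|$ are both smaller than $c/(4C)$, the displayed lower bound gives $|1 - J(z)| \geq c/2$, and rearranging the identity produces
\beq
\sup_{z\in\caD}|\Delta(z)| \leq \frac{2\|K\|_{\infty}}{c}\,|\vartheta - \vartheta_{\infty}|,
\eeq
which is the desired estimate. (For the finitely many $N$ with $|\vartheta - \vartheta_{\infty}|$ not small, the trivial bound $|\Delta(z)| = O(1) \leq C|\vartheta-\vartheta_{\infty}|$ applies by adjusting $C$.) The only delicate step is the preliminary $\sup_{\caD}|\Delta| \to 0$, but this follows from standard compactness/implicit function reasoning applied to the analytic self-consistent equation on $\caD$, away from the supports.
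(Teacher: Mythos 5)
Your proposal is correct, and it starts from the same master identity as the paper: both subtract the two copies of \eqref{eq:funceq} to obtain $\Delta(z)\,(1-J(z))=(\vartheta_{\infty}-\vartheta)K(z)$ and bound $K$ via the stability bound \eqref{eq:mfczmaps} (in your comparison of $J$ with the diagonal integral the numerator should read $(\vartheta-\vartheta_{\infty})x-\Delta(z)$, an immaterial sign). Where you genuinely differ is the non-degeneracy of $1-J$. The paper bounds the mixed integral $J$ directly, by Cauchy--Schwarz together with the identity $\int|\vartheta x-z-m_{fc}^{\vartheta}(z)|^{-2}\,\dd\nu(x)=\Im m_{fc}^{\vartheta}(z)/\Im\bigl(z+m_{fc}^{\vartheta}(z)\bigr)$ and the behavior of $\Im m_{fc}^{\vartheta}$ on a domain separated from the spectrum (the mechanism behind Corollaries~\ref{cor:prepath} and \ref{cor:path}); this is uniform in $\vartheta\in\Theta_{\varpi}$ and does not require $m_{fc}^{\vartheta}$ and $m_{fc}^{\vartheta_{\infty}}$ to be close. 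You instead perturb off the diagonal integral and invoke \eqref{eq:1-s1s2}, which forces you to feed in the a priori smallness $\sup_{\caD}|\Delta|\to 0$ and run a short bootstrap. That a priori input is exactly the Montel/uniqueness argument the paper records as its first step, so it is available; but note the asymmetry: in the paper's argument the qualitative convergence is essentially decorative, while in yours it is load-bearing, and consequently your constant only kicks in once $|\vartheta-\vartheta_{\infty}|$ and $\sup_{\caD}|\Delta|$ are below a threshold --- which is fine, since the lemma only claims the bound for sufficiently large $N$ (your parenthetical about the remaining finitely many $N$ is unnecessary). The trade-off is that your route is a bit more elementary (no imaginary-part identity, all analytic input concentrated in \eqref{eq:1-s1s2}), whereas the paper's bound on $J$ is unconditional in $\vartheta$; both versions apply bounds stated for $z\in\caD'$ to the given compact set $\caD$, a small domain-matching point worth a sentence in a careful write-up.
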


Given the bound of $m_{fc}^{\vartheta}(z)-m_{sc}(z)$, that of the covariance term can be deduced easily.
\begin{corollary}\label{cor:I-s1s2}
	Under the assumptions of Lemma~\ref{lem:mfc-msc}, there exists a constant $C>0$ such that
	\beq
	\sup_{z_{1},z_{2}\in\caD}\absv{I^{\vartheta}(z_{1},z_{2})-I^{\vartheta_{\infty}}(z_{1},z_{2})}\leq C\absv{\vartheta-\vartheta_{\infty}}.
	\eeq
\end{corollary}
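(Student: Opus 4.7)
The plan is to reduce the bound to two ingredients already at our disposal: the closeness of the Stieltjes transforms from Lemma~\ref{lem:mfc-msc}, and the lower bound $|\vartheta x - z - m_{fc}^{\vartheta}(z)| \geq c$ from \eqref{eq:mfczmaps}. Set
\[
A^{\vartheta}(z,x) \deq \vartheta x - z - m_{fc}^{\vartheta}(z),
\]
so that $I^{\vartheta}(z_1,z_2) = \int A^{\vartheta}(z_1,x)^{-1} A^{\vartheta}(z_2,x)^{-1}\, \dd\nu(x)$, and apply the elementary telescoping identity
\[
\frac{1}{ab} - \frac{1}{cd} = \frac{(c-a)d + a(d-b)}{abcd}
\]
with $a = A^{\vartheta}(z_1,x)$, $b = A^{\vartheta}(z_2,x)$, $c = A^{\vartheta_{\infty}}(z_1,x)$, $d = A^{\vartheta_{\infty}}(z_2,x)$.

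First I would control the numerator. By the triangle inequality,
\[
\bigl|A^{\vartheta}(z,x) - A^{\vartheta_{\infty}}(z,x)\bigr| \leq |x|\,|\vartheta - \vartheta_{\infty}| + \bigl|m_{fc}^{\vartheta}(z) - m_{fc}^{\vartheta_{\infty}}(z)\bigr|.
\]
Since $\nu$ is compactly supported, $|x|$ is bounded on $\supp\nu$; combined with Lemma~\ref{lem:mfc-msc}, both terms are $O(|\vartheta - \vartheta_{\infty}|)$ uniformly in $x \in \supp\nu$ and $z \in \caD$. Next I would control the denominator. By \eqref{eq:mfczmaps}, $|A^{\vartheta}(z,x)|$ is bounded above and below by positive constants for $z \in \caD'$ and $x \in \supp\nu$, uniformly in $\vartheta \in \Theta_{\varpi}$; since $\caD$ lies at distance $\sim 1$ from $[L_{-}^{\vartheta},L_{+}^{\vartheta}]$, the same bound extends to $z \in \caD$ by continuity of $m_{fc}^{\vartheta}$ away from the support, so $|abcd| \geq c^4 > 0$ uniformly.

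Combining the two estimates, the integrand in $I^{\vartheta}(z_1,z_2) - I^{\vartheta_{\infty}}(z_1,z_2)$ is pointwise bounded by $C|\vartheta - \vartheta_{\infty}|$ uniformly in $x \in \supp\nu$ and $(z_1,z_2) \in \caD \times \caD$. Integrating against the probability measure $\nu$ yields the claim. I would expect no substantial obstacle: the hardest bookkeeping item is only checking that the lower bound on $|A^{\vartheta}|$ extends from $\caD'$ to the possibly-real points of $\caD$, which is immediate from the distance-to-support hypothesis since $A^{\vartheta}(z,x) = 0$ would force $\vartheta x$ to be a singular point of $m_{fc}^{\vartheta}$, impossible off $[L_{-}^{\vartheta},L_{+}^{\vartheta}]$.
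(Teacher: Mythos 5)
Your proof is correct and follows essentially the same route as the paper: the paper likewise writes $I^{\vartheta}-I^{\vartheta_{\infty}}$ as the integral of the difference $f^{\vartheta}_{z_1}f^{\vartheta}_{z_2}-f^{\vartheta_{\infty}}_{z_1}f^{\vartheta_{\infty}}_{z_2}$ and bounds it using the stability bound \eqref{eq:mfczmaps} together with Lemma~\ref{lem:mfc-msc}. Your telescoping identity and the remark on extending the lower bound on the denominator to all of $\caD$ simply make explicit what the paper leaves implicit.
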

\begin{proof}
	Given the Lemma~\ref{lem:mfc-msc}, the lemma is a direct consequence of the self-consistent equation \eqref{eq:funceq} and the stability bound \eqref{eq:mfczmaps}:
	\beq
	I^{\vartheta}(z_{1},z_{2})-I^{\vartheta_{\infty}}(z_{1},z_{2})=\int_{\R} \big[f^{\vartheta}_{z_{1}}(x)f^{\vartheta}_{z_{2}}(x)-f^{\vartheta_{\infty}}_{z_{1}}(x)f^{\vartheta_{\infty}}_{z_{2}}(x)\big] \dd\nu(x)=O(\absv{\vartheta-\vartheta_{\infty}}),
	\eeq
	where we abbreviated $f^{\vartheta}_{z}(x)\deq\frac{1}{\vartheta x-z-m_{fc}^{\vartheta}(z)}$. Note that the uniformity follows from \eqref{eq:mfczmaps} and Lemma~\ref{lem:mfc-msc}.
\end{proof}
\begin{remark}
	Note that for $\vartheta_{\infty}=0$, we have
	\beq
	I^{\vartheta_{\infty}}(z_{1},z_{2})=\frac{1}{(-z-m_{sc}(z_{1}))(-z_{2}-m_{sc}(z_{2}))}=m_{sc}(z_{1})m_{sc}(z_{2}).
	\eeq
\end{remark}

Recalling the definition of $W$, we observe that the off-diagonal terms are identical to that of Lemma 5.3 in \cite{Baik-Lee2017} with $J=0$, so that we have the following lemma holds:
\begin{lemma}[Lemma 5.3 of \cite{Baik-Lee2017}, Large deviation estimates]\label{lem:lde}
	Let $S$ be an $(N-1)\times (N-1)$ matrix independent of $\{W_{ia}:1\leq a\leq N,a\neq i\}$ with operator norm $\norm{S}$. Then for $n=1,2$, there exists a constant $C_{n}$ depending only on $W_{4}$ in Assumption~\ref{assump:Wigner} such that
	\beq
	\biggexpct{\biggabsv{\sum_{p,q}^{(i)}W_{ip}S_{pq}W_{qi}-\frac{1}{N}\sum_{p}^{(i)}S_{pp}}^{2n}}\leq \frac{C_{n}}{N^{n+1}}\Tr\absv{S}^{2n}\leq C_{n}\frac{\norm{S}^{2n}}{N^{n}}.
	\eeq 
	
	Moreover,
	\beq
	\biggabsv{\sum_{p,q}^{(i)}W_{ip}S_{pq}W_{qi}-\frac{1}{N}\sum_{p}^{(i)}S_{pp}}\prec\frac{\norm{S}}{\sqrt{N}}.
	\eeq
\end{lemma}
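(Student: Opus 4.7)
The plan is to treat this as a standard quadratic-form concentration bound. Write $X_p := W_{ip} = N^{-1/2} A_{ip}$ for $p \neq i$; by Assumption~\ref{assump:Wigner} the variables $\{X_p\}_{p \neq i}$ are independent of $S$, mean zero, with $\expct{X_p^2} = 1/N$ and $\expct{|X_p|^k} \leq c_k N^{-k/2}$ for every $k \geq 2$. Setting
\beq
Q \deq \sum_{p,q}^{(i)} W_{ip} S_{pq} W_{qi} - \frac{1}{N}\sum_{p}^{(i)} S_{pp},
\eeq
I would split $Q = Q_d + Q_o$ where $Q_d := \sum_p^{(i)} S_{pp}(X_p^2 - N^{-1})$ collects the diagonal contributions and $Q_o := \sum_{p \neq q}^{(i)} S_{pq} X_p X_q$ the off-diagonal ones. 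Conditioning on $S$ and exploiting independence, $\expct{Q_d Q_o \mid S} = 0$, so the two pieces can be handled separately.

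For the case $n=1$, direct computation using independence gives $\expct{|Q_d|^2 \mid S} = \sum_p S_{pp}^2 \Var(X_p^2) \leq C N^{-2} \sum_p |S_{pp}|^2$ and $\expct{|Q_o|^2 \mid S} = 2 \sum_{p\neq q} |S_{pq}|^2 / N^2$, both of which are dominated by $C N^{-2} \Tr |S|^2$. For $n=2$, I would expand $Q^4$ and index the resulting monomials $X_{p_1}\cdots X_{p_{k}}$ by the partition of indices into groups; because each $X_p$ is mean zero and independent, only terms in which every distinct index appears at least twice survive. Classifying the surviving patterns shows that the dominant contributions produce either $N^{-4}\Tr|S|^4$ or $N^{-4}(\Tr|S|^2)^2$, and combined they are bounded by $C N^{-3} \Tr|S|^4$. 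The second inequality then follows from $\Tr|S|^{2n} \leq N \norm{S}^{2n}$.

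For the "moreover" statement one repeats the moment computation at arbitrary even order $2n$, which is permitted by Assumption~\ref{assump:Wigner}(iv) since all moments of $A_{ij}$ are finite. The same combinatorial bookkeeping yields $\expct{|Q|^{2n} \mid S} \leq C_n \norm{S}^{2n}/N^n$, and Markov's inequality then gives, for any $\epsilon, D > 0$,
\beq
\prob{|Q| > N^{\epsilon}\norm{S}/\sqrt{N}} \leq \frac{\expct{|Q|^{2n}}}{N^{2n\epsilon}\norm{S}^{2n}/N^n} \leq C_n N^{-2n\epsilon},
\eeq
which becomes smaller than $N^{-D}$ by choosing $n$ large enough in terms of $\epsilon, D$.

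The main obstacle is the combinatorial bookkeeping in the $n=2$ (and general $2n$) moment expansion: one must enumerate all partitions of the $4n$ indices into blocks of size $\geq 2$, assign to each a product of $S$-entries, and verify that every resulting sum is dominated by $N^{-n-1}\Tr|S|^{2n}$ once we use $|S_{pq}| \leq (|S|^2)_{pp}^{1/2}(|S|^2)_{qq}^{1/2}$ and the Cauchy–Schwarz-type inequalities $\sum_{pq}|S_{pq}|^{2k} \leq \Tr|S|^{2k}$. This is the only step beyond routine; otherwise the argument is essentially a transcription of the Hanson–Wright computation as carried out in \cite{Baik-Lee2017}.
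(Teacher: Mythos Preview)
Your approach is correct and is the standard Hanson--Wright-type moment computation for quadratic forms in independent variables. Note, however, that the paper does not give its own proof of this lemma: it is quoted directly from \cite{Baik-Lee2017} (Lemma~5.3, with the rank-one perturbation parameter $J$ set to zero), and the only observation the paper makes is that the off-diagonal entries of $W$ here coincide with those in that reference. Your outline is exactly the argument one would write out if asked to reproduce that cited proof, so there is nothing to compare---you have supplied what the paper deliberately omits.
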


\subsubsection{Matrix identities}

\begin{lemma}[Lemma 3.1 of \cite{Lee-Schnelli2015}, Matrix identities]
	Let $X$ be an $N\times N$, symmetric matrix and $R(z)\deq (X-zI)^{-1}$, $z\in\C$. Then for $i,j,k\in\{1,\cdots,N\}$, the following identities hold:
	\begin{itemize} \label{eq:mat id 3}
		\item[--] Schur complement formula:
		\beq\label{eq:Schur}
		R_{ii}=\bigg(X_{ii}-z-\sum_{m,n}^{(i)}X_{im}R^{(i)}_{mn}X_{ni}\bigg)^{-1}.
		\eeq
		
		\item[--] For $i\neq j$,
		\beq
		R_{ij}=-R_{ii}\sum_{m}^{(i)}X_{im}R_{mj}^{(i)}=-R_{ii}R_{jj}^{(i)}\Big(X_{ij}-\sum_{m,n}^{(i,j)}X_{im}R^{(i,j)}_{mn}X_{nj}\Big).
		\eeq
		
		\item[--] For $i,j\neq k$,
		\beq
		R_{ij}=R_{ij}^{(k)}+\frac{R_{ik}R_{kj}}{R_{kk}}.
		\eeq
	\end{itemize}
\end{lemma}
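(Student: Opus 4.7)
All three identities are classical consequences of the block matrix inversion formula applied to $X-zI$ with one or two indices singled out. I would treat them in order.

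For the Schur complement formula, write $X-zI$ in $2\times 2$ block form by separating row/column $i$ from the rest:
\beq
X-zI = \begin{pmatrix} X_{ii}-z & \mathbf{x}_i^{\top} \\ \mathbf{x}_i & X^{(i)}-zI \end{pmatrix},
\eeq
where $(\mathbf{x}_i)_m = X_{im}$ for $m\neq i$. The standard block inversion formula gives the $(i,i)$ entry of the inverse as
\beq
R_{ii} = \bigl((X_{ii}-z) - \mathbf{x}_i^{\top}(X^{(i)}-zI)^{-1}\mathbf{x}_i\bigr)^{-1},
\eeq
and expanding the quadratic form produces exactly $\sum_{m,n}^{(i)} X_{im} R^{(i)}_{mn} X_{ni}$, yielding \eqref{eq:Schur}.

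For the off-diagonal identity with $i\neq j$, I would read off the off-diagonal block of the inverse in the same $2\times 2$ decomposition: the vector of entries $(R_{mi})_{m\neq i}$ equals $-R_{ii}(X^{(i)}-zI)^{-1}\mathbf{x}_i$, which immediately gives $R_{ij} = -R_{ii}\sum_{m}^{(i)} R_{jm}^{(i)}X_{mi}$. For the second equality, apply the same procedure to $X^{(i)}$ with index $j$ singled out, so that $R_{jm}^{(i)}$ for $m\neq j$ is expressed through $R^{(i,j)}$ and the $j$-th row of $X^{(i)}$; substituting and isolating the $X_{ij}$ term produces the stated formula.

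For the third identity relating $R_{ij}$ and $R^{(k)}_{ij}$, the cleanest route is to apply the second identity twice: use the $i\neq j$ formula once to express $R_{ij}$ in terms of $R^{(k)}$ (by a further Schur reduction eliminating index $k$) and then reorganize, or alternatively interpret $R-R^{(k)}$ (extended by zeros) as a rank-one object controlled by the $k$-th row/column of $R$ via the resolvent identity applied to the rank-one perturbation that removes row/column $k$. Either way, matching the explicit formulas from block inversion for $R_{ij}$ and $R_{ij}^{(k)}$ gives $R_{ij} - R_{ij}^{(k)} = R_{ik}R_{kj}/R_{kk}$. The only minor obstacle is bookkeeping of which minor is being taken at each step; the main point to check is that the Schur complements $X^{(i)}-zI$ and $X^{(i,j)}-zI$ are actually invertible, which is automatic for $z\in\C^{+}$ since $X$ is symmetric, so all resolvents are well-defined.
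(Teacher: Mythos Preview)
Your proof is correct and follows the standard block-inversion approach. Note that the paper does not actually prove this lemma; it simply cites it from \cite{Lee-Schnelli2015}, so there is no ``paper's proof'' to compare against. Your argument via the $2\times 2$ block decomposition of $X-zI$ is exactly the classical derivation of these resolvent identities, and your remark about invertibility for $z\in\C^{+}$ correctly handles the only technical point.
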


\section{Proof of Proposition \ref{prop:Gproc}}\label{sec:Gaussianprocprf}

In this section we assume that $V$ is \textbf{deterministic}, so that conditions \eqref{eq:regesdveq} and \eqref{eq:esdconveq} hold true.

As both of propositions asserts convergence of processes, according to Theorem 8.1 of \cite{Billingsley1968} it suffices to prove the finite-dimensional convergence and the tightness of processes. For the finite-dimensional convergence, following \cite{Bai-Yao2005} and \cite{Baik-Lee2017}, we express $\xi_{N}^{\vartheta}(z)$ as a martingale and use the following theorem concerning the central limit theorem of martingales:
\begin{lemma}[Theorem 35.12 of \cite{Billingsley1995}]\label{lem:martclt}
	Suppose that for each fixed $n\in\N$, $\{X_{n,k}\}_{k\in\N}$ is a martingale with respect to a filtration $\caF_{n,1}\subset\caF_{n,2}\subset\cdots$. Let $Y_{n,k}\deq X_{n,k}-X_{n,k-1}$ where $X_{n,0}=0$, and suppose that $\sum_{k}Y_{n,k}$ converges a.s. and $Y_{n,k}\in L^{2}$ for each $k$. Denote $\sigma^{2}_{n,k}\deq \cexpct{Y_{n,k}^{2}}{\caF_{n,k-1}}$ where $\caF_{n,0}$ is the trivial $\sigma-$algebra. If
	\beq\label{eq:martclt1}
	\sum_{k}\sigma_{n,k}^{2}\longrightarrow\sigma^{2}
	\eeq
	in probability where $\sigma\in\R_{+}$ is a constant and
	\beq\label{eq:martclt2}
	\sum_{k}\expct{Y_{n,k}^{2}\lone_{[\absv{Y_{n,k}}\geq\epsilon]}}\to 0
	\eeq
	for each $\epsilon>0$, then $\sum_{k=1}^{\infty}Y_{n,k}$ converges weakly to the normal distribution with zero mean and variance $\sigma^{2}$.
\end{lemma}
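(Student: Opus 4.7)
The plan is to prove convergence in distribution via pointwise convergence of characteristic functions, showing that $\phi_{n}(t)\deq\expct{\exp(\ii t S_{n})}\to\exp(-t^{2}\sigma^{2}/2)$ for every $t\in\R$, where $S_{n}\deq\sum_{k}Y_{n,k}$. The filtration structure suggests iteratively conditioning on $\caF_{n,k-1}$ and working with the conditional characteristic functions $\psi_{n,k}(t)\deq\cexpct{\e{\ii t Y_{n,k}}}{\caF_{n,k-1}}$, exploiting the martingale property $\cexpct{Y_{n,k}}{\caF_{n,k-1}}=0$ together with $\cexpct{Y_{n,k}^{2}}{\caF_{n,k-1}}=\sigma_{n,k}^{2}$.

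First, I would use \eqref{eq:martclt2} to reduce to bounded increments. Truncating $Y_{n,k}$ at level $\epsilon$, the Lindeberg-type condition ensures that the contribution of the tails to both $S_{n}$ and $\sum_{k}\sigma_{n,k}^{2}$ vanishes in the limit $n\to\infty$ followed by $\epsilon\to 0^{+}$. For the truncated part, a second-order Taylor expansion of $\e{\ii t Y_{n,k}}$ combined with the martingale property yields
\beq
\psi_{n,k}(t)=1-\frac{t^{2}}{2}\sigma_{n,k}^{2}+R_{n,k}(t),
\eeq
with $|R_{n,k}(t)|$ controlled by $|t|^{3}\epsilon\,\sigma_{n,k}^{2}$ up to the Lindeberg remainder. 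Taking the product and using $\log(1-z)=-z+O(|z|^{2})$ together with $\max_{k}\sigma_{n,k}^{2}\to 0$ in probability (a consequence of the Lindeberg condition), one obtains
\beq
\prod_{k}\psi_{n,k}(t)=\exp\bigg(-\frac{t^{2}}{2}\sum_{k}\sigma_{n,k}^{2}\bigg)\big(1+o_{\P}(1)\big),
\eeq
which converges to $\e{-t^{2}\sigma^{2}/2}$ in probability by \eqref{eq:martclt1}.

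To pass from this to convergence of $\phi_{n}(t)$, I would introduce the auxiliary process $M_{n,k}\deq \e{\ii t S_{n,k}}/\prod_{j\leq k}\psi_{n,j}(t)$, which is a complex-valued martingale with $M_{n,0}=1$ whenever the denominators are bounded away from zero; hence $\expct{M_{n,\infty}}=1$, and rewriting this as $\phi_{n}(t)=\expct{\prod_{k}\psi_{n,k}(t)}$ permits passing to the limit via dominated convergence. The main obstacle is ensuring that $|\prod_{j\leq k}\psi_{n,j}(t)|$ stays bounded below uniformly in $k$: this is handled by stopping the martingale at the first index where $\sum_{j\leq k}\sigma_{n,j}^{2}$ exceeds a safe threshold slightly larger than $\sigma^{2}$, using \eqref{eq:martclt1} to show this stopping time becomes trivial in the $n\to\infty$ limit, and combining with the trivial bound $|\phi_{n}(t)|\leq 1$ to upgrade convergence in probability to convergence of expectations. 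No further input from the martingale CLT is needed beyond hypotheses \eqref{eq:martclt1}--\eqref{eq:martclt2}.
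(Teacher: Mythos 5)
The paper does not prove this lemma at all: it is quoted verbatim as Theorem 35.12 of \cite{Billingsley1995} and used as a black box, so there is no internal argument to compare yours against. Your sketch reconstructs the standard characteristic-function proof of the martingale CLT (truncation via the Lindeberg condition \eqref{eq:martclt2}, second-order Taylor expansion of the conditional characteristic functions $\psi_{n,k}(t)=\cexpct{\e{\ii t Y_{n,k}}}{\caF_{n,k-1}}$ using $\cexpct{Y_{n,k}}{\caF_{n,k-1}}=0$, the observation that $\max_{k}\sigma_{n,k}^{2}\to 0$ in probability, and a stopping argument to keep $\sum_{j\leq k}\sigma_{n,j}^{2}$ bounded), which is essentially how Billingsley, Brown, McLeish and Hall--Heyde prove it; so the route is sound and is the same route as the cited source rather than a genuinely new one.

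One step is stated incorrectly as written, although the repair is standard and stays within your framework: from $\expct{M_{n,\infty}}=1$, i.e. $\expct{\e{\ii t S_{n}}/\prod_{k}\psi_{n,k}(t)}=1$, you cannot ``rewrite this as $\phi_{n}(t)=\expct{\prod_{k}\psi_{n,k}(t)}$'' --- the expectation of a ratio does not factor that way. The standard completion is the splitting $\phi_{n}(t)=\e{-t^{2}\sigma^{2}/2}\,\expct{\e{\ii t S_{n}}/\prod_{k}\psi_{n,k}(t)}+\expct{\e{\ii t S_{n}}\big(1-\e{-t^{2}\sigma^{2}/2}/\prod_{k}\psi_{n,k}(t)\big)}$, where the first term equals $\e{-t^{2}\sigma^{2}/2}$ by the (stopped) martingale identity and the second vanishes because the bracket tends to $0$ in probability and is bounded once $\absv{\prod_{k}\psi_{n,k}(t)}$ is bounded below --- which is exactly what your stopping time is for. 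You should also make explicit that stopping alters the increment array, so one must check that the stopped array still satisfies \eqref{eq:martclt1}--\eqref{eq:martclt2} and that the stopped sum coincides with $\sum_{k}Y_{n,k}$ on an event of probability tending to one; you gesture at this, but that is where the real bookkeeping of the proof lives.
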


Lemma \ref{lem:martclt} is used to conclude that the finite dimensional distribution of $\xi_{N}^{\vartheta}(z)-\expct{\xi_{N}^{\vartheta}(z)}$ converges weakly to the centered Gaussian distribution with designated covariance, and the convergence of mean is dealt separately. In particular, the filtration to which our martingale is adapted is defined as follows:
\begin{definition}\label{def:filt}
	We define the (decreasing) filtration $\{\caF_{k}:0\leq k\leq N\}$ as
	\beq
	\caF_{k}:=\sigma(W_{ij}:k<i,j\leq N),\quad k=0,1,\cdots,N
	\eeq
	and denote the conditional expectation $\cexpct{\cdot}{\caF_{k}}$ by $\expctk{\cdot}{k}$. We also define \beq
	\caG_{k}\deq\sigma(W_{ij}:k<i,j\leq N)\vee\sigma(v_{i}:i>k),\quad k=0,1,\cdots,N.
	\eeq
\end{definition}
\begin{notation}
	For \textbf{random} $V$, we also define
	\beq
	\caF_{k}\deq\sigma(W_{i,j}:k<i,j\leq N) \vee \sigma(v_{m}:1\leq m\leq N),\quad k=0,1,\cdots,N
	\eeq
	and $\E_{k}$ similarly. Note that for \textbf{deterministic} $V$, $\sigma(v_{m}:1\leq m\leq N)$ is the trivial $\sigma$-algebra, so that the definition is still consistent.
\end{notation}

The convergence of mean $\expct{\xi_{N}^{\vartheta}(z)}$ is contained in the following lemma, which is proved in \ref{sec:meanb}.
\begin{lemma}
	Define
	\beq
	b_{N}^{\vartheta}(z)\deq \expct{\xi_{N}^{\vartheta}(z)}=\expctk{\xi_{N}^{\vartheta}(z)}{N}=N\expct{m_{N}(z)-\wh{m}_{fc}^{\vartheta}(z)}.
	\eeq
	For $z\in\caD_{c}$ or $z\in\Gamma_{r}\cup\Gamma_{l}$, 
	\begin{multline}
	b_{N}^{\vartheta}(z)=-\frac{1}{2}\frac{(m_{fc}^{\vartheta_{\infty}})''(z)}{(1+(m_{fc}^{\vartheta_{\infty}})'(z))^{2}}\bigg[(w_{2}-1)+(m_{fc}^{\vartheta_{\infty}})'(z)+(W_{4}-3)\frac{(m_{fc}^{\vartheta_{\infty}})'(z)}{1+(m_{fc}^{\vartheta_{\infty}})'(z)}\bigg] \\
	+O(\vartheta N^{-\alpha_{0}}+N^{-\frac{1}{2}+\epsilon})
	\end{multline}
	if $\vartheta_{\infty}>0$, and
	\beq
	b_{N}^{\vartheta}(z) =m_{sc}(z)^{3}(1+m_{sc}'(z))((w_{2}-1)+m'_{sc}(z)+(W_{4}-3)m_{sc}(z)^{2})+O(\vartheta+N^{-\frac{1}{2}+\epsilon})
	\eeq
	if $\vartheta_{\infty}=0$.
\end{lemma}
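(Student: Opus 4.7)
My plan is to start from the Schur complement identity and expand each diagonal resolvent entry as a perturbation around the deterministic quantity $\wh g_k(z) := (\vartheta v_k - z - \wh m_{fc}(z))^{-1}$. Writing $R_{kk}^{-1} = \vartheta v_k + N^{-1/2}A_{kk} - z - Z_k^{tot}$ with $Z_k^{tot} := \sum_{m,n}^{(k)}W_{km}R_{mn}^{(k)}W_{nk}$, and using $\E[Z_k^{tot}\mid R^{(k)}] = m_N^{(k)}(z)$, one obtains $R_{kk}^{-1} = \wh g_k^{-1} + \epsilon_k$ with
\[
\epsilon_k = N^{-1/2}A_{kk} - (Z_k^{tot} - m_N^{(k)}) - (m_N^{(k)} - \wh m_{fc}).
\]
Corollary~\ref{cor:xiorder} and Lemma~\ref{lem:lde} give $\epsilon_k \prec N^{-1/2}$, so the Taylor expansion $R_{kk} - \wh g_k = -\wh g_k^2\epsilon_k + \wh g_k^3\epsilon_k^2 + \caO(N^{-3/2})$ is valid. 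Averaging in $k$ and taking expectation produces the master identity
\[
\E[m_N - \wh m_{fc}] = -\frac{1}{N}\sum_k \wh g_k^2\,\E[\epsilon_k] + \frac{1}{N}\sum_k \wh g_k^3\,\E[\epsilon_k^2] + O(N^{-3/2+\epsilon}).
\]

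The two expectations on the right are then computed. For $\E[\epsilon_k]$, only $m_N^{(k)} - \wh m_{fc}$ contributes, and the $O(1/N)$ discrepancy between $m_N$ and $m_N^{(k)}$ is crucial: the resolvent identity $\Tr R - \Tr R^{(k)} = \partial_z \log R_{kk}$ yields $\E[m_N - m_N^{(k)}] = N^{-1}\wh g_k(1 + \wh m_{fc}') + O(N^{-3/2+\epsilon})$. For $\E[\epsilon_k^2]$, all cross terms vanish upon conditioning on $R^{(k)}$ (since row $k$ of $W$ is independent of $R^{(k)}$), and an Isserlis-type computation gives $\E[(Z_k^{tot} - m_N^{(k)})^2\mid R^{(k)}] = (W_4-1)N^{-2}\sum_m (R_{mm}^{(k)})^2 + 2N^{-2}\sum_{m\neq n}(R_{mn}^{(k)})^2$; the first sum equals $N\wh I_0(z,z) + O(N^{1/2+\epsilon})$ by the local law, while the second is handled by the Ward-type identity $N^{-1}\Tr((R^{(k)})^2) = (m_N^{(k)})'$ and the rigidity estimate $(m_N^{(k)})' \approx \wh m_{fc}'$. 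Combining with $\E[(N^{-1/2}A_{kk})^2] = w_2/N$ produces $\E[\epsilon_k^2] = N^{-1}[w_2 + (W_4 - 3)\wh I_0(z,z) + 2\wh m_{fc}'(z)] + \caO(N^{-3/2})$.

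Substituting into the master identity, the key algebraic inputs are $1 - \wh I_0(z,z) = (1+\wh m_{fc}'(z))^{-1}$ and $N^{-1}\sum_k \wh g_k^3 = \wh m_{fc}''/(2(1+\wh m_{fc}')^3)$, obtained by differentiating \eqref{eq:funceqhat} once and twice. Solving the resulting scalar equation, the $O(1/N)$ correction to $\E[\epsilon_k]$ precisely cancels the $1 + \wh m_{fc}'$ in $w_2 + 2\wh m_{fc}'$, collapsing it to $(w_2 - 1) + \wh m_{fc}'$, so that
\[
b_N^\vartheta(z) = \frac{\wh m_{fc}''(z)}{2(1+\wh m_{fc}'(z))^2}\bigg[(w_2-1) + \wh m_{fc}'(z) + (W_4-3)\frac{\wh m_{fc}'(z)}{1+\wh m_{fc}'(z)}\bigg] + O(N^{-1/2+\epsilon}).
\]
Replacing $\wh m_{fc}^\vartheta$ (and its derivatives) by $m_{fc}^{\vartheta_\infty}$ uses Lemma~\ref{lem:mfc-msc} together with the bound $|\wh m_{fc}^\vartheta - m_{fc}^\vartheta| \lesssim \vartheta N^{-\alpha_0}$, which follows from Assumption~\ref{assump:esdvconv} by comparing \eqref{eq:funceq} with \eqref{eq:funceqhat}; this introduces the $O(\vartheta N^{-\alpha_0})$ error. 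In the case $\vartheta_\infty = 0$, substituting $m_{fc}^{\vartheta_\infty} = m_{sc}$ and applying the identity $m_{sc}''/(2(1+m_{sc}')^2) = m_{sc}^3(1+m_{sc}')$, derived from $m_{sc}^2 + zm_{sc} + 1 = 0$, converts the formula into the stated Bai--Yao form.

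The main obstacle is the careful bookkeeping of all $O(1/N)$ subleading contributions that feed back into the leading order of $b_N^\vartheta$: the $\partial_z \log R_{kk}$ correction to $\E[m_N - m_N^{(k)}]$ is easy to miss but is precisely what produces the shifts $w_2 \mapsto w_2 - 1$ and $2\wh m_{fc}' \mapsto \wh m_{fc}'$. A secondary difficulty is verifying that the Taylor remainder, the third-moment piece $\E[\epsilon_k^3]$ (into which $W_3$ would a priori enter), and the fluctuation $\E[(m_N^{(k)} - \wh m_{fc})^2]$ all fit inside the stated error $O(\vartheta N^{-\alpha_0} + N^{-1/2+\epsilon})$ uniformly on $\caD_c$ and on $\Gamma_l \cup \Gamma_r$; this is where the strong local law of Corollary~\ref{cor:xiorder} is used decisively, and some care is needed near $\Gamma_l\cup\Gamma_r$ where $\eta$ is smaller.
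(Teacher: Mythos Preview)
Your proposal is correct and follows essentially the same route as the paper. Both arguments expand $R_{kk}$ about $\wh g_k$ via Schur complement, compute the first two moments of the fluctuation $\epsilon_k = -(Q_k - \wh m_{fc} + v_k)$, and solve the resulting self-consistent scalar equation for $b_N$. Your use of the identity $\Tr R - \Tr R^{(k)} = \partial_z\log R_{kk}$ is a slightly cleaner packaging of what the paper does via $(R^2)_{ii} = \partial_z R_{ii}$ combined with the third resolvent identity, but the content is identical; likewise your Isserlis computation of $\E[(Z_k^{tot}-m_N^{(k)})^2]$ reproduces exactly the paper's case-by-case moment count in Section~A.2, and your observation that the $\partial_z\log R_{kk}$ correction shifts $w_2+2\wh m_{fc}'$ to $(w_2-1)+\wh m_{fc}'$ is precisely the mechanism behind the $-\frac{1}{2}\wh m_{fc}''/(1+\wh m_{fc}')^2$ term appearing in the paper's Section~A.1.
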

\begin{remark}\label{rem:preduc1}
	For \textbf{random} $V$, the same proof with $\expct{\cdot}$ replaced by $\expctk{\cdot}{N}$ gives us the absolute bound
	\beq
	\absv{\expctk{m_{N}(z)}{N}-\wh{m}_{fc}(z)}\lone_{\Omega_{N}}=O(1/N).
	\eeq 
\end{remark}

Now given the convergence of means, to use  {Lemma \ref{lem:martclt}} as addressed above, we express $\xi_{N}^{\vartheta}(z)-\expct{\xi_{N}^{\vartheta}}(z)$ as a martingale. Letting
\beq
	\zeta_{N}\deq\xi_{N}-\expct{\xi_{N}}=\Tr R-\expct{\Tr R},
\eeq
one can rewrite $\zeta_{N}$ as sum of a martingale difference sequence as follows:
\begin{equation}
	\zeta_{N} =\sum_{k=1}^{N}\expctk{\Tr R}{k-1}-\expctk{\Tr R}{k} \\
=\sum_{k=1}^{N}(\E_{k-1}-\E_{k})\Tr R =\sum_{k=1}^{N}(\E_{k-1}-\E_{k})(\Tr R-\Tr R^{(k)}).
\end{equation}
In \ref{sec:covariance}, we further simplify the martingale decomposition to get
\beq
	\zeta_{N}=\sum_{k=1}^{N}\expctk{\phi_{k}^{\vartheta}}{k-1}+\caO_{p}(N^{-\frac{1}{2}})
\eeq
where
\begin{multline}
\phi_{k}^{\vartheta}=\wh{g}_{k}^{\vartheta}\Big(\sum_{p,q}^{(k)}W_{kp}(R^{(k)})^{2}_{pq}W_{qk}-(\wh{m}_{fc}^{\vartheta})'\Big) \\
+(\wh{g}_{k}^{\vartheta})^{2}\Big(-W_{kk}+\vartheta v_{k}+\sum_{p,q}^{(k)}W_{kp}R^{(k)}_{pq}W_{qk}-\wh{m}_{fc}^{\vartheta}\Big)(1+(\wh{m}_{fc}^{\vartheta})').
\end{multline}

In \ref{sec:covariance}, we also prove the conditions \eqref{eq:martclt1} and \eqref{eq:martclt2} of  {Lemma \ref{lem:martclt}}:
\begin{lemma}\label{lem:martclt11}
	For distinct points $z_{1},z_{2}\in\caK$, we let
	\beq
	\Gamma_{N}^{\vartheta}(z_{1},z_{2})=\sum_{k=1}^{N}\expctk{\expctk{\phi_{k}^{\vartheta}(z_{1})}{k-1}\cdot\expctk{\phi_{k}^{\vartheta}(z_{2})}{k-1}}{k}.
	\eeq
	Then
	\begin{multline}
	\Gamma^{\vartheta}_{N}(z_{1},z_{2}) =(w_{2}-2)\frac{\partial^{2}I}{\partial z_{1} \partial z_{2}} +(W_{4}-3)\bigg(I\frac{\partial^{2}I}{\partial z_{1}\partial z_{2}} +\frac{\partial I}{\partial z_{1}}\frac{\partial I}{\partial z_{2}}\bigg) \\
	+\frac{2}{(1-I)^{2}}\bigg(\frac{\partial I}{\partial z_{2}}\frac{\partial I}{\partial z_{1}}+(1-I)\frac{\partial^{2} I}{\partial z_{1}\partial z_{2}}\bigg)+\caO(N^{-\frac{1}{2}})+O(\vartheta N^{-\alpha_{0}})
	\end{multline}
	if $\vartheta_{\infty}>0$ where
	\beq
	I(z_{1},z_{2})\equiv I^{\vartheta_{\infty}}(z_{1},z_{2})\deq\int_{\R}\frac{1}{(\vartheta_{\infty}x-z_{1}-m_{fc}^{\vartheta_{\infty}}(z_{1}))(\vartheta_{\infty}x-z_{2}-m_{fc}^{\vartheta_{\infty}}(z_{2}))}\dd\nu(x),
	\eeq
	and
	\begin{multline}
	\Gamma_{N}(z_{1},z_{2})=m_{sc}'(z_{1})m_{sc}'(z_{2})\bigg((w_{2}-2)+2(W_{4}-3)m_{sc}(z_{1})m_{sc}(z_{2}) \\
	+\frac{2}{(1-m_{sc}(z_{1})m_{sc}(z_{2}))^{2}}\bigg)+O(\vartheta)+\caO(N^{-\frac{1}{2}})
	\end{multline}
	if $\vartheta_{\infty}=0$.
\end{lemma}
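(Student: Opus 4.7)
The strategy is to expand $\Gamma_N^\vartheta(z_1,z_2)$ using the explicit form of $\phi_k^\vartheta$ established in Section~\ref{sec:covariance} and to exploit the independence of the $k$-th row of $W$ from the minor $W^{(k)}$. Decomposing $\phi_k^\vartheta=\phi_k^{(1)}+\phi_k^{(2)}$ along the two summands of its definition---where $\phi_k^{(1)}$ has prefactor $\wh g_k^\vartheta$ with quadratic form of kernel $(R^{(k)})^2$ and $\phi_k^{(2)}$ has prefactor $(\wh g_k^\vartheta)^2(1+(\wh m_{fc}^\vartheta)')$ with $-W_{kk}$ plus a quadratic form of kernel $R^{(k)}$---the product $\phi_k^\vartheta(z_1)\phi_k^\vartheta(z_2)$ expands into four cross terms. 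Taking conditional expectations with respect to $\caF_{k-1}$ and then $\caF_k$, each cross term is at leading order a covariance of row-$k$ quadratic forms, which by the standard fourth-moment computation and Assumption~\ref{assump:Wigner} has the form
\[
\mathrm{Cov}\Bigl(\sum_{p,q}^{(k)}W_{kp}S^{(1)}_{pq}W_{qk},\,\sum_{p,q}^{(k)}W_{kp}S^{(2)}_{pq}W_{qk}\Bigr)=\frac{2}{N^2}\Tr\bigl(S^{(1)}S^{(2)}\bigr)+\frac{W_4-3}{N^2}\sum_p^{(k)}S^{(1)}_{pp}S^{(2)}_{pp},
\]
together with an extra $w_2/N$ diagonal variance from the $-W_{kk}$ piece; the interplay between the pre-factor $2$ in the Gaussian trace term and this $w_2/N$ contribution is what eventually produces the $(w_2-2)$ coefficient in the limit.

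The next step is to bound subleading fluctuations of the quadratic forms by $\caO_p(N^{-1/2})$ via Lemma~\ref{lem:lde}, and to replace $R^{(k)}_{pp}(z)$ throughout by its deterministic surrogate $\wh g_p^\vartheta(z)$ using the entrywise local law (Corollary~\ref{cor:xiorder}), uniformly in $z\in\caK$. The sum over $k$ then converges via Corollary~\ref{cor:covwh}: $\frac{1}{N}\sum_k\wh g_k^\vartheta(z_1)\wh g_k^\vartheta(z_2)=\wh I_0^\vartheta(z_1,z_2)\to I^\vartheta(z_1,z_2)$. Using the identity $\partial_z\wh g_k^\vartheta=(\wh g_k^\vartheta)^2(1+(\wh m_{fc}^\vartheta)')$, products of the $\wh g_k^\vartheta$-prefactors convert into first and second partial derivatives of $\wh I_0^\vartheta$, producing the $(w_2-2)\partial_{z_1}\partial_{z_2}I$ term and the $(W_4-3)(I\partial_{z_1}\partial_{z_2}I+\partial_{z_1}I\,\partial_{z_2}I)$ term of the stated formula. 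For the Gaussian trace piece, the resolvent identity gives $\frac{1}{N}\Tr(R^{(k)}(z_1)R^{(k)}(z_2))=\frac{m_N^{(k)}(z_1)-m_N^{(k)}(z_2)}{z_1-z_2}$, which by the local law and subtraction of two instances of~\eqref{eq:funceq} converges to $\frac{I^\vartheta(z_1,z_2)}{1-I^\vartheta(z_1,z_2)}$; mixed $\partial_{z_1}\partial_{z_2}$ differentiation of this identity combined with the prefactor conversion generates the remaining $\frac{2}{(1-I)^2}(\partial_{z_1}I\,\partial_{z_2}I+(1-I)\partial_{z_1}\partial_{z_2}I)$ contribution. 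The case $\vartheta_\infty=0$ follows by passing to the limit via Lemma~\ref{lem:mfc-msc} and using $I^0(z_1,z_2)=m_{sc}(z_1)m_{sc}(z_2)$.

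The main technical obstacle is the careful handling of the conditional expectation $\expctk{\cdot}{k-1}$: under the decreasing filtration $\caF_k=\sigma(W_{ij}:k<i,j\le N)$, the minor $R^{(k)}$ is not $\caF_k$-measurable and $\expctk{\cdot}{k-1}$ does not simply freeze the $k$-th row, so the replacement of $R^{(k)}$ by its deterministic surrogate via the local law (Lemma~\ref{lem:locallaw}) and the a priori bound of Lemma~\ref{lem:apriori} must be executed with remainders controlled uniformly in $z\in\caK$. The precise combinatorial bookkeeping required to separate the diagonal ($w_2,W_4$) contributions from the off-diagonal (Gaussian pairing) contributions of the $W_{kp}$'s, and to correctly reproduce the $\frac{2}{(1-I)^2}$ coefficient from mixed differentiation of $\frac{I}{1-I}$, is the combinatorial heart of the computation. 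Finally, verifying that the $\caO_p(N^{-1/2})$ remainder in the decomposition $\zeta_N=\sum_k\expctk{\phi_k^\vartheta}{k-1}+\caO_p(N^{-1/2})$ does not contaminate the second-moment computation is handled by Cauchy--Schwarz together with the rough uniform bound $\expctk{\phi_k^\vartheta}{k-1}=\caO_p(N^{-1/2})$.
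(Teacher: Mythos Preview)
Your proposal contains a genuine gap in the treatment of the nested conditional expectations. The quadratic--form covariance identity you invoke,
\[
\mathrm{Cov}\Bigl(\sum_{p,q}^{(k)}W_{kp}S^{(1)}_{pq}W_{qk},\,\sum_{p,q}^{(k)}W_{kp}S^{(2)}_{pq}W_{qk}\Bigr)=\frac{2}{N^{2}}\Tr(S^{(1)}S^{(2)})+\frac{W_{4}-3}{N^{2}}\sum_{p}^{(k)}S^{(1)}_{pp}S^{(2)}_{pp},
\]
is the formula obtained by integrating out the \emph{entire} $k$-th row. But the object $\expctk{\expctk{\phi_{k}(z_{1})}{k-1}\expctk{\phi_{k}(z_{2})}{k-1}}{k}$ is not that full covariance: since $W_{kp}$ is $\caF_{k-1}$-measurable for $p>k$ and is integrated out by $\E_{k-1}$ only for $p<k$, the off-diagonal pairings (the cases $p=t\neq q=r$ and $p=r\neq q=t$ in the paper's index analysis) survive only when \emph{both} indices exceed $k$. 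You flag this issue yourself, but your subsequent computation does not resolve it.

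Concretely, the paper's analogue of your ``Gaussian trace piece'' is the quantity $Z_{k}$, which after the restriction to $p,q>k$ satisfies a recursion yielding $Z_{k}=\frac{1}{N}\frac{\wh I_{k}^{2}}{1-\wh I_{k}}$ with $\wh I_{k}=\frac{1}{N}\sum_{p>k}\wh g_{p}(z_{1})\wh g_{p}(z_{2})$. The $k$-dependence through $\wh I_{k}$ is essential: because $\wh I_{k-1}-\wh I_{k}=\frac{1}{N}\wh g_{k}(z_{1})\wh g_{k}(z_{2})$, the sum $\frac{1}{N}\sum_{k}\frac{\wh g_{k}(z_{1})\wh g_{k}(z_{2})}{1-\wh I_{k}}$ is a Riemann sum for $\int_{0}^{\wh I_{0}}\frac{\dd t}{1-t}=-\log(1-\wh I_{0})$, and it is the differentiation $\partial_{z_{1}}\partial_{z_{2}}[-2\log(1-I)]$ that produces the stated $\frac{2}{(1-I)^{2}}\bigl(\partial_{z_{1}}I\,\partial_{z_{2}}I+(1-I)\partial_{z_{1}}\partial_{z_{2}}I\bigr)$. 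If instead you use the full trace $\frac{1}{N}\Tr\bigl(R^{(k)}(z_{1})R^{(k)}(z_{2})\bigr)\to\frac{I}{1-I}$ for every $k$, the resulting $\wt\Gamma$-contribution is $\frac{2I^{2}}{1-I}$, whose mixed second derivative does \emph{not} match the target expression. In short, the filtration does real work here, and your route bypasses precisely the mechanism---the running partial sum $\wh I_{k}$ and its telescoping into a logarithm---that delivers the correct coefficient.
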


\begin{lemma}\label{lem:martclt22}
	For any $z\in\caK$ and $\epsilon>0$,
	\beq
	\sum_{k}\expct{\absv{\expctk{\phi_{k}^{\vartheta}}{k-1}}^{2}\lone_{[\absv{\expctk{\phi_{k}^{\vartheta}}{k-1}}\geq\epsilon]}}\to 0.
	\eeq
\end{lemma}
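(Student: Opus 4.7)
The plan is to establish the Lindeberg condition by reducing it to a uniform fourth-moment bound on the martingale differences, which in turn follows from the large deviation estimate of Lemma~\ref{lem:lde} together with the stability and local-law bounds already collected in Section~\ref{sec:prelim}. Set $Y_k := \expctk{\phi_k^{\vartheta}(z)}{k-1}$. By Markov's inequality applied to $|Y_k|^{2}$,
\beq
\expct{|Y_k|^{2}\lone_{[|Y_k|\ge\epsilon]}} \;\le\; \epsilon^{-2}\,\expct{|Y_k|^{4}},
\eeq
and by Jensen's inequality for the conditional expectation, $\expct{|Y_k|^{4}}\le\expct{|\phi_k^{\vartheta}|^{4}}$. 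Hence it suffices to prove the uniform bound $\expct{|\phi_k^{\vartheta}(z)|^{4}}\le CN^{-2}$ for $k=1,\dots,N$ and $z\in\caK$; summing then yields $\sum_{k}\expct{|Y_k|^{2}\lone_{[|Y_k|\ge\epsilon]}} \le C\epsilon^{-2}N^{-1}\to 0$.

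Next, decompose $\phi_k^{\vartheta}$ into its three fluctuating pieces,
\beq
\phi_k^{\vartheta} \;=\; \wh{g}_k^{\vartheta}\,\mathcal{Q}_1 \;+\; (\wh{g}_k^{\vartheta})^{2}(1+(\wh{m}_{fc}^{\vartheta})')\bigl(-W_{kk}+\vartheta v_k\bigr) \;+\; (\wh{g}_k^{\vartheta})^{2}(1+(\wh{m}_{fc}^{\vartheta})')\,\mathcal{Q}_2,
\eeq
where $\mathcal{Q}_1 := \sum_{p,q}^{(k)} W_{kp}(R^{(k)})^{2}_{pq}W_{qk} - (\wh{m}_{fc}^{\vartheta})'(z)$ and $\mathcal{Q}_2 := \sum_{p,q}^{(k)} W_{kp}R^{(k)}_{pq}W_{qk} - \wh{m}_{fc}^{\vartheta}(z)$. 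The deterministic prefactors $\wh{g}_k^{\vartheta}$, $\wh{m}_{fc}^{\vartheta}$, and $(\wh{m}_{fc}^{\vartheta})'$ are uniformly bounded on $\caK\subset\{\Im z>c\}$ by Lemma~\ref{lem:sqrtbehav} (and its differentiation via Cauchy's formula). The middle term equals $-A_{kk}/\sqrt{N}\cdot(\text{bounded})$, whose fourth moment is $O(N^{-2})$ directly from Assumption~\ref{assump:Wigner}.

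For each quadratic form $\mathcal{Q}_j$, we split as
\beq
\mathcal{Q}_1 \;=\; \Bigl(\sum_{p,q}^{(k)} W_{kp}(R^{(k)})^{2}_{pq}W_{qk} - \tfrac{1}{N}\sum_{p}^{(k)}(R^{(k)})^{2}_{pp}\Bigr) \;+\; \Bigl(\tfrac{1}{N}\sum_{p}^{(k)}(R^{(k)})^{2}_{pp} - (\wh{m}_{fc}^{\vartheta})'(z)\Bigr),
\eeq
and analogously for $\mathcal{Q}_2$. Applying Lemma~\ref{lem:lde} with $n=2$ and $S=R^{(k)}$ or $(R^{(k)})^{2}$ bounds the fourth moment of the first bracket by $C\|S\|^{4}/N^{2}\le CN^{-2}$, since $\|R^{(k)}\|\le (\Im z)^{-1}\le c^{-1}$ on $\caK$. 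For the second bracket, the trace equals $\frac{d}{dz}m_N^{(k)}(z)$, and this deterministic-looking difference is $\caO(N^{-1})$ on $\caK$ by Corollary~\ref{cor:xiorder} applied at a slightly enlarged $z$ and a Cauchy-integral estimate for the derivative (valid because $\Im z$ is bounded below); its fourth power is $\caO(N^{-4})$ and is absorbed. The same reasoning handles $\mathcal{Q}_2$. Combining, $\expct{|\phi_k^{\vartheta}|^{4}}\le CN^{-2}$ uniformly in $k$, and the conclusion follows.

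There is no genuine obstacle here because $\caK$ is bounded away from the real axis; the only potential nuisance is the deterministic replacement $\frac{1}{N}\Tr(R^{(k)})^{n}\leftrightarrow (\wh{m}_{fc}^{\vartheta})^{(n-1)}$, which is routine given the macroscopic-scale local law and the standard minor/full-matrix interchange $m_N - m_N^{(k)} = O(1/N)$. All estimates hold surely on $\Omega_N$ for sufficiently large $N$ (Remark~\ref{rem:omegaprob}), so the deterministic-$V$ hypothesis of Proposition~\ref{prop:Gproc} eliminates any further probabilistic complication.
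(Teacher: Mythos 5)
Your proof is correct and follows essentially the paper's route: Markov's inequality reduces the Lindeberg sum to $\epsilon^{-2}\sum_{k}\expct{\absv{\expctk{\phi_{k}^{\vartheta}}{k-1}}^{4}}$, which is then controlled by a uniform bound of order $N^{-2}$ on the fourth moment of $\phi_{k}^{\vartheta}$. The paper simply quotes the bound $\phi_{k}\prec N^{-1/2}$ obtained in the preceding simplification (itself a consequence of Lemma~\ref{lem:lde} and the local law on $\caK$), whereas you re-derive the same estimate directly at the level of fourth moments; this is the same argument, if anything slightly more self-contained.
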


\begin{remark}\label{rem:preduc2}
	As in  {Remark \ref{rem:preduc1}}, for \textbf{random} $V$, the proofs of  {Lemma \ref{lem:martclt11}} and  {Lemma \ref{lem:martclt22}} imply the fact that
	\beq
	\expct{\absv{m_{N}(z)-\expctk{m_{N}(z)}{N}}^{2}\lone_{\Omega_{N}}}=O(N^{-2+\epsilon}).
	\eeq
\end{remark}

Now given the finite-dimensional convergence, it remains to prove the tightness. Since the mean $b_{N}(z)$ converges, the tightness of $\zeta_{N}^{\vartheta}(z)$ implies that of $\xi_{N}^{\vartheta}(z)$. Following \cite{Bai-Yao2005}, by Theorem 12.3 of \cite{Billingsley1968}, it suffices to check the tightness for a fixed $z\in\caK$ and prove a H\"{o}lder condition given below. The tightness for fixed $z\in\caK$ follows directly from the finite-dimensional convergence and hence the tightness reduces to the following H\"{o}lder condition:
\beq
\expct{\absv{\zeta_{N}^{\vartheta}(z_{1})-\zeta_{N}^{\vartheta}(z_{2})}^{2}}\leq K\absv{z_{1}-z_{2}}^{2}, \quad^{\forall}z_{1},z_{2}\in\caK,
\eeq
for some constant $K$ independent of $N\in\N$ and $z_{1},z_{2}\in\caD_{c}$.

The proof starts with an application of the resolvent equation $R(z_{1})-R(z_{2})=(z_{1}-z_{2})R(z_{1})R(z_{2})$, to get
\begin{multline}
\expct{\absv{\zeta_{N}^{\vartheta}(z_{1})-\zeta_{N}^{\vartheta}(z_{2})}^{2}}=\expct{\absv{(\Tr R(z_{1})-\expct{\Tr R(z_{2})})-(\Tr R(z_{2})-\expct{\Tr R(z_{2})})}^{2}} \\
=\absv{z_{1}-z_{2}}^{2}\expct{\absv{\Tr R(z_{1})R(z_{2})-\expct{\Tr R(z_{1})R(z_{2})}}^{2}}.
\end{multline}

Therefore the following lemma completes the proof of  {Proposition \ref{prop:Gproc}}, which is proved in \ref{sec:tight}:
\begin{lemma}\label{lem:tightproc}
	For $z_{1},z_{2}\in\caK$ and sufficiently large $N\in\N$, we have
	\beq
	\expct{\absv{\Tr R(z_{1})R(z_{2})-\expct{\Tr R(z_{1})R(z_{2})}}^{2}}\leq K,
	\eeq
	where $K$ is a constant independent of $z_{1},z_{2}$ and $N\in\N$.
\end{lemma}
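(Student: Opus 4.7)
The plan is to reduce this two-point variance estimate to a single-point variance estimate for $\Tr R(z)$, and then to invoke the already-available single-point variance bound from Lemma~\ref{lem:varbound}. The starting point is the resolvent identity
\[
R(z_{1})R(z_{2}) = \frac{R(z_{1})-R(z_{2})}{z_{1}-z_{2}},
\]
which, together with the fundamental theorem of calculus applied to the $\C^{+}$-analytic function $z\mapsto\Tr R(z)$, gives
\[
\Tr R(z_{1})R(z_{2}) \;=\; \int_{0}^{1}\Tr R(z_{t})^{2}\,\dd t, \qquad z_{t}\deq z_{1}+t(z_{2}-z_{1}),
\]
an identity that remains valid at $z_{1}=z_{2}$ by continuity. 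Since $\caK\subset\{\Im z>c\}$ is a path inside a convex half-plane, the segment $[z_{1},z_{2}]$ stays in $\{\Im z>c\}$ for any $z_{1},z_{2}\in\caK$. Centering the integrand and applying the Cauchy--Schwarz inequality then yields
\[
\expct{\absv{\Tr R(z_{1})R(z_{2})-\expct{\Tr R(z_{1})R(z_{2})}}^{2}} \;\le\; \sup_{t\in[0,1]}\Var{\Tr R(z_{t})^{2}}.
\]

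To control the single-point object on the right, I would write $h(z)\deq\Tr R(z)-\expct{\Tr R(z)}$, which is analytic in $\C^{+}$, and use $\Tr R(z)^{2}=\partial_{z}\Tr R(z)$. The Cauchy integral formula on the circle of radius $r\deq c/2$ around $z$ expresses
\[
h'(z)=\frac{1}{2\pi r}\int_{0}^{2\pi}e^{-\ii\theta}h(z+re^{\ii\theta})\,\dd\theta,
\]
from which the Cauchy--Schwarz inequality followed by taking expectations gives
\[
\Var{\Tr R(z)^{2}}=\expct{\absv{h'(z)}^{2}}\;\le\;\frac{1}{r^{2}}\sup_{\absv{w-z}=r}\Var{\Tr R(w)}.
\]
Since $z\in[z_{1},z_{2}]\subset\{\Im z>c\}$, the circle $\{\absv{w-z}=r\}$ lies in $\{\Im w\ge c/2\}$, and the whole problem is now reduced to a uniform one-point bound $\Var{\Tr R(w)}=O(1)$ for $w$ in this half-plane.

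This last bound is an immediate consequence of Lemma~\ref{lem:varbound}, which (for deterministic $V$, as assumed throughout Section~\ref{sec:Gaussianprocprf}) gives
\[
N^{2}\Var{m_{N}(w)} \;\le\; C(\Im w)^{-3-\epsilon}\cdot\frac{1}{N}\sum_{k}\expct{\absv{R_{kk}(w)}^{1+\epsilon}}.
\]
Combined with the deterministic a priori bound $\absv{R_{kk}(w)}\le(\Im w)^{-1}$ and $\Im w\ge c/2$, this produces $\Var{\Tr R(w)}=N^{2}\Var{m_{N}(w)}\le C'(c/2)^{-4-2\epsilon}$, a constant depending only on $c$ and the fixed $\epsilon$. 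Chaining the two reductions then produces the desired $K=K(c,\epsilon)$, independent of $N$, $z_{1}$, and $z_{2}$. No substantial obstacle is anticipated: both reductions are direct consequences of the $\C^{+}$-analyticity of $\Tr R$, and the single-point variance bound is already available from Lemma~\ref{lem:varbound}; the only mild care is to choose the Cauchy radius $r$ as a fixed fraction of $c$ so that the contour remains inside the half-plane where the variance bound applies.
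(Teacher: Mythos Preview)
Your proof is correct and takes a genuinely different route from the paper. The paper proves Lemma~\ref{lem:tightproc} by a direct martingale decomposition of $\Tr R(z_{1})R(z_{2})-\expct{\Tr R(z_{1})R(z_{2})}$ with respect to the filtration $\caF_{k}$, writing $\Tr RS-\Tr R^{(k)}S^{(k)}$ explicitly via the resolvent identities (Schur complement and minor relations) and then bounding each of the five resulting terms by hand using the large deviation estimate of Lemma~\ref{lem:lde} together with the bounds $\norm{R},\norm{S}\le c^{-1}$ and $\expct{\absv{R_{kk}-\wh g_{k}}^{2}}\le C/N$. Your approach instead exploits analyticity twice---first to rewrite $\Tr R(z_{1})R(z_{2})$ as an average of $\partial_{z}\Tr R(z_{t})$ along the segment, then to dominate the variance of this derivative by the single-point variance $\Var{\Tr R(w)}$ via the Cauchy integral formula---and closes by invoking Lemma~\ref{lem:varbound}. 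This is considerably shorter and avoids all the term-by-term algebra; the price is that it relies on Lemma~\ref{lem:varbound} as a black box, whereas the paper's argument is self-contained within Appendix~\ref{sec:tight} and reuses exactly the same machinery (and auxiliary factor $(v_{k}-z-m_{N}^{(k)})^{-1}$) that is needed anyway for the companion result Lemma~\ref{lem:tightprocsqrt}. There is no circularity in your reduction: Lemma~\ref{lem:varbound} is proved independently in Section~\ref{sec:profvarb} and does not use Lemma~\ref{lem:tightproc}.
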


\section{Proof of Propositions \ref{prop:Gprocsqrttheta} and \ref{prop:Gprocsqrt}}\label{sec:Gaussianprocsqrtprf}
In this section we assume that $V$ is \textbf{random}. Since $\prob{\Omega_{N}}\to 1$, assuming $\Omega_{N}$ for any sample paths below will do no harm to our proof.

\subsection{Primary reduction}
Since $\{\wt{\xi}^{\vartheta}_{N}(z):z\in\caK\}$ defines a continuous process for each $n\in\N$, we again use  {Theorem 8.1} of \cite{Billingsley1968} to prove  {Proposition \ref{prop:Gprocsqrt}}, hence it suffices to prove the finite-dimensional convergence and the tightness.
To this end, we start the proof by reducing $\wt{\xi}_{N}(z)$ into a simplified form which looks pleasant to apply the classical central limit theorem.

In this section, we assume $z\in\caK$.
Note first that 
\beq
\wt{\xi}_{N}^{\vartheta}(z) =\frac{\sqrt{N}}{\vartheta}(m_{N}^{\vartheta}(z)-m_{fc}^{\vartheta}(z)) =\frac{1}{\sqrt{N}\vartheta}\xi_{N}^{\vartheta}(z)  +\frac{\sqrt{N}}{\vartheta}
(\wh{m}_{fc}^{\vartheta}(z)-m_{fc}^{\vartheta}(z)).
\eeq
Considering the first term, we decompose it as
\beq
\frac{1}{\sqrt{N}\vartheta}\xi_{N}^{\vartheta}(z) =\frac{\sqrt{N}}{\vartheta}(m_{N}(z)-\expctk{m_{N}(z)}{N})+\frac{\sqrt{N}}{\vartheta}(\expctk{m_{N}(z)}{N}-\wh{m}_{fc}(z)).
\eeq

Then by  {Remarks \ref{rem:preduc1}} and  {\ref{rem:preduc2}}, 
\beq
\biggexpct{\biggabsv{\frac{1}{\sqrt{N}\vartheta}\xi_{N}^{\vartheta}(z)}^{2}\lone_{\Omega_{N}}}=O(N^{-1}\vartheta^{-2}),
\eeq
which, together with the fact that $\prob{\Omega_{N}}\to 1$, implies the in probability convergence $N^{-1/2}\vartheta^{-1}\xi_{N}^{\vartheta}(z)\to0$.
In this sense, we let $\wt{\zeta}^{\vartheta}_{N}(z)\deq\vartheta^{-1}\sqrt{N}(\wh{m}_{fc}^{\vartheta}(z)-m_{fc}^{\vartheta}(z))$ and try to estimate it.

Given the estimate above, we first rewrite $\wt{\zeta}^{\vartheta}_{N}(z)$ as follows:
\begin{multline}\label{eq:above}
	\wt{\zeta}^{\vartheta}_{N}(z) =\frac{\sqrt{N}}{\vartheta}\bigg(\frac{1}{N}\sum_{i=1}^{N}\wh{g}^{\vartheta}_{i}(z)-m_{fc}^{\vartheta}(z)\bigg)
 	=\frac{1}{\sqrt{N}\vartheta} \sum_{i=1}^{N}\bigg[\frac{1}{\vartheta v_{i}-z-\wh{m}_{fc}^{\vartheta}(z)}- \int\frac{1}{\vartheta x-z-m_{fc}^{\vartheta}(z)}\dd\nu(x)\bigg] \\
	=\frac{1}{\sqrt{N}\vartheta}\sum_{i=1}^{N}\bigg[\frac{1}{\vartheta v_{i}-z-m_{fc}^{\vartheta}(z)} -\expct{\frac{1}{\vartheta v_{i}-z-m_{fc}^{\vartheta}(z)}}\bigg] \\
	+\wt{\zeta}^{\vartheta}_{N}(z)\frac{1}{N}\sum_{i=1}^{N}\frac{1}{(\vartheta v_{i}-z-\wh{m}_{fc}^{\vartheta}(z))(\vartheta v_{i}-z-m_{fc}^{\vartheta}(z))}.
\end{multline}

To estimate the second summand in the right-hand side of \eqref{eq:above}, we expand it in terms of $\wh{m}_{fc}(z)-m_{fc}(z)$:
\begin{multline}
\frac{1}{N}\sum_{i=1}^{N}\frac{1}{(\vartheta v_{i}-z-\wh{m}_{fc}^{\vartheta}(z))(\vartheta v_{i}-z-m_{fc}^{\vartheta}(z))} \\
=\int\bigg[\frac{1}{(\vartheta x-z-m_{fc}^{\vartheta}(z))^{2}}+\frac{\wh{m}_{fc}^{\vartheta}(z)-m_{fc}^{\vartheta}(z)}{(\vartheta x-z-m_{fc}^{\vartheta}(z))^{2}(\vartheta x-z-\wh{m}_{fc}^{\vartheta}(z))}\bigg]\dd\wh{\nu}(x),
\end{multline}
and we extract the leading term of above as
\begin{equation}
	\int\frac{1}{(\vartheta x-z-m_{fc}^{\vartheta}(z))^{2}}\dd\wh{\nu}(x) +\frac{\vartheta\wt{\zeta}_{N}^{\vartheta}(z)}{\sqrt{N}}\int\frac{1}{(\vartheta x-z-m_{fc}^{\vartheta}(z))^{2}(\vartheta x-z-\wh{m}_{fc}^{\vartheta}(z))}\dd\wh{\nu}(x).
\end{equation}
Substituting, we get
\begin{multline}\label{eq:barxiexpand}
	\bigg(1-\int\frac{1}{(\vartheta x-z-m_{fc}^{\vartheta}(z))^{2}}\dd\nu(x)\bigg)\wt{\zeta}_{N}(z) \\
	=\frac{1}{\sqrt{N}\vartheta}\sum_{i=1}^{N}\bigg[\frac{1}{\vartheta v_{i}-z-m_{fc}^{\vartheta}(z)}-\expct{\frac{1}{\vartheta v_{i}-z-m_{fc}^{\vartheta}(z)}}\bigg] 
	+\wt{\zeta}^{\vartheta}_{N}(z)\bigg(\int \frac{1}{(\vartheta x-z-m_{fc}^{\vartheta}(z))^{2}}(\dd\wh{\nu}^{\vartheta}(x)-\dd\nu^{\vartheta}(x))\bigg) \\
	+\frac{\vartheta\wt{\zeta}_{N}(z)^{2}}{\sqrt{N}}\int\frac{1}{(\vartheta x-z-m_{fc}^{\vartheta}(z))^{2}(\vartheta x-z-\wh{m}_{fc}^{\vartheta}(z))}\dd\wh{\nu}(x),
\end{multline}
so that
\begin{multline}
	\bigg(1-\int\frac{1}{(\vartheta x-z-m_{fc}^{\vartheta}(z))^{2}}\dd\nu(x)\bigg)\wt{\zeta}_{N}(z) \\
	=\frac{1}{\sqrt{N}\vartheta}\sum_{i=1}^{N}\bigg[\frac{1}{\vartheta v_{i}-z-m_{fc}^{\vartheta}(z)}-\expct{\frac{1}{\vartheta v_{i}-z-m_{fc}^{\vartheta}(z)}}\bigg] 
	+\wt{\zeta}^{\vartheta}_{N}(z)\big((m_{\wh{\nu}}^{\vartheta})'(z+m_{fc}^{\vartheta}(z))-(m_{\nu}^{\vartheta})'(z+m_{fc}(z))\big) \\
	+\frac{\vartheta\wt{\zeta}_{N}(z)^{2}}{\sqrt{N}}\int\frac{1}{(\vartheta x-z-m_{fc}^{\vartheta}(z))^{2}(\vartheta x-z-\wh{m}_{fc}^{\vartheta}(z))}\dd\wh{\nu}(x)
\end{multline}

First, we recall the existence of a constant $C>1$ such that
\beq
\absv{1-\int\frac{1}{(\vartheta x-z-m_{fc}^{\vartheta}(z))^{2}}\dd\nu(x)}\geq C^{-1}\sqrt{\kappa+\eta} \quad\text{ for }z=E+\ii\eta \in\caD'
\eeq
uniformly in $\vartheta\in\Theta_{\varpi}$, given in \eqref{eq:1-s1s2}.
By a standard continuity argument, the bound can be extended to $z=a_{\pm}$ without changing the constant, so that we may divide the equality \eqref{eq:barxiexpand} by the quantity above. 

Then \eqref{eq:esdconveqtheta} together with  {Lemma \ref{lem:apriori}} and Cauchy integral formula implies
\beq
\absv{\wt{\zeta}_{N}^{\vartheta}(z)\Big(\frac{\dd}{\dd z} m_{\wh{\nu}}^{\vartheta}(z+m_{fc}^{\vartheta}(z)) -\frac{\dd}{\dd z} m_{\nu}^{\vartheta}(z+m_{fc}^{\vartheta}(z))\Big)} =O(\vartheta N^{-\frac{1}{2}+2\epsilon_{0}+\epsilon_{1}})
\eeq
on $\Omega$.

Again recalling \eqref{eq:mfczmaps} and other bounds following it, we have
\beq
\Bigabsv{\int\frac{1}{(\vartheta x-z-m_{fc}^{\vartheta}(z))^{2}(\vartheta x-z-\wh{m}_{fc}^{\vartheta}(z))}\dd\wh{\nu}(x)}\leq C
\eeq
on $\Omega$, giving
\beq
\frac{\wt{\zeta}_{N}^{\vartheta}(z)^{2}}{\sqrt{N}}\int\frac{1}{(\vartheta x-z-m_{fc}^{\vartheta}(z))^{2}(\vartheta x-z-\wh{m}_{fc}^{\vartheta}(z))}\dd\wh{\nu}(x) =O(N^{-\frac{1}{2}+2\epsilon_{0}+2\epsilon_{1}})
\eeq
on $\Omega$.

Finally, recalling that
\beq
1-\int \frac{1}{(\vartheta x-z-m_{fc}^{\vartheta}(z))^{2}}\dd\nu(x)=\frac{1}{1+(m_{fc}^{\vartheta})'(z)},
\eeq
we can conclude that on $\Omega$,
\begin{equation}\label{eq:reduc}
	\wt{\zeta}_{N}^{\vartheta}(z) =\big(1+(m_{fc}^{\vartheta})'(z)\big)\frac{1}{\sqrt{N}\vartheta}\sum_{i=1}^{N}\bigg[\frac{1}{\vartheta v_{i}-z-m_{fc}^{\vartheta}(z)}-\expct{\frac{1}{\vartheta v_{i}-z-m_{fc}^{\vartheta}(z)}}\bigg]
	+O(\vartheta N^{-\frac{1}{2}+2(\epsilon_{0}+\epsilon_{1})}).
\end{equation}
\begin{remark}\label{rem:wtximean}
	Noting that the expression of $\wt{\zeta}_{N}^{\vartheta}$ holds also for $z\in\Gamma_{r}\cup\Gamma_{l}$, we remark that 
	\beq
	\expct{\wt{\xi}_{N}^{\vartheta}(z)\lone_{\Omega}}\to0 \quad\text{for  } z\in\Gamma\setminus\Gamma_{0},
	\eeq
	since we have  {Remark \ref{rem:preduc1}}, and from Cauchy-Schwarz inequality,  
	\begin{multline}
	\absv{\expct{\wt{\zeta}^{\vartheta}_{N}(z)\cdot\lone_{\Omega}}}
	=\frac{\absv{1+(m_{fc}^{\vartheta})'(z)}}{\sqrt{N}\vartheta}\biggabsv{\biggexpct{\bigg(\sum_{i}\frac{1}{\vartheta v_{i}-z-m_{fc}^{\vartheta}(z)}-\biggexpct{\frac{1}{\vartheta v_{i}-z-m_{fc}^{\vartheta}(z)}}\bigg)\lone_{\Omega^{c}}}} 
	+O(\vartheta N^{-\frac{1}{2}+2(\epsilon_{0}+\epsilon_{1})}) \\
	\leq \bigg(\prob{\Omega^{c}}\frac{1}{N\vartheta^{2}}\sum_{i}\biggexpct{\biggabsv{\frac{1}{\vartheta v_{i}-z-m_{fc}^{\vartheta}(z)}-\biggexpct{\frac{1}{\vartheta v_{i}-z-m_{fc}}}}^{2}}\bigg)^{\frac{1}{2}}+o(1)\to0.
	\end{multline}
\end{remark}

\subsection{Finite-dimensional convergence}

To serve our purpose of proving the finite-dimensional convergence, we assume in this section that we have a fixed number of points $z_{1},\cdots,z_{p}$ in $\caK$.

\subsubsection{The case $\vartheta_{\infty}>0$}

As easily seen in \eqref{eq:reduc}, the term
\beq\label{eq:clt}
\frac{1}{\sqrt{N}\vartheta}\sum_{i}\bigg[\frac{1}{\vartheta v_{i}-z-m_{fc}^{\vartheta}(z)} -\biggexpct{\frac{1}{\vartheta v_{i}-z-m_{fc}^{\vartheta}(z)}}\bigg]
\eeq 
results in the Gaussian convergence. Indeed, if we consider the sum corresponding to \eqref{eq:clt} with $\vartheta$ replaced by $\vartheta_{\infty}$, from  {Lemma \ref{lem:mfc-msc} } we have
\begin{equation}
	\biggabsv{\frac{1}{\vartheta x-z-m_{fc}^{\vartheta}(z)}-\frac{1}{\vartheta_{\infty}x-z-m_{fc}^{\vartheta_{\infty}}(z)}}
	\leq\biggabsv{\frac{(\vartheta_{\infty}-\vartheta)x-(m_{fc}^{\vartheta}(z)-m_{fc}^{\vartheta_{\infty}}(z))}{(\vartheta x-z-m_{fc}^{\vartheta}(z))(\vartheta_{\infty}x-z-m_{fc}^{\vartheta_{\infty}})}} \leq C\absv{\vartheta-\vartheta_{\infty}},
\end{equation}
where the constant $C$ is chosen uniformly in $x\in\supp\nu$ and $N\in\N$ sufficiently large.
Then the central limit theorem together with the fact that $\prob{\Omega_{N}\cap\Omega_{N}'}\to1$ gives the weak convergence of the random vector
\beq
(\wt{\zeta}_{N}(z_{1}),\cdots,\wt{\zeta}_{N}(z_{p}))
\eeq
to a Gaussian random vector with mean zero and covariance matrix
\beq
\wt{\Gamma}^{\vartheta}(z_{i},z_{j}) =\vartheta_{\infty}^{-2}\bigg(1+\frac{\dd}{\dd z}m_{fc}^{\vartheta_{\infty}}(z_{i})\bigg)\bigg(1+\frac{\dd}{\dd z}m_{fc}^{\vartheta_{\infty}}(z_{j})\bigg)\bigg(I^{\vartheta_{\infty}}(z_{i},z_{j})-m_{fc}^{\vartheta_{\infty}}(z_{i})m_{fc}^{\vartheta_{\infty}}(z_{j})\bigg)
\eeq
for $1\leq i,j\leq p$.
Recalling that $\wt{\xi}_{N}^{\vartheta}(z)-\wt{\zeta}_{N}^{\vartheta}=\caO_{p}(N^{-1/2}\vartheta^{-1})$, we obtain the same convergence for $(\wt{\xi}_{N}^{\vartheta}(z_{1}),\cdots,\wt{\xi}_{N}^{\vartheta}(z_{p}))$.

\subsubsection{The case $\vartheta_{\infty}=0$}
For $\vartheta_{\infty}=0$, we reduce each summand to as follows:
\beq
\frac{1}{\vartheta x-z-m_{fc}^{\vartheta}(z)} = -\frac{1}{z+m_{fc}^{\vartheta}(z)} -\frac{\vartheta x}{(z+m_{fc}^{\vartheta}(z))^{2}}+\frac{\vartheta^{2}x^{2}}{(\vartheta x-z-m_{fc}^{\vartheta}(z))(z+m_{fc}^{\vartheta}(z))^{2}}.
\eeq
Since the first term is a constant, it vanishes after subtracting its expectation. The contribution of the last term is negligible, using the bound $\absv{z+m_{fc}^{\vartheta}(z)}\geq C$ obtained from \eqref{eq:mfczmaps}, together with the following bound of variance:
\begin{multline}
\biggexpct{\biggabsv{\frac{1}{\sqrt{N}\vartheta}\sum_{i}\bigg(\frac{\vartheta^{2}v_{i}^{2}}{(\vartheta v_{i}-z-m_{fc}^{\vartheta}(z))}-\biggexpct{\frac{\vartheta^{2}v_{i}^{2}}{(\vartheta v_{i}-z-m_{fc}^{\vartheta}(z))}}\bigg)}^{2}} \\
\leq\vartheta^{2}\int\biggabsv{\frac{x^{2}}{\vartheta x-z-m_{fc}^{\vartheta}(x)}}^{2}\dd\nu(x)\leq C\vartheta^{2}.
\end{multline}

To annihilate the dependence on $\vartheta$, with the help of  {Lemma \ref{lem:mfc-msc}}, we reduce the summand further to
\beq
\frac{x}{(z+m_{fc}^{\vartheta}(z))^{2}}-\frac{x}{(z+m_{sc}(z))^{2}} =\frac{x\big(2z+m_{sc}(z)+m_{fc}^{\vartheta}(z)\big)}{(z+m_{fc}^{\vartheta})^{2}(z+m_{sc}(z))^{2}}\big(m_{fc}^{\vartheta}(z)-m_{sc}(z)\big) =O(\vartheta),
\eeq
so that
\beq
\biggexpct{\biggabsv{\frac{2z+m_{sc}(z)+m_{fc}^{\vartheta}(z)}{(z+m_{fc}^{\vartheta})^{2}(z+m_{sc}(z))^{2}}\big(m_{fc}^{\vartheta}(z)-m_{sc}(z)\big)\frac{1}{\sqrt{N}}\sum_{i} v_{i}}^{2}}\leq C\vartheta^{2}
\eeq

Noting that the terms of variance $O(\vartheta^{2})$ vanishes for $\vartheta_{\infty}=0$, we may rewrite 
\begin{multline}
	\frac{1}{\sqrt{N}\vartheta}\sum_{i=1}^{N}\bigg[\frac{1}{\vartheta v_{i}-z-m_{fc}^{\vartheta}(z)}-\biggexpct{\frac{1}{\vartheta v_{i}-z-m_{fc}^{\vartheta}(z)}}\bigg] \\
	=-\frac{1}{(z+m_{sc}(z))^{2}}\bigg[\frac{1}{\sqrt{N}}\sum_{i}v_{i}\bigg] + X_{N} =-m_{sc}(z)^{2}\bigg[\frac{1}{\sqrt{N}}\sum_{i}v_{i}\bigg] +X_{N}
\end{multline}
where $X$ is a random variable with $\expct{\absv{X}^{2}}=O(\vartheta^{2})$.

As above, by the central limit theorem, the random vector
\beq
(\wt{\zeta}_{N}(z_{1}),\cdots,\wt{\zeta}_{N}(z_{p}))
\eeq
converges weakly to the Gaussian random vector with mean zero and covariance
\beq
\wt{\Gamma}(z_{i},z_{j})=(1+m_{sc}'(z_{i}))(1+m_{sc}'(z_{j}))m_{sc}(z_{i})^{2}m_{sc}(z_{j})^{2}\Var{v_{1}} =m_{sc}'(z_{i})m_{sc}'(z_{j})\Var{v_{1}}.
\eeq
Finally, by the same reasoning as above, the convergence can be extended to that of $(\xi_{N}^{\vartheta}(z_{1}),\cdots,\xi_{N}^{\vartheta}(z_{p}))$.

\subsection{Tightness of $\wt{\xi}_{N}^{\vartheta}(z)$}

In this section, we prove the tightness of $\{\wt{\xi}_{N}(z):z\in\caK\}$, which completes the proof of {Propositions \ref{prop:Gprocsqrttheta}} and {\ref{prop:Gprocsqrt}}. Given the finite-dimensional convergence of $\wt{\xi}_{N}(z)$, the tightness for a fixed point $z\in\caK$ directly follows, so that it remains to prove the H\"{o}lder condition, as given in {Section \ref{sec:Gaussianprocprf}}:
\beq
\frac{1}{N\vartheta^{2}}\expct{\absv{(\Tr R_{N}(z_{1})-\expct{\Tr R_{N}(z_{1})})-(\Tr R_{N}(z_{2})-\expct{\Tr R_{N}(z_{2})})}^{2}}\leq C\absv{z_{1}-z_{2}}^{2}
\eeq
for $z_{1},z_{2}\in\caK$ where $K$ is a fixed constant independent of $z_{1},z_{2}\in\caK$ and $N\in\N$.

As in {Section \ref{sec:Gaussianprocprf}}, we start with an application of the resolvent equation to get
\begin{multline}
\frac{1}{N\vartheta^{2}}\expct{\absv{(\Tr R_{N}(z_{1})-\expct{\Tr R_{N}(z_{1})})-(\Tr R_{N}(z_{2})-\expct{\Tr R_{N}(z_{2})})}^{2}} \\
=\frac{1}{N\vartheta^{2}}\absv{z_{1}-z_{2}}^{2}\expct{\absv{\Tr R_{N}(z_{1})R_{N}(z_{2})-\expct{\Tr R_{N}(z_{1})\Tr R_{N}(z_{2})}}^{2}}.
\end{multline}

The proof of the corresponding bound is also given in \ref{sec:tight}:
\begin{lemma}\label{lem:tightprocsqrt}
	For $z_{1},z_{2}\in\caK$ and $N\in\N$ sufficiently large, we have
	\beq
	\frac{1}{N\vartheta^{2}}\expct{\absv{\Tr R_{N}(z_{1})R_{N}(z_{2})-\expct{\Tr R_{N}(z_{1})R_{N}(z_{2})}}^{2}}\leq K,
	\eeq
	where $K>0$ is a constant independent of $z_{1},z_{2}\in\caK$ and $N\in\N$ sufficiently large.
\end{lemma}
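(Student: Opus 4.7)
The plan is to apply the law of total variance
$$\Var{\Tr R(z_1) R(z_2)} = \expct{\Var{\Tr R(z_1) R(z_2) \mid V}} + \Var{\expctk{\Tr R(z_1) R(z_2)}{N}},$$
which separates fluctuations coming from $A$ (conditional variance, expected to be $O(1)$) and from $V$ (variance of the conditional expectation, expected to be $O(N\vartheta^2)$). Dividing by $N\vartheta^2$ and using $\vartheta \gg N^{-1/2}$ so that $N\vartheta^2 \gg 1$ then yields the claimed uniform bound.

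For the conditional variance, fix $V$: then $\vartheta V$ is deterministic, and on $\Omega_N$ the regularity hypotheses required by Lemma~\ref{lem:tightproc} hold for $\wh{\nu}_N$. The same martingale decomposition used in the deterministic-$V$ case thus gives $\Var{\Tr R(z_1) R(z_2) \mid V}\lone_{\Omega_N} \leq K$ for a constant $K$ independent of $N$ and of $z_1, z_2 \in \caK$. The off-$\Omega_N$ contribution is treated via the deterministic bound $|\Tr R(z_1) R(z_2)| \leq N/(\Im z_1 \Im z_2)$ together with $\prob{\Omega_N^c} \leq cN^{-\frt}$ from Lemma~\ref{lem:chernoff}: provided $\frt$ is large enough (or is enlarged using the i.i.d.\ structure of $V$ and concentration), the resulting $O(N^{2-\frt})$ term is absorbed into the target $O(N\vartheta^2)$.

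For the variance of the conditional expectation, the resolvent identity $\Tr R(z_1) R(z_2) = N(m_N(z_1) - m_N(z_2))/(z_1 - z_2)$ together with Remark~\ref{rem:preduc1} gives, on $\Omega_N$,
$$\expctk{\Tr R(z_1) R(z_2)}{N} = N\cdot \frac{\wh{m}_{fc}(z_1) - \wh{m}_{fc}(z_2)}{z_1 - z_2} + O(1),$$
where the $O(1)$ remainder arises by upgrading the pointwise estimate $\expctk{m_N(z)}{N} - \wh{m}_{fc}(z) = O(1/N)$ to the same $O(1/N)$ bound on its $z$-derivative via Cauchy's formula in an analytic neighborhood of $\caK$, making the finite difference $O(|z_1 - z_2|/N)$. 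It therefore suffices to prove $\Var{\wh{m}_{fc}(z_1) - \wh{m}_{fc}(z_2)} \leq C\vartheta^2 |z_1 - z_2|^2/N$. For this I invoke the reduction~\eqref{eq:reduc} of Section~5.1, which writes
$$\wh{m}_{fc}(z) - m_{fc}(z) = \frac{1}{N}\sum_{i=1}^{N} h(z, v_i) + O(\vartheta^2 N^{-1+c\epsilon_0})$$
with $h(z,v) \deq (1 + m_{fc}'(z))\bigl[(\vartheta v - z - m_{fc}(z))^{-1} - \expct{(\vartheta v_1 - z - m_{fc}(z))^{-1}}\bigr]$; I then take the difference at $z_1, z_2$ and Taylor-expand $(\vartheta v - z - m_{fc}(z))^{-1}$ around $v = 0$ (using $\expct{v_1} = 0$) to obtain $h(z,v) = O(\vartheta)$ and $\partial_z h(z,v) = O(\vartheta)$ in $L^2(v)$, hence $h(z_1, v) - h(z_2, v) = O(\vartheta |z_1 - z_2|)$ in $L^2$. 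Summing $N$ i.i.d.\ copies, dividing by $N^2$, and reinstating the negligible errors in~\eqref{eq:reduc} yields the claimed estimate.

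The main technical obstacle is establishing uniformity of~\eqref{eq:reduc} and of its error terms jointly in $z_1, z_2 \in \caK$, including the coincident limit $z_1 \to z_2$: the arguments of Section~5.1 are pointwise, and to produce the difference estimates above one must either repeat them in a complex neighborhood of $\caK$ and convert pointwise bounds into derivative bounds via Cauchy's formula, or run the analogous estimates directly for $\partial_z \wh{m}_{fc}(z)$ as a function of $V$. Handling the off-$\Omega_N$ contribution to the conditional variance in the event that $\frt$ is small is a secondary but more mechanical issue, and the remaining computations are routine.
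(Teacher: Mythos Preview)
Your two-stage decomposition via the law of total variance is a genuinely different route from the paper's. The paper instead reruns the martingale argument of Lemma~\ref{lem:tightproc} with the enlarged filtration $\caG_k = \sigma(W_{ij}: k<i,j) \vee \sigma(v_i: i>k)$, so that each martingale increment integrates out $v_k$ together with the $k$-th row and column of $A$. The appropriate center for $R_{kk}$ is then $(-z-m_N^{(k)}(z))^{-1}$, which is independent of both, and the Schur formula gives
\[
R_{kk}^{-1} - \bigl(-z - m_N^{(k)}(z)\bigr) \;=\; \vartheta v_k + \tfrac{1}{\sqrt{N}}A_{kk} + \Bigl(m_N^{(k)} - \sum_{p,q}^{(k)} W_{kp}R^{(k)}_{pq}W_{qk}\Bigr),
\]
whose second moment is $O(\vartheta^2)$ by $\vartheta \gg N^{-1/2}$, independence, and Lemma~\ref{lem:lde}. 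This yields \eqref{eq:localintlaw} as an \emph{unconditional} moment bound, and the rest of the proof needs only $\norm{R} \leq \eta^{-1}$ and the large-deviation estimate --- never the local law or $\Omega_N$.

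That last point is exactly where your proposal has a real gap. You call the off-$\Omega_N$ contribution ``secondary but mechanical,'' but the trivial bound $\absv{\Tr R(z_1)R(z_2)} \leq N\eta^{-2}$ combined with $\prob{\Omega_N^c} \leq cN^{-\frt}$ gives only $O(N^{2-\frt})$, and you need this to be $O(N\vartheta^2)$. Since the assumptions permit $\vartheta_N \gg N^{-1/2}$ with arbitrarily slow divergence while guaranteeing only \emph{some} $\frt>0$ (Assumption~\ref{assump:regesdv}(i) is a hypothesis on the model, not a concentration statement you can sharpen from the i.i.d.\ structure), this fails in general. Both halves of your decomposition inherit the problem: your conditional invocation of Lemma~\ref{lem:tightproc} ultimately uses $\expct{\absv{R_{kk}-\wh{g}_k}^2}\leq C/N$, which rests on $\absv{m_N^{(k)}-\wh{m}_{fc}}\prec N^{-1}$ and hence on the local law (valid only on $\Omega_N$); and your part~2 relies on Remark~\ref{rem:preduc1} and \eqref{eq:reduc}, both stated on $\Omega_N$. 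Part~1 is repairable by recentering at $(\vartheta v_k - z - m_N^{(k)})^{-1}$ instead of $\wh{g}_k$, but part~2 would then require a separate $\Omega_N$-free analysis of the dependence of $\wh{m}_{fc}$ on a single $v_i$. The paper's direct $\caG_k$-martingale sidesteps all of this in one stroke.
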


\section*{Acknowledgments}
We are grateful to the anonymous referee for carefully reading our manuscript and providing helpful comments.
This work was supported in part by Samsung Science and Technology Foundation project number SSTF-BA1402-04.

\appendix

\section{Mean Function $b(z)$}\label{sec:meanb}
As mentioned in {Section \ref{sec:Gaussianprocprf}}, in this section we suppose that $V$ is \textbf{deterministic} and calculate the limiting formula of the mean $\expct{\xi_{N}(z)}$. First, in {Section \ref{sec:mean1}--\ref{sec:mean3}}, we reduce the mean of $\xi_{N}^{\vartheta}(z)$ to a form depending on $\vartheta$. Then, we repeatedly use {Lemma \ref{lem:mfc-msc}} to replace $\vartheta$ with its limit $\vartheta_{\infty}$. Throughout the primary simplification, all the bounds we require are irrelevant of $\vartheta$, therefore we drop the superscript $\vartheta$ in {Sections \ref{sec:mean1}--\ref{sec:mean3}}. Note that we are assuming that $N$ is sufficient large so that $\prob{\Omega_{N}}=1$.

Set
\beq
b_{N}(z):=\expct{\xi_{N}(z)}=N\expct{m_{N}(z)-\wh{m}_{fc}(z)}.
\eeq
Then, letting 
\beq
Q_{i}\deq -W_{ii}+\sum_{p,q}^{(i)}W_{ip}R_{pq}^{(i)}W_{qi},
\eeq
we get 
\begin{multline}\label{eq:resodiagexpa}
	R_{ii}= \frac{1}{-z-Q_{i}} =\frac{1}{-z-(\wh{m}_{fc}(z)-v_{i})} +\frac{Q_{i}-(\wh{m}_{fc}(z)-v_{i})}{(-z-(\wh{m}_{fc}(z)-v_{i}))^{2}} \\
	+\frac{(Q_{i}-(\wh{m}_{fc}(z)-v_{i})^{2}}{(-z-(\wh{m}_{fc}(z)-v_{i}))^{3}} +\frac{1}{-z-Q_{i}}\bigg(\frac{Q_{i}-(\wh{m}_{fc}(z)-v_{i})}{-z-(\wh{m}_{fc}(z)-v_{i})}\bigg)^{3} \\
	=\wh{g}_{i}(z) +\wh{g}_{i}(z)^{2}(Q_{i}-(\wh{m}_{fc}(z)-v_{i})) 
	+\wh{g}_{i}(z)^{3}(Q_{i}-(\wh{m}_{fc}(z)-v_{i}))^{2} +R_{ii}\bigg(\frac{Q_{i}-(\wh{m}_{fc}(z)-v_{i})}{-z-(\wh{m}_{fc}(z)-v_{i})}\bigg)^{3}.
\end{multline}

By the local law $\absv{R_{ii}-\hat{g}_{i}(z)}=\caO(N^{-1/2})$, 
\beq\label{eq:qcentred}
Q_{i}-(\wh{m}_{fc}(z)-v_{i})=-\frac{1}{R_{ii}}-z-\wh{m}_{fc}(z)+v_{i}=-\frac{1}{R_{ii}}+\frac{1}{\wh{g}_{i}(z)}=\caO(N^{-\frac{1}{2}}),
\eeq
which implies 

\begin{equation}
b_{N}(z)=\sum_{i=1}^{N}\expct{\wh{g}_{i}(z)^{2}(Q_{i}-(\wh{m}_{fc}(z)-v_{i})) +\wh{g}_{i}(z)^{3}(Q_{i}-(\wh{m}_{fc}(z)-v_{i}))^{2}}+O(N^{-\frac{1}{2}+\epsilon})
\end{equation}

\subsection{$\expct{Q_{i}-(\wh{m}_{fc}(z)-v_{i})}$}\label{sec:mean1}
By the definition of $Q_{i}$, 
\begin{equation}
\expct{Q_{i}-\wh{m}_{fc}(z)+v_{i}} =\expct{-W_{ii}+\sum_{p,q}^{(i)}W_{ip}R_{pq}^{(i)}W_{qi}}-\wh{m}_{fc}(z)+v_{i}
=\frac{1}{N}\biggexpct{\sum_{p}^{(i)}R_{pp}^{(i)}}-\wh{m}_{fc}(z)
\end{equation}

On the other hand, \eqref{eq:mat id 3} together with the local law gives 
\begin{multline}
\sum_{p}^{(i)}(R_{pp}^{(i)}-R_{pp}) =-\sum_{p}^{(i)}\frac{R_{pi}R_{ip}}{R_{ii}} =-\sum_{p}^{(i)}\bigg(\wh{g}_{i}(z)^{-1}R_{pi}R_{ip}+\frac{R_{pi}R_{ip}(\wh{g}_{i}(z)-R_{ii})}{R_{ii}\wh{g}_{i}(z)}\bigg) \\
=-\wh{g}_{i}(z)^{-1}\sum_{p}^{(i)}R_{pi}R_{ip} + \caO(N^{-\frac{1}{2}}),
\end{multline}
which in turn implies 
\beq
\sum_{p}^{(i)}R_{pp}^{(i)} =Nm_{N}(z)-R_{ii}-\frac{1}{\wh{g}_{i}(z)}\big((R^{2})_{ii}-R_{ii}^{2}\big)+\caO(N^{-\frac{1}{2}}) =Nm_{N}(z)-\frac{1}{\wh{g}_{i}}(R^{2})_{ii}+\caO(N^{-\frac{1}{2}}),
\eeq
so that
\begin{multline}
\sum_{i}\wh{g}_{i}(z)^{2}\expct{Q_{i}-\wh{m}_{fc}(z)+v_{i}} =\sum_{i}\wh{g}_{i}(z)^{2}\bigg(\biggexpct{\frac{1}{N}\sum_{p}^{(i)}R_{pp}}-\wh{m}_{fc}(z)\bigg) \\
=\frac{1}{N}\sum_{i}\wh{g}_{i}(z)^{2}\biggexpct{Nm_{N}(z)-\frac{1}{\wh{g}_{i}}(R^{2})_{ii}} -\wh{m}_{fc}(z)\sum_{i}\wh{g}_{i}(z)^{2} +O(N^{-\frac{1}{2}+\epsilon}) \\
=b_{N}(z)\frac{1}{N}\Big(\sum_{i}\wh{g}_{i}(z)^{2}\Big)-\frac{1}{N}\Bigexpct{\sum_{i}\wh{g}_{i}(z)(R^{2})_{ii}} +O(N^{-\frac{1}{2}+\epsilon}).
\end{multline}

Considering $(R^{2})_{ii}$, we have 
\beq
R_{ij}=\bilin{e_{i}}{Re_{j}} =\sum_{\alpha}\frac{1}{\lambda_{\alpha}-z}\bilin{e_{i}}{v_{\alpha}}\bilin{v_{\alpha}}{e_{j}}
\eeq
where $v_{\alpha}$ denotes the eigenvector of $W$ corresponding to $\lambda_{\alpha}$. Thus
\begin{multline}
(R^{2})_{ij}=\sum_{k}R_{ik}R_{kj}=\sum_{k}\Big(\sum_{\alpha}\frac{1}{\lambda_{\alpha}-z}\bilin{e_{i}}{v_{\alpha}}\bilin{v_{\alpha}}{e_{k}}\Big)\Big(\sum_{\beta}\frac{1}{\lambda_{\beta}-z}\bilin{e_{k}}{v_{\beta}}\bilin{v_{\beta}}{e_{j}}\Big) \\
=\sum_{\alpha,\beta}\frac{\bilin{e_{i}}{v_{\alpha}}\bilin{v_{\beta}}{e_{j}}}{(\lambda_{\alpha}-z)(\lambda_{\beta}-z)}\sum_{k}\bilin{v_{\alpha}}{e_{k}}\bilin{e_{k}}{v_{\beta}} =\sum_{\alpha,\beta}\frac{\bilin{e_{i}}{v_{\alpha}}\bilin{v_{\beta}}{e_{j}}}{(\lambda_{\alpha}-z)(\lambda_{\beta}-z)}\delta_{\alpha,\beta} \\
=\sum_{\alpha}\frac{\bilin{e_{i}}{v_{\alpha}}\bilin{v_{\alpha}}{e_{j}}}{(\lambda_{\alpha}-z)^{2}} =\frac{\dd}{\dd z}R_{ij}(z).
\end{multline}
Then the local law $\absv{R_{ii}-\wh{g}_{i}}\prec N^{-1/2}$ together with the Cauchy integral formula (applied on a small contour of length $N^{-\delta}$ enclosing $z$) gives 
\beq
\biggabsv{(R^{2})_{ii}-\frac{\dd}{\dd z}\wh{g}_{i}} \prec N^{-\frac{1}{2}+\delta}.
\eeq
Plugging this in, we get
\beq
\frac{1}{N}\expct{\sum_{i}\wh{g}_{i}(R^{2})_{ii}}=\frac{1}{N}\sum_{i}\wh{g}_{i}\frac{\dd}{\dd z}\wh{g}_{i} +O(N^{-\frac{1}{2}+\delta+\epsilon}).
\eeq

On the other hand, 
\begin{multline}
	\frac{1}{N}\sum_{i}\wh{g}_{i}\frac{\dd}{\dd z}\wh{g}_{i} =\frac{1}{2}\frac{\dd}{\dd z}\Big(\frac{1}{N}\sum_{i}\wh{g}_{i}^{2}\Big) =\frac{1}{2}\frac{\dd}{\dd z}\int_{\R}\frac{1}{(x-z-\wh{m}_{fc}(z))^{2}}\dd\wh{\nu}(x) \\
	=\frac{1}{2}\frac{\dd}{\dd z}\Big(\frac{\wh{m}_{fc}'(z)}{1+\wh{m}_{fc}'(z)}\Big) =\frac{1}{2}\frac{\wh{m}_{fc}''(z)}{(1+\wh{m}_{fc}'(z))^{2}}=\frac{1}{2}\frac{m_{fc}''(z)}{(1+m_{fc}'(z))^{2}}+O(\vartheta N^{-\alpha_{0}}).
\end{multline}

Combining the results above, we get
\begin{equation}
	\sum_{i}\expct{\wh{g}_{i}(z)^{2}(Q_{i}-\wh{m}_{fc}(z)+v_{i})} =b_{N}(z)\frac{m_{fc}'(z)}{1+m_{fc}'(z)} -\frac{1}{2}\frac{m_{fc}''(z)}{(1+m_{fc}'(z))^{2}} 
	+O(\vartheta N^{-\alpha_{0}}+N^{-\frac{1}{2}+\delta+\epsilon}).
\end{equation}

\subsection{$\expct{(Q_{i}-(\wh{m}_{fc}(z)-v_{i}))^{2}}$}

\begin{multline}
\expct{(Q_{i}-(\wh{m}_{fc}(z)-v_{i}))^{2}} =\biggexpct{\bigg(-\frac{1}{\sqrt{N}}A_{ii}+\sum_{p,q}^{(i)}W_{ip}R_{pq}^{(i)}W_{qi}-\wh{m}_{fc}(z)\bigg)^{2}} \\
=\wh{m}_{fc}(z)^{2}+\frac{w_{2}}{N}+\biggexpct{\bigg(\sum_{p,q}^{(i)}W_{ip}R_{pq}^{(i)}W_{qi}\bigg)^{2}}-2\wh{m}_{fc}(z)\biggexpct{\sum_{p,q}^{(i)}W_{ip}R_{pq}^{(i)}W_{qi}} \\
=\wh{m}_{fc}^{2}-2\wh{m}_{fc}\expct{m_{N}^{(i)}}+\frac{w_{2}}{N}+\biggexpct{\bigg(\sum_{p,q}^{(i)}W_{ip}R_{pq}^{(i)}W_{qi}\bigg)^{2}}.
\end{multline}

Expanding the last term,
\beq
\biggexpct{\bigg(\sum_{p,q}^{(i)}W_{ip}R_{pq}^{(i)}W_{qi} \bigg)^{2}} =\sum_{p,q,r,t}^{(i)}\expct{W_{ip}W_{qi}W_{ir}W_{ti}}\expct{R_{pq}^{(i)}R_{rt}^{(i)}}
\eeq

As $\expct{W_{ip}W_{qi}W_{ir}W_{ti}}\neq0$ implies $\absv{\{p,q,r,t\}}=1$ or each index is repeated twice, we separate into two cases.
\begin{enumerate}[(i)]
	\item $\absv{\{p,q,r,t\}}=1$.
	\beq
	\sum_{p}^{(i)}\expct{W_{ip}^{4}}\expct{(R_{pp}^{(i)})^{2}} =\frac{W_{4}}{N^{2}} \sum_{p}^{(i)}\expct{(R_{pp}^{(i)})^{2}} =\frac{W_{4}}{N^{2}}\sum_{p}\wh{g}_{p}(z)^{2}+O(N^{-\frac{3}{2}+\epsilon})
	\eeq
	
	\item $\absv{\{p,q,r,t\}}=2$.
	\begin{enumerate}[(a)]
		\item For $p=q\neq r=t$:
		Since
		\beq\label{eq:mr-emg}
		R_{pp}^{(i)}-\wh{g}_{p}(z)=R_{pp}-\wh{g}_{p}(z)+\frac{R_{pi}R_{ip}}{R_{ii}}=\caO(N^{-\frac{1}{2}}),
		\eeq
		we get
		\begin{multline}
		\sum_{p\neq r}^{(i)}\expct{W_{ip}^{2}W_{ir}^{2}}\expct{R_{pp}^{(i)}R_{rr}^{(i)}} =\frac{1}{N^{2}}\expct{\sum_{p\neq r}^{(i)}R_{pp}R_{rr}} \\
		=\frac{1}{N^{2}}\biggexpct{\sum_{p}^{(i)}R_{pp}^{(i)}(Nm_{N}^{(i)}-R_{pp}^{(i)})} =\expct{(m_{N}^{(i)})^{2}}-\frac{1}{N^{2}}\sum_{p}(\wh{g}_{p}(z))^{2}+O(N^{-\frac{3}{2}+\epsilon}).
		\end{multline}
		
		\item For $p=t \neq q=r$:
		\begin{multline}
		\sum_{p\neq r}^{(i)}\expct{W_{ip}^{2}W_{ir}^{2}}\expct{(R_{pr}^{(i)})^{2}} 
		=\frac{1}{N^{2}}\biggexpct{\sum_{p\neq r}^{(i)}(R_{pr}^{(i)})^{2}}
		=\frac{1}{N^{2}}\bigg(\expct{\Tr(R^{(i)})^{2}}-\biggexpct{\sum_{p}^{(i)}(R_{pp}^{(i)})^{2}}\bigg) \\
		=\frac{1}{N}(m_{N}^{(i)})'-\frac{1}{N^{2}}\sum_{p}\wh{g}_{p}(z)^{2}+O(N^{-\frac{3}{2}+\epsilon}) =\frac{1}{N}\wh{m}_{fc}'-\frac{1}{N^{2}}\sum_{p}\wh{g}_{p}(z)^{2}+O(N^{-\frac{3}{2}+\epsilon}),
		\end{multline}
		where we have used \eqref{eq:mr-emg} in the third equality and 
		\beq
		m_{N}^{(i)}-\wh{m}_{fc}=\frac{1}{N}\sum_{p}^{(i)}(R_{pp}^{(i)}-R_{pp})-\frac{1}{N}R_{ii}+(m_{N}-\wh{m}_{fc})=\caO(N^{-1})
		\eeq
		together with the Cauchy integral formula in the last equality.
		
		\item For $p=r \neq q=t$:
		
		By symmetry, the result is same as above.
		
	\end{enumerate}
	Hence we get 
	\begin{multline}
	\biggexpct{\Big(\sum_{p,q}^{(i)}W_{ip}R_{pq}^{(i)}W_{qi}\Big)^{2}} =\frac{W_{4}}{N^{2}}\sum_{p}\wh{g}_{p}^{2} +\frac{2}{N}\wh{m}_{fc}' -\frac{3}{N^{2}}\sum_{p}\wh{g}_{p}^{2}+\expct{(m_{N}^{(i)})^{2}} +O(N^{-\frac{3}{2}+\epsilon}) \\
	=\expct{(m_{N}^{(i)})^{2}} +\frac{W_{4}-3}{N^{2}}\sum_{p}\wh{g}_{p}^{2} +\frac{2}{N}\wh{m}_{fc}' +O(N^{-\frac{3}{2}+\epsilon})
	\end{multline}
\end{enumerate}

\begin{remark}
	For complex Hermitian $A$ where we assume the independence of the real and the complex entries and $\expct{A_{ij}^{2}}=0$ for $i\neq j$, (ii)-(c) leads to $0$.
\end{remark}

Summing up, we get from $\absv{m_{N}^{(i)}-\wh{m}_{fc}}\prec N^{-1}$
\begin{multline}
	\expct{(Q_{i}-\wh{m}_{fc}(z)+v_{i})^{2}}
	=\wh{m}_{fc}^{2}-2\wh{m}_{fc}\expct{m_{N}^{(i)}}+\frac{w_{2}}{N}+\expct{(m_{N}^{(i)})^{2}} +\frac{W_{4}-3}{N^{2}}\sum_{p}\wh{g}_{p}^{2} +\frac{2}{N}\wh{m}_{fc}' +O(N^{-\frac{3}{2}+\epsilon}) \\
	=\frac{1}{N}(w_{2}+2\wh{m}_{fc}')+\frac{W_{4}-3}{N^{2}}\sum_{p}\wh{g}_{p}^{2}+O(N^{-\frac{3}{2}+\epsilon}).
\end{multline}

Therefore
\beq
\sum_{i}\wh{g}_{i}^{3}\expct{(Q_{i}-\wh{m}_{fc}+v_{i})^{2}} =\Big(\frac{1}{N}\sum_{i}\wh{g}_{i}^{3}\Big)(w_{2}+2\wh{m}_{fc}')+(W_{4}-3)\Big(\frac{1}{N}\sum_{i}\wh{g}_{i}^{3}\Big)\Big(\frac{1}{N}\sum_{p}\wh{g}_{p}^{2}\Big).
\eeq

On the other hand, {Corollary \ref{cor:covwh}} implies
\beq\label{eq:g^2sum}
\frac{1}{N}\sum_{i}\wh{g}_{i}^{2}=\int_{\R}\frac{1}{(x-z-\wh{m}_{fc}(z))^{2}}\dd\wh{\nu}(x) =\frac{\wh{m}_{fc}'(z)}{1+\wh{m}_{fc}'(z)} =\frac{m_{fc}'(z)}{1+m_{fc}'(z)}+O(\vartheta N^{-\alpha_{0}}).
\eeq
Similarly, differentiating \eqref{eq:g^2sum} we have
\beq
\frac{1}{N}\sum_{i}\wh{g}_{i}^{3} =\int_{\R}\frac{1}{(x-z-\wh{m}_{fc}(z))^{3}}\dd\wh{\nu}(x) = \frac{1}{2}\frac{\wh{m}_{fc}''(z)}{(1+\wh{m}_{fc}'(z))^{3}} =\frac{1}{2}\frac{m_{fc}''(z)}{(1+m_{fc}'(z))^{3}} +O(\vartheta N^{-\alpha_{0}}).
\eeq

\subsection{The mean $b(z)$}\label{sec:mean3}
Summing up the results, we get
\begin{multline}
b_{N}(z) =\sum_{i=1}^{N}\expct{\wh{g}_{i}(z)^{2}(Q_{i}-(\wh{m}_{fc}(z)-v_{i})) +\wh{g}_{i}(z)^{3}(Q_{i}-(\wh{m}_{fc}(z)-v_{i}))^{2}} +O(N^{-\frac{1}{2}+\epsilon}) \\
=b_{N}(z)\frac{m_{fc}'(z)}{1+m_{fc}'(z)} -\frac{1}{2}\frac{m_{fc}''(z)}{(1+m_{fc}'(z))^{2}}+\frac{1}{2}\frac{m_{fc}''(z)}{(1+m_{fc}'(z))^{3}}(w_{2}+2m_{fc}'(z)) \\
+\frac{W_{4}-3}{2}\frac{m_{fc}'(z)}{1+m_{fc}'(z)}\frac{m_{fc}''(z)}{(1+m_{fc}'(z))^{3}} +O(\vartheta N^{-\alpha_{0}}+N^{-\frac{1}{2}+\epsilon}),
\end{multline}
hence
\begin{multline}
b_{N}(z) = -\frac{1}{2}\frac{m_{fc}''(z)}{(1+m_{fc}'(z))}+\frac{1}{2}\frac{m_{fc}''(z)}{(1+m_{fc}'(z))^{2}}(w_{2}+2m_{fc}'(z)) \\
+\frac{W_{4}-3}{2}\frac{m_{fc}'(z)m_{fc}''(z)}{(1+m_{fc}'(z))^{3}} +O(\vartheta N^{-\alpha_{0}}+N^{-\frac{1}{2}+\epsilon}) \\
= \frac{1}{2}\frac{m_{fc}''(z)}{(1+m_{fc}'(z))^{2}} \bigg[(w_{2}-1)+m_{fc}'(z)+(W_{4}-3)\frac{m_{fc}'(z)}{1+m_{fc}'(z)}\bigg] +O(\vartheta N^{-\alpha_{0}}+N^{-\frac{1}{2}+\epsilon}).
\end{multline}

\subsubsection{$\vartheta_{\infty}>0$}
Now retrieving the dependence on $\vartheta$, we have
\begin{multline}
	b_{N}^{\vartheta}(z)= \frac{1}{2}\frac{(m_{fc}^{\vartheta})''(z)}{(1+(m_{fc}^{\vartheta})'(z))^{2}} \bigg[(w_{2}-1)+(m_{fc}^{\vartheta})'(z)+(W_{4}-3)\frac{(m_{fc}^{\vartheta})'(z)}{1+(m_{fc}^{\vartheta})'(z)}\bigg] \\
	+O(\vartheta N^{-\alpha_{0}}+N^{-\frac{1}{2}+\epsilon}).
\end{multline}

Using {Lemma \ref{lem:mfc-msc}}, it suffices to prove that $1+\frac{\dd}{\dd z}m_{fc}^{\vartheta}(z)$ is lower bounded, uniformly in $\vartheta$. Recalling the self-consistent equation \eqref{eq:funceq}, we write
\beq
\frac{\dd}{\dd z}m_{fc}^{\vartheta}(z)=\int_{\R}\frac{1+\frac{\dd}{\dd z}m_{fc}^{\vartheta}(z)}{(\vartheta x-z-m_{fc}^{\vartheta}(z))^{2}}\dd\nu(x),
\eeq
so that
\beq
\biggabsv{1+\frac{\dd}{\dd z}m_{fc}^{\vartheta}(z)}=\biggabsv{\bigg(1-\int_{\R}\frac{1}{(\vartheta x-z-m_{fc}^{\vartheta}(z))^{2}}\dd\nu(x)\bigg)^{-1}}.
\eeq
Then \eqref{eq:1-s1s2} implies the required lower bound, which together with several applications of Cauchy integral proves that
\beq
b_{N}^{\vartheta}(z) =b^{\vartheta_{\infty}}(z) +O(\absv{\vartheta-\vartheta_{\infty}}+N^{-\alpha_{0}}+N^{-\frac{1}{2}+\epsilon}),
\eeq
where
\beq
b^{\vartheta_{\infty}}(z)\deq \frac{(m_{fc}^{\vartheta_{\infty}})''(z)}{2(1+(m_{fc}^{\vartheta_{\infty}})'(z))^{2}}\bigg[(w_{2}-1)+(m_{fc}^{\vartheta_{\infty}})'(z)+(W_{4}-3)\frac{(m_{fc}^{\vartheta_{\infty}})'(z)}{1+(m_{fc}^{\vartheta_{\infty}})'(z)}\bigg].
\eeq

\begin{remark}
	If we consider $V=0$, then we have $\rho_{fc}=\wh{\rho}_{fc}=\rho_{sc}$ and $m_{fc}=\wh{m}_{fc}=\wh{g}_{i}=m_{sc}$. In this case, the self-consistent equation $z+m_{sc}(z)=-\frac{1}{m_{sc}(z)}$ gives
	\beq\label{eq:s^2}
	m_{sc}(z)^{2} =\frac{1}{(z+m_{sc}(z))^{2}} =\frac{m_{sc}'(z)}{1+m_{sc}'(z)}
	\eeq
	and similarly differentiating the second equality in \eqref{eq:s^2} implies
	\beq
	m_{sc}(z)^{3} = \frac{1}{2}\frac{m_{sc}''(z)}{(1+m_{sc}'(z))^{3}}
	\eeq
	so that
	\beq
	b_{N}(z) \to m_{sc}(z)^{3}(1+m_{sc}'(z))((w_{2}-1)+m_{sc}'(z)+(W_{4}-3)m_{sc}(z)^{2}),
	\eeq
	which is given in Proposition 3.1 of \cite{Bai-Yao2005}.
\end{remark}

\subsubsection{$\vartheta=o(1)$}
To handle with the case $\vartheta=o(1)$, we rewrite the result above with the superscript $\vartheta$ recaptured:
\begin{multline}
b_{N}^{\vartheta}(z) = \frac{1}{2} \frac{(m_{fc}^{\vartheta})''(z)}{(1+(m_{fc}^{\vartheta})'(z))^{2}}\bigg[(w_{2}-1)+(m_{fc}^{\vartheta})'(z)+(W_{4}-3)\frac{(m_{fc}^{\vartheta})'(z)}{1+(m_{fc}^{\vartheta})'(z)}\bigg] \\
+O(\vartheta N^{-\alpha_{0}}+N^{-\frac{1}{2}+\epsilon}),
\end{multline}
uniformly for $z\in\Gamma\setminus\Gamma_{0}$.

Recalling {Lemma \ref{lem:mfc-msc}}, again the Cauchy integral formula gives the same bound for $\absv{(m_{fc}^{\vartheta})'-m_{sc}'}$ and $\absv{(m_{fc}^{\vartheta})''-m_{sc}''}$. Then from the stability bound \eqref{eq:1-s1s2} gives the uniform convergence
\begin{multline}
b_{N}^{\vartheta}(z) = \frac{1}{2} \frac{m_{sc}''(z)}{(1+m_{sc}'(z))^{2}}\bigg[(w_{2}-1)+m_{sc}'(z)+(W_{4}-3)\frac{m_{sc}'(z)}{1+m_{sc}'(z)}\bigg] +O(\vartheta+N^{-\frac{1}{2}+\epsilon}) \\
=m_{sc}(z)^{3}(1+m_{sc}'(z))((w_{2}-1)+m'_{sc}(z)+(W_{4}-3)m_{sc}(z)^{2})+O(\vartheta+N^{-\frac{1}{2}+\epsilon}),
\end{multline}
by the preceding remark.

\section{Covariance Function}\label{sec:covariance}
As indicated in {Section \ref{sec:Gaussianprocprf}}, this section is devoted to the proof of {Lemma \ref{lem:martclt11}} and {\ref{lem:martclt22}}. Similar to the preceding section, we simplify the covariance in {Sections \ref{sec:martdecop}--\ref{sec:reduccov}} without tracking the superscript $\vartheta$, as the bounds used along the simplification is independent of $\vartheta$. Also, throughout this section, we assume $z\in\Gamma_{u}\cup\Gamma_{d}$.

\subsection{Martingale decomposition}\label{sec:martdecop}
The matrix identities \eqref{eq:mat id 3} implies
\begin{multline}
\Tr R -\Tr R^{(k)} =R_{kk}+\sum_{i}^{(k)}\frac{R_{ik}R_{ki}}{R_{kk}} =R_{kk}+\sum_{i}^{(k)}R_{kk}\cdot\frac{R_{ik}}{R_{kk}}\cdot\frac{R_{ki}}{R_{kk}} \\
=R_{kk}\Big(1+\sum_{i}^{(k)}\Big(-\sum_{p}^{(i)}W_{kp}R_{pi}^{(k)}\Big)^{2}\Big) =R_{kk}\Big(1+\sum_{i}^{(k)}\sum_{p,q}^{(i)}W_{kp}R_{pi}^{(k)}W_{qk}R_{iq}^{(k)}\Big),
\end{multline}
which can be further rewritten as
\begin{equation}
=R_{kk}\Big(1+\sum_{p,q}^{(k)}W_{kp}W_{qk}\Big(\sum_{i}^{(k)}R_{pi}^{(k)}R_{iq}^{(k)}\Big)\Big) =R_{kk}\Big(1+\sum_{p,q}^{(k)}W_{kp}(R^{(k)})^{2}_{pq}W_{qk}\Big),
\end{equation}
so that 
\beq\label{eq:martdecomp}
\zeta_{N} =\sum_{k=1}^{N}(\E_{k-1}-\E_{k})\Big(R_{kk}\Big(1+\sum_{p,q}^{(k)}W_{kp}(R^{(k)})^{2}_{pq}W_{qk}\Big)\Big).
\eeq

Recalling the expansion \eqref{eq:resodiagexpa}, we take
\beq
X_{k} \deq(\wh{g}_{k}+\wh{g}_{k}^{2}(Q_{k}-\wh{m}_{fc}+v_{k})) \Big(1+\sum_{p,q}^{(k)}W_{kp}(R^{(k)})^{2}_{pq}W_{qk}\Big)
\eeq
and
\beq
Y_{k} \deq R_{kk}\Big(\frac{Q_{k}-\wh{m}_{fc}+v_{k}}{-z-\wh{m}_{fc}+v_{k}}\Big)^{2}\Big(1+\sum_{p,q}^{(k)}W_{kp}(R^{(k)})^{2}_{pq}W_{qk}\Big),
\eeq
so that $X_{k}+Y_{k}=\Tr R-\Tr R^{(k)}$. Using {Lemma \ref{lem:lde}}, we have 
\beq
\sum_{p,q}^{(k)}W_{kp}(R^{(k)})^{2}_{pq}W_{qk}=\caO(1),
\eeq
so that $Y_{k}=\caO(N^{-1})$.

On the other hand, we observe that for $k>l$,
\beq
\expct{(\E_{k-1}-\E_{k})Y_{k}\cdot(\E_{l-1}-\E_{l})\overline{Y}_{l}} =\expct{(\E_{k-1}-\E_{k})Y_{k}\cdot\expctk{(\E_{l-1}-\E_{l})\overline{Y}_{l}}{k-1}}=0,
\eeq
which in turn gives
\beq\label{eq:orthogonal}
\biggexpct{\biggabsv{\sum_{k=1}^{N}(\E_{k-1}-\E_{k})Y_{k}}^{2}} =\biggexpct{\sum_{k=1}^{N}\biggabsv{(\E_{k-1}-\E_{k})Y_{k}}^{2}} =O(N^{-1+2\epsilon}).
\eeq
Hence a typical application of Markov inequality implies
\begin{multline}\label{eq:cov}
\zeta_{N} =\sum_{k=1}^{N}(\E_{k-1}-\E_{k})X_{k}+\caO_{p}(N^{-\frac{1}{2}}) =\sum_{k=1}^{N}\wh{g}_{k}(\E_{k-1}-\E_{k})\Big[1+\sum_{p,q}^{(k)}W_{kp}(R^{(k)})^{2}_{pq}W_{qk}\Big] \\
+\sum_{k=1}^{N}\wh{g}_{k}^{2}(\E_{k-1}-\E_{k})\Big[(Q_{k}-\wh{m}_{fc}+v_{k})\Big(1+\sum_{p,q}^{(k)}W_{kp}(R^{(k)})^{2}_{pq}W_{qk}\Big)\Big] +\caO_{p}(N^{-\frac{1}{2}}).
\end{multline}
where $\caO_{p}(N^{-\frac{1}{2}})$ stands for the terms bounded by $N^{-1/2+\epsilon}$ in probability.

\subsubsection{The first term}
We consider the first term of \eqref{eq:cov}:
\begin{multline}
\biggexpctk{\sum_{p,q}^{(k)}W_{kp}(R^{(k)})^{2}_{pq}W_{qk}}{k} =\sum_{p}^{(k)}\frac{1}{N}\expctk{(R^{(k)})^{2}_{pp}}{k} \\ =\biggexpctk{\frac{1}{N}\sum_{p}^{(k)}(R^{(k)})^{2}_{pp}}{k-1} =\expctk{(m_{N}^{(k)})'}{k-1} =\wh{m}_{fc}'+\caO(N^{-1}).
\end{multline} 
Hence the first term is given by
\beq
(\E_{k-1}-\E_{k})\Big[\sum_{p,q}^{(k)}W_{kp}(R^{(k)})^{2}_{pq}W_{qk}\Big] =\biggexpctk{\sum_{p,q}^{(k)}W_{kp}(R^{(k)})^{2}_{pq}W_{qk}-\wh{m}_{fc}'}{k-1}+\caO(N^{-1}).
\eeq

\subsubsection{The second term}
To calculate the second term, we first observe that the {Lemma \ref{lem:lde}} and \eqref{eq:qcentred} imply
\beq
(Q_{k}-\wh{m}_{fc}+v_{k})\bigg(\sum_{p,q}^{(k)}W_{kp}(R^{(k)})^{2}_{pq}W_{qk}-\frac{1}{N}\sum_{p}^{(k)}(R^{(k)})^{2}_{pp}\bigg) \prec N^{-1}.
\eeq

Therefore
\begin{multline}
(\E_{k-1}-\E_{k})\Big[(Q_{k}-\wh{m}_{fc}+v_{k})\Big(1+\sum_{p,q}^{(k)}W_{kp}(R^{(k)})^{2}_{pq}W_{qk}\Big)\Big] \\ =(\E_{k-1}-\E_{k})\bigg[(Q_{k}-\wh{m}_{fc}+v_{k})\bigg(1+\frac{1}{N}\sum_{p}^{(k)}(R^{(k)})^{2}_{pp}\bigg)\bigg] +\caO(N^{-1})\\
=(\E_{k-1}-\E_{k})[(Q_{k}-\wh{m}_{fc}+v_{k})](1+\wh{m}_{fc}') +\caO(N^{-1}).
\end{multline}

As above, we first reduce the term concerning $\E_{k}$:
\begin{multline}
	\expctk{Q_{k}-\wh{m}_{fc}+v_{k}}{k} =\biggexpctk{-W_{kk}+\sum_{p,q}^{(k)}W_{kp}R_{pq}^{(k)}W_{qk}-\wh{m}_{fc}+v_{k}}{k}
	=\biggexpctk{\sum_{p,q}^{(k)}W_{kp}R_{pq}^{(k)}W_{qk}-\wh{m}_{fc}}{k}\\ 
	=\biggexpctk{\frac{1}{N}\sum_{p}^{(k)}R^{(k)}_{pp}-\wh{m}_{fc}}{k} 
	=\expctk{ m_{N}^{(k)}-\wh{m}_{fc}}{k}=\caO(N^{-1}).
\end{multline}
Hence the second term is given by
\begin{multline}
	(\E_{k-1}-\E_{k})\Big[(Q_{k}-\wh{m}_{fc}+v_{k})\Big(1+\sum_{p,q}^{(k)}W_{kp}(R^{(k)})^{2}_{pq}W_{qk}\Big)\Big]
	=\expctk{Q_{k}-\wh{m}_{fc}+v_{k}}{k-1}(1+\wh{m}_{fc}')+\caO(N^{-1}) \\
	=\biggexpctk{-W_{kk}+v_{k}+\sum_{p,q}^{(k)}W_{kp}R^{(k)}_{pq}W_{qk}-\wh{m}_{fc}}{k-1}(1+\wh{m}_{fc}')+\caO(N^{-1}).
\end{multline}

\subsection{Simplification}
Combining the results above and using the argument of \eqref{eq:orthogonal}, we get
\beq
\zeta_{N}=\sum_{k=1}^{N}\expctk{\phi_{k}}{k-1}+\caO_{p}(N^{-\frac{1}{2}})
\eeq
where
\begin{equation}
\phi_{k}=\wh{g}_{k}\Big(\sum_{p,q}^{(k)}W_{kp}(R^{(k)})^{2}_{pq}W_{qk}-\wh{m}_{fc}'\Big) 
+(1+\wh{m}_{fc}')\wh{g}_{k}^{2}\Big(-W_{kk}+v_{k}+\sum_{p,q}^{(k)}W_{kp}R^{(k)}_{pq}W_{qk}-\wh{m}_{fc}\Big).
\end{equation}

Using the identities $\frac{\dd}{\dd z}R_{p,q}^{(k)}=(R^{(k)})^{2}_{pq}$ and $\wh{g}_{k}'=(1+\wh{m}_{fc}')\wh{g}_{k}^{2}$, we get
\beq
\phi_{k}=\frac{\dd}{\dd z}\Big[\wh{g}_{k}\Big(-W_{kk}+v_{k}+\sum_{p,q}^{(k)}W_{kp}R^{(k)}_{pq}W_{qk}-\wh{m}_{fc}\Big)\Big].
\eeq

Since
\beq
-W_{kk}+v_{k}+\sum_{p,q}^{(k)}W_{kp}R_{pq}^{(k)}W_{qk}-\wh{m}_{fc}=Q_{k}+v_{k}-\wh{m}_{fc} =\caO(N^{-\frac{1}{2}})
\eeq
and $\wh{g}_{k}(z)\sim 1$ as $z\in\Gamma_{u}\cup\Gamma_{d}$, we have $\phi_{k}\prec N^{-1/2}$.

\subsection{Covariance}
Let $z_{1},\cdots,z_{p}\in\Gamma_{u}$ be distinct points. (Note that by symmetry $\xi_{N}(\overline{z})=\overline{\xi_{N}(z)}$, it suffices to consider points in $\Gamma_{u}$.) Using the martingale convergence theorem, we prove that the distribution of random vector $(\xi_{N}(z_{1}),\cdots,\xi_{N}(z_{p}))$ converges weakly to the $p$-dimensional centered Gaussian of covariance matrix given in {Proposition \ref{prop:Gproc}}.

For distinct points $z_{1},z_{2}\in\Gamma_{u}$, we let
\beq
\Gamma_{N}(z_{1},z_{2})=\sum_{k=1}^{N}\expctk{\expctk{\phi_{k}(z_{1})}{k-1}\cdot\expctk{\phi_{k}(z_{2})}{k-1}}{k}
\eeq
and
\begin{multline}\label{eq:covantider}
\wt{\Gamma}_{N}(z_{1},z_{2}) =\sum_{k=1}^{N}\wh{g}_{k}(z_{1})\wh{g}_{k}(z_{2})\E_{k}\bigg[\biggexpctk{-W_{kk}+v_{k}+\sum_{p,q}^{(k)}W_{kp}R^{(k)}_{pq}(z_{1})W_{qk}-\wh{m}_{fc}(z_{1})}{k-1} \\
\cdot\biggexpctk{-W_{kk}+v_{k}+\sum_{p,q}^{(k)}W_{kp}R_{pq}^{(k)}(z_{2})W_{qk}-\wh{m}_{fc}(z_{2})}{k-1} \Big]
\end{multline}
so that
\beq
\Gamma_{N}(z_{1},z_{2})=\frac{\partial^{2}}{\partial z_{1} \partial z_{2}}\wt{\Gamma}_{N}(z_{1},z_{2}).
\eeq

\subsection{Reduction of $\wt{\Gamma}_{N}$}\label{sec:reduccov}
For simplicity, we define
\beq
S_{k}(z) \deq \sum_{p,q}^{(k)}W_{kp}R_{pq}^{(k)}(z)W_{qk}\quad\text{ and }\quad T_{k}:=-W_{kk}+v_{k}.
\eeq
Then each summand of \eqref{eq:covantider} is 
\beq
\wh{g}_{k}(z_{1})\wh{g}_{k}(z_{2})\expctk{\expctk{T_{k}+S_{k}(z_{1})-\wh{m}_{fc}(z_{1})}{k-1}\cdot\expctk{T_{k}+S_{k}(z_{2})-\wh{m}_{fc}(z_{2})}}{k}.
\eeq

Using {Lemma \ref{lem:lde}}, we get
\beq
\absv{S_{k}(z)-\wh{m}_{fc}(z)} \leq\biggabsv{\sum_{p,q}^{(k)}W_{kp}R^{(k)}_{pq}(z)W_{qk}-\frac{1}{N}\sum_{p}^{(k)}R^{(k)}_{pp}(z)}+\absv{\wh{m}_{fc}(z)-m_{N}^{(k)}(z)}\prec N^{-\frac{1}{2}}.
\eeq
On the other hand, we have
\begin{align}
& \expctk{\expctk{T_{k}}{k-1}\expctk{T_{k}}{k-1}}{k}=\frac{1}{N}\expctk{A_{kk}^{2}}{k}=\frac{w_{2}}{N}, \\
& \expctk{\expctk{T_{k}}{k-1}\expctk{S_{k}(z)-\wh{m}_{fc}(z)}{k-1}}{k} =\expctk{T_{k}(S_{k}(z)-\wh{m}_{fc}(z))}{k} =0, \\
& \expctk{\expctk{S_{k}(z_{1})}{k-1}\expctk{\wh{m}_{fc}(z_{2})}{k-1}}{k} =\wh{m}_{fc}(z_{2})\expctk{S_{k}(z_{1})}{k} =\wh{m}_{fc}(z_{2})\expctk{m_{N}^{(k)}(z_{1})}{k}.
\end{align}
Hence
\begin{multline}
\expctk{\expctk{T_{k}+S_{k}(z_{1})-\wh{m}_{fc}(z_{1})}{k-1} \expctk{T_{k}+S_{k}(z_{2})-\wh{m}_{fc}(z_{2})}{k-1}}{k} \\ =\frac{w_{2}}{N} +\expctk{\expctk{S_{k}(z_{1})}{k-1}\expctk{S_{k}(z_{2})}{k-1}}{k} -\wh{m}_{fc}(z_{1})\expctk{m_{N}^{(k)}(z_{2})}{k}\\
 -\wh{m}_{fc}(z_{2})\expctk{m_{N}^{(k)}(z_{1})}{k} +\wh{m}_{fc}(z_{1})\wh{m}_{fc}(z_{2}) \\
=\frac{w_{2}}{N} +\expctk{\expctk{S_{k}(z_{1})}{k-1}\expctk{S_{k}(z_{2})}{k-1}}{k} -\expctk{\expctk{m_{N}^{(k)}(z_{1})}{k-1}\expctk{m_{N}^{(k)}(z_{2})}{k-1}}{k} \\ +\expctk{\expctk{m_{N}^{(k)}(z_{1})-\wh{m}_{fc}(z_{1})}{k-1}\expctk{m_{N}^{(k)}(z_{2})-\wh{m}_{fc}(z_{2})}{k-1}}{k}.
\end{multline}

\subsubsection{$\expctk{\expctk{S_{k}(z_{1})}{k-1}\expctk{S_{k}(z_{2})}{k-1}}{k}$}
By the definition of $S_{k}$, we get
\begin{equation}
\expctk{\expctk{S_{k}(z_{1})}{k-1}\expctk{S_{k}(z_{2})}{k-1}}{k} \\
=\sum_{p,q,r,t}^{(k)}\expctk{\expctk{W_{kp}R^{(k)}_{pq}(z_{1})W_{qk}}{k-1} \expctk{W_{kr}R^{(k)}_{rt}(z_{2})W_{tk}}{k-1}}{k}.
\end{equation}
Note that
\beq
\expctk{W_{kp}R^{(k)}_{pq}W_{qk}}{k-1}
=\left\{
\begin{array}{lc}
	W_{kp}W_{qk}\expctk{R^{(k)}_{pq}}{k-1} &\text{ if }p,q>k, \\
	W_{kp}\expct{W_{qk}}\expctk{R^{(k)}_{pq}}{k-1}=0 & \text{ if }p>k,q<k, \\
	\expct{W_{kp}}W_{qk}\expctk{R^{(k)}_{pq}}{k-1}=0 & \text{ if }p<k, q>k, \\
	\expct{W_{kp}W_{qk}}\expctk{R^{(k)}_{pq}}{k-1} & \text{ if }p,q<k,
\end{array}
\right.
\eeq
Thus
\begin{multline}
\expctk{\expctk{W_{kp}R^{(k)}_{pq}(z_{1})W_{qk}}{k-1} \expctk{W_{kr}R^{(k)}_{rt}(z_{2})W_{tk}}{k-1}}{k} \\
=\left\{
\begin{array}{ll}
\expct{W_{kp}W_{qk}W_{kr}W_{tk}}\expctk{\expctk{R^{(k)}_{pq}(z_{1})}{k-1}\expctk{R^{(k)}_{rt}(z_{2})}{k-1}}{k} & \text{ if }p,q,r,t>k, \\
0 & \begin{array}{c}\text{ if }(p-k)(q-k)<0 \\ \text{ or }(r-k)(t-k)<0,\end{array} \\
\expct{W_{kp}W_{qk}}\expct{W_{kr}W_{tk}}\expctk{\expctk{R^{(k)}_{pq}(z_{1})}{k-1}\expctk{R^{(k)}_{rt}(z_{2})}{k-1}}{k} & \text{otherwise}.
\end{array}
\right.
\end{multline}
Using the result above, we deduce that each summand vanishes if there exists an index repeated only once.
Hence, we divide the sum into following:
\begin{enumerate}[(i)]
	\item If $p=q=r=t$, 
	\begin{multline}
	\sum_{p}^{(k)}\expctk{\expctk{W_{kp}R^{(k)}_{pp}(z_{1})W_{pk}}{k-1} \expctk{W_{kp}R^{(k)}_{pp}(z_{2})W_{pk}}{k-1}}{k} \\
	=\sum_{p>k}\frac{W_{4}}{N^{2}}\expctk{\expctk{R_{pp}^{(k)}(z_{1})}{k-1}\expctk{R_{pp}^{(k)}(z_{2})}{k-1}}{k}
	+\sum_{p<k}\frac{1}{N^{2}}\expctk{\expctk{R_{pp}^{(k)}(z_{1})}{k-1}\expctk{R_{pp}^{(k)}(z_{2})}{k-1}}{k} \\
	=\sum_{p>k}\frac{W_{4}}{N^{2}}\wh{g}_{p}(z_{1})\wh{g}_{p}(z_{2})+\sum_{p<k}\frac{1}{N^{2}}\wh{g}_{p}(z_{1})\wh{g}_{p}(z_{2})+\caO(N^{-\frac{3}{2}}).
	\end{multline}
	
	\item If $p=q\neq r=t$,
	\begin{multline}
	\frac{1}{N^{2}}\sum_{p\neq r}^{(k)}\expctk{\expctk{R^{(k)}_{pp}(z_{1})}{k-1}\expctk{R^{(k)}_{rr}(z_{2})}{k-1}}{k} \\
	=\expctk{\expctk{m_{N}^{(k)}(z_{1})}{k-1}\expctk{m_{N}^{(k)}(z_{2})}{k-1}}{k}- 
	\frac{1}{N^{2}}\sum_{p}\expctk{\expctk{R_{pp}^{(k)}(z_{1})}{k-1}\expctk{R_{pp}^{(k)}(z_{2})}{k-1}}{k} \\
	=\expctk{\expctk{m_{N}^{(k)}(z_{1})}{k-1}\expctk{m_{N}^{(k)}(z_{2})}{k-1}}{k}-\frac{1}{N^{2}}\sum_{p}\wh{g}_{p}(z_{1})\wh{g}_{p}(z_{2})+\caO(N^{-2}).
	\end{multline}
	
	\item If $p=t\neq q=r$,
	\begin{multline}
	\sum_{p\neq q}^{(k)} \expctk{\expctk{W_{kp}R_{pq}^{(k)}(z_{1})W_{qk}}{k-1} \expctk{W_{kq}R_{qp}^{(k)}(z_{2})W_{pk}}{k-1}}{k} \\
	=\frac{1}{N^{2}}\sum_{ p\neq q, p, q>k} \expctk{\expctk{R_{pq}^{(k)}(z_{1})}{k-1} \expctk{R_{qp}^{(k)}(z_{2})}{k-1}}{k} =: Z_{k}.
	\end{multline}
	We again use \eqref{eq:mat id 3} to expand $Z_{k}$ as
	\begin{equation}
	\frac{1}{N^{2}}\sum_{p\neq q, p, q>k}\E_{k}\bigg[\biggexpctk{R_{pp}^{(k)}(z_{1})\sum_{a}^{(k,p)}W_{pa}R_{aq}^{(k,p)}(z_{1})}{k-1}
	\cdot\biggexpctk{R_{pp}^{(k)}(z_{2})\sum_{b}^{(k,p)}R_{qb}^{(k,p)}(z_{2})W_{bp}}{k-1}\bigg].
	\end{equation}
	As $R_{pp}^{(k)}(z)=\wh{g}_{p}(z)+\caO(N^{-1/2})$ and $R^{(k)}_{pq}(z)=\caO(N^{-1/2})$, we get
	\begin{multline}
	Z_{k} =\frac{1}{N^{2}}\sum_{p\neq q, p, q>k}\Big( \wh{g}_{p}(z_{1})\wh{g}_{p}(z_{2}) \\
	\sum_{a,b}^{(k,p)}\expctk{\expctk{W_{pa}R_{aq}^{(k,p)}(z_{1})}{k-1}\expctk{R_{qb}^{(k,p)}(z_{2})W_{bp}}{k-1}}{k}+\caO(N^{-\frac{3}{2}})\Big) \\
	=\frac{1}{N^{3}}\sum_{p\neq q, p, q>k}\wh{g}_{p}(z_{1})\wh{g}_{p}(z_{2})\sum_{a>k}^{(p)}\expctk{\expctk{R^{(k,p)}_{aq}(z_{1})}{k-1}\expctk{R^{(k,p)}_{qa}(z_{2})}{k-1}}{k} +\caO(N^{-\frac{3}{2}}) \\
	=\frac{1}{N^{3}}\sum_{p\neq q, p, q>k}\wh{g}_{p}(z_{1})\wh{g}_{p}(z_{2})\sum_{a>k}\expctk{\expctk{R^{(k)}_{aq}(z_{1})}{k-1}\expctk{R^{(k)}_{qa}(z_{2})}{k-1}}{k} +\caO(N^{-\frac{3}{2}}) \\
	=\frac{1}{N^{3}}\sum_{q,a>k}\expctk{\expctk{R^{(k)}_{aq}(z_{1})}{k-1}\expctk{R^{(k)}_{qa}(z_{2})}{k-1}}{k}\Big(\sum_{p>k}^{(q)}\wh{g}_{p}(z_{1})\wh{g}_{p}(z_{2})\Big) +\caO(N^{-\frac{3}{2}}) \\
	=\frac{1}{N^{3}}\Big(\sum_{p>k}\wh{g}_{p}(z_{1})\wh{g}_{p}(z_{2})\Big)\sum_{q,a>k}\expctk{\expctk{R^{(k)}_{aq}(z_{1})}{k-1}\expctk{R^{(k)}_{qa}(z_{2})}{k-1}}{k} +\caO(N^{-\frac{3}{2}}) \\
	=\frac{1}{N}\Big(\sum_{p>k}\wh{g}_{p}(z_{1})\wh{g}_{p}(z_{2})\Big)\Big(Z_{k}+\frac{1}{N^{2}}\sum_{q>k}\wh{g}_{q}(z_{1})\wh{g}_{q}(z_{2})\Big)+\caO(N^{-\frac{3}{2}}).
	\end{multline}
	Therefore by denoting $\frac{1}{N}\sum_{p>k}\wh{g}_{p}(z_{1})\wh{g}_{p}(z_{2})=\wh{I}_{k}$, we get
	\beq
	(1-\wh{I}_{k})Z_{k} =\frac{1}{N}\wh{I}_{k}^{2}+\caO(N^{-\frac{3}{2}}). 
	\eeq
	
	Also {Lemma \ref{cor:path}} below implies $(1-\wh{I}_{k})^{-1}=O(1)$, hence
	\beq
	Z_{k}=\frac{1}{N}\frac{\wh{I}_{k}^{2}}{1-\wh{I}_{k}}+\caO(N^{-\frac{3}{2}}).
	\eeq
	
	\item\label{item:compvavar} If $p=r\neq q=t$, the symmetry gives 
	\beq
	\sum_{p\neq q}^{(k)}\expctk{\expctk{W_{kp}R^{(k)}_{pq}(z_{1})W_{qk}}{k-1}\expctk{W_{kp}R^{(k)}_{pq}(z_{2})W_{qk}}{k-1}}{k} =Z_{k}.
	\eeq
	\begin{remark}
		If $W$ is complex, then \eqref{item:compvavar} vanishes.
	\end{remark}
\end{enumerate}
Adding up, we get
\begin{multline}
\expctk{\expctk{T_{k}+S_{k}(z_{1})-\wh{m}_{fc}(z_{1})}{k-1} \expctk{T_{k}+S_{k}(z_{2})-\wh{m}_{fc}(z_{2})}{k-1}}{k} \\
=\frac{w_{2}}{N}+\frac{W_{4}}{N^{2}}\sum_{p>k}\wh{g}_{p}(z_{1})\wh{g}_{p}(z_{2})+\frac{1}{N^{2}}\sum_{p<k}\wh{g}_{p}(z_{1})\wh{g}_{p}(z_{2}) \\
=\frac{w_{2}}{N} +\frac{W_{4}}{N}\wh{I}_{k} +\frac{1}{N}(\wh{I}_{0}-\wh{I}_{k}) -\frac{1}{N}\wh{I}_{0} +\frac{2}{N}\frac{\wh{I}_{k}^{2}}{1-\wh{I}_{k}} +\caO(N^{-\frac{3}{2}}) \\
=\frac{w_{2}-2}{N} +\frac{W_{4}-3}{N}\wh{I}_{k} +\frac{2}{N}\frac{1}{1-\wh{I}_{k}}+\caO(N^{-\frac{3}{2}}).
\end{multline}

\subsection{Conclusion for $\Gamma(z_{1},z_{2})$}
Now we retrieve the dependence on $\vartheta$ and fully analyze the covariance. Summing over $k$, we get
\begin{multline}
\wt{\Gamma}_{N}^{\vartheta}(z_{1},z_{2}) =(w_{2}-2)\frac{1}{N}\sum_{k}\wh{g}^{\vartheta}_{k}(z_{1})\wh{g}_{k}^{\vartheta}(z_{2}) +(W_{4}-3)\frac{1}{N}\sum_{k=1}^{N}\wh{I}_{k}^{\vartheta}(z_{1},z_{2})(\wh{g}^{\vartheta}_{k}(z_{1})\wh{g}^{\vartheta}_{k}(z_{2})) \\
+\frac{2}{N}\sum_{k=1}^{N}\frac{1}{1-I_{k}^{\vartheta}(z_{1},z_{2})}\wh{g}^{\vartheta}_{k}(z_{1})\wh{g}^{\vartheta}_{k}(z_{2}) +\caO(N^{-\frac{1}{2}}).
\end{multline}
For simplicity, we denote $f_{i}(x)=(x-z_{i}-m_{fc}(z_{i}))^{-1}$ and $\wh{f}_{i}(x)=(x-z_{i}-\wh{m}_{fc}(z_{i}))^{-1}$ for $x\in\R$.

\subsubsection{The first term}
The convergence of the first term follows directly from {Lemma \ref{cor:covwh}}:
\beq
(w_{2}-2)\frac{1}{N}\sum_{k}\wh{g}_{k}^{\vartheta}(z_{1})\wh{g}_{k}^{\vartheta}(z_{2}) =(w_{2}-2)\wh{I}_{0}^{\vartheta}(z_{1},z_{2}) =(w_{2}-2)I^{\vartheta}(z_{1},z_{2}) +O(\vartheta N^{-\alpha_{0}}).
\eeq

\subsubsection{The second term}
By the definition of $\wh{I}_{k}$, again by {Lemma \ref{cor:covwh} } we get
\begin{multline}
\frac{1}{N}\sum_{k=1}^{N} \wh{I}_{k}^{\vartheta}(z_{1},z_{2})\wh{g}_{k}^{\vartheta}(z_{1})\wh{g}_{k}^{\vartheta}(z_{2}) =\frac{1}{N^{2}}\sum_{p>k}\wh{g}_{k}^{\vartheta}(z_{1})\wh{g}_{k}^{\vartheta}(z_{2})\wh{g}_{p}^{\vartheta}(z_{1})\wh{g}_{p}^{\vartheta}(z_{2}) \\
=\frac{1}{2N^2}\sum_{p,k}\wh{g}_{k}^{\vartheta}(z_{1})\wh{g}_{k}^{\vartheta}(z_{2})\wh{g}_{p}^{\vartheta}(z_{1})\wh{g}_{p}^{\vartheta}(z_{2}) -\frac{1}{2N^2}\sum_{m}(\wh{g}_{m}^{\vartheta}(z_{1})\wh{g}_{m}^{\vartheta}(z_{2}))^{2} \\
=\frac{1}{2N^2}\Big(\sum_{k}\wh{g}_{k}^{\vartheta}(z_{1})\wh{g}_{k}^{\vartheta}(z_{2})\Big)^{2}+\caO(N^{-1}) =\frac{1}{2}I^{\vartheta}(z_{1},z_{2})+O(N^{-1}+\vartheta N^{-\alpha_{0}}).
\end{multline}

\subsubsection{The third term}

To consider the third term, we define a polygonal path $C_{N}=\bigcup_{k}C_{k,N}$ connecting $\wh{I}^{\vartheta}_{N},\wh{I}^{\vartheta}_{N-1},\cdots,\wh{I}^{\vartheta}_{0}$ by
\beq
C_{k,N}:[0,1]\to\C,\quad C_{k,N}(t)= \wh{I}^{\vartheta}_{k} +\frac{t}{N}\wh{g}^{\vartheta}_{k}(z_{1})\wh{g}^{\vartheta}_{k}(z_{2}).
\eeq

Then letting $F(z)=(1-z)^{-1}$, we get
\begin{multline}
\frac{1}{N}\frac{1}{1- \wh{I}_{k}^{\vartheta}}\wh{g}_{k}^{\vartheta}(z_{1})\wh{g}_{k}^{\vartheta}(z_{2}) =\int_{0}^{1}F(C_{k,N}(0))\cdot C_{k,N}'(t)dt \\
= \int_{C_{N}}F(z)dz +\int_{0}^{1}(F(C_{k,N})(0)-F(C_{k,N}(t)))C_{k,N}(t)'dt.
\end{multline}
By {Lemma \ref{cor:path} } we have $\absv{F(C_{k,N}(0))-F(C_{k,N}(t))}=O(N^{-1})$, so that
\beq
\int_{0}^{1}(F(C_{n,k}(t))-F(C_{n,k}(0)))C_{k,N}(t)dt =O(N^{-2}),
\eeq
hence again by {Lemma \ref{cor:path}}
\begin{multline}
\frac{1}{N}\sum_{k}\frac{1}{1-\wh{I}_{k}^{\vartheta}}\wh{g}_{k}^{\vartheta}(z_{1})\wh{g}_{k}^{\vartheta}(z_{2}) =\int_{C_{N}} F(z)dz +O(N^{-1}) = -\log(1-\wh{I}^{\vartheta}_{0}(z_{1},z_{2})) +O(N^{-1}) \\ =-\log(1-I^{\vartheta}(z_{1},z_{2})) +O(N^{-1}+\vartheta N^{-\alpha_{0}}).
\end{multline}

\subsubsection{Summary}
In summary, we have
\begin{multline}\label{eq:covsum}
\wt{\Gamma}_{N}(z_{1},z_{2}) =(w_{2}-2)I^{\vartheta}(z_{1},z_{2}) +\frac{1}{2}(W_{4}-3)I^{\vartheta}(z_{1},z_{2})^{2} \\
-2\log(1-I^{\vartheta}(z_{1},z_{2})) +O(N^{-1}+\vartheta N^{-\alpha_{0}})+\caO(N^{-\frac{1}{2}}).
\end{multline}

\subsection{Proof of Lemma \ref{lem:martclt11}}

\subsubsection{$\vartheta_{\infty}>0$}

For $\vartheta_{\infty}>0$, recalling {Corollaries \ref{cor:path}} and {\ref{cor:I-s1s2}}, we have
\beq
\wt{\Gamma}_{N}(z_{1},z_{2})=\wt{\Gamma}^{\vartheta_{\infty}}+O(N^{-1}+\vartheta N^{-\alpha_{0}}+\absv{\vartheta-\vartheta_{\infty}})+\caO(N^{-\frac{1}{2}}),
\eeq
where 
\beq
\wt{\Gamma}^{\vartheta_{\infty}}(z_{1},z_{2}) = (w_{2}-2)I^{\vartheta_{\infty}}(z_{1},z_{2}) +\frac{1}{2}(W_{4}-3)I^{\vartheta_{\infty}}(z_{1},z_{2})^{2} -2\log(1-I^{\vartheta_{\infty}}(z_{1},z_{2})).
\eeq

Then by differentiating, again the Cauchy integral formula implies the in probability convergence of $\Gamma_{N}$ to $\Gamma$ defined in {Proposition \ref{prop:Gproc}}.

\subsubsection{$\vartheta=o(1)$}
If $\vartheta=o(1)$, similarly we reduce \eqref{eq:covsum} further, using {Corollaries \ref{cor:path}} and {\ref{cor:I-s1s2}}, to
\begin{multline}
\wt{\Gamma}_{N}(z_{1},z_{2}) =(w_{2}-2)I^{\vartheta}(z_{1},z_{2}) +\frac{1}{2}(W_{4}-3)I^{\vartheta}(z_{1},z_{2})^{2} \\
-2\log(1-I^{\vartheta}(z_{1},z_{2})) +O(N^{-1}+\vartheta N^{-\alpha_{0}})+\caO(N^{-\frac{1}{2}}) \\
=(w_{2}-2)m_{sc}(z_{1})m_{sc}(z_{2})+\frac{1}{2}(W_{4}-3)(m_{sc}(z_{1})m_{sc}(z_{2}))^{2} \\
-2\log(1-m_{sc}(z_{1})m_{sc}(z_{2}))+O(\vartheta)+\caO(N^{-\frac{1}{2}}).
\end{multline}
Differentiating, we get
\begin{multline}
\Gamma_{N}(z_{1},z_{2})=m_{sc}'(z_{1})m_{sc}'(z_{2})\bigg((w_{2}-2)+2(W_{4}-3)m_{sc}(z_{1})m_{sc}(z_{2}) \\
+\frac{2}{(1-m_{sc}(z_{1})m_{sc}(z_{2}))^{2}}\bigg)+O(\vartheta)+\caO(N^{-\frac{1}{2}}),
\end{multline}
which converges to $\Gamma(z_{1},z_{2})$ in probability.
\begin{remark}
	The result above and also  coincides precisely with $\Gamma(z_{1},z_{2})$ in Proposition 4.1 of \cite{Bai-Yao2005}.
\end{remark}

\subsection{Proof of Lemma \ref{lem:martclt22}}
Now  it suffices to prove
\beq
\sum_{k}\expct{\absv{\expctk{\phi_{k}}{k-1}}^{2}\lone_{[\absv{\expctk{\phi_{k}}{k-1}}\geq\epsilon]}}\to 0
\eeq
for any fixed $\epsilon>0$. For
\beq
\expct{\absv{\expctk{\phi_{k}}{k-1}}\lone_{[\absv{\expctk{\phi_{k}}{k-1}}\geq\epsilon]}}\leq\epsilon^{-2}\expct{\absv{\expctk{\phi_{k}}{k-1}}^{4}},
\eeq
the bound $\phi_{k}\prec N^{-1/2}$ gives 
\beq
\sum_{k}\expct{\absv{\expctk{\phi_{k}}{k-1}}^{2}\lone_{[\absv{\expctk{\phi_{k}}{k-1}}\geq\epsilon]}} \leq\epsilon^{-2}\sum_{k}\expct{\absv{\expctk{\phi_{k}}{k-1}}^{4}} =O(N^{-1+\epsilon'})\to0.
\eeq

\section{Tightness of $\xi_{N}$}\label{sec:tight}

As mentioned in {Section \ref{sec:Gaussianprocprf}}, this section provides the proofs of {Lemmas \ref{lem:tightproc}} and {\ref{lem:tightprocsqrt}}. As in the sections above, we omit the superscript $\vartheta$ to prevent unnecessary complication. Recalling the aim, in order to prove the H\"{o}lder conditions, we prove the existence of a constant $K>0$ such that
\beq\label{eq:C1}
\expct{\absv{\Tr R(z_{1})R(z_{2})-\expct{\Tr R(z_{1})R(z_{2})}}^{2}}\leq K,\quad^{\forall}z_{1},z_{2}\in\caK
\eeq
for deterministic $V$ and
\beq\label{eq:C2}
\frac{1}{N\vartheta^{2}}\expct{\absv{\Tr R_{N}(z_{1})R_{N}(z_{2})-\expct{\Tr R_{N}(z_{1})R_{N}(z_{2})}}^{2}\lone_{\Omega}}\leq K,\quad^{\forall}z_{1},z_{2}\in\caK
\eeq
for random $V$.

\subsection{Proof of Lemma \ref{lem:tightproc}}\label{sec:C11}

To prove the existence of the constant $K$ in \eqref{eq:C1}, we again use the $\sigma$-algebra $\caF_{k}$ introduced in {Section \ref{sec:Gaussianprocprf}}:
\begin{multline}
\expct{\absv{\Tr R(z_{1})R(z_{2})-\expct{\Tr R(z_{1})R(z_{2})}}^{2}} \\
=\biggexpct{\biggabsv{\sum_{k=1}^{N}(\E_{k-1}-\E_{k})\big(\Tr R(z_{z})R(z_{2})-\Tr R^{(k)}(z_{1})R^{(k)}(z_{2})\big)}^{2}}.
\end{multline}

In the following, we denote
\beq
R\deq R(z_{1}),\quad S\deq R(z_{2})\quad\text{and}\quad R^{(k)}\deq R^{(k)}(z_{1}),\quad S^{(k)}\deq R^{(k)}(z_{2})
\eeq
for simplicity. Then one can observe that
\beq
\norm{R}\leq \frac{1}{\eta}\leq C
\eeq
for $C\geq c^{-1}$ and similarly \beq
\norm{S},\norm{R^{(k)}},\norm{S^{(k)}}\leq C.
\eeq
Now for $i,j\neq k$, \eqref{eq:mat id 3} gives
\begin{multline}
R_{ij}S_{ji}-R^{(k)}_{ij}S^{(k)}_{ji} =(R_{ij}-R^{(k)}_{ij})S^{(k)}_{ji} +R^{(k)}_{ij}(S_{ji}-S^{(k)}_{ji}) +(R_{ij}-R^{(k)}_{ij})(S_{ji}-S^{(k)}_{ji}) \\
=\frac{R_{ik}R_{kj}}{R_{kk}}S^{(k)}_{ji} +R_{ij}^{(k)}\frac{S_{jk}S_{ki}}{S_{kk}} +\frac{R_{ik}R_{kj}}{R_{kk}}\frac{S_{jk}S_{ki}}{S_{kk}}.
\end{multline}

Hence we get
\begin{multline}
\Tr RS-\Tr R^{(k)}S^{(k)} =\sum_{i,j}^{(k)}\big(R_{ij}S_{ji}-R^{(k)}_{ij}S^{(k)}_{ji}\big) +\sum_{j}^{(k)}R_{kj}S_{jk} +\sum_{i}^{(k)}R_{ik}S_{ki} +R_{kk}S_{kk} \\
=\sum_{i,j}^{(k)}\Big(\frac{R_{ik}R_{kj}}{R_{kk}}S^{(k)}_{ji}+R^{(k)}_{ij}\frac{S_{jk}S_{ki}}{S_{kk}}+\frac{R_{ik}R_{kj}}{R_{kk}}\frac{S_{jk}S_{ki}}{S_{kk}}\Big) +2\sum_{i}^{(k)}R_{ki}S_{ik} +R_{kk}S_{kk},
\end{multline}
which together with \eqref{eq:orthogonal} implies
\begin{multline}
\expct{\absv{\Tr R(z_{1})R(z_{2})-\expct{R(z_{1})R(z_{2})}}^{2}}\\
\leq C\biggexpct{\sum_{k=1}^{N}\biggabsv{(\E_{k-1}-\E_{k})\sum_{i,j}^{(k)}\Big(\frac{R_{ik}R_{kj}}{R_{kk}}S^{(k)}_{ji}+R^{(k)}_{ij}\frac{S_{jk}S_{ki}}{S_{kk}}+\frac{R_{ik}R_{kj}}{R_{kk}}\frac{S_{jk}S_{ki}}{S_{kk}}\Big)}^{2}} \\
+C\biggexpct{\sum_{k=1}^{N}\absv{(\E_{k-1}-\E_{k})2(RS)_{kk} -R_{kk}S_{kk}}^{2}}
\end{multline}
\subsubsection{The first and the second term}

We first rewrite
\begin{multline}
\sum_{i,j}^{(k)}\frac{R_{ik}R_{kj}}{R_{kk}}S_{ji}^{(k)} =\sum_{i,j}^{(k)}\frac{1}{R_{kk}}\Big(R_{kk}\sum_{p}^{(k)}R_{ip}^{(k)}W_{pk}\Big)\Big(R_{kk}\sum_{q}^{(k)}W_{kq}R_{qj}^{(k)}\Big)S_{ji}^{(k)} \\
=\sum_{i,j,p,q}^{(k)}R_{kk}W_{kq}R_{qj}^{(k)}S_{ji}^{(k)}R_{ip}^{(k)}W_{pk} =R_{kk}\sum_{p,q}^{(k)}W_{kq}(R^{(k)}S^{(k)}R^{(k)})_{qp}W_{pk}.
\end{multline}

Noting that 
\beq
(\E_{k-1}-\E_{k})\bigg[\frac{\wh{g}_{k}(z_{1})}{N}\sum_{p}^{(k)}(R^{(k)}S^{(k)}R^{(k)})_{pp}\bigg]=0,
\eeq
we get
\begin{multline}\label{eq:above1}
\biggexpct{\sum_{k=1}^{N}\biggabsv{(\E_{k-1}-\E_{k})\sum_{i,j}^{(k)}\frac{R_{ik}R_{kj}}{R_{kk}}S_{ji}^{(k)}}^{2}} \\
=\E\Big[\sum_{k=1}^{N}\Big\vert(\E_{k-1}-\E_{k})\Big[R_{kk}\sum_{p,q}^{(k)}W_{kq}(R^{(k)}S^{(k)}R^{(k)})_{qp}W_{pk}
-\frac{\wh{g}_{k}(z_{1})}{N}\sum_{p}^{(k)}(R^{(k)}S^{(k)}R^{(k)})_{pp}\Big]\Big\vert^{2}\Big].
\end{multline}

From $\absv{a+b}^{2} \leq 2(\absv{a}^{2}+\absv{b}^{2})$, \eqref{eq:above1} is bounded by 
\begin{multline}
2\sum_{k=1}^{N}\biggexpct{\biggabsv{\biggexpctk{R_{kk}\sum_{p,q}^{(k)}W_{kq}(R^{(k)}S^{(k)}R^{(k)})_{qp}W_{pk}-\frac{\wh{g}_{k}(z_{1})}{N}\sum_{p}^{(k)}(R^{(k)}S^{(k)}R^{(k)})_{pp}}{k-1}}^{2}} \\
+2\sum_{k=1}^{N}\biggexpct{\biggabsv{\biggexpctk{R_{kk}\sum_{p,q}^{(k)}W_{kq}(R^{(k)}S^{(k)}R^{(k)})_{qp}W_{pk}-\frac{\wh{g}_{k}(z_{1})}{N}\sum_{p}^{(k)}(R^{(k)}S^{(k)}R^{(k)})_{pp}}{k}}^{2}} \\
\leq 4\sum_{k=1}^{N}\biggexpct{\biggabsv{R_{kk}\sum_{p,q}^{(k)}W_{kq}(R^{(k)}S^{(k)}R^{(k)})_{qp}W_{pk}-\frac{\wh{g}_{k}(z_{1})}{N}\sum_{p}^{(k)}(R^{(k)}S^{(k)}R^{(k)})_{pp}}^{2}}
\end{multline}
where we have used the Jensen's inequality in the third line.

Since $\absv{R_{kk}},\norm{R^{(k)}},\norm{S^{(k)}}\leq C$, from {Lemma \ref{lem:lde}} we have
\begin{multline}
\biggexpct{\biggabsv{R_{kk}\sum_{p,q}^{(k)}W_{kq}(R^{(k)}S^{(k)}R^{(k)})_{qp}W_{pk}-\frac{R_{kk}}{N}\sum_{p}^{(k)}(R^{(k)}S^{(k)}R^{(k)})_{pp}}^{2}} \\
\leq C\biggexpct{\biggabsv{\sum_{p,q}^{(k)}W_{kq}(R^{(k)}S^{(k)}R^{(k)})_{qp}W_{pk}-\frac{1}{N}\sum_{p}^{(k)}(R^{(k)}S^{(k)}R^{(k)})_{pp}}^{2}}
\leq C\frac{\norm{R^{(k)}S^{(k)}R^{(k)}}^{2}}{N}\leq \frac{C}{N}.
\end{multline}

On the other hand, we also have $\tr (R^{(k)}S^{(k)}R^{(k)})\leq \norm{R^{(k)}S^{(k)}R^{(k)}}\leq C$, so that
\beq
\biggexpct{\biggabsv{\frac{R_{kk}}{N}\sum_{p}^{(k)}(R^{(k)}S^{(k)}R^{(k)})_{pp}-\frac{\wh{g}_{k}(z_{1})}{N}\sum_{p}^{(k)}(R^{(k)}S^{(k)}R^{(k)})_{pp}}^{2}} \leq C\expct{\absv{R_{kk}-\wh{g}_{k}(z_{1})}^{2}}.
\eeq
Using the expansion \eqref{eq:resodiagexpa}, we get
\begin{multline}
R_{kk}-\wh{g}_{k}(z_{1}) =\wh{g}_{k}(z_{1})^{2}(Q_{k}(z_{1})-\wh{m}_{fc}(z_{1})+v_{k}) +\caO(N^{-1}) \\
=\wh{g}_{k}(z_{1})^{2}(Q_{k}(z_{1})-m_{N}^{(k)}(z_{1})+v_{k}) +\caO(N^{-1}).
\end{multline}

Now using {Lemma \ref{lem:lde} } we get
\begin{multline}\label{eq:resolvconcent}
\expct{\absv{R_{kk}-\wh{g}_{k}(z_{1})}^{2}} \leq C\expct{\absv{Q_{k}(z_{1})-m_{N}^{(k)}+v_{k}}^{2}} 
=C\biggexpct{\biggabsv{-N^{-\frac{1}{2}}A_{ii}+\sum_{p,q}^{(k)}W_{kp}R^{(k)}_{pq}W_{qk}-\frac{1}{N}\sum_{p}^{(k)}R_{pp}^{(k)}}^{2}} \\
\leq 2C\bigg(\frac{1}{N}+\biggexpct{\biggabsv{\sum_{p,q}^{(k)}W_{kp}R^{(k)}_{pq}W_{qk}-\frac{1}{N}\sum_{p}^{(k)}R_{pp}^{(k)}}^{2}}\bigg) \leq \frac{C}{N},
\end{multline}
so that 
\beq
\biggexpct{\biggabsv{\frac{R_{kk}}{N}\sum_{p}^{(k)}(R^{(k)}S^{(k)}R^{(k)})_{pp}-\frac{\wh{g}_{k}(z_{1})}{N}\sum_{p}^{(k)}(R^{(k)}S^{(k)}R^{(k)})_{pp}}^{2}} \leq\frac{C}{N}.
\eeq
Altogether, we get a bound for the first term:
\beq\label{eq:first}
\biggexpct{\sum_{k=1}^{N}\biggabsv{(\E_{k-1}-\E_{k})\sum_{i,j}^{(k)}\frac{R_{ik}R_{kj}}{R_{kk}}S^{(k)}_{ji}}^{2}} \leq C.
\eeq

By symmetry, we have the same bound for the second term:
\beq\label{eq:second}
\biggexpct{\sum_{k=1}^{N}\biggabsv{(\E_{k-1}-\E_{k})\sum_{i,j}^{(k)}R^{(k)}_{ij}\frac{S_{jk}S_{ki}}{S_{kk}}}^{2}}\leq C.
\eeq

\subsubsection{The third term}
We first expand the third term using \eqref{eq:mat id 3}:
\begin{multline}
\sum_{i,j}^{(k)}\frac{R_{ik}R_{kj}}{R_{kk}}\frac{S_{jk}S_{ki}}{S_{kk}} =\sum_{i,j}^{(k)}R_{kk}S_{kk}\sum_{p,q,r,t}^{(k)}W_{pk}R_{ip}^{(k)}R_{qj}^{(k)}W_{kq}W_{rk}S_{jr}^{(k)}S_{ti}^{(k)}W_{kt} \\
=R_{kk}S_{kk}\sum_{p,q,r,t,i,j}^{(k)}\Big(W_{kt}(S_{ti}^{(k)}R_{ip}^{(k)})W_{pk}\Big)\Big(W_{kq}(R_{qj}^{(k)}S_{jr}^{(k)})W_{rk}\Big) \\
=R_{kk}S_{kk}\Big(\sum_{t,p}^{(k)}W_{kt}(S^{(k)}R^{(k)})_{tp}W_{pk}\Big)\Big(\sum_{q,r}^{(k)}W_{kq}(R^{(k)}S^{(k)})_{qr}W_{rk}\Big) \\
=R_{kk}S_{kk}\Big(\sum_{p,q}^{(k)}W_{kp}(R^{(k)}S^{(k)})_{pq}W_{qk}\Big)^{2},
\end{multline}
since $R^{(k)}$ and $S^{(k)}$ commutes. As above, we note that
\beq
(\E_{k-1}-\E_{k})\bigg[\wh{g}_{k}(z_{1})\wh{g}_{k}(z_{2})\Big(\frac{1}{N}\sum_{p}^{(k)}(R^{(k)}S^{(k)})_{pp}\Big)^{2}\bigg]=0,
\eeq
hence
\begin{multline}
\biggexpct{\biggabsv{(\E_{k-1}-\E_{k})\Big[\sum_{i,j}^{(k)}\frac{R_{ik}R_{kj}}{R_{kk}}\frac{S_{jk}S_{ki}}{S_{kk}}\Big]}^{2}} \\
=\E\Big[\Big\vert(\E_{k-1}-\E_{k})\Big[R_{kk}S_{kk}\Big(\sum_{p,q}^{(k)}W_{kp}(R^{(k)}S^{(k)})_{pq}W_{qk}\Big)^{2} 
-\wh{g}_{k}(z_{1})\wh{g}_{k}(z_{2})\Big(\frac{1}{N}\sum_{p}(R^{(k)}S^{(k)})_{pp}\Big)^{2}\Big]\Big\vert^{2}\Big].
\end{multline}

As above, $\absv{R_{kk}S_{kk}},\absv{\frac{1}{N}\sum_{p}^{(k)}(R^{(k)}S^{(k)})_{pp}}\leq \norm{R}\norm{S}\leq C$ gives
\begin{multline}
\biggexpct{\biggabsv{(\E_{k-1}-\E_{k})\Big[\sum_{i,j}^{(k)}\frac{R_{ik}R_{kj}}{R_{kk}}\frac{S_{jk}S_{ki}}{S_{kk}}\Big]}^{2}} \\
\leq C\biggexpct{\biggabsv{\Big(\sum_{p,q}^{(k)}W_{kp}(R^{(k)}S^{(k)})_{pq}W_{qk}\Big)^{2}-\Big(\frac{1}{N}\sum_{p}^{(k)}(R^{(k)}S^{(k)})_{pp}\Big)^{2}}^{2}} 
+C\expct{\absv{R_{kk}S_{kk}-\wh{g}_{k}(z_{1})\wh{g}_{k}(z_{2})}^{2}}.
\end{multline}
Using $A^{2}-B^{2}=(A-B)^{2}+2B(A-B)$ together with {Lemma \ref{lem:lde}}, the first term is bounded as
\begin{multline}
=\E\Big[\Big\vert\Big(\sum_{p,q}^{(k)}W_{kp}(R^{(k)}S^{(k)})_{pq}W_{qk}-\frac{1}{N}\sum_{p}^{(k)}(R^{(k)}S^{(k)})_{pp}\Big)^{2} \\
+\frac{2}{N}\sum_{p}^{(k)}(R^{(k)}S^{(k)})_{pp}\Big(\sum_{p,q}^{(k)}W_{kp}(R^{(k)}S^{(k)})_{pq}W_{qk}-\frac{1}{N}\sum_{p}^{(k)}(R^{(k)}S^{(k)})_{kk}\Big)\Big\vert^{2}\Big] \\
\leq C\Big(\biggexpct{\biggabsv{\sum_{p,q}^{(k)}W_{kp}(R^{(k)}S^{(k)})_{pq}W_{qk}-\frac{1}{N}\sum_{p}^{(k)}(R^{(k)}S^{(k)})_{pp}}^{4}} \\
+\biggexpct{\biggabsv{\sum_{p,q}^{(k)}W_{kp}(R^{(k)}S^{(k)})_{pq}W_{qk}-\frac{1}{N}\sum_{p}^{(k)}(R^{(k)}S^{(k)})_{pp}}^{2}}\Big) \leq\frac{C}{N}.
\end{multline}

On the other hand, by \eqref{eq:resolvconcent} we have
\begin{multline}
\expct{\absv{R_{kk}S_{kk}-\wh{g}_{k}(z_{1})\wh{g}_{k}(z_{2})}^{2}} =\expct{\absv{R_{kk}(S_{kk}-\wh{g}_{k}(z_{2}))+\wh{g}_{k}(z_{2})(R_{kk}-\wh{g}_{k}(z_{1}))}^{2}} \\
\leq C\big(\expct{\absv{S_{kk}-\wh{g}_{k}(z_{2})}^{2}}+\expct{\absv{R_{kk}-\wh{g}_{k}(z_{1})}^{2}}\big) \leq \frac{C}{N}.
\end{multline}

Altogether, we have a bound of the third term:
\beq\label{eq:third}
\biggexpct{\sum_{k=1}^{N}\biggabsv{(\E_{k-1}-\E_{k})\sum_{i,j}^{(k)}\frac{R_{ik}R_{kj}}{R_{kk}}\frac{S_{jk}S_{ki}}{S_{kk}}}^{2}} \leq C
\eeq

\subsubsection{Remainders}
We again expand the fourth term:
\begin{multline}
\sum_{i}^{(k)}R_{ki}S_{ik} =R_{kk}S_{kk}\sum_{i}^{(k)}\Big(\sum_{p}^{(k)}W_{kp}R_{pi}^{(k)}\Big) \Big(\sum_{q}^{(k)}S_{iq}^{(k)}W_{qk}\Big) \\
=R_{kk}S_{kk}\sum_{p,q}^{(k)}W_{kp}\Big(\sum_{i}R_{pi}^{(k)}S_{iq}^{(k)}\Big)W_{qk} =R_{kk}S_{kk}\sum_{p,q}^{(k)}W_{kp}(R^{(k)}S^{(k)})_{pq}W_{qk}.
\end{multline}
Using this expansion, we write
\begin{multline}\label{eq:fourth}
\Bigexpct{\Bigabsv{(\E_{k-1}-\E_{k})\Big[\sum_{i}^{(k)}R_{ki}S_{ik}\Big]}^{2}} \\
=\biggexpct{\biggabsv{(\E_{k-1}-\E_{k})\Big[R_{kk}S_{kk}\sum_{p,q}^{(k)}W_{kp}(R^{(k)}S^{(k)})_{pq}W_{qk} -\wh{g}_{k}(z_{1})\wh{g}_{k}(z_{2})\frac{1}{N}\sum_{p}^{(k)}(R^{(k)}S^{(k)})_{pp}\Big]}^{2}}.
\end{multline}
Then by the same argument as above, $\absv{R_{kk}S_{kk}},\absv{\frac{1}{N}\sum_{p}^{(k)}(R^{(k)}S^{(k)})_{pp}}\leq C$ implies that above is bounded by
\begin{multline} 2\biggexpct{\biggabsv{(\E_{k-1}-\E_{k})\Big[R_{kk}S_{kk}\Big(\sum_{p,q}^{(k)}W_{kp}(R^{(k)}S^{(k)})_{pq}W_{qk}-\frac{1}{N}\sum_{p}^{(k)}(R^{(k)}S^{(k)})_{pp}\Big)\Big]}^{2}} \\
+2\biggexpct{\biggabsv{(\E_{k-1}-\E_{k})\Big[(R_{kk}S_{kk}-\wh{g}_{k}(z_{1})\wh{g}_{k}(z_{2}))\frac{1}{N}\sum_{p}^{(k)}(R^{(k)}S^{(k)})_{pp}\Big]}^{2}}, \\
\leq C\biggexpct{\biggabsv{\sum_{p,q}^{(k)}W_{kp}(R^{(k)}S^{(k)})_{pq}W_{qk}-\frac{1}{N}\sum_{p}^{(k)}(R^{(k)}S^{(k)})_{pp}}^{2}} 
+C\expct{\absv{R_{kk}S_{kk}-\wh{g}_{k}(z_{1})\wh{g}_{k}(z_{2})}^{2}} \leq\frac{C}{N}.
\end{multline}

And similarly we have
\beq\label{eq:last}
\expct{\absv{(\E_{k-1}-\E_{k})[R_{kk}S_{kk}]}^{2}} =\expct{\absv{(\E_{k-1}-\E_{k})[R_{kk}S_{kk}-\wh{g}_{k}(z_{1})\wh{g}_{k}(z_{2})]}^{2}} \leq\frac{C}{N}.
\eeq

Using \eqref{eq:first}, \eqref{eq:second}, \eqref{eq:third}, \eqref{eq:fourth} and \eqref{eq:last}, we conclude that the H\"{o}lder condition holds, proving the tightness of $\{\xi_{N}(z):z\in\caK\}$.

\subsection{Proof of Lemma \ref{lem:tightprocsqrt}}\label{sec:tightprocsqrtprf}
The proof of \eqref{eq:C2} follows the same line as {Section \ref{sec:C11}}, except we replace the conditional expectation $\expctk{\cdot}{k}$ with
\beq
\wh{\E}_{k} [\,\cdot \,] := \cexpct{\cdot}{\caG_{k}},
\eeq
where $\caG_{k}$ is defined in {Definition \ref{def:filt}}. To be specific, noting that $R^{(k)}$ is independent of $\{v_{k},W_{k,i}:1\leq i\leq N\}$, we write 
\begin{multline}
\expct{\absv{\Tr R(z_{1})R(z_{2})-\expct{R(z_{1})R(z_{2})}}^{2}} \\
=\biggexpct{\biggabsv{\sum_{k=1}^{N}(\wh{\E}_{k-1}-\wh{\E}_{k})(\Tr R(z_{z})R(z_{2})-\Tr R^{(k)}(z_{1})R^{(k)}(z_{2}))}^{2}} \\
\leq C\biggexpct{\sum_{k=1}^{N}\biggabsv{(\wh{\E}_{k-1}-\wh{\E}_{k})\sum_{i,j}^{(k)}\Big(\frac{R_{ik}R_{kj}}{R_{kk}}S^{(k)}_{ji}+R^{(k)}_{ij}\frac{S_{jk}S_{ki}}{S_{kk}}+\frac{R_{ik}R_{kj}}{R_{kk}}\frac{S_{jk}S_{ki}}{S_{kk}}\Big)}^{2}} \\
+C\biggexpct{\sum_{k=1}^{N}\absv{2(RS)_{kk} -R_{kk}S_{kk}}^{2}}.
\end{multline}

To bound the first term, we replace the estimate
\beq
\expct{\absv{R_{kk}(z)-\wh{g}_{k}^{\vartheta}(z)}^{2}}\leq \frac{C}{N}
\eeq
in {Section \ref{sec:C11}} with
\beq\label{eq:localintlaw}
\biggexpct{\biggabsv{R_{kk}(z)+\frac{1}{z+m_{N}^{(k)}(z)}}^{2}}\leq C\vartheta^{2}.
\eeq
To prove the bound above, we first recall the Schur complement formula \eqref{eq:Schur}:
\beq
R_{kk}=\frac{1}{\vartheta v_{k}+\frac{1}{\sqrt{N}}A_{kk}-z-\sum_{p,q}^{(k)}W_{kp}R_{pq}^{(i)}W_{qk}},
\eeq
so that
\beq
R_{kk}+\frac{1}{z+m_{N}^{(k)}(z)} =\frac{R_{kk}}{z+m_{N}^{(k)}}\bigg(\Big(\vartheta v_{i}+\frac{1}{\sqrt{N}}A_{kk}\Big)+\Big(\frac{1}{N}\sum_{p}^{(k)}R_{pp}^{(k)}-\sum_{p,q}^{(k)}W_{kp}R_{pq}^{(k)}W_{qk}\Big)\bigg).
\eeq
On the other hand, $z\in\caK$ implies
\beq
\biggabsv{\frac{R_{kk}}{z+m_{N}^{(k)}(z)}}=\eta^{-2}=O(1),
\eeq
and by {Lemma \ref{lem:lde}} together with the assumption $\vartheta\gg N^{-1/2}$,
\beq
\biggexpct{\biggabsv{\vartheta v_{i}+\frac{1}{\sqrt{N}}A_{kk}+\Big(\frac{1}{N}\sum_{p}^{(k)}R_{pp}^{(k)}-\sum_{p,q}^{(k)}W_{kp}R_{pq}^{(k)}W_{qk}\Big)}^{2}} =O(\vartheta^{2}).
\eeq
By H\"{o}lder inequality, with two bounds above, we deduce \eqref{eq:localintlaw}.

After the replacement, using the fact that
\beq
(\wh{\E}_{k}-\wh{\E}_{k-1})\Big[\frac{1}{-z-m_{N}^{(k)}(z)}\cdot\frac{1}{N}\sum_{p}^{(k)}(R^{(k)}S^{(k)}R^{(k)})_{pp}\Big]=0,
\eeq
we obtain
\begin{multline}
\biggexpct{\sum_{k=1}^{N}\biggabsv{(\wh{\E}_{k-1}-\wh{\E}_{k})\sum_{i,j}^{(k)}\frac{R_{ik}R_{kj}}{R_{kk}}S_{ji}^{(k)}}^{2}} \\
=\E\bigg[\sum_{k=1}^{N}\Big\vert(\wh{\E}_{k-1}-\wh{\E}_{k})\Big[R_{kk}\sum_{p,q}^{(k)}W_{kq}(R^{(k)}S^{(k)}R^{(k)})_{qp}W_{pk}
-\Big(\frac{1}{-z-m_{N}^{(k)}(z)}\Big)\cdot\frac{1}{N}\sum_{p}^{(k)}(R^{(k)}S^{(k)}R^{(k)})_{pp}\Big]\Big\vert^{2}\bigg] \\
\leq 4\sum_{k=1}^{N}\Bigexpct{\Bigabsv{R_{kk}\sum_{p,q}^{(k)}W_{kq}(R^{(k)}S^{(k)}R^{(k)})_{qp}W_{pk}-\frac{1}{-z-m_{N}^{(k)}(z)}\cdot\frac{1}{N}\sum_{p}^{(k)}(R^{(k)}S^{(k)}R^{(k)})_{pp}}^{2}}
\leq CN\vartheta^{2},
\end{multline}
and a similar bound for the second term.

For the third term, we start by noting that
\beq
(\wh{\E}_{k}-\wh{\E}_{k-1})\bigg[\frac{1}{(-z_{2}-m_{N}^{(k)}(z_{2}))(-z_{2}-m_{N}^{(k)}(z_{2}))}\Big(\frac{1}{N}\sum_{p}^{(k)}(R^{(k)}S^{(k)})_{pp}\Big)^{2}\bigg]=0
\eeq
and use the estimate
\beq
\biggexpct{\biggabsv{R_{kk}S_{kk}-\frac{1}{(-z_{2}-m_{N}^{(k)}(z_{2}))(-z_{2}-m_{N}^{(k)}(z_{2}))}}^{2}}\leq C\vartheta^{2}
\eeq
following from \eqref{eq:localintlaw}, to conclude that
\beq
\biggexpct{\sum_{k=1}^{N}\biggabsv{(\wh{\E}_{k-1}-\wh{\E}_{k})\sum_{i,j}^{(k)}\frac{R_{ik}R_{kj}}{R_{kk}}\frac{S_{jk}S_{ki}}{S_{kk}}}^{2}} \leq CN\vartheta^{2}.
\eeq
The corresponding bounds for the other terms follows similarly, essentially by \eqref{eq:localintlaw}.

\section{Proof of Lemma \ref{lem:rl0igno}}\label{sec:lemmarl0igno}
In this section, we omit the superscript $\vartheta$ for simplicity unless otherwise stated, as the bounds are uniform over $\vartheta$.

\subsection{Proof of Lemma \ref{lem:varbound}}\label{sec:profvarb}

We start from the martingale decomposition as in \eqref{eq:martdecomp}:
\beq
m_{N}-\cexpct{m_{N}}{V}=m_{N}-\expctk{m_{N}}{N}=\sum_{k=1}^{N}(\E_{k-1}-\E_{k})\Big[R_{kk}\Big(1+\sum_{p,q}^{(k)}W_{kp}(R^{(k)})^{2}_{pq}W_{qk}\Big)\Big].
\eeq
Similar to proofs in {Section \ref{sec:tight}}, we use the fact
\beq
(\E_{k-1}-\E_{k})\bigg[\frac{1}{v_{k}-z-m_{N}^{(k)}(z)}\Big(1+\frac{1}{N}\sum_{p}^{(k)}(R^{(k)})^{2}_{pp}\Big)\bigg]=0,
\eeq
so that
\begin{multline}\label{eq:varbound0}
		\expct{\absv{m_{N}-\expctk{m_{N}}{N}}}^{2} \\
		=\biggexpct{\sum_{k=1}^{N}\biggabsv{(\E_{k-1}-\E_{k})\bigg[R_{kk}\Big(1+\sum_{p,q}^{(k)}W_{kp}(R^{(k)})^{2}_{pq}W_{qk}\Big)
		-\frac{1}{v_{k}-z-m_{N}^{(k)}(z)}\Big(1+\frac{1}{N}\sum_{p}^{(k)}(R^{(k)})^{2}_{pp}\Big)\bigg]}^{2}} \\
		\leq 4\sum_{k=1}^{N}\biggexpct{\biggabsv{R_{kk}\Big(1+\sum_{p,q}^{(k)}W_{kp}(R^{(k)})^{2}_{pq}W_{qk}\Big)
		-\frac{1}{v_{k}-z-m_{N}^{(k)}(z)}\Big(1+\frac{1}{N}\sum_{p}^{(k)}(R^{(k)})^{2}_{pp}\Big)}^{2}}\\
		\leq C\sum_{k=1}^{N}\biggexpct{\biggabsv{R_{kk}\sum_{p,q}^{(k)}W_{kp}(R^{(k)})^{2}_{pq}W_{qk}	-\frac{1}{v_{k}-z-m_{N}^{(k)}(z)}\frac{1}{N}\sum_{p}^{(k)}(R^{(k)})^{2}_{pp}}^{2}}
		+C\sum_{k=1}^{N}\Bigexpct{\Bigabsv{R_{kk}-\frac{1}{v_{k}-z-m_{N}^{(k)}(z)}}^{2}}
\end{multline}

Now we express each summand in the first sum by
\begin{multline}
R_{kk}\sum_{p,q}^{(k)}W_{kp}(R^{(k)})^{2}_{pq}W_{qk}-\frac{1}{N}\frac{1}{v_{k}-z-m_{N}^{(k)}(z)}\sum_{p}^{(k)}(R^{(k)})^{2}_{pp}	\\
=\big(R_{kk}-\frac{1}{v_{k}-z-m_{N}^{(k)}(z)}\Big)\Big(\sum_{p,q}^{(k)}W_{kp}(R^{(k)})^{2}_{pq}W_{qk}\Big)
+\frac{1}{v_{k}-z-m_{N}^{(k)}(z)}\Big(\sum_{p,q}^{(k)}W_{kp}(R^{(k)})^{2}_{pq}W_{qk}-\frac{1}{N}\sum_{p}^{(k)}(R^{(k)})^{2}_{pp}\Big).
\end{multline}

The first term above is bounded by
\begin{multline}\label{eq:varbound1}
\Bigabsv{R_{kk}-\frac{1}{v_{k}-z-m_{N}^{(k)}(z)}}\Bigabsv{\sum_{p,q}^{(k)}W_{kp}(R^{(k)})^{2}_{pq}W_{qk}}	\\
=\Bigabsv{\frac{1}{v_{k}-z-m_{N}^{(k)}(z)}}\Bigabsv{R_{kk}\Big(\sum_{p,q}^{(k)}W_{kp}(R^{(k)})^{2}_{pq}W_{qk}\Big)}\Bigabsv{(v_{k}-z-m_{N}^{(k)}(z))-\frac{1}{R_{kk}}}
\end{multline}

\begin{multline}
\leq\Bigabsv{\frac{1}{v_{k}-z-m_{N}^{(k)}(z)}}\biggabsv{\frac{1+\sum_{p,q}^{(k)}W_{kp}\absv{R^{(k)}}^{2}_{pq}W_{qk}}{W_{kk}-z-\sum_{p,q}^{(k)}W_{kp}R^{(k)}_{pq}W_{qk}}}\\
\times\Bigabsv{v_{k}-z-m_{N}^{(k)}(z)-\Big(W_{kk}-z-\sum_{p,q}^{(k)}W_{kp}R_{pq}^{(k)}W_{qk}\Big)}.
\end{multline}

Noting that
\beq
-\im\Big[W_{kk}-z-\sum_{p,q}^{(k)}W_{kp}R_{pq}^{(k)}W_{qk}\Big]=\eta\Big(1+\sum_{p,q}^{(k)}W_{kp}\absv{R^{(k)}}^{2}_{pq}W_{qk}\Big),
\eeq
The last line of \eqref{eq:varbound1} is bounded by
\beq\label{eq:varbound2}
\frac{1}{\eta}\Bigabsv{\frac{1}{v_{k}-z-m_{N}^{(k)}(z)}}\Bigabsv{\frac{1}{\sqrt{N}}A_{kk}+\Big(\sum_{p,q}^{(k)}W_{kp}R^{(k)}_{pq}W_{qk}-m_{N}^{(k)}(z)\Big)}.
\eeq

Then we use Lemma \ref{lem:lde}, to obtain
\begin{equation}
\biggcexpct{\biggabsv{\frac{1}{\sqrt{N}}A_{kk}+\Big(\sum_{p,q}^{(k)}W_{kp}R^{(k)}_{pq}W_{qk}-m_{N}^{(k)}(z)\Big)}^{2}}{W_{pq}:p,q\neq k}
\leq C\big(\frac{1}{N}+\frac{1}{N^{2}}\Tr \absv{R^{(k)}}^{2}\big).
\end{equation}

Thus,
\begin{multline}
\biggexpct{\Bigabsv{\frac{1}{v_{k}-z-m_{N}^{(k)}(z)}}^{2}\Bigabsv{\frac{1}{\sqrt{N}}A_{kk}+\Big(\sum_{p,q}^{(k)}W_{kp}R^{(k)}_{pq}W_{qk}-m_{N}^{(k)}(z)\Big)}^{2}}	\\
\leq \frac{C}{N}\biggexpct{\Bigabsv{\frac{1}{v_{k}-z-m_{N}^{(k)}(z)}}^{2}\Big(1+\frac{1}{N}\Tr \absv{R^{(k)}}^{2}\Big)}	
\leq \frac{C}{N}\E\Bigg[\Bigabsv{\frac{\big(1+\frac{1}{N}\Tr \absv{R^{(k)}}^{2}\big)}{v_{k}-z-m_{N}^{(k)}(z)}}^{1-\epsilon}\times\frac{\Bigabsv{\big(1+\frac{1}{N}\Tr \absv{R^{(k)}}^{2}\big)}^{\epsilon}}{\Bigabsv{v_{k}-z-m_{N}^{(k)}(z)}^{1+\epsilon}}\Bigg]	\\
\leq \frac{C}{N} \eta^{-1+\epsilon}\times \eta^{-2\epsilon}\times \Bigexpct{\frac{1}{\absv{v_{k}-z-m_{N}^{(k)}(z)}^{1+\epsilon}}}\\
\leq \frac{C}{N}\eta^{-1-\epsilon}\Bigexpct{\Bigabsv{\Bigcexpct{R_{kk}^{-1}}{W_{pq}:p,q\neq k}}^{1+\epsilon}}
\leq \frac{C}{N}\eta^{-1-\epsilon}\expct{\absv{R_{kk}}^{1+\epsilon}},
\end{multline}
where we used the Jensen's inequality in the last line.

For the second term, we use again the similar bound, to obtain
\begin{multline}\label{eq:varbound3}
\biggexpct{\frac{1}{\absv{v_{k}-z-m_{N}^{(k)}(z)}^{2}}\Bigabsv{\sum_{p,q}^{(k)}W_{kp}(R^{(k)})^{2}_{pq}W_{qk}-\frac{1}{N}\sum_{p}^{(k)}(R^{(k)})^{2}_{pp}}^{2}}	\\
\leq\biggexpct{\frac{1}{\absv{v_{k}-z-m_{N}^{(k)}(z)}^{2}}\biggcexpct{\biggabsv{\sum_{p,q}^{(k)}W_{kp}(R^{(k)})^{2}_{pq}W_{qk}-\frac{1}{N}\sum_{p}^{(k)}(R^{(k)})^{2}_{pp}}^{2}}{A_{pq}:p,q\neq k}}	\\
\leq \frac{C}{N}\biggexpct{\frac{\frac{1}{N}\Tr \absv{R^{(k)}}^{4}}{\absv{v_{k}-z-m_{N}^{(k)}}^{2}}}	\leq\frac{C}{N\eta^{2}}\times\biggexpct{\frac{\frac{1}{N}\Tr \absv{R^{(k)}}^{2}}{\absv{v_{k}-z-m_{N}^{(k)}(z)}^{2}}}\leq \frac{C}{N\eta^{3+\epsilon}}\expct{\absv{R_{kk}}^{1+\epsilon}}.
\end{multline}

Finally, each summand in the second sum of \eqref{eq:varbound0} is bounded as follows
\begin{multline}\label{eq:varbound4}
\Bigabsv{R_{kk}-\frac{1}{v_{k}-z-m_{N}^{(k)}(z)}}
\leq \Bigabsv{\frac{R_{kk}}{v_{k}-z-m_{N}^{(k)}(z)}}\times \Bigabsv{v_{k}-z-m_{N}^{(k)}(z)-\big(W_{ii}-z-\sum_{p,q}^{(k)}W_{kp}R^{(k)}_{pq}W_{qk}\big)}	\\
=\Bigabsv{\frac{R_{kk}}{v_{k}-z-m_{N}^{(k)}(z)}}\times \Bigabsv{\frac{1}{\sqrt{N}}A_{kk}+\sum_{p,q}^{(k)}W_{kp}R^{(k)}_{pq}W_{qk}-m_{N}^{(k)}(z)}.
\end{multline}
Noting that the last line is precisely \eqref{eq:varbound2} expect for the first factor replaced by $\absv{R_{kk}}$, the bound directly follows using the trivial bound $\absv{R_{kk}}\leq \eta^{-1}$.

For $\expct{\absv{m_{N}(z)-\expct{m_{N}(z)}}^{2}}$, the proof is similar to above, except we again use the filtration $\caG_{k}$ and $\frac{1}{-z-m_{N}^{(k)}(z)}$ as the auxiliary factor, instead of $\frac{1}{v_{k}-z-m_{N}^{(k)}(z)}$.

We start with the same martingale decomposition:
\begin{multline}
\expct{\absv{m_{N}(z)-\expct{m_{N}(z)}}^{2}}	\\
\leq C\sum_{k=1}^{N}\biggexpct{\biggabsv{R_{kk}\Big(1+\sum_{p,q}^{(k)}W_{kp}(R^{(k)})^{2}_{pq}W_{qk}\Big)-\frac{1}{-z-m_{N}^{(k)}(z)}\Big(1+\frac{1}{N}\sum_{p}^{(k)}(R^{(k)})^{2}_{pp}\Big)}^{2}}	\\
\leq C\sum_{k=1}^{N}\biggexpct{\Bigabsv{R_{kk}-\frac{1}{-z-m_{N}^{(k)}(z)}}^{2}\Bigabsv{\sum_{p,q}^{(k)}W_{kp}(R^{(k)})^{2}_{pq}W_{qk}}^{2}}	\\
+C\sum_{k=1}^{N}\biggexpct{\Bigabsv{\frac{1}{-z-m_{N}^{(k)}(z)}}^{2}\Bigabsv{\sum_{p,q}^{(k)}W_{kp}(R^{(k)})^{2}_{pq}W_{qk}-\frac{1}{N}\sum_{p}^{(k)}(R^{(k)})^{2}_{pp}}^{2}}
+C\sum_{k=1}^{N}\biggexpct{\biggabsv{R_{kk}+\frac{1}{z+m_{N}^{(k)}(z)}}^{2}}.
\end{multline}

We here give proof for the first sum, and the rest follows similar lines. Also as the order of $\vartheta$ has to be taken into account, we keep the dependency on $\vartheta$. As above, we see that
\begin{multline}
\Bigabsv{R_{kk}-\frac{1}{-z-m_{N}^{(k)}(z)}}\Bigabsv{\sum_{p,q}^{(k)}W_{kp}(R^{(k)})^{2}_{pq}W_{qk}}
=\Bigabsv{\frac{1}{-z-m_{N}^{(k)}(z)}}\Bigabsv{R_{kk}\sum_{p,q}^{(k)}W_{kp}(R^{(k)})^{2}_{pq}W_{qk}}\Bigabsv{-z-m_{N}^{(k)}(z)-\frac{1}{R_{kk}}}\\
\leq\frac{1}{\eta}\Bigabsv{\frac{1}{z+m_{N}^{(k)}(z)}}\Bigabsv{-W_{kk}+\sum_{p,q}^{(k)}W_{kp}R^{(k)}_{pq}W_{qk}-m_{N}^{(k)}(z)}.
\end{multline}

And we can proceed as follows:
\begin{multline}
\biggexpct{\Bigabsv{\frac{1}{-z-m_{N}^{(k)}(z)}}^{2}\Bigabsv{\frac{1}{\sqrt{N}}A_{kk}+\vartheta v_{k}+\sum_{p,q}^{(k)}W_{kp}R^{(k)}_{pq}W_{qk}-m_{N}^{(k)}(z)}^{2}}	\\
\leq C\biggexpct{\Bigabsv{\frac{1}{-z-m_{N}^{(k)}(z)}}^{2}\Big(\frac{1}{N}+\vartheta^{2}\Var{v_{1}}+\frac{1}{N^{2}}\Tr \absv{R^{(k)}}^{2}\Big)}	\\
\leq C\vartheta^{2}\Var{v_{1}}\biggexpct{\Bigabsv{\frac{1}{-z-m_{N}^{(k)}(z)}}^{2}\Big(1+\frac{1}{N\vartheta^{2}}\times\frac{1}{N}\Tr\absv{R^{(k)}}^{2}\Big)}.
\end{multline}
Using the condition $\vartheta\gg N^{-\frac{1}{2}}$, above is bounded by
\begin{equation} C\vartheta^{2}\Var{v_{1}}\biggexpct{\Bigabsv{\frac{1}{-z-m_{N}^{(k)}(z)}}^{2}\Big(1+\frac{1}{N}\Tr\absv{R^{(k)}}^{2}\Big)}
\leq C\vartheta^{2}\Var{v_{1}}\eta^{-1-\epsilon}\expct{\absv{R_{kk}}^{1+\epsilon}}
\end{equation}

\subsection{Proof of \eqref{eq:sharp} for $\xi^{\vartheta_{\infty}}$}
Recalling the formula of $\Gamma(z_{1},z_{2})$ in {Proposition \ref{prop:Gproc}}, we observe from the symmetry $\overline{\xi(z)}=\xi(\overline{z})$ that
\begin{multline}
\expct{\absv{\xi-\expct{\xi}}^{2}} =\Gamma(z,\overline{z}) =(w_{2}-2)\int_{\R}\frac{\absv{1+m_{fc}'(z)}^{2}}{\absv{x-z-m_{fc}(z)}^{4}}\dd\nu(x) \\
+(W_{4}-3)\bigg[\Big(\int_{\R}\frac{1}{\absv{x-z-m_{fc}(z)}^{2}}\dd\nu(x)\Big)\Big(\int_{\R}\frac{\absv{1+m'_{fc}(z)}^{2}}{\absv{x-z-m_{fc}(z)}^{4}}\dd\nu(x)\Big)\\
 +\biggabsv{\int_{\R}\frac{(1+m_{fc}'(z))}{(x-z-m_{fc}(z))\absv{x-z-m_{fc}(z)}^{2}}\dd\nu(x)}^{2}\bigg] \\
+2\Big(1-\int_{\R}\frac{1}{\absv{x-z-m_{fc}(z)}^{2}}\dd\nu(x)\Big)^{-2}\biggabsv{\int_{\R}\frac{(1+m_{fc}'(z))}{(x-z-m_{fc}(z))\absv{x-z-m_{fc}(z)}^{2}}\dd\nu(x)}^{2}\\
+2\Big(1-\int_{\R}\frac{1}{\absv{x-z-m_{fc}(z)}^{2}}\dd\nu(x)\Big)^{-1}\int_{R}\frac{\absv{1+m_{fc}'(z)}^{2}}{\absv{x-z-m_{fc}(z)}^{4}}\dd\nu(x).
\end{multline}
Thus, applying {Lemma \ref{lem:sqrtbehav}} repeatedly, we find that $\Var{\xi(z)}$ is bounded for $z\in\Gamma_{r}\cup\Gamma_{l}\cup\Gamma_{0}$. By a similar argument, we also have the $O(1)$ bound for
\beq
\absv{\expct{\xi(z)}} =\frac{1}{2}\biggabsv{\frac{m''_{fc}(z)}{(1+m'_{fc}(z))^{2}}}\biggabsv{(w_{2}-1) +m'_{fc}(z) +(W_{4}-3)\frac{m'_{fc}(z)}{1+m'_{fc}(z)}}.
\eeq
Noting that $\expct{\absv{\xi(z)}^{2}}=\Var{\xi(z)}+\absv{\expct{\xi(z)}}^{2}$ and $\absv{\Gamma_{r}\cup\Gamma_{l}\cup\Gamma_{0}}\to0$, we conclude
\beq
\int_{\Gamma_{\#}}\expct{\absv{\xi(\Gamma_{\#}(t))}^{2}}\absv{\Gamma_{\#}'(t)}\dd t\leq C\absv{\Gamma_{\#}} \to 0.
\eeq

\subsection{Proof of \eqref{eq:sharpN} for $\xi_{N}^{\vartheta}$}\label{sec:sharpNprf}
We fix $\epsilon>0$ and take $\Omega_{N}$ to be the event
\beq
\Omega_{N}\deq \Big\{\sup_{z\in\Gamma_{0}\cup\Gamma_{r}\cup\Gamma_{l}}\absv{m_{N}(z)-\wh{m}_{fc}(z)}\leq N^{-1+\epsilon}\Big\}
\eeq 
For $z\in\Gamma_{0}$, \eqref{eq:stieltjesconcen} implies for any large (but fixed) $D>0$,
\beq
\int_{\Gamma_{0}} \expct{\absv{\xi_{N}(z)}^{2}\lone_{\Omega_{N}}} \dd z \leq N^{\epsilon}\absv{\Gamma_{0}} \leq N^{\epsilon-\delta}\quad\text{and}\quad \prob{\Omega_{N}^{c}}\leq N^{-D}
\eeq
for sufficiently large $N$, hence \eqref{eq:sharpN} follows for $\Gamma_{\#}=\Gamma_{0}$ letting $\epsilon=\frac{\delta}{2}$.

Now for $\Gamma_{r}$, as $\lim_{v_{0}\to 0^{+}}\absv{\Gamma_{r}}=0$, it suffices to prove $\expct{\absv{\xi_{N}(z)}^{2}}\leq M$ uniformly on $\Gamma_{r}$, for some ($N$-independent) constant $M$. Recalling the results in {Section \ref{sec:meanb}}, we have
\beq
b_{N}(z) =-\frac{1}{2}\frac{m_{fc}''(z)}{(1+m_{fc}'(z))^{2}}\bigg[(w_{2}-1)+m_{fc}'(z)+(W_{4}-3)\frac{m_{fc}'(z)}{1+m_{fc}'(z)}\bigg] + o(1),
\eeq
hence $\absv{\expct{\xi_{N}(z)}}^{2}\leq C$ for $z\in\Gamma_{r}$.

Defining the event \beq
\Lambda_{N}:=\big[\lambda_{N} \leq\wh{\gamma}_{N} +N^{-\frac{1}{3}}\big]\cap\Omega_{N},
\eeq
we have $\prob{\Lambda_{N}^{c}} <N^{-D}$ for any large (but fixed) $D>0$ by {Lemma \ref{lem:rigid}}. On the event $\Lambda_{N}$, $\absv{\wh{L}_{+}-L_{+}}\leq N^{-c\alpha_{0}}$ gives the bound
\beq
\absv{R_{kk}} \leq\norm{R} =\max_{1\leq i\leq N}\biggabsv{\frac{1}{\lambda_{i}-z}} \leq\frac{1}{a_{+}-\wh{\gamma}_{N} -N^{-\frac{1}{3}}} \leq\frac{1}{a_{+}-\wh{L}_{+}-N^{-\frac{1}{3}}}\leq C,
\eeq
for any $k=1,\cdots,N$, uniformly for $z\in\Gamma_{r}$. Also the Cauchy interlacing gives the same bound $\norm{R^{(k)}}\leq C$ and from \eqref{eq:mfczmapswh} we obtain
\beq\label{eq:low}
	\absv{v_{k}-z-m_{N}^{(k)}(z)}\geq \absv{v_{k}-z-\wh{m}_{fc}(z)}-\absv{\wh{m}_{fc}(z)-m_{N}^{(k)}(z)}\geq C.
\eeq. 

To bound $\Var{\xi_{N}(z)}=\expct{\absv{\xi_{N}(z)-\expct{\xi_{N}(z)}}^{2}}=\expct{\absv{\zeta_{N}(z)}^{2}}$, we start with the bound in \eqref{eq:varbound0}, restricted on the event $\Lambda_{N}$. The proof is similar to that in Section \ref{sec:profvarb}, except that we replace bounds concerning $\eta$ into $\norm{R}$. For example, the second line of \eqref{eq:varbound1} is bounded by
\begin{multline}
	\Bigexpct{\lone_{\Lambda_{N}}\Bigabsv{\frac{1}{v_{k}-z-m_{N}^{(k)}(z)}}^{2}\Bigabsv{R_{kk}\Big(\sum_{p,q}^{(k)}W_{kp}(R^{(k)})^{2}_{pq}W_{qk}\Big)}^{2}\Bigabsv{(v_{k}-z-m_{N}^{(k)}(z))-\frac{1}{R_{kk}}}^{2}} \\
	\leq C\Bigexpct{\lone_{\Lambda_{N}}\Bigabsv{(v_{k}-z-m_{N}^{(k)}(z))-\frac{1}{R_{kk}}}^{2}}	\leq \frac{C}{N},
\end{multline}
where we used Cauchy-Schwarz inequality and Lemma \ref{lem:lde} in the last inequality. Likewise, bound for other terms in \eqref{eq:varbound0} follow by replacing $\eta$ into $\norm{R}$, $\norm{R^{(k)}}$, and \eqref{eq:low}. The proof for $\Gamma_{l}$ is the same, except we take the event $\Lambda_{N}$ as 
\beq
\Lambda_{N}\deq[\lambda_{1}\geq\wh{\gamma}_{1}-N^{-\frac{1}{3}}]\cap\Omega_{N}
\eeq
and take the bound for $\Lambda_{N}$ as
\beq
\norm{R} \leq \frac{1}{a_{-}-\lambda_{1}} \leq  \frac{1}{a_{-}-\wh{L}_{-}-N^{-\frac{1}{3}}}.
\eeq

\subsection{Proof of \eqref{eq:sharp} for $\wt{\xi}^{\vartheta_{\infty}}$}
As we saw above, we have
\beq
\expct{\absv{\wt{\xi}(z)}^{2}} \leq \absv{1+m_{fc}'(z)}^{2}\int \frac{1}{\absv{v-z-m_{fc}(z)}^{2}}\dd\nu(v)\leq C,
\eeq
uniformly for $z\in\Gamma$ from \eqref{eq:mfczmaps}. As $\absv{\Gamma_{0}\cup\Gamma_{r}\cup\Gamma_{l}}\to0$, \eqref{eq:sharp} for $\wt{\xi}^{\vartheta_{\infty}}$ directly follows.

\subsection{Proof of \eqref{eq:sharpN} for $\wt{\xi}_{N}^{\vartheta}$}
We take the event $\Omega_{N}$ to be contained in
\beq
\Big[\sup_{z\in\Gamma_{0}\cup\Gamma_{r}\cup\Gamma_{l}}:\absv{m_{N}(z)-m_{fc}(z)}\leq N^{-\frac{1}{2}+\epsilon}\Big] \cap\big[\max\{\absv{\lambda_{1}-\gamma_{1}},\absv{\lambda_{N}-\gamma_{N}}\}\leq cN^{-\frac{1}{3}+\epsilon}\big] \cap\Omega_{N}.
\eeq

Then for $\Gamma_{0}$, by {Corollary \ref{cor:olxiorder}}, we have
\beq
\int\expct{\absv{\wt{\xi}_{N}(\Gamma_{0}(t))}^{2}}\absv{\Gamma_{0}'(t)}\dd t \leq N^{\epsilon-\delta},
\eeq
hence we get the result by taking $\epsilon<\delta$.

For $\Gamma_{l}$, we first observe that from {Remark \ref{rem:wtximean}},
\beq
\absv{\expct{\wt{\xi}_{N}(z)\lone_{\Omega_{N}}}} =O(N^{-\frac{1}{2}+\epsilon}),
\eeq
hence it suffices to prove
\beq
\expct{\absv{\wt{\xi}_{N}(z)-\expct{\wt{\xi}_{N}(z)}}^{2}\lone_{\Omega_{N}}}\leq K
\eeq
for some $z,N$-independent constant $K\geq0$.

From the definition of $\Omega_{N}$, we have
\beq\label{eq:opnorm1}
\absv{R_{kk}}\leq \norm{R}=\max_{i}\frac{1}{\absv{\lambda_{i}-z}} \leq\frac{1}{\absv{a_{-}-\lambda_{1}}} \leq\frac{1}{L_{-}-a_{-}-cN^{-\frac{1}{3}+\epsilon}} \leq C
\eeq
on $\Omega_{N}$ uniformly for $z\in\Gamma_{l}$. Then following lines of Appendix \ref{sec:tightprocsqrtprf},
\begin{multline}
\expct{\absv{\wt{\xi}_{N}(z)-\expct{\wt{\xi}_{N}(z)}^{2}}} \\
\leq C\sum_{k}\bigg(\biggexpct{\biggabsv{R_{kk}\Big(\sum_{p,q}^{(k)}W_{kp}(R^{(k)})^{2}_{pq}W_{qk}-\frac{1}{N}\sum_{p}^{(k)}(R^{(k)})_{pp}^{2}\Big)}^{2}} \\
+\biggexpct{\biggabsv{\Big(R_{kk}+\frac{1}{z+m_{N}^{(k)}(z)}\Big)\Big(1+\frac{1}{N}\sum_{p}^{(k)}(R^{(k)})^{2}_{pp}\Big)}^{2}}\bigg).
\end{multline}
Now using {Lemma \ref{lem:lde}}, \eqref{eq:localintlaw}, and \eqref{eq:opnorm1}, the result follows from the same argument as in {Section \ref{sec:sharpNprf}}.

\section{Proofs of Lemmas in {Section~\ref{sec:prelim}}} \label{sec:lemmas3.2}
In this appendix, we provide the proofs of lemmas that were stated in {Section \ref{sec:prelim}}.

\begin{proof}[Proof of Lemma \ref{lem:chernoff}]
	Since {Assumption \ref{assump:regesdv}} implies that \eqref{eq:regesdveq} holds with probability $\geq 1-N^{-\frt}$, we focus on \eqref{eq:esdconveq} and \eqref{eq:esdconveqtheta}.
	
	To bound the probability of the event on which \eqref{eq:esdconveq} does not hold, we first note that
	\beq
	\absv{m_{\wh{\nu}}'(z)} \leq \int\frac{1}{\absv{x-z}^{2}}\dd\wh{\nu}(x)\leq \frac{1}{C_{1}^{2}}
	\eeq
	where $C_{1}=\dist(\caD,\supp\nu)$ and similarly $\absv{m_{\nu}'(z)}\leq C_{1}^{-2}$, so that
	\beq
	\absv{m_{\wh{\nu}}(z)-m_{\wh{\nu}}(z')}\leq C_{1}^{-2}\absv{z-z'}.
	\eeq
	Now we let $Q_{N}$ be a lattice in $\C^{+}$ with $\absv{Q_{N}}\leq C_{2}N^{1-\epsilon_{0}}$ such that for any $z\in \caD$, $\inf_{z'\in Q_{N}}\absv{z-z'}\leq C_{1}^{2}N^{-1/2+\epsilon_{0}}$ for sufficiently large $N$. Then, 
	\beq
	\prob{\sup_{z\in\caD}\absv{m_{\wh{\nu}}(z)-m_{\nu}(z)}\geq 3N^{-\frac{1}{2}+\epsilon_{0}}} \leq\sum_{z\in Q_{N}}\prob{\absv{m_{\wh{\nu}}(z)-m_{\nu}(z)}\geq N^{-\frac{1}{2}+\epsilon_{0}}}.
	\eeq
	
	On the other hand, for each fixed $z\in\caD$, a typical application of the Chernoff inequality implies
	\begin{multline}
	\prob{\absv{m_{\wh{\nu}}(z)-m_{\nu}(z)}\geq N^{-\frac{1}{2}+\epsilon_{0}}} \\
	=\biggprob{\biggabsv{\frac{1}{\sqrt{N}}\sum_{i=1}^{N}\Big(\frac{1}{v_{i}-z}-\expct{\frac{1}{v_{i}-z}}\Big)}\geq N^{\epsilon_{0}}} \leq C\exp(-cN^{2\epsilon_{0}})
	\end{multline}
	for some absolute constant $c,C>0$, hence
	\beq
	\prob{\sup_{z\in\caD}\absv{m_{\wh{\nu}}(z)-m_{\nu}(z)}\geq 3N^{-\frac{1}{2}+\epsilon_{0}}}\leq CN^{1-2\epsilon_{0}}\exp({cN^{-2\epsilon_{0}}})\leq N^{-\frt}.
	\eeq
	
	Now we bound the probability of the event on which \eqref{eq:esdconveqtheta} fails. We first note that
	\beq
	\vartheta^{-1}(m_{\wh{\nu}}^{\vartheta}(z)-m_{\nu}^{\vartheta}(z)) =\frac{1}{N}\sum_{i=1}^{N}\Big[\frac{v_{i}}{z(\vartheta v_{i}-z)}-\expct{\frac{v_{i}}{z(\vartheta v_{i}-z)}}\Big].
	\eeq
	Considering the functions
	\beq
	\wh{F}(\vartheta,z)\deq \int \frac{x}{z(\vartheta x-z)}\dd\wh{\nu}(x)\quad\text{and}\quad F(\vartheta,z)\deq\int\frac{x}{z(\vartheta x-z)}\dd\nu(x)
	\eeq
	defined on $\caD\subset\Theta_{\varpi}\times\C^{+}$, $F$ is jointly Lipschitz for 
	\begin{multline}
	\absv{F(\vartheta_{1},z_{1})-F(\vartheta_{2},z_{2})} \leq \int\biggabsv{v_{i}\frac{v_{i}(z_{2}\vartheta_{2}-z_{1}\vartheta z_{1})+(z_{1}^{2}-z_{2}^{2})}{z_{1}z_{2}(\vartheta x-z_{1})(\vartheta x-z_{2})}}\dd\nu(x) \\
	\leq d^{-4}(C^{2}\absv{z_{2}\vartheta_{2}-z_{1}\vartheta_{1}}+C\absv{z_{1}-z_{2}}),
	\end{multline}
	where $d=\inf_{(\vartheta,z)\in\caD}\dist(z,\supp\nu^{\vartheta})$, and similarly $\wh{F}$ is also jointly Lipschitz with constant bounded uniformly in $N$. Thus we conclude that there exists a constant $C>0$ independent of $(\vartheta,z)\in\caD$ and $N\in\N$ satisfying
	\begin{multline}
	\biggabsv{\frac{1}{\vartheta_{1}}(m_{\wh{\nu}}^{\vartheta_{1}}(z_{1})-m_{\nu}^{\vartheta_{1}}(z_{1})) -\frac{1}{\vartheta_{2}}(m_{\wh{\nu}}^{\vartheta_{2}}(z_{2})-m_{\nu}^{\vartheta_{2}}(z_{2}))} \\
	=\absv{\wh{F}(\vartheta_{1},z_{1})-\wh{F}(\vartheta_{2},z_{2})+F(\vartheta_{2},z_{2})-F(\vartheta_{1},z_{1})}\leq C(\absv{z_{1}-z_{2}}+\absv{\vartheta_{1}-\vartheta_{2}}).
	\end{multline}
	Now we let $Q_{N}$ be a lattice in $\caD$ with $\absv{Q_{N}}\leq CN^{6}$ such that for any $(\vartheta,z)\in \caD$, $\inf_{(\vartheta',z')\in Q_{N}}\absv{z-z'}+\absv{\vartheta-\vartheta'}\leq N^{-2}$ for sufficiently large $N$. Then, 
	\beq
	\biggprob{\sup_{(\vartheta,z)\in\caD}\biggabsv{\frac{1}{\vartheta}(m_{\wh{\nu}}^{\vartheta}(z)-m_{\nu}^{\vartheta}(z))}\geq 2N^{-\frac{1}{2}+\epsilon_{0}}} 
	\leq\sum_{(\vartheta,z)\in Q_{N}}\biggprob{\frac{1}{\vartheta}\absv{m_{\wh{\nu}}^{\vartheta}(z)-m_{\nu}^{\vartheta}(z)}\geq N^{-\frac{1}{2}+\epsilon_{0}}}.
	\eeq
	
	On the other hand, for each fixed $(\vartheta,z)\in\caD$, a typical application of McDiamird's inequality implies
	\begin{multline}
	\biggprob{\frac{1}{\vartheta}\absv{m_{\wh{\nu}}^{\vartheta}(z)-m_{\nu}^{\vartheta}(z)}\geq N^{-\frac{1}{2}+\epsilon_{0}}} \\
	=\biggprob{\biggabsv{\frac{1}{\sqrt{N}}\sum_{i=1}^{N}\Big(\frac{v_{i}}{z(\vartheta v_{i}-z)}-\biggexpct{\frac{v_{i}}{z(\vartheta v_{i}-z)}}\Big)}\geq N^{\epsilon_{0}}} \leq C\exp(-cN^{2\epsilon_{0}})
	\end{multline}
	for some absolute constant $c,C>0$, hence
	\beq
	\biggprob{\sup_{(\vartheta,z)\in\caD}\biggabsv{\frac{1}{\vartheta}(m_{\wh{\nu}}^{\vartheta}(z)-m_{\nu}^{\vartheta}(z))}\geq 2N^{-\frac{1}{2}+\epsilon_{0}}} \leq CN^{6}\exp({cN^{-2\epsilon_{0}}})\leq N^{-\frt}.
	\eeq
\end{proof}

\begin{proof}[Proof of Corollary \ref{cor:olxiorder}]
	We start with the bound
	\beq
	\absv{m_{N}^{\vartheta}(z)-m_{fc}^{\vartheta}(z)} \leq\biggabsv{ \sum_{i}\frac{1}{N}\frac{1}{\gamma_{i}^{\vartheta}-z} -\int_{\wt{\gamma}_{i-1}^{\vartheta}}^{\wt{\gamma}_{i}^{\vartheta}}\frac{1}{x-z}\dd\rho_{fc}^{\vartheta}(x)} +\frac{1}{N}\sum_{i}\biggabsv{\frac{1}{\gamma_{i}^{\vartheta}-z} -\frac{1}{\lambda_{i}^{\vartheta}-z}},
	\eeq
	where $\wt{\gamma}_{0}^{\vartheta}=L_{-}^{\vartheta}$, $\wt{\gamma}_{N}^{\vartheta}=L_{+}^{\vartheta}$ and $\int_{\R}^{\wt{\gamma}_{i}}\dd\rho_{fc}^{\vartheta}(x)=\frac{i}{N}$ for $1\leq i\leq N-1$.
	
	From $\kappa_{E}+\eta\sim 1$, we find that
	\begin{multline}
	\biggabsv{\sum_{i}\frac{1}{N}\frac{1}{\gamma_{i}^{\vartheta}-z} -\int_{\gamma_{i-1}^{\vartheta}}^{\gamma_{i}^{\vartheta}}\frac{1}{x-z}\dd\rho_{fc}^{\vartheta}(x)}
	=\biggabsv{\sum_{i=1}^{N}\int_{\wt{\gamma}_{i-1}^{\vartheta}}^{\wt{\gamma}_{i}^{\vartheta}}\Big(\frac{1}{\gamma_{i}^{\vartheta}-z}-\frac{1}{x-z}\Big)\dd\rho_{fc}^{\vartheta}(x)} \\
	\leq\sum_{i}\int_{\wt{\gamma}^{\vartheta}_{i-1}}^{\wt{\gamma}^{\vartheta}_{i}}\biggabsv{\frac{\gamma_{i}^{\vartheta}-x}{(x-z)(\gamma_{i}^{\vartheta}-z)}}\dd\rho_{fc}^{\vartheta}(x) 
	\leq\kappa_{E}^{-2}\sum_{i}\absv{\wt{\gamma}_{i}^{\vartheta}-\wt{\gamma}_{i-1}^{\vartheta}}\int_{\wt{\gamma}_{i-1}^{\vartheta}}^{\wt{\gamma}_{i}^{\vartheta}}\dd\rho_{fc}^{\vartheta}(x)
	=\frac{L_{+}^{\vartheta}-L_{-}^{\vartheta}}{\kappa_{E}^{2}N}=O(N^{-1}).
	\end{multline}
	
	On the other hand, using the rigidity estimate \eqref{eq:rigidncon} we get the bound
	\begin{multline}
	\frac{1}{N}\sum_{i}\biggabsv{\frac{1}{\gamma_{i}^{\vartheta}-z}-\frac{1}{\lambda_{i}^{\vartheta}-z}} \leq \frac{1}{\kappa_{E}(\kappa_{E}-\max(\absv{\gamma_{1}^{\vartheta}-\lambda_{1}^{\vartheta}},\absv{\gamma_{N}^{\vartheta}-\lambda_{N}^{\vartheta}})}\frac{1}{N}\sum_{i}\absv{\lambda_{i}^{\vartheta}-\gamma_{i}^{\vartheta}} \\
	\prec \frac{2}{N}\sum_{i=1}^{\lfloor\frac{N}{2}\rfloor}\big[N^{-\frac{2}{3}}i^{-\frac{1}{3}}+cN^{-\frac{1}{3}}i^{-\frac{2}{3}}\big] +\vartheta N^{-\frac{1}{2}}+2N^{-\frac{5}{3}+\epsilon}(1+cN^{\frac{1}{4}})= O(\theta N^{-\frac{1}{2}}),
	\end{multline}
	since $\vartheta \geq C N^{-1/2}$, where we let $\epsilon<\frac{5}{12}$.
\end{proof}

\begin{proof}[Proof of Corollary \ref{cor:xiorder}]
	We omit the superscript $\vartheta$ for simplicity. For $z\in\caD_{c}$, the bounds immediately follow from the strong local deformed semicircle laws {Lemma \ref{lem:locallaw}} as $\eta\sim 1$. For $z\in\Gamma_{r}\cup\Gamma_{l}\cup\Gamma_{0}$, we bound $\absv{m_{N}(z)-\wh{m}_{fc}(z)}$ by
	\begin{multline}
	\absv{m_{N}(z)-\wh{m}_{fc}(z)} =\biggabsv{\frac{1}{N}\sum_{i}\frac{1}{\lambda_{i}-z}-\int_{\R}\frac{1}{x-z}\dd\wh{\rho}_{fc}(x)} \\
	\leq \biggabsv{\sum_{i}\Big(\frac{1}{N}\frac{1}{\wh{\gamma}_{i}-z}-\int_{\wh{\lambda}_{i-1}}^{\wh{\lambda}_{i}}\frac{1}{x-z}\Big)} 
	+\biggabsv{\frac{1}{N}\sum_{i}\Big(\frac{1}{\lambda_{i}-z}-\frac{1}{\wh{\gamma}_{i}-z}\Big)},
	\end{multline}
	where $\wh{\lambda}_{0}=\wh{L}_{-}$, $\wh{\lambda}_{N}=\wh{L}_{+}$ and $\int_{-\infty}^{\wh{\lambda}_{i}}\dd\wh{\rho}_{fc}=\frac{i}{N}$ for $1\leq i\leq N-1$.
	
	Then $\wh{\kappa}_{E}\sim 1$ and $\absv{\wh{L}_{\pm}-L_{\pm}}\leq N^{-c\alpha_{0}}$ implies
	\begin{multline}
	\biggabsv{\sum_{i}\Big(\int_{\wh{\lambda}_{i-1}}^{\wh{\lambda}_{i}}\frac{\wh{\gamma}_{i}-x}{(x-z)(\wh{\gamma}_{i}-z)}\dd\wh{\rho}_{fc}(x)\Big)}  \leq\wh{\kappa}_{E}^{-2}\sum_{i}\int_{\wh{\lambda}_{i-1}}^{\wh{\lambda}_{i}}\absv{\wh{\gamma}_{i}-x}\dd\wh{\rho}_{fc}(x) \\
	\leq\frac{\wh{\kappa}_{E}^{-2}}{N}\sum_{i}(\wh{\lambda}_{i}-\wh{\lambda}_{i-1}) =\frac{\wh{\kappa}_{E}^{-2}(\wh{L}_{+}-\wh{L}_{-})}{N} =O(N^{-1})
	\end{multline}
	and {Lemma \ref{lem:rigid}} implies
	\begin{multline}
	\frac{1}{N}\sum_{i}\biggabsv{\frac{1}{\lambda_{i}-z}-\frac{1}{\wh{\gamma}_{i}-z}} \leq\wh{\kappa}_{E}^{-2}\frac{1}{N}\sum_{i}\absv{\wh{\gamma}_{i}-\lambda_{i}} \prec N^{-\frac{5}{3}}\sum_{i}\check{\alpha}_{i}^{-\frac{1}{3}} \\
	=2N^{-1}\sum_{i\leq\frac{N}{2}}\frac{1}{N}\Big(\frac{i}{N}\Big)^{-\frac{1}{3}} \sim N^{-1}\int_{0}^{2}x^{-\frac{1}{3}}dx =O(N^{-1}),
	\end{multline}
	so that the bound \eqref{eq:stieltjesconcen} for $z\in \Gamma_{l}\cup\Gamma_{r}\cup\Gamma_{0}$ follows.
	
	Now for $z\in\Gamma_{l}\cup\Gamma_{r}$, \eqref{eq:sqrtimwh} together with $\wh{\kappa}_{E}\sim 1$ implies
	\beq
	\sqrt{\frac{\Im\wh{m}_{fc}(z)}{N\eta}}\sim\sqrt{\frac{\eta}{\wh{\kappa}_{E}+\eta}\frac{1}{N\eta}}\sim\frac{1}{\sqrt{N}},
	\eeq
	so that
	\beq
	\absv{R_{ij}-\delta_{ij}\wh{g}_{i}(z)} \prec \sqrt{\frac{\Im \wh{m}_{fc}(z)}{N\eta}} +\frac{1}{N\eta} =O(N^{-\frac{1}{2}}+N^{-1+\delta}).
	\eeq
	Taking $\delta<\frac{1}{2}$, we get the result.
\end{proof}

\begin{proof}[Proof of Corollary \ref{cor:path}]
	For each $N$ and $0\leq k\leq N$ we have
	\begin{multline}
	\absv{\wh{I}_{k}^{\vartheta}(z_{1},z_{2})} \leq\frac{1}{N}\sum_{p>k}\absv{\wh{g}_{p}^{\vartheta}(z_{1})\wh{g}_{p}^{\vartheta}(z_{2})} \leq\frac{1}{N}\sum_{p}\absv{\wh{g}_{p}^{\vartheta}(z_{1})\wh{g}^{\vartheta}_{p}(z_{2})} \\
	\leq\frac{1}{2N}\sum_{p}(\absv{\wh{g}_{p}^{\vartheta}(z_{1})}^{2}+\absv{\wh{g}^{\vartheta}_{p}(z_{2})}^{2}) \leq\frac{1}{2}\int_{\R}\frac{1}{\absv{\vartheta x-z_{1}-\wh{m}^{\vartheta}_{fc}(z_{1})}^{2}} +\frac{1}{\absv{\vartheta x-z-\wh{m}^{\vartheta}_{fc}(z_{2})}^{2}}\dd\wh{\nu}(x) \\
	=\frac{1}{2}\Big(\frac{\Im \wh{m}_{fc}^{\vartheta}(z_{1})}{\Im (z_{1}+\wh{m}^{\vartheta}_{fc}(z_{1}))}+\frac{\Im\wh{m}^{\vartheta}_{fc}(z_{2})}{\Im(z_{2}+\wh{m}^{\vartheta}_{fc}(z_{2}))}\Big)
	\end{multline}
	where we used \eqref{eq:funceqhat} in the last equality. Similarly, from \eqref{eq:funceq},
	\beq
	\absv{I^{\vartheta}(z_{1},z_{2})} \leq\frac{1}{2}\Big(\frac{\Im m_{fc}^{\vartheta}(z_{1})}{\Im(z_{1}+m_{fc}^{\vartheta}(z_{1}))}+\frac{\Im m_{fc}^{\vartheta}(z_{2})}{\Im(z_{2}+m_{fc}^{\vartheta}(z_{2}))}\Big).
	\eeq
	
	From the inequalities above together with the uniform lower bound of $\Im m_{fc}^{\vartheta}(z)$ and $\Im \wh{m}_{fc}^{\vartheta}(z)$ given in {Corollary \ref{cor:prepath}}, it can be easily checked that there exists $r>0$ satisfying the condition.
\end{proof}

\begin{proof}[Proof of Corollary \ref{cor:covwh}]
	Using \eqref{eq:mfczmapswh} and {Corollary \ref{cor:path}}. we first reduce $\wh{I}_{0}^{\vartheta}(z_{1},z_{2})$ as follows:
	\begin{multline}
	\wh{I}_{0}^{\vartheta}(z_{1},z_{2}) =\int\frac{1}{(\vartheta x-z_{1}-\wh{m}_{fc}^{\vartheta}(z_{1}))(\vartheta x-z_{2}-\wh{m}_{fc}^{\vartheta}(z_{2}))}\dd\wh{\nu}(x) \\
	=\int\frac{1}{(\vartheta x-z_{1}-m_{fc}^{\vartheta}(z_{1}))(\vartheta x-z_{2}-m_{fc}^{\vartheta}(z_{2}))}\dd\wh{\nu}(x) +C\vartheta N^{-\frac{1}{2}+\epsilon_{0}}.
	\end{multline}
	Note that the uniformity of the constant $C$ follows from that of \eqref{eq:mfczmapswh} and {Corollary \ref{cor:path}}. Then, letting $w_{1}^{\vartheta}=z_{1}+m_{fc}^{\vartheta}(z_{1})$ and similarly $w_{2}^{{\vartheta}}$, we can rewrite the integrand as 
	\beq
	\frac{1}{(\vartheta x-w_{1}^{\vartheta})(\vartheta x-w_{2}^{\vartheta})} = \frac{1}{w_{1}^{\vartheta}-w_{2}^{\vartheta}}\Big[\frac{1}{\vartheta x-w_{1}^{\vartheta}}-\frac{1}{\vartheta x-w_{2}^{\vartheta}}\Big],
	\eeq
	so that we have
	\beq
	I^{\vartheta}(z_{1},z_{2})-\wh{I}_{0}^{\vartheta}(z_{1},z_{2}) =\frac{1}{w_{1}^{\vartheta}-w_{2}^{\vartheta}}[(m_{\nu}^{\vartheta}(w_{1}^{\vartheta})-m_{\wh{\nu}}^{\vartheta}(w_{1}^{\vartheta}))-(m_{\nu}^{\vartheta}(w_{2}^{\vartheta})-m_{\wh{\nu}}^{\vartheta}(w_{2}^{\vartheta}))]+C\vartheta N^{-\frac{1}{2}+\epsilon_{0}}.
	\eeq
	
	Now the Cauchy integral formula shows that
	\begin{equation}
	\frac{\big(m_{\nu}^{\vartheta}(w_{1}^{\vartheta})-m_{\wh{\nu}}^{\vartheta}(w_{1}^{\vartheta})\big)-\big(m_{\nu}^{\vartheta}(w_{2}^{\vartheta})-m_{\wh{\nu}}^{\vartheta}(w_{2}^{\vartheta})\big)}{w_{1}^{\vartheta}-w_{2}^{\vartheta}}
	=\frac{1}{2\pi \ii} \oint_{\gamma}\frac{\big(m_{\nu}^{\vartheta}(w)-m_{\wh{\nu}}^{\vartheta}(w)\big)-\big(m_{\nu}^{\vartheta}(w_{2}^{\vartheta})-m_{\wh{\nu}}^{\vartheta}(w_{2}^{\vartheta})\big)}{(w-w_{1})(w-w_{2})}\dd w
	\end{equation}
	for any contour $\gamma$ in $\C^{+}$ satisfying
	\beq
	\frac{1}{2\pi\ii}\oint_{\gamma}\frac{1}{w-w_{1}}\dd w=1.
	\eeq
	For our particular choice of the contour, we first consider a positively oriented simple closed curve $\gamma$ in $\caD_{c}$ such that $\caD_{1}$ lies inside $\gamma$. If we set $G(z)\equiv G^{\vartheta}(z)\deq z+m_{fc}^{\vartheta}(z)$, clearly the contour $G(\gamma)$ is contained in $\C^{+}$. Furthermore, the contour integral
	\beq
	\frac{1}{2\pi \ii}\oint_{G(\gamma)}\frac{1}{w-w_{1}}\dd w
	\eeq
	is precisely the number of zeros of 
	\beq
	G(z)-w_{1}=G(z)-G(z_{1})=0
	\eeq
	within the region bounded by $\gamma$. Now using {Lemma \ref{lem:G_diff}}, which implies the injectivity of $G$, together with the assumption on $\gamma$, we see that the number is precisely $1$, so that
	\beq
	\frac{1}{2\pi\ii}\oint_{G(\gamma)}\frac{1}{w-w_{1}}\dd w =1.
	\eeq
	Recalling the definition of $\gamma$, we have a constant $c_{0}>0$ such that $\absv{z-z_{0}}\geq c_{0}$ for any $z\in\gamma$ and $z_{0}\in\caD_{1}$, so that whenever $z\in\gamma$ and $z_{1},z_{2}\in\caD_{1}$, from {Lemma \ref{lem:G_diff}} we have the lower bound
	\beq
	\absv{(G(z)-G(z_{1}))(G(z)-G(z_{2}))}\geq d^{2}\absv{z-z_{1}}\absv{z-z_{2}}\geq d^{2}c_{0}^{2}.
	\eeq
	Therefore, we get the bound
	\begin{multline}
	\biggabsv{\frac{1}{2\pi \ii} \oint_{G(\gamma)} \frac{\big(m_{\nu}^{\vartheta}(w)-m_{\wh{\nu}}^{\vartheta}(w)\big)-\big(m_{\nu}^{\vartheta}(w_{2}^{\vartheta})-m_{\wh{\nu}}^{\vartheta}(w_{2}^{\vartheta})\big)}{(w-w_{1})(w-w_{2})}\dd w} \\
	=\biggabsv{\frac{1}{2\pi\ii} \oint_{\gamma}\frac{\big(m_{\nu}^{\vartheta}(G(z))-m_{\wh{\nu}}^{\vartheta}(G(z))\big)-\big(m_{\nu}^{\vartheta}(w_{2}^{\vartheta})-m_{\wh{\nu}}^{\vartheta}(w_{2}^{\vartheta})\big)}{(G(z)-G(z_{1}))(G(z)-G(z_{2}))} G'(z) \dd z} \\
	\leq \frac{\absv{\gamma}}{c_{0}^{2}d^{2}}\Big[\sup_{z\in\gamma}\absv{G'(z)}\big(\absv{m^{\vartheta}_{\nu}(G(z))-m^{\vartheta}_{\wh{\nu}}(G(z))}+\absv{m^{\vartheta}_{\nu}(G(z_{2}))-m_{\wh{\nu}}^{\vartheta}(G(z_{2}))}\big)\Big] \leq C\vartheta N^{-\frac{1}{2}+\epsilon_{0}},
	\end{multline}
	where the last inequality follows from \eqref{eq:esdconveqtheta}.
\end{proof}

\begin{proof}[Proof of Lemma~\ref{lem:mfc-msc}]
	We first note that by the self-consistent equation \eqref{eq:funceq}, together with the trivial bound
	\beq
	\absv{m_{fc}^{\vartheta}(z)} \leq \int_{\R}\biggabsv{\frac{1}{x-z}}\dd\rho_{fc}^{\vartheta}(x)\leq \dist(z,[L_{-}^{\vartheta},L_{+}^{\vartheta}])^{-1},
	\eeq
	implies that any pointwise limit $s(z)$ of $m_{fc}^{\vartheta}(z)$ must satisfy the self-consistent equation
	\beq
	s(z)=\int_{\R}\frac{1}{\vartheta_{\infty}x-z-s(z)}\dd\nu(x),\quad \Im s(z)>0 \text{ if }\Im z>0,
	\eeq
	which precisely coincides with that of $m_{fc}^{\vartheta_{\infty}}(z)$. Then, an application of Montel's theorem gives the uniform convergence $m_{fc}^{\vartheta}(z)\to m_{fc}^{\vartheta_{\infty}}(z)$ on $\caD$. 
	
	Now given the uniform convergence, we again use \eqref{eq:funceq} to get
	\begin{multline}
	m_{fc}^{\vartheta}(z)-m_{fc}^{\vartheta_{\infty}}(z) =\int_{\R} \Big[\frac{1}{\vartheta x-z-m_{fc}^{\vartheta}(z)}-\frac{1}{\vartheta_{\infty}x-z-m_{fc}^{\vartheta_{\infty}}(z)}\Big] \dd\nu(x) \\
	=\int_{\R}\frac{(m_{fc}^{\vartheta}(z)-m_{fc}^{\vartheta_{\infty}}(z))-(\vartheta-\vartheta_{\infty})x}{(\vartheta x-z-m_{fc}^{\vartheta}(z))(\vartheta_{\infty}x-z-m_{fc}^{\vartheta_{\infty}}(z))}\dd\nu(x) \\
	= (m_{fc}^{\vartheta}(z)-m_{fc}^{\vartheta_{\infty}}(z))\int_{\R}\frac{1}{(\vartheta x-z-m_{fc}^{\vartheta}(z))(\vartheta_{\infty}x-z-m_{fc}^{\vartheta_{\infty}}(z))}\dd\nu(x) \\
	-(\vartheta-\vartheta_{\infty}) \int_{\R}\frac{x}{(\vartheta x-z-m_{fc}^{\vartheta}(z))(\vartheta_{\infty}x-z-m_{fc}^{\vartheta_{\infty}}(z))}\dd\nu(x).
	\end{multline}
	so that 
	\begin{multline}
	\bigg(1-\biggabsv{\int_{\R}\frac{1}{(\vartheta x-z-m_{fc}^{\vartheta}(z))(\vartheta_{\infty}x-z-m_{fc}^{\vartheta_{\infty}}(z))}\dd\nu(x)}\Big)\absv{m_{fc}^{\vartheta}(z)-m_{fc}^{\vartheta_{\infty}}(z)} \\
	\leq \absv{\vartheta-\vartheta_{\infty}}\biggabsv{\int_{\R}\frac{x}{(\vartheta x-z-m_{fc}^{\vartheta}(z))(\vartheta_{\infty}x-z-m_{fc}^{\vartheta_{\infty}}(z))}\dd\nu(x)}.
	\end{multline}
	The stability bound \eqref{eq:mfczmaps} implies the existence of a constant $C>0$ satisfying
	\beq
	\int_{\R}\biggabsv{\frac{x}{(\vartheta x-z-m_{fc}^{\vartheta}(z))(\vartheta_{\infty}x-z-m_{fc}^{\vartheta_{\infty}})}}\dd\nu(x) \leq C,
	\eeq
	uniformly for $z\in\caD$. 
	On the other hand, our domain $\caD$ satisfies the assumptions of {Corollary \ref{cor:prepath}}, so that there exists $c'>0$ satisfying $\Im m_{fc}^{\vartheta}(z)>c'\Im z$ and similarly for $\vartheta_{\infty}$. Therefore we have
	\beq
	\frac{\Im m_{fc}^{\vartheta}(z)}{\eta+\Im m_{fc}^{\vartheta}(z)} = 1-\frac{\eta}{\eta+\Im m_{fc}^{\vartheta}(z)}>\frac{c'}{1+c'}
	\eeq
	and the same lower bound for $\vartheta_{\infty}$. Then we deduce
	\begin{multline}
	1-\int_{\R}\biggabsv{\frac{1}{(\vartheta x-z-m_{fc}^{\vartheta}(z))(\vartheta_{\infty}x-z-m_{fc}^{\vartheta}(z))}}\dd\nu(x) \\
	\leq 1-\Big[\frac{\Im m_{fc}^{\vartheta}(z)}{\eta+\Im m_{fc}^{\vartheta}(z)}\cdot\frac{\Im m_{fc}^{\vartheta_{\infty}}}{\eta+\Im m_{fc}^{\vartheta_{\infty}}(z)}\Big]^{\frac{1}{2}}	\leq 1-\frac{c'}{1+c'}=\frac{1}{1+c'},
	\end{multline}
	which completes the proof.
\end{proof}

\end{document}